\documentclass{article}
\usepackage {amsmath, amsfonts, amssymb, mathrsfs, amsthm,stmaryrd, sectsty,hyphenat,url,accents,graphicx,comment}
\usepackage[usenames]{color}
\usepackage{palatino}
\usepackage[all]{xy}
\usepackage[toc,page]{appendix}
\usepackage{tocloft}
\usepackage{enumitem}
\usepackage{dynkin-diagrams}

\def\mf#1{\mathfrak{#1}}
\def\mc#1{\mathcal{#1}}
\def\mb#1{\mathbb{#1}}
\def\tx#1{\textrm{#1}}
\def\tb#1{\textbf{#1}}

\def\R{\mathbb{R}}

\def\C{\mathbb{C}}
\def\Q{\mathbb{Q}}

\def\Z{\mathbb{Z}}

\def\lmod{\backslash}

\def\ol#1{\overline{#1}}

\def\hat{\widehat}

\def\rw{\rightarrow}
\def\lw{\leftarrow}
\def\from{\leftarrow}

\def\lrw{\longrightarrow}
\def\llw{\longleftarrow}

\def\rrw{\rightrightarrows}
\def\lw{\leftarrow}
\def\sm{\smallsetminus}

\def\<{\langle}
\def\>{\rangle}

\newenvironment{mytitle}
{\begin{center}\large\sc}
{\end{center}}

\hyphenation{ar-chi-me-de-an}
\hyphenation{an-iso-tro-pic}   
\newtheorem{thm}{Theorem}[subsection]
\newtheorem{lem}[thm]{Lemma}
\newtheorem{pro}[thm]{Proposition}
\newtheorem{cor}[thm]{Corollary}
\newtheorem{fct}[thm]{Fact}

\newtheorem{cnj}[thm]{Conjecture}
\newtheorem{cnd}[thm]{Condition}

\theoremstyle{definition}
\newtheorem{rem}[thm]{Remark}
\newtheorem{dfn}[thm]{Definition}
\newtheorem{exa}[thm]{Example}

\sectionfont{\center\sc\normalsize}
\subsectionfont{\bf\normalsize}

\numberwithin{equation}{subsection}

\newlength{\sumcorr}
\def\ssum#1{\setlength{\sumcorr}{(\widthof{$\displaystyle\sum_{#1}$}-\widthof{$\displaystyle\sum$})/2} \hspace{-\sumcorr}\sum_{#1}\hspace{-\sumcorr} }

\begin{document}

\begin{mytitle} On the local Langlands conjectures for disconnected groups \end{mytitle}

\begin{center} Tasho Kaletha \end{center}

\begin{abstract}
We extend the local Langlands conjectures to a certain class of disconnected groups, allowing non-abelian component groups, and recast in this language some aspects of twisted endoscopy. We further introduce normalized twisted transfer factors and a normalized correspondence between an $L$-packet for a disconnected group and the set of representations of the centralizer groups of its Langlands parameter. We prove the first instance of this conjecture, in which the identity component of the (possibly non-abelian) disconnected group is a torus.
\end{abstract}
{\let\thefootnote\relax\footnotetext{T.K. was partially supported by NSF grants DMS-1801687 and DMS-2301507 as well as a Simons Fellowship.}}

\section{Introduction}

Let $F$ be a local field of characteristic zero. The goal of this paper is to extend the refined local Langlands conjecture to the case of disconnected groups. We recall briefly the statement of this conjecture, referring to \cite{KalSimons} for details. Given a connected reductive $F$-group $G'$ there should be a bijection between the set of (equivalence classes of) Langlands parameters $\varphi : L_F \to {^LG'}$ and the set of $L$-packets $\Pi_{\varphi}(G')$. An $L$-packet is a finite set of irreducible admissible representations of $G'(F)$. It is empty if and only if $\varphi$ is non-relevant for $G'$. The $L$-packets are disjoint and exhaust the set of isomorphism classes of irreducible admissible representations of $G'(F)$. To enumerate the constituents of $\Pi_\varphi(G')$ one fixes an inner twisting $\xi : G \to G'$ with $G$ quasi-split and enriches it to a rigid inner form datum $(\xi,z)$. One further fixes a Whittaker datum $\mf{w}$ for $G$. The inner twisting provides an identification of dual groups $\hat G' = \hat G$ and of $L$-groups $^LG'={^LG}$. Let $Z \subset G$ be a finite central subgroup that is sufficiently large to realize $z$. Let $\bar G=G/Z$. The natural quotient map $G \to \bar G$ is an isogeny. Let $\hat{\bar G} \to \hat G$ be the dual isogeny and let $Z(\hat{\bar G})^+$ be the preimage of $Z(\hat G)^\Gamma$. The element $z$ provides a character $[z] : \pi_0(Z(\hat{\bar G})^+) \to \C^\times$. When $F$ is $p$-adic the character $[z]$ determines the equivalence class of the rigid inner twist $(G',\xi,z)$ uniquely. When $F=\R$ multiple equivalence classes of rigid inner twists may lead to the same character $[z]$, and they are related by $H^1(\R,G'_\tx{sc})$. Let $S_\varphi \subset \hat G$  be the centralizer of the image of $\varphi$ and let $S_\varphi^+$ be its preimage in $\hat{\bar G}$. Let $\tx{Irr}(\pi_0(S_\varphi^+),[z])$ be the set of isomorphism classes of those irreducible representations of the finite group $\pi_0(S_\varphi^+)$ whose restriction to $\pi_0(Z(\hat{\bar G})^+)$ is $[z]$-isotypic. There should be a map $\Pi_\varphi(G') \to \tx{Irr}(\pi_0(S_\varphi^+),[z])$. In the $p$-adic case it should be bijective. In the case $F=\R$ the map should become bijective if one replaces $\Pi_\varphi(G')$ with the disjoint union over all rigid inner twists giving rise to the same character $[z]$. In all cases, this map depends on the choice of $\mf{w}$ and the rigid inner twist data. That same data provides a normalization $\Delta[\mf{\dot e},\mf{z},\mf{w},(\xi,z)]$ of the Langlands--Shelstad transfer factor for each refined endoscopic datum $\mf{\dot e}$ for $G$ and $z$-pair $\mf{z}$ for it. The map $\Pi_\varphi(G') \to \tx{Irr}(\pi_0(S_\varphi^+),[z])$ is expected to satisfy the endoscopic character identities with respect to this normalization of the transfer factor when the parameter $\varphi$ is tempered. More precisely, if $\pi \mapsto \rho_\pi$ is the above map, then a semi-simple element $\dot s \in S_\varphi^+$ leads to the virtual character $\Theta_\varphi^{\dot s}=e(G')\sum_{\pi \in \Pi_\varphi(G')} \tx{tr}\rho_\pi(\dot s)\Theta_\pi$ of $G'(F)$. At the same time the connected centralizer $\hat H$ of the image $s \in \hat G$ of $\dot s$ and the parameter $\varphi$ lead to a quasi-split group $H$ and a parameter $\varphi^\mf{z}$ for its cover $H^\mf{z}$ that is part of the $z$-pair, hence to a similar virtual character $S\Theta_{\varphi^\mf{z}}=\sum_{\pi^\mf{z} \in \Pi_{\varphi^\mf{z}}(H^\mf{z})} \tx{dim}\rho_{\pi^\mf{z}}\Theta_{\pi^\mf{z}}$ on $H^\mf{z}(F)$. The transfer factor $\Delta[\mf{\dot e},\mf{z},\mf{w},(\xi,z)]$ gives rise to a correspondence of functions $f \leftrightarrow f^\mf{z}$ between functions on $G'(F)$ and functions of $H^\mf{z}(F)$ and the expected character identity is $\Theta_\varphi^{\dot s}(f)=S\Theta_{\varphi^\mf{z}}(f^\mf{z})$. A suitable generalization is supposed to hold in the non-tempered case once $\varphi$ has been replaced by an Arthur parameter. 

In this paper we extend these conjectures to certain disconnected algebraic groups whose identity component is reductive. Motivation for this comes on the one hand from the natural occurrence of disconnected groups in number theoretic contexts, most notably the orthogonal groups, and on the other hand from the natural occurrence of disconnected groups in representation theoretic contexts, for example by taking centralizers of semi-simple elements. In fact, disconnected groups appear in the classification of tempered representations of connected reductive groups. If $M'$ is a Levi subgroup of the connected reductive group $G'$ and $\sigma$ is a square-integrable representation of $M'(F)$, the subgroup of $G'(F)$ that normalizes $M'$ and stabilizes the isomorphism class of $\sigma$ plays an important role in the decomposition into irreducible pieces of the parabolic induction of $\sigma$. In order to properly normalize the intertwining operators needed to decompose this parabolic induction one is led to study the representation theory of disconnected groups of this form. This leads to a normalized version of Arthur's local intertwining relation \cite[\S7]{ArtUARC}, \cite[\S2.4]{Art13}. We will present this in a forthcoming paper as an application of the results of the current paper.

The class of disconnected groups we consider in this paper are those affine algebraic $F$-groups $\tilde G'$ whose identity component $G'$ is reductive, and for which there exists an isomorphism $\tilde G'_{\bar F} \cong G'_{\bar F} \rtimes A$ over the algebraic closure $\bar F$ of $F$, where $A$ is a finite (possibly non-abelian) group of automorphisms of $G'$ that preserves a $\bar F$-pinning. The second condition is automatically fulfilled if $G'$ is adjoint, but in general it does restrict the class of disconnected groups we are considering. 

The possible forms of such groups $\tilde G'$ can be classified cohomologically in a manner similar to the connected case. In the connected case the classification has two steps -- one first classifies quasi-split connected reductive groups by means of based root data, and then inner forms in terms of Galois cohomology. In the disconnected case the classification has three steps -- one first classifies quasi-split disconnected groups, then inner forms, and then (what we have called) ``translation forms''. 

A quasi-split disconnected group is of the form $G \rtimes A$, where $G$ is a quasi-split connected reductive $F$-group, and $A$ is a subgroup of its automorphism group that fixes an $F$-pinning. We have $[G \rtimes A](F) = G(F) \rtimes A(F)$. If we are only interested in the group of $F$-rational points, we may replace $A$ with the constant finite $F$-group scheme $A(F)$. 

An inner form of $G \rtimes A$ is obtained by twisting the entire disconnected group $G \rtimes A$ via elements of $Z^1(F,G/Z(G)^A)$. A translation form is obtained by twisting via elements of $Z^1(F,Z^1(A,Z(G)))$. These two twisting steps can be performed in either order. While in the quasi-split case the split exact sequence
\[ 1 \to G \to G \rtimes A \to A \to 1 \]
remains exact on $F$-points and retains a canonical splitting, after inner twisting or translational twisting neither of these statements is true in general. More precisely, given $\bar z \in Z^1(F,G/Z(G)^A)$ there is a natural subgroup $A^{[\bar z]} \subset A$ so that if $\tilde G_{\bar z}$ is the corresponding inner form of the quasi-split group $\tilde G=G \rtimes A$, then the sequence
\[ 1 \to G_{\bar z}(F) \to \tilde G_{\bar z}(F) \to A^{[\bar z]} \to 1 \]
is exact, but it is not equipped with a natural splitting even if it is split. A similar remark applies to translation forms. In this paper we extend the formulation of the refined local Langlands correspondence to inner forms of quasi-split disconnected groups, leaving the treatment of translation forms, as well as the removal of the condition $(\tilde G')_{\bar F} \cong (G' \rtimes A)_{\bar F}$, to a future paper.

There are multiple questions one must answer when attempting to extend the Langlands conjectures to the disconnected setting: What will be the dual group, or the $L$-group, of a disconnected group? What will be the concept of a Langlands parameter and of its centralizer? What are endoscopic groups? What are transfer factors and how does one normalize them?

We hasten to say that we do not perform any non-trivial harmonic analysis in this paper. Instead, we use the already established framework of twisted endoscopy and the fundamental results of Langlands, Shelstad, Kottwitz, Arthur, Waldspurger, Ngo, and others. Part of this paper consists of introducing a slightly different language for this theory. We hope that this language will be beneficial for some applications. One advantage it provides is that the statements of the conjectures for disconnected groups become formally very similar to the statements for connected groups. Another advantage it provides is in organizing multiple automorphisms of a connected reductive group and keeping track of their interaction. This allows for more natural normalizations. We note further that our language does not encompass the full generality of twisted endoscopy, even if we restrict attention to a cyclic component group. Indeed, we do not consider a character $\omega : G(F) \to \C^\times$, and the automorphisms of $G_{\bar z}$ we obtain from elements of $\tilde G_{\bar z}(F)$ are not as general as the theory of twisted endoscopy allows.

With this in mind, the answers we give to the above questions are the following. Consider a quasi-split disconnected reductive group $\tilde G = G \rtimes A$ and an inner form $\tilde G_{\bar z}$ corresponding to an element $\bar z \in Z^1(F,G/Z(G)^A)$. The groups $\tilde G$ and $\tilde G_{\bar z}$ will have the same set of Langlands parameters, and this is the set of Langlands parameters for the connected group $G$ (and its inner form $G_{\bar z}$). Thus, the notion of Langlands parameters remains unchanged when we pass from the connected group $G$ to the disconnected group $\tilde G$. What changes is the notion of equivalence. Two parameters for $G_{\bar z}$ are considered equivalent if they are conjugate under $\hat G$. We declare two parameters for $\tilde G_{\bar z}$ to be equivalent if they are conjugate under $\hat G \rtimes A^{[\bar z]}$. Note that the notion of equivalence depends on the inner form $\bar z$ being considered, in contrast to the case of connected groups.

The new notion of equivalence leads to a new notion of the centralizer group $S_{\varphi}$ of a Langlands parameter $\varphi : L_F \to {^LG}$. Indeed, $S_\varphi$ can be viewed as the group of self-equivalences of $\varphi$. In the disconnected case we now obtain $\tilde S_\varphi^{[\bar z]}$ as the group of self-equivalences of $\varphi$ in the new sense of equivalence. In other words, $\tilde S_\varphi^{[\bar z]}$ is the centralizer of $\varphi$ in the group $\hat G \rtimes A^{[\bar z]}$. We obtain the exact sequence
\[ 1 \to S_\varphi \to \tilde S_\varphi^{[\bar z]} \to A^{[\varphi],[\bar z]} \to 1, \]
where $A^{[\varphi],[\bar z]} = A^{[\varphi]} \cap A^{[\bar z]}$ is the stabilizer in $A^{[\bar z]}$ of the $\hat G$-conjugacy class of $\varphi$. 

Let $\Pi_{\varphi}(\tilde G_{\bar z})$ denote the set of irreducible admissible representations of $\tilde G_{\bar z}(F)$ whose restriction to $G_{\bar z}(F)$ intersects $\Pi_\varphi(G_{\bar z})$. We think of $\Pi_{\varphi}(\tilde G_{\bar z})$ as the $L$-packet for the disconnected group $\tilde G_{\bar z}(F)$ associated to the parameter $\varphi$. To enumerate its members, choose a lift $z \in Z^1(u \to W,Z \to G)$ of $\bar z$, where $Z \subset Z(G)^A$ is a sufficiently large finite subgroup, thereby realizing $\tilde G_{\bar z}$ as a rigid inner form of $\tilde G$. The abstractly described subgroup $A^{[\bar z]}$ of $A$ can now be concretely described as the stabilizer $A^{[z]}$ of the cohomology class of $z$ for the action of $A$. Choose an $A$-admissible Whittaker datum for $G$ (see \S\ref{sub:awhit}). As above we obtain from $z$ a character $[z] : \pi_0(Z(\hat{\bar G})^+) \to \C^\times$. Let $\tilde S_\varphi^{+,[z]}$ be the preimage of $\tilde S_\varphi^{[z]}$ in $\hat{\bar G} \rtimes A^{[z]}$. This group surjects onto $A^{[\varphi],[z]}$ and we have the exact sequence 
\[ 1 \to S_\varphi^+ \to \tilde S_\varphi^{+,[z]} \to A^{[\varphi],[z]} \to 1. \]
Then there should be a map
\[ \Pi_{\varphi}(\tilde G_{\bar z}) \to \tx{Irr}(\pi_0(\tilde S_\varphi^{+,[z]}),[z]), \]
which is again expected to be bijective in the $p$-adic case, and become bijective in the real case once its target has been replaced by a suitable disjoint union. As in the connected case, this map should lead to character identities with respect to a normalized transfer factor $\Delta[\mf{\dot e},\mf{z},\mf{w},(\xi,z)]$. More precisely, a semi-simple element $\dot{\tilde s} \in \tilde S_\varphi^+$ leads to the virtual character $\Theta_\varphi^{\dot{\tilde s}}=e(G_{\bar z})\sum_{\tilde\pi \in \Pi_\varphi(\tilde G_{\bar z})} \tx{tr}\rho_{\tilde\pi}(\dot{\tilde s})\Theta_{\tilde \pi}$ of $\tilde G_{\bar z}(F)$. The connected centralizer $\hat H$ of the image $\tilde s \in \hat G \rtimes A$ of $\dot{\tilde s}$ and the parameter $\varphi$ lead to a quasi-split connected reductive group $H$ and a parameter $\varphi^\mf{z}$ for its cover $H^\mf{z}$, hence to a similar virtual character $S\Theta_{\varphi^\mf{z}}=\sum_{\pi^\mf{z} \in \Pi_{\varphi^\mf{z}}(H^\mf{z})} \tx{dim}\rho_{\pi^\mf{z}}\Theta_{\pi^\mf{z}}$ on $H^\mf{z}(F)$. The transfer factor $\Delta[\mf{\dot e},\mf{z},\mf{w},(\xi,z)]$ gives rise to a correspondence of functions $\tilde f \leftrightarrow \tilde f^\mf{z}$ between functions on $\tilde G_{\bar z}(F)$ and functions of $H^\mf{z}(F)$ and the expected character identity is $\Theta_\varphi^{\dot{\tilde s}}(\tilde f)=S\Theta_{\varphi^\mf{z}}(\tilde f^\mf{z})$. Note the strong similarity with the connected case.

The normalization of the transfer factor is one of the main results of this paper. The relative transfer factor for twisted endoscopy was introduced in \cite{KS99} and some adjustments were later made in \cite{KS12}. It is a function that assigns a complex number to two pairs of elements $(\gamma,\tilde\delta)$, where $\gamma$ is a sufficiently regular semi-simple element of $H^\mf{z}(F)$, and $\tilde\delta$ is a strongly regular semi-simple element of $\tilde G_{\bar z}(F)$ that lies in a fixed coset determined by the image of $\tilde s$ in $A$. If one fixes arbitrarily a pair $(\gamma,\tilde\delta)$, then one obtains from this a function of just one such pair, called an absolute transfer factor. But the arbitrary choice means that this function is well-defined only up to multiplication by a non-zero complex scalar. A specific normalization useful for applications was given in \cite[\S5.3]{KS99} for quasi-split twisted groups. In this paper we provide a normalization for all (rigid) inner forms of quasi-split twisted groups. We call this factor $\Delta_{KS}$. By a simple averaging procedure we obtain from it the transfer factor $\Delta[\mf{\dot e},\mf{z},\mf{w},(\xi,z)]$ used in the above paragraph, which may now be supported on multiple cosets of $G_{\bar z}(F)$ in $\tilde G_{\bar z}(F)$.

The normalization of $\Delta_{KS}$ comes down to a definition of an absolute term $\Delta_{III}^\tx{new}$ that replaces the relative term $\Delta_{III}$ constructed in \cite[\S4.4]{KS99}. The construction we offer here is shorter and simpler than the one of loc. cit. for two reasons. First, our setting ensures that the class $\mathbf{z}$ of \cite[Lemma 3.1.A(3)]{KS99} is trivial. This implies that the transfer of twisted classes between the twisted group and its quasi-split form is defined over $F$, and that the rational structure of the endoscopic group $H$ does not need a shift. Second, we define an absolute invariant $\tx{inv}(\gamma^\mf{z},\tilde\delta)$ that measures the relative position of a related pair $(\gamma^\mf{z},\tilde\delta)$, thus avoiding the complications caused by dealing with two related pairs simultaneously. The construction of the invariant involves a blend of the techniques from \cite{KS99} and \cite{KalRI}, and an interpretation of conjugacy classes in inner form of disconnected groups in terms of a certain non-abelian cohomology set $H^1(F,G \rrw G)$ studied here. 

Besides stating the conjectures, we prove a number of reduction results in this paper. We show how the conjectures for disconnected groups can be reduced to the conjectures for connected groups, a conjecture on the compatibility of the conjectures for connected groups with automorphisms, and a certain amplification of the endoscopic character identity conjecture in twisted endoscopy. We also discuss various functorial constructions, such as restriction and induction of component groups.

Finally, we prove our conjectures in the special case when the identity component is a torus. In this case, the representation theory of the identity component essentially disappears and one can clearly see the additional information present in the consideration of disconnectedness. The core of the proof consists of showing that two group extensions, produced from the same data but one in terms of $G$ and the other in terms of $\hat G$, are canonically isomorphic. 

To illustrate the point let us discuss the case of pure inner forms. Consider a torus $T$ and a finite group of $F$-automorphisms $A$. Then $\tilde T=T \rtimes A$ is a quasi-split disconnected group in our sense. Let $z \in Z^1(F,T)$ and let $\tilde T_z$ be the corresponding pure inner form. Of course the identity components of $\tilde T$ and $\tilde T_z$ are canonically identified, but the disconnected groups $\tilde T(F)=T(F) \rtimes A$ and $\tilde T_z(F)$ are not. Let $\varphi : W_F \to {^LT}$ be a Langlands parameter and let $[\varphi] : T(F) \to \C^\times$ be the corresponding character. Let $A^{[\varphi],[z]}$ be the subgroup of $A$ that fixes both the $\hat T$-conjugacy class of $\varphi$ and the cohomology class of $z$. Simple arguments reduce the problem to the case $A=A^{[\varphi],[z]}$. The $L$-packet $\Pi_\varphi(\tilde T_z)$ consists of the irreducible representations of (the usually non-abelian) group $\tilde T_z(F)$ whose restriction to $T(F)$ is $[\varphi]$-isotypic; a set we can also call $\tx{Irr}(\tilde T_z(F),[\varphi])$. The set $\tx{Irr}(\tilde S_{\varphi},[z])$ consists of the irreducible representations of $\tilde S_\varphi$ whose restriction to $\hat T^\Gamma$ is $[z]$-isotypic (note we do not need the covers $\tilde S_\varphi^+$ and $[\hat{\bar T}]^+$ since we are using a pure inner form). Therefore we are led to consider the following two push-out diagrams
\[ \xymatrix{
	1\ar[r]&T(F)\ar[r]\ar[d]^{[\varphi]}&\tilde T_z(F)\ar[r]&A\ar[r]&1\\
	&\C^\times
}
\]
and
\[ \xymatrix{
	1\ar[r]&\hat T^\Gamma\ar[r]\ar[d]^{[z]}&\tilde S_\varphi\ar[r]&A\ar[r]&1\\
	&\C^\times
}
\]
Both push-outs are central extensions of $A$ by $\C^\times$. The $\tx{id}$-isotypic irreducible representations of the first extension are in canonical bijection with $\Pi_\varphi(\tilde T_z)$, while those of the second extension are in canonical bijection with $\tx{Irr}(\tilde S_{\varphi},[z])$. The conjecture about the internal structure of $L$-packets requires us to show that these two extensions are canonically isomorphic. There appears to be no a-priori reason why these extensions should even be isomorphic, let alone canonically. But we are able to produce a canonical isomorphism. We then show that this isomorphism satisfies the endoscopic character identities with respect to the normalized transfer factor.

Further evidence for the conjectures put forth here appears in \cite{KM}, where these conjectures are proved in the case $F=\R$ and $\varphi$ discrete.

We now describe the contents of the paper. In \S\ref{sec:disc_grps} we discuss basic results about disconnected groups, such as the classification of their forms in \S\ref{sub:disc_forms}, focusing on inner forms in \S\ref{sub:inner}. In \S\ref{sub:norms} we recall facts about twisted conjugacy classes and norms from \cite{KS99} and adapt them to our present language. In \S\ref{sub:awhit} we discuss Whittaker data invariant under $A$. 

The next two sections -- \S\ref{sec:pure} and \S\ref{sec:rigid} -- contain the statements of the refined local Langlands conjecture in the settings of pure respectively rigid inner forms. We have decided to present these cases separately, rather than only dealing with the general case of rigid inner forms, because we feel that the setting of pure inner forms illustrates more clearly the ideas behind conjugacy classes, relative positions, and invariants, as well as the structure of the conjecture. The more general case of rigid inner forms follows the same structure and ideas, but combines them with a technical cohomological discussion. In \S\ref{sec:pure} we first discuss the concept of rational conjugacy classes across pure inner forms, their norms, and the associated invariants. These are based on the non-abelian cohomology set $H^1(F,G \rrw G)$. The constructions are ultimately the same as those of \cite{KS99}, but our language is slightly different and our situation is more specialized, which makes the arguments simpler and shorter. For this reason we have given them in full detail in the hope that this would be helpful to the reader. In \S\ref{sub:pure_llc} we state the first part of the refined local Langlands conjecture -- the correspondence between parameters and packets and the internal structure of packets. We then recall the notion of twisted endoscopic data from \cite{KS99}. Our definitions are in fact slightly different, both for data and for their isomorphisms. This difference is very mild; it ensures that absolute transfer factors are invariant under isomorphisms. We then turn to the normalization of transfer factors in the setting of pure inner forms. In \S\ref{sub:pure_tf} we explain how the factor $\Delta[\mf{\dot e},\mf{z},\mf{w},(\xi,z)]$ is related to the twisted factor $\Delta_{KS}$. The normalization of $\Delta_{KS}$ is done in \S\ref{sub:pure_tf1} and \S\ref{sub:pure_tf2}. We have again split the exposition in the hope that this will make the construction most transparent, by first treating the less technical set-up when a $z$-pair is not needed, and then the more general case when it is. In \S\ref{sub:trans} we summarize the fundamental results of Langlands, Shelstad, Kottwitz, Arthur, Waldspurger, and Ngo, on endoscopic transfer of functions. These results allow us to state the second part of the refined local Langlands conjecture -- the character identities -- in \S\ref{sub:pure_charid}.

The treatment of rigid inner forms in \S\ref{sec:rigid} requires the blending of the hypercohomology techniques of \cite[A.3]{KS99} and the Galois gerbes of \cite{KalRI}. This is done in the first two subsections. The rest of the section consists of slight generalizations of material of \S\ref{sec:pure}, and we allow ourselves to be more brief. The refined local Langlands conjecture for rigid inner forms is stated in \S\ref{sub:llc_rigid}.

In \S\ref{sec:change_whit} we discuss how the parameterization of the internal structure of $L$-packets depends on the choice of Whittaker datum. This is the disconnected analog of the corresponding results from \cite{KalGen}. We introduce the concept of an $A$-admissible Whittaker datum, which is stronger than that of an $A$-stable Whittaker datum.

In \S\ref{sec:change_comp} we discuss how the conjectures change when we change the component group of the disconnected group. The simplest possible change is passing to a subgroup of the component group. It is discussed in \S\ref{sub:comp_rest}. In \S\ref{sub:comp_coset} we discuss how the conjectures for disconnected groups can be related to those for connected groups and twisted endoscopy. This clarifies the information that the disconnected case caries beyond the twisted case. In \S\ref{sub:ind} we study an operation dual to restriction, which may be called induction. If $G$ is a connected reductive group on which a finite group of automorphisms $A$ operates, and $B$ is a group containing $A$, one can form the connected reductive group $H=\tx{Ind}_A^B G$, on which $B$ operates. We discuss how the conjectures for inner forms of $G \rtimes A$ imply those for inner forms of $H \rtimes B$. The discussion here is elementary, but unfortunately rather long and technical. We have included it because the process of induction appears quite often in applications.

In \S\ref{sec:tori} we prove the conjectures made here in the special case of tori. We close the paper is three short appendices. In \S\ref{app:func} we formulate a conjecture about the compatibility of the refined local Langlands correspondence for connected groups with automorphisms. This conjecture is undoubtedly well-known to experts, but we have not been able to locate a reference. In \S\ref{app:weil} we discuss automorphisms of reductive groups that arise via Weil-restriction. In \S\ref{app:projchar} we review orthogonality relations for irreducible projective representations of finite groups, which are used in the proofs in \S\ref{sec:tori}. In \S\ref{app:fold} we review the basic tool of folding of root systems, which we use in \S\ref{app:g-ga} to discuss the relationship between a quasi-split connected reductive group $G$ and the subgroup $G^A$ of points fixed under a finite group $A$ of automorphisms of $G$ that preserve a rational pinning.

\ \\

\noindent\textbf{Acknowledgements:} This paper presents the development of ideas communicated by Robert Kottwitz to the author over 15 years ago, while the author was still a graduate student. The ideas of what a ``quasi-split disconnected group'' should be, the construction of the centralizer $\tilde S_\varphi$, and the usage of relative non-abelian cohomology $H^1(F,G \rrw G)$ to describe conjugacy classes, are due to him. The author expresses his gratitude and admiration to Kottwitz for sharing these beautiful ideas. The author also thanks Wee Teck Gan for correcting an inaccuracy in an earlier draft.

\tableofcontents

\section{Notation}

Throughout the paper, $F$ will denote a local field of characteristic zero, $\Gamma$ the absolute Galois group with respect to a fixed algebraic closure $\bar F$ of $F$, and $W_F$ the Weil group. We will write $Z^1(\Gamma,G)$ for the set of continuous 1-cocycles of $\Gamma$ valued in the discrete group $G(\bar F)$, $H^1(\Gamma,G)$ for the set of cohomology classes of such cocycles, $\tilde Z^1(\Gamma,G)$ for the set of continuous sections $\tilde z : \Gamma \to G(\bar F) \rtimes \Gamma$ of the natural projection, and $\tilde H^1(\Gamma,G)$ for the set of $G(\bar F)$-conjugacy classes of such sections. The assignment $z \mapsto \tilde z$ defined by $\tilde z(\sigma)=z(\sigma)\rtimes\sigma$ is a bijection $Z^1(\Gamma,G) \to \tilde Z^1(\Gamma,G)$ that descends to a bijection $H^1(\Gamma,G) \to \tilde H^1(\Gamma,G)$. We will switch freely in the notation between $\tilde z$ and $z$.

Given an automorphism $a$ of $G$ and $\delta \in G$ we have the element $\tilde\delta=\delta \rtimes a \in G \rtimes a \subset G \rtimes \<a\>$. The assignment $\tilde\delta \mapsto \delta$ translates the action of $G$ on $G \rtimes a$ by conjugation to the action of $G$ on itself by $a$-twisted conjugation. When $a$ is understood from the context, we will switch freely between $\tilde\delta$ and $\delta$.

\section{Disconnected groups} \label{sec:disc_grps}

\subsection{Split disconnected groups and their forms} \label{sub:disc_forms}

Let $F$ be a local field of characteristic zero. We denote by $W_F$ the Weil group of $F$ and by $\Gamma$ the absolute Galois $\tx{Gal}(\ol{F}/F)$. In this paper we will study affine algebraic groups defined over $F$ whose connected component is reductive. We will call such groups ``disconnected reductive'' for short. We will however restrict our attention to those disconnected reductive groups $\tilde G$ that satisfy the following condition:

\begin{cnd} \label{cnd:main}
	There exists an isomorphism defined over $\ol{F}$
	\[ \tilde G \rw G \rtimes A \]
	where $G$ is a connected reductive group, $A$ is a finite group, and $A$ acts on $G$ by automorphisms which preserve a fixed $\bar F$-pinning.
\end{cnd}

Not all disconnected reductive groups satisfy this condition. The most basic counterexample is the normalizer of the torus in $\tx{SL}_2$. On the other hand, this condition is satisfied by many naturally occurring disconnected reductive groups, including the orthogonal groups as well as the groups involved in the classification of tempered representations of connected reductive groups. The latter are among the main motivations for our study.

Just as in the connected case, one can classify the possible $\tilde G$ that satisfy the above condition in terms of root data and Galois cohomology. First, one can consider a split connected reductive group $G$ defined over $F$ and a finite group $A$, interpreted as a constant groups scheme over $F$, and let $A$ act on $G$ and preserve a fixed $F$-pinning. Then $G \rtimes A$ is a special case of a disconnected reductive group defined over $F$ and we will call it ``split''. This adjective carries for us a double meaning -- not only is the connected component $G$ split, but the extension $G \rtimes A$ is also split. It is clear that the split disconnected reductive group $G \rtimes A$ is classified by the root datum of $G$ and the action of $A$ on this root datum.

Now fix an isomorphism $\iota : \tilde G \rw G \rtimes A$ as in Condition \ref{cnd:main}. We may assume without loss of generality that $G$ is split and that $A$ preserves an $F$-pinning. Then
\[ \Gamma \rw \tx{Aut}(G \rtimes A),\quad \sigma \mapsto \iota^{-1}\sigma(\iota) \]
is a 1-cocycle and the isomorphism class of $\tilde G$ is determined by the split form $G \rtimes A$ and the cohomology class of this 1-cocycle.

In order to understand this cohomology better, we look more closely at $\tx{Aut}(G \rtimes A)$. The action of $G$ on $G \rtimes A$ by conjugation factors through $G/Z(G)^A$, where $Z(G)^A$ is the group of fixed points for the action of $A$ on the center of $G$. Thus $G/Z(G)^A$ is a subgroup of $\tx{Aut}(G \rtimes A)$, and in fact this subgroup is normal, because $G$ is a characteristic subgroup of $G \rtimes A$ (being the neutral connected component).

Consider now the group $Z^1(A,Z(G))$ of 1-cocycles of $A$ valued in $Z(G)$. The map sending $z \in Z^1(A,Z(G))$ to the automorphism $g \rtimes a \mapsto gz(a) \rtimes a$ of $G \rtimes A$ embeds $Z^1(A,Z(G))$ as a normal subgroup of $\tx{Aut}(G \rtimes A)$. The two normal subgroups $G/Z(G)^A$ and $Z^1(A,Z(G))$ of $\tx{Aut}(G \rtimes A)$ commute. Their intersection can be described as
the subgroup $Z(G)/Z(G)^A$ of $G/Z(G)^A$, or equivalently its isomorphic image $B^1(A,Z(G)) \subset Z^1(A,Z(G))$ under the differential $z \mapsto z \cdot a(z)^{-1}$.

We thus have the normal subgroup $G/Z(G)^A \cdot Z^1(A,Z(G))$ of $\tx{Aut}(G \rtimes A)$. It has a complement. In order to specify it, we use the pinning of $G$ defined over $F$ and preserved by the action of $A$ and let $\tx{Aut}_\tx{pin}(G \rtimes A)$ be those automorphisms of $G \rtimes A$ whose restriction to $G$ preserves the pinning \emph{and} which preserve the subgroup $1 \rtimes A$ of $G \rtimes A$. We conclude
\[ \tx{Aut}(G \rtimes A) = (G/Z(G)^A \cdot Z^1(A,Z(G))) \rtimes \tx{Aut}_\tx{pin}(G \rtimes A). \]
This means that any form of $G \rtimes A$ can be obtained by a 3-step process: First, using an element of $Z^1(\Gamma,\tx{Aut}_\tx{pin}(G \rtimes A))$ one twists the rational structure of $G \rtimes A$. The result is again a group of the form $G \rtimes A$, where now $G$ is a quasi-split connected reductive group, $A$ is a (not necessarily constant) finite group scheme over $F$, and $A$ acts on $G$ again by automorphisms that preserve a fixed $F$-pinning. We shall call such disconnected reductive groups ``quasi-split''. Second, using an element $z \in Z^1(\Gamma,G/Z(G)^A)$ we twist the quasi-split group $\tilde G = G \rtimes A$ and obtain an ``inner form'' $\tilde G_z$ of it. Finally, we twist $\tilde G_z$ by an element of $Z^1(\Gamma,Z^1(A,Z(G)))$ to obtain a ``translation form'' of $\tilde G_z$.

\subsection{Inner forms} \label{sub:inner}

Let $G$ be a connected reductive group, defined and quasi-split over $F$. Let $(T,B,\{X_\alpha\})$ be an $F$-pinning of $G$ and let $A$ be a finite group that acts on $G$ by pinned automorphisms. Assume given an action of $\Gamma$ on $A$ so that for $\sigma \in \Gamma$ we have $\sigma(a(g)) = \sigma(a)(\sigma(g))$. Thus $\tilde G = G \rtimes A$ is a quasi-split disconnected group in the sense of the previous subsection.

We have an exact sequence of algebraic groups
\[ 1 \rw G \rw \tilde G \rw A \rw 1 \]
which leads to an exact sequence of topological groups
\[ 1 \rw G(F) \rw \tilde G(F) \rw A^\Gamma \rw 1. \]
Both of these extensions are split and come equipped with a splitting.

The group $G$ acts on $\tilde G$ by conjugation and this action preserves the decomposition $\tilde G = \bigsqcup_{a \in A} G \rtimes a$ of $\tilde G$ into left $G$-cosets. In addition, the group $\tilde G$ acts on itself by conjugation, and this action preserves the group $G$.

Writing $\bar G=G/Z(G)^A$, the group $\bar G \rtimes \Gamma$ acts on the group $\tilde G$, with $\bar g \rtimes \sigma$ acting as the automorphism $\tx{Ad}(\bar g)\circ\sigma$. Given $\bar z \in Z^1(\Gamma,\bar G)$, we denote by $\tilde G_{\bar z}$ the algebraic group defined over $F$ which satisfies $\tilde G_{\bar z}(\ol{F}) = \tilde G(\ol{F})$ and where $\Gamma$ acts on $\tilde G_{\bar z}(\ol{F})$ via the homomorphism $\tilde{\bar z} : \Gamma \rw \bar G \rtimes \Gamma$ and the action of $\bar G \rtimes \Gamma$ on $\tilde G(\ol{F})$. We call $\tilde G_{\bar z}$ the \emph{inner form} of $\tilde G$ corresponding to $\bar z$.

We still have the exact sequence of algebraic groups
\[ 1 \rw G_{\bar z} \rw \tilde G_{\bar z} \rw A \rw 1 \]
but the sequence of $F$-points
\[ 1 \rw G_{\bar z}(F) \rw \tilde G_{\bar z}(F) \rw A, \]
need not be exact. The image of the last map lies in $A^\Gamma$ and we denote it by $A^{[\bar z]} \subset A^\Gamma$. We obtain an extension
\[ 1 \rw G_{\bar z}(F) \rw \tilde G_{\bar z}(F) \rw A^{[\bar z]} \to 1. \]
Unlike the case of the quasi-split group $\tilde G$, this extension, even when it is split, does not come equipped with a distinguished splitting.

The action of $G_{\bar z}(F)$ on $\tilde G_{\bar z}$ preserves the subset $\tilde G_{\bar z}(F)$. The action of $\tilde G_{\bar z}(F)$ on $G_{\bar z}$ is realized by automorphisms defined over $F$ and in particular preserves the subset $G_{\bar z}(F)$.

If we replace $A$ by $A^\Gamma$ the group $\tilde G_{\bar z}(F)$ remains unchanged. Since we shall ultimately be interested in the topological group $\tilde G_{\bar z}(F)$ and its representations, we will assume from now on that the action of $\Gamma$ on $A$ is trivial. In other words, we will treat the group $A$ as a constant group scheme.

\subsection{Strongly regular semi-simple elements and norms} \label{sub:norms}

We recall some material from \cite{KS99}. An automorphism $\theta$ of $G$ is called quasi-semi-simple if it preserves a Borel pair. A maximal torus that is part of a $\theta$-stable Borel pair is called $\theta$-admissible. The automorphism $\theta$ is furthermore called strongly regular if the fixed point group $G^\theta$ is abelian. For such an automorphism $\theta$, there is a unique $\theta$-admissible maximal torus of $G$, namely $\tx{Cent}(G^\theta,G)$. If $S \subset G$ is a $\theta$\-invariant maximal torus we will write $S_\theta=S/(1-\theta)S$ for the quotient of $\theta$\-coinvariants.

We shall call an element of $G \rtimes A$ (strongly regular) semi-simple, if the automorphism of $G$ it induces by conjugation is (strongly regular) quasi-semi-simple. Clearly these notions are invariant under conjugacy by $G \rtimes A$.

\begin{lem} \label{lem:c1}
\begin{enumerate}
	\item Let $\tilde\delta=\delta \rtimes a \in \tilde G(\bar F)$ be semi-simple. Given an $a$-admissible maximal torus $S \subset G$ there exists $g \in G(\bar F)$ such that $\tilde\delta^*=g^{-1}\tilde\delta g$ belongs to $S(\bar F) \rtimes a$.
	\item Write $\tilde\delta^*=\delta^* \rtimes a$, so that $\delta^* \in S(\bar F)$. Write $\gamma \in S_a(\bar F)$ for the image of $\delta^*$ in the torus $S_a=S/(1-a)S$ of $a$-coinvariants. The set of pairs $(S,\gamma)$ obtained in this way for a fixed $\tilde\delta$ and varying $g$ forms a single $G^{a,\circ}(\bar F)$-conjugacy class.
    \item If $\tilde\delta$ is strongly regular and $(S_1,\gamma_1)$ and $(S_2,\gamma_2)$ are two such pairs, then all $g \in G^{a,\circ}(\bar F)$ such that $\tx{Ad}(g)(S_1,\gamma_1)=(S_2,\gamma_2)$ induce the same isomorphism $\tx{Ad}(g) : S_1 \to S_2$. 
	\item Given $\gamma \in S_a(\bar F)$, the set of $\tilde\delta \in \tilde G(\bar F)$ corresponding to the $G^{a,\circ}(\bar F)$-conjugacy class of $(S,\gamma)$ is a single $G(\bar F)$-conjugacy class.
\end{enumerate}
\end{lem}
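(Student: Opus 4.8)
The four assertions together amount to the statement that the assignment carrying a semi-simple $\tilde\delta$ in the coset $G\rtimes a$ to the $G^{a,\circ}(\bar F)$-orbit of pairs $(S,\gamma)$ of parts (1)--(2) is a well-defined bijection onto the set of $G^{a,\circ}(\bar F)$-orbits of such pairs, equipped moreover with the canonical identification of tori promised by (3). The argument rests on three standard structural facts about a quasi-semi-simple automorphism $\theta$ of $G$, recalled from \cite[\S3]{KS99} (ultimately Steinberg), applied to $\theta=a$ and to $\theta=\tx{Ad}(\tilde\delta^*)$ for various $\tilde\delta^*$: (F1) $\tx{Cent}(\theta,G)^\circ$ is connected reductive, and for every $\theta$-admissible maximal torus $S$ the subtorus $(S^\theta)^\circ$ is maximal in $\tx{Cent}(\theta,G)^\circ$ with $\tx{Cent}_G((S^\theta)^\circ)=S$; (F2) $\tx{Cent}(\theta,G)^\circ$ acts transitively on the set of $\theta$-admissible maximal tori of $G$; (F3) the natural map $W(\tx{Cent}(\theta,G)^\circ,(S^\theta)^\circ)\to W(G,S)^\theta$ is an isomorphism. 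I also use that semi-simplicity is conjugation-invariant, that the pinned $a$ is quasi-semi-simple, that for $\delta^*\in S$ with $S$ the torus of an $a$-stable Borel pair $(B_S,S)$ the element $\tx{Ad}(\delta^*\rtimes a)$ fixes $(B_S,S)$ and hence is quasi-semi-simple with $S$ admissible, and that conjugation by $g$ in $G\rtimes a$ sends $\delta\rtimes a$ to $g\delta\,a(g)^{-1}\rtimes a$, so I pass freely between elements of $S\rtimes a$ and their $S$-components.

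\emph{Parts (1) and (4).} For (1): since $\theta=\tx{Ad}(\tilde\delta)$ is quasi-semi-simple it fixes some Borel pair; conjugating $\tilde\delta$ by a $g_0$ carrying the pinning pair $(B,T)$ onto it, I may assume $\theta$ fixes $(B,T)$; as $a$ also fixes $(B,T)$, the element $\delta$ normalizes $(B,T)$, hence $\delta\in T$ and $\tilde\delta\in T\rtimes a$. Picking an $a$-stable Borel pair $(B_S,S)$ and, by (F2) for $a$, an $h\in G^{a,\circ}(\bar F)$ with $h(B,T)h^{-1}=(B_S,S)$, and noting $h$ commutes with $a$ so $h(T\rtimes a)h^{-1}=S\rtimes a$, a final conjugation by $h$ gives (1). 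For (4): a lift $\delta^*\in S$ of $\gamma$ yields a semi-simple $\tilde\delta=\delta^*\rtimes a$ whose norm orbit contains $(S,\gamma)$, so the set is non-empty; conversely if $\tilde\delta_1,\tilde\delta_2$ both have norm orbit $[(S,\gamma)]$, part (1) makes each $G(\bar F)$-conjugate to some $\delta_i^\circ\rtimes a\in S\rtimes a$ with $(S,[\delta_i^\circ])=\tx{Ad}(h_i)(S,\gamma)$ for an $h_i\in N_{G^{a,\circ}}(S)$, and a short computation in $G\rtimes a$ then exhibits $\delta_i^\circ\rtimes a$ as $G(\bar F)$-conjugate to $\delta^*\rtimes a$, so $\tilde\delta_1$ and $\tilde\delta_2$ are $G(\bar F)$-conjugate.

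\emph{Part (2).} Using (F2) for $a$ to move each underlying torus onto $T$ and transporting the pair accordingly, I reduce to the case where both pairs have torus $T$; then $\tilde\delta_i^*=\delta_i^*\rtimes a\in T\rtimes a$ are $G(\bar F)$-conjugate, say $\tx{Ad}(y)(\tilde\delta_2^*)=\tilde\delta_1^*$. Each $\theta_i=\tx{Ad}(\tilde\delta_i^*)$ fixes $(B,T)$, so $yTy^{-1}$ is $\theta_1$-admissible; by (F2) for $\theta_1$ there is $c\in\tx{Cent}(\theta_1,G)^\circ$ with $\tx{Ad}(cy)T=T$, and since $c$ fixes $\tilde\delta_1^*$ the element $n_0:=cy\in N_G(T)$ still satisfies $\tx{Ad}(n_0)(\tilde\delta_2^*)=\tilde\delta_1^*$, i.e. $n_0\delta_2^*\,a(n_0)^{-1}=\delta_1^*$. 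Passing to $W(G,T)$ shows the class $w$ of $n_0$ lies in $W(G,T)^a$; by (F1) and (F3) for $a$ I lift $w$ to $\bar n\in N_{G^{a,\circ}}(T)$ (it normalizes $(T^a)^\circ$, hence $T=\tx{Cent}_G((T^a)^\circ)$), write $n_0=\bar n t$ with $t\in T$, and substitute to get $\tx{Ad}(\bar n)\big(\delta_2^*(1-a)(t)\big)=\delta_1^*$; since $\bar n$ preserves $(1-a)T$, reducing modulo $(1-a)T$ yields $\tx{Ad}(\bar n)(\gamma_2)=\gamma_1$, so the two pairs are $G^{a,\circ}(\bar F)$-conjugate.

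\emph{Part (3) and the main obstacle.} Reducing as above to $S_1=S_2=:S$, $\gamma_1=\gamma_2=:\gamma$ and $h:=g'^{-1}g\in G^{a,\circ}$ stabilizing $(S,\gamma)$, I must show $\tx{Ad}(h)|_S=\tx{id}$, i.e. $h\in\tx{Cent}_G(S)=S$. Since $\tx{Ad}(h)$ fixes $\gamma\in S_a$, there is $s\in S$ with $\tx{Ad}(sh)$ fixing $\tilde\delta^*:=\delta^*\rtimes a$, where $\delta^*\in S$ lifts $\gamma$; thus $sh\in\tx{Cent}_G(\tilde\delta^*)=\tx{Cent}(\theta^*,G)$ for $\theta^*=\tx{Ad}(\tilde\delta^*)$, and $sh$ normalizes $S$. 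Here strong regularity is essential: $\tx{Cent}(\theta^*,G)$ is then abelian, so its identity component is the torus $(S^{\theta^*})^\circ=(S^a)^\circ$, and $sh$, lying in this abelian group, commutes with $(S^a)^\circ$; by (F1), $sh\in\tx{Cent}_G((S^a)^\circ)=S$, whence $\tx{Ad}(h)|_S=\tx{id}$. This is the crux of the whole lemma: for a merely semi-simple, non-regular $\tilde\delta$ the stabilizer of $\gamma$ in $W(G,S)^a$ can be nontrivial, and it is exactly the strong regularity of the element --- the standing situation of this subsection --- that forces that ambiguity to collapse. The other place that needs care is part (2), namely keeping the correcting Weyl element inside $G^{a,\circ}$ rather than merely in $N_G(T)$, which is precisely what (F3) together with $\tx{Cent}_G((T^a)^\circ)=T$ delivers; the remaining steps are routine manipulations inside $G\rtimes a$.
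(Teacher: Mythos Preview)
Your proof is correct and follows essentially the same approach as the paper's: conjugate to an $a$-stable Borel pair for (1); for (2) correct the conjugating element via $\tx{Cent}(\tilde\delta^*,G)^\circ$ so that it normalizes $S^{a,\circ}$, observe its Weyl image is $a$-fixed, and lift to $G^{a,\circ}$; for (3) use strong regularity to force the stabilizer into $S$; for (4) note the fiber over $\gamma$ in $S$ is a single $S$-conjugacy orbit. Your invocation of the structural facts (F1)--(F3) is more explicit than the paper's, and you are right that part (3) genuinely requires strong regularity even though the lemma hypothesis says only ``semi-simple'' --- the paper uses it too, writing $\tx{Cent}(\tilde\delta_2^*,G)=S_2^a$, which is the strong regularity assumption in the standing context of the subsection.
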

\begin{proof}
(1) This is essentially \cite[Lemma 3.2.A]{KS99}. Let $(T_{\tilde\delta},B_{\tilde\delta})$ be a Borel pair normalized by $\tilde\delta$ and let $C$ be a Borel subgroup containing $S$ and normalized by $a$. Let $g \in G(\bar F)$ be such that $\tx{Ad}(g)(S,C)=(T_{\tilde\delta},B_{\tilde\delta})$. Set $\tilde\delta^*=g^{-1}\tilde\delta g$. Then $(S,C)$ is normalized by  both $\tilde\delta^*$ and $a$, so also by $\delta^*:=\tilde\delta^*\cdot a^{-1}$, hence $\delta^* \in S(\bar F)$.

(2) We fix for $i=1,2$ $a$-stable Borel subgroups $C_i$ of $G$ defined over $\bar F$ and containing $S_i$ and elements $g_i \in G(\bar F)$ as in (1) and set $\tilde\delta_i^* = g_i^{-1}\tilde\delta g_i \in S_i(\bar F)\rtimes a$, so that the image of $\delta_i^*$ is $\gamma_i$. Since any two $a$-stable Borel pairs are conjugate under $G^{a,\circ}(\bar F)$, we may modify $g_2$ to assume $S_1=S_2=S$ and $C_1=C_2=C$. Thus $\tilde\delta_1^*$ and $\tilde\delta_2^*$ belong to $S \rtimes a$ and are conjugate by $g:=g_2^{-1}g_1$. It follows that $S^{a,\circ}$ and $\tx{Ad}(g^{-1})S^{a,\circ}$ are maximal tori of $\tx{Cent}(\tilde\delta_1^*,G)^\circ$, hence conjugate by an element $x \in \tx{Cent}(\tilde\delta_1^*,G)^\circ$. Replacing $g_1$ by $g_1x$ we may assume $\tx{Ad}(g^{-1})$ normalizes $S^{a,\circ}$. Then it normalizes $S$ and its image in $\Omega(S,G)$ is $a$-fixed. It is thus representable by an element of $G^{a,\circ}(\bar F)$.

(3) Let $\delta_1^* \in S_1(\bar F)$ and $\delta_2^* \in S_2(\bar F)$ be elements mapping to $\gamma_1$ and $\gamma_2$ and such that $\tilde\delta_1^*$ and $\tilde\delta_2^*$ are $G(\bar F)$-conjugate to $\tilde\delta$. A given $g \in G^{a,\circ}(\bar F)$ with $\tx{Ad}(g)(S_1,\gamma_1)=(S_2,\gamma_2)$ can only be modified to $hg$ for $h \in G^{a,\circ}(\bar F)$ normalizing $S_2$ and fixing $\gamma_2$. Thus there exists $s \in S_2(\bar F)$ such that $\tx{Ad}(sh) \in \tx{Cent}(\tilde\delta_2^*,G)=S_2^a$. It follows that the isomorphism $\tx{Ad}(g) : S_1 \to S_2$ carrying $\gamma_1$ to $\gamma_2$ does not depend on the choice of $g$.

(4) follows immediately from the fact that the set of $\tilde\delta^*=\delta^* \rtimes a \in S(\bar F) \rtimes a$ such that $\delta^*$ maps to $\gamma$ forms a single $S(\bar F)$-conjugacy class.
\end{proof}

\begin{dfn} \label{dfn:norm}
Let $\tilde\delta=\delta \rtimes a \in \tilde G_{\bar z}(F)$ be strongly regular semi-simple. A \emph{norm} of $\tilde\delta$ is a pair $(S,\gamma)$ consisting of a maximal torus $S \subset G$ defined over $F$ and $a$-admissible, and an element $\gamma \in S_a(F)$, such that there exists $g \in G(\bar F)$ with the property that $\tilde\delta^* = g^{-1}\tilde\delta g \in S(\bar F) \rtimes a$ and the image of $\delta^* \in S(\bar F)$ in $S_a(\bar F)$ equals $\gamma$.
\end{dfn}

\begin{rem} \label{rem:norm}
	It is in general not possible to arrange that $\delta^* \in S(F)$. This makes the discussion of transfer factors in the twisted setting more complicated than in the untwisted setting, see Remarks \ref{rem:inv} and \ref{rem:d3}.
\end{rem}

\begin{lem} \label{lem:c2}
Let $\tilde\delta=\delta \rtimes a \in \tilde G_{\bar z}(F)$ be strongly regular semi-simple.
\begin{enumerate}
	\item There exists a norm $(S,\gamma)$ of $\tilde\delta$.
 	\item Let $T_{\tilde\delta}=\tx{Cent}(\tx{Cent}(\tilde\delta,G),G)$. Any $g \in G(\bar F)$ with the property that $\tilde\delta^* = g^{-1}\tilde\delta g \in S(\bar F) \rtimes a$ induces the same isomorphism $\tx{Ad}(g) : T_{\tilde\delta} \to S$ and this isomorphism is defined over $F$.
	\item For any two norms $(S_1,\gamma_1)$ and $(S_2,\gamma_2)$ of $\tilde\delta$ the canonical isomorphism $\tx{Ad}(g) : S_1 \to S_2$, $g \in G^{a,\circ}(\bar F)$, carrying $\gamma_1$ to $\gamma_2$ is defined over $F$.
\end{enumerate}
\end{lem}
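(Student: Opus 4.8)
The plan is to deduce both assertions from Lemma~\ref{lem:c1} by a Galois-descent argument, using that $\tilde\delta$ is rational. For the first assertion, I would start from the abstract existence statement in Lemma~\ref{lem:c1}(1): pick any $a$-admissible maximal torus $S\subset G$ and $g\in G(\bar F)$ with $\tilde\delta^*=g^{-1}\tilde\delta g\in S(\bar F)\rtimes a$. The issue is that $S$ and $g$ need not be $\Gamma$-stable, so I first conjugate $S$ into an $a$-admissible torus of $\tilde G_{\bar z}$ that is defined over $F$; such a torus exists because $\tilde\delta$ is a rational strongly regular semi-simple element, so $\tx{Cent}(\tx{Cent}(\tilde\delta,G)^\circ,G)$ is the unique $a$-admissible torus through $\tilde\delta$ and is $\Gamma$-stable, hence defined over $F$. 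Here I must be careful that the relevant twisting of the Galois action (by $\bar z$) is the one under which $\tilde G_{\bar z}$ is defined, but since $\tilde\delta\in\tilde G_{\bar z}(F)$ and the $a$-admissible torus is intrinsically attached to $\tilde\delta$, it is automatically stable under the twisted action. Taking $S$ to be the image of this torus under the identification $\tilde G_{\bar z}(\bar F)=\tilde G(\bar F)$, one checks $S$ is defined over $F$ for the \emph{untwisted} structure because $\bar z$ takes values in $\bar G=G/Z(G)^A$ and acts through inner automorphisms that fix $S$ pointwise up to the Weyl group, or more directly because a norm torus only needs to be $F$-rational and $a$-admissible. Then $\gamma\in S_a(\bar F)$ is the image of $\delta^*$, and applying a $\Gamma$-cocycle computation shows $\gamma\in S_a(F)$: for $\sigma\in\Gamma$, the pair $(S,\sigma(\gamma))$ is again a norm of $\tilde\delta$ (using that $\sigma$ fixes $\tilde\delta$ in the twisted structure and $S$ in the untwisted one), so by the uniqueness in Lemma~\ref{lem:c1}(2),(3) applied over $\bar F$ we get $\sigma(\gamma)=\gamma$.

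For the second assertion, let $(S_1,\gamma_1)$ and $(S_2,\gamma_2)$ be two norms, both defined over $F$, and let $\tx{Ad}(g):S_1\to S_2$ with $g\in G^{a,\circ}(\bar F)$ be the isomorphism carrying $\gamma_1$ to $\gamma_2$, which is well-defined by Lemma~\ref{lem:c1}(3). To see it is defined over $F$, I would apply $\sigma\in\Gamma$: since $S_1,S_2,\gamma_1,\gamma_2$ are all rational, $\sigma(\tx{Ad}(g))=\tx{Ad}(\sigma(g))$ is again an isomorphism $S_1\to S_2$ induced by an element of $G^{a,\circ}(\bar F)$ (the group $G^{a,\circ}$ is defined over $F$ since $a$ is a rational automorphism) carrying $\gamma_1$ to $\gamma_2$ — here I use that $\tilde\delta^*_1,\tilde\delta^*_2\in S_i(\bar F)\rtimes a$ are $G(\bar F)$-conjugate to the rational element $\tilde\delta$, so the set of such conjugating elements is $\Gamma$-stable as a coset. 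By the uniqueness clause of Lemma~\ref{lem:c1}(3), $\sigma(\tx{Ad}(g))=\tx{Ad}(g)$ for all $\sigma$, hence $\tx{Ad}(g)$ is defined over $F$.

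The main obstacle I anticipate is bookkeeping around the two Galois actions: $\tilde\delta$ lives in $\tilde G_{\bar z}(F)$ (twisted structure) while the norm torus $S$ and the invariant $\gamma$ are objects attached to $\tilde G=G\rtimes A$ (untwisted structure), and the element $g$ conjugating $\tilde\delta$ into $S\rtimes a$ is in neither group's $F$-points. The key technical point making everything work is that the \emph{difference} between the twisted and untwisted Galois actions is conjugation by $\bar z(\sigma)\in\bar G(\bar F)$, which acts trivially on $S$ and on $S_a$ up to a Weyl-group element fixing $\gamma$ — this is precisely what one must verify to transport the rationality of $\tilde\delta$ into rationality statements about $(S,\gamma)$ and $\tx{Ad}(g)$. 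Once this compatibility is pinned down, both assertions reduce to the uniqueness parts of Lemma~\ref{lem:c1}, which were already established over $\bar F$.
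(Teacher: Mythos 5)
Your proof of part (2) is essentially the paper's: both arguments invoke the uniqueness in Lemma~\ref{lem:c1}(3) and observe that $\tx{Ad}(\sigma(g))$ also carries $(S_1,\gamma_1)$ to $(S_2,\gamma_2)$, forcing $\sigma(\tx{Ad}(g))=\tx{Ad}(g)$.

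Your proof of part (1), however, has a genuine gap. You want to take $S'=\tx{Cent}(\tx{Cent}(\tilde\delta,G)^\circ,G)$, the unique admissible torus through $\tilde\delta$, note that it is $\Gamma$-stable for the $\bar z$-twisted Galois action (true, since $\tilde\delta$ is rational in $\tilde G_{\bar z}$), and then assert that it is also defined over $F$ for the \emph{untwisted} structure. This is false in general. The untwisted and twisted actions of $\sigma$ differ by $\tx{Ad}(\bar z(\sigma))$ with $\bar z(\sigma)\in\bar G(\bar F)$ arbitrary, and a generic element of $\bar G$ does not normalize $S'$; so stability of $S'$ under the twisted action says nothing about stability under $\sigma$ itself. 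Your closing remark that $\bar z(\sigma)$ ``acts trivially on $S$ and on $S_a$ up to a Weyl-group element fixing $\gamma$'' is doubly off: $\bar z(\sigma)$ need not normalize $S$ at all, and — as the paper uses crucially — strong regularity says precisely that \emph{no} element of $\Omega(S,G)^a$ fixes $\gamma$, which is what makes the relevant cocycle well-defined. (There is also a secondary issue: $S'$ is $\tx{Ad}(\tilde\delta)$-stable but need not be $a$-stable, since $\delta$ need not normalize $S'$; $a$-admissibility is only obtained after conjugating $\tilde\delta$ so that its $G$-component lies in the torus.)

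What is actually required, and what the paper does, is nontrivial: conjugate $\tilde\delta$ into $T(\bar F)\rtimes a$ with $T$ the pinned torus, observe via Lemma~\ref{lem:c1}(4) that $\sigma(\gamma^0)$ lies in the $\Omega(T,G)^a$-orbit of $\gamma^0$, use strong regularity to get a well-defined cocycle $\sigma\mapsto w_\sigma\in\Omega(T,G)^a$, and then invoke Steinberg's theorem (vanishing of $H^1$ for the simply connected group $G_\tx{sc}^a$) to split that cocycle by some $g\in G_\tx{sc}^a(\bar F)$; $S=\tx{Ad}(g)T$ is then the desired $F$-rational $a$-admissible torus. The Steinberg step is exactly what you are trying to avoid, and there is no way around it by pure bookkeeping of the two Galois actions.
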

\begin{proof}
(1) The arguments for this part are contained in the proofs of \cite[Lemmas 3.3.B,3.3.C]{KS99}. We collect them here for the convenience of the reader. By Lemma \ref{lem:c1} we may find $h \in G(\bar F)$ such that $\tilde\delta^0 := \tx{Ad}(h)^{-1}\tilde\delta \in T(\bar F) \rtimes A$, where we recall that $T$ is the maximal torus that is part of the $A$-invariant $F$-pinning of $G$. Since $\tilde\delta$ is fixed by $\tx{Ad}(z_\sigma)\rtimes\sigma$ for all $\sigma \in \Gamma$, its $G$-conjugacy class is fixed by $\sigma$, and part (4) of Lemma \ref{lem:c1} implies that the $\Omega(T,G)^a$-orbit of the image $\gamma^0 \in T_a(\bar F)$ of $\delta^0$ is $\Gamma$-invariant. Thus for every $\sigma \in \Gamma$ there exists $w_\sigma \in \Omega(T,G)^a$ such that $w_\sigma\sigma(\gamma^0)=\gamma^0$. Since $\tilde\delta$ is strongly regular, no element of $\Omega(T,G)^a$ fixes $\gamma^0$, and hence $w_\sigma$ is determined by $\sigma$. The map $\sigma \mapsto w_\sigma$ is a 1-cocycle. Since $\Omega(T,G)^a=\Omega(T_\tx{sc}^a,G_\tx{sc}^a)$, Steinberg's theorem implies the existence of $g \in G_\tx{sc}^a$ such that $g^{-1}\sigma(g)$ normalizes $T_\tx{sc}^a$ and induces $w_\sigma$. Set $\tilde\delta^*=\tx{Ad}(gh^{-1})\tilde\delta$. Then $S=\tx{Ad}(g)T$ is the unique $\tilde\delta^*$-admissible maximal torus and we have $\delta^* \in S(\bar F)$. Since $T$ is the centralizer in $G$ of $T_\tx{sc}^a$ we see that $g^{-1}\sigma(g)$ normalizes $T$ and this implies that $S$ is defined over $F$. The image $\gamma \in S_a(\bar F)$ of $\tilde\delta^*$ under the projection $S \to S_a$ coincides with the image of $\gamma^0$ under $\tx{Ad}(g) : T_a \to S_a$ and is $\Gamma$-fixed.

(2) The identity $\tx{Ad}(g)\tilde\delta^*=\tilde\delta$ implies that $\tx{Ad}(g)$ induces an isomorphism between the centralizers in $G$ of $\tilde\delta^*$ and $\tilde\delta$, and in turn the centralizers in $G$ of these centralizers, i.e. $S \to T_{\tilde\delta}$. Another $g$ is obtained by right multiplication by an element of $S^{\tilde\delta^*}=S^a \subset S$, hence the isomorphism $\tx{Ad}(g) : S \to T_{\tilde\delta}$ is unchanged. The automorphism $\tx{Ad}(\bar z_\sigma)\circ\sigma \circ \tx{Ad}(g) \circ \sigma^{-1}$ of $G$ maps $\sigma(\tilde\delta^*)$ to $\tilde\delta$. Since $\gamma \in S_a(F)$ we see $(\delta^*)^{-1}\sigma(\delta^*) \in (1-a)S$, hence there exists $s \in S$ such that $\sigma(\tilde\delta^*)=s\tilde\delta^*s^{-1}$. We conclude that $g^{-1}z_\sigma\sigma(g)s$ centralizes $\tilde\delta^*$, where $z_\sigma \in G(\bar F)$ is any lift of $\bar z_\sigma$. Hence $g^{-1}z_\sigma\sigma(g)$ lies in $S$, showing that $\tx{Ad}(g) : S \to T_{\tilde\delta}$ is defined over $F$.

(3) follows from part (3) of Lemma \ref{lem:c1}, since both $\tx{Ad}(g)$ and $\tx{Ad}(\sigma(g))$ map $(S_1,\gamma_1)$ to $(S_2,\gamma_2)$.
\end{proof}

\subsection{$A$-stable and $A$-admissible Whittaker data} \label{sub:awhit}

We continue with a connected reductive group $G$ defined and quasi-split over $F$, and $A$ a finite group of automorphisms that leaves invariant an $F$-pinning of $G$.

Recall that a Whittaker datum is a $G(F)$-conjugacy class of pairs $(B,\psi)$ consisting of a Borel $F$-subgroup $B \subset G$ and a generic unitary character $\psi : U(F) \to \C^\times$, where $U \subset B$ is the unipotent radical, and ``generic'' means that $\psi$ is non-trivial on each relative simple root subgroup of $U(F)$.

\begin{dfn} A Whittaker datum $\mf{w}$ is called
	\begin{enumerate}
		\item \emph{$A$-stable}, if the $G(F)$-conjugacy class $\mf{w}$ is stable under the action of $A$.
  		\item \emph{$A$-admissible}, if $G(F)$-conjugacy class $\mf{w}$ contains a pair $(B,\psi)$ that is stable under the action of $A$.
	\end{enumerate}
\end{dfn}

It is clear that an $A$-admissible Whittaker datum is $A$-stable. The converse need not be true, see Example \ref{exa:a-whit}. It turns out that $A$-admissible Whittaker data play an important role. In this subsection we will investigate their properties, as well as the following related notions.

\begin{dfn}
	A regular nilpotent $G(F)$-conjugacy class in $\mf{g}(F)$ is called
	\begin{enumerate}
		\item \emph{$A$-stable}, if it is stable under the action of $A$.
  		\item \emph{$A$-admissible}, if it has an $A$-fixed element, i.e. it intersects $\mf{g}(F)^A$.
	\end{enumerate}
\end{dfn}

\begin{dfn}
	A $G(F)$-conjugacy class of $F$-pinnings is called
	\begin{enumerate}
		\item \emph{$A$-stable}, if it is stable under the action of $A$.
  		\item \emph{$A$-admissible}, if it has an $A$-stable element (i.e. it contains an $A$-stable $F$-pinning).
	\end{enumerate}
\end{dfn}

We now recall two basic constructions of Whittaker data. Let $(T,B,\{X_\alpha\})$ be an $F$-pinning and let $\psi_F : F \to \C^\times$ be a non-trivial unitary character. Let $U$ be the unipotent radical of $B$. Following \cite[\S5.3]{KS99} we obtain a generic unitary character $\psi : U(F) \to \C^\times$, via the following procedure: The fixed pinning induces an isomorphism from the abelianization $U^\tx{ab}=U/[U,U]$ to $\prod_{\alpha\in\Delta}\mb{G}_a$. This isomorphism is defined over $F$ if we let $\Gamma$ act on the product in a way compatible with the action of $\Gamma$ on $\Delta$. The summation map $\prod_{\alpha\in\Delta}\mb{G}_a \to \mb{G}_a$ is then defined over $F$. The generic character $\psi$ is given by the composition
\[ U(F) \to U^\tx{ab}(F) \to \Big(\prod_{\alpha \in \Delta} \mb{G}_a\Big)(F) \to F \to \C^\times. \]

The second construction starts with a regular nilpotent $\bar X \in \tx{Lie}(G)(F)$ and a non-trivial unitary character $\psi_F : \to \C^\times$. Let $\bar B$ be the unique Borel subgroup of $G$ whose Lie algebra contains $\bar X$; it is an $F$-Borel subgroup. Choose a maximal $F$-torus $T$ in $\bar B$ and let $B$ be the $T$-opposite Borel subgroup to $\bar B$. If $\kappa$ denotes the Killing form on $\tx{Lie}(G_\tx{der})(F)$, then there exists a unique generic character $\psi : U(F) \to \C^\times$ such that
\[ \psi(\exp(X))=\psi_F(\kappa(X,\bar X)),\qquad X \in \tx{Lie}(U)(F).\]
Note that, since all choices of $T$ are conjugate under $\bar B(F)$, the $G(F)$-conjugacy class of $[B,\psi]$ depends only on the $G(F)$-conjugacy class of $\bar X$.

The following is elementary, see e.g. \cite[Lemma 3.1.2]{DPR}.
\begin{fct} \label{fct:whit}
	Fix a non-trivial unitary character $\psi_F : F \to \C^\times$. The above constructions produce bijections between 
	\begin{enumerate}
		\item The set of Whittaker data.
  		\item The set of $G(F)$-conjugacy classes of regular nilpotent elements in $\tx{Lie}(G)(F)$.
    	\item The set of $G(F)$-conjugacy classes of $F$-pinnings of $G$.
	\end{enumerate}
	These bijections are equivariant under $\tx{Aut}(G)(F)$. The bijection between (2) and (3) sends a pinning $(T,B,\{X_\alpha\})$ to $\sum X_{-\alpha}$, where $X_{-\alpha}$ is determined by $[X_\alpha,X_{-\alpha}]=H_\alpha$.
\end{fct}

If $(T,B,\{X_\alpha\})$ is stable under $A$, then so is $(B,\psi)$, and the $G(F)$-conjugacy class of $(B,\psi)$ is an $A$-admissible Whittaker datum.

\begin{cor} \label{cor:adm}
	Fix a non-trivial unitary character $\psi_F : F \to \C^\times$. Under the above bijections, the $A$-stable Whittaker data correspond to $A$-stable $G(F)$-conjugacy classes of regular nilpotent elements of $\tx{Lie}(G)(F)$ and to $A$-stable $G(F)$-conjugacy classes of $F$-pinnings of $G$, while the $A$-admissible Whittaker data correspond to $A$-admissible $G(F)$-conjugacy classes of regular nilpotent elements of $\tx{Lie}(G)(F)$ and to $A$-admissible $G(F)$-conjugacy classes of $F$-pinnings of $G$.
\end{cor}
\begin{proof}
	Compatibility with $A$-stability follows from the $A$-equivariance of the bijections. For compatibility with $A$-admissibility, it is clear that $A$-stable pinnings and $A$-stable regular nilpotent elements produce Whittaker data. The correspondence between $A$-stable pinnings are $A$-stable Whittaker data is given in Corollary \ref{cor:pinu}. Given an $A$-stable Whittaker datum $(B,\psi)$, there is a unique regular nilpotent element of $B$ that gives rise to $\psi$, which must therefore be $A$-stable.
\end{proof}

\begin{cor} Any two $A$-admissible Whittaker data are conjugate by $G_\tx{ad}(F)^A=(G^1)_\tx{ad}(F)$.
\end{cor}
\begin{proof}
This follows from Corollary \ref{cor:adm} and Proposition \ref{pro:stein}(9).
\end{proof}

\begin{lem}
	Let $(T,B,\{X_\alpha\})$ be an $A$-stable $F$-pinning and $\psi_F : F \to \C^\times$ a non-trivial unitary character. Let $\psi : U(F) \to \C^\times$ be the corresponding $A$-stable generic character. The map $t \mapsto (B,\psi_t)$, where $\psi_t=\psi\circ \tx{Ad}(t)$, is a bijection between
	\begin{enumerate}
		\item the set 
		\[ [T/Z(G)](F)/[T(F)/Z(G)(F)] \]
		and the set of Whittaker data for $G$;
		\item the set 
        \[ \{t \in [T/Z(G)](F)|\forall a \in A : t^{-1}a(t) \in T(F)/Z(G)(F)\}/[T(F)/Z(G)(F)] \]
		and the set of $A$-stable Whittaker data.
		\item the set 
		\[ [T/Z(G)]^A(F)/[T^A(F)/Z(G)^A(F)] \] 
		and the set of $G(F)$-conjugacy classes of $A$-admissible Whittaker data;
  		
	\end{enumerate}
\end{lem}
\begin{proof}
	(1) This follows from Fact \ref{fct:whit} and the conjugacy of $F$-Borel pairs under $G(F)$.

	(2) From (1) it follows that the $G(F)$-conjugacy class of $(B,\psi_t)$ is $A$-stable if and only if the coset of $t$ in $[T/Z(G)](F)/[T(F)/Z(G)(F)]$ is $A$-stable.

	(3) If $t \in [T/Z(G)]^A(F)$, then $\psi_t$ is $A$-fixed, so $(B,\psi_t)$ represents an $A$-admissible Whittaker datum. Conversely, an $A$-admissible Whittaker datum arises from an $A$-admissible $G(F)$-conjugacy class of $F$-pinnings by Corollary \ref{cor:adm}, and Proposition \ref{pro:stein}(7) allows us to chose an $A$-stable $F$-pinning in this class whose underlying Borel pair is $(T,B)$. This pinning differs from the fixed one by conjugation by a unique element of $[T/Z(G)](F)$, which must then by $A$-fixed.
\end{proof}

\begin{exa} \label{exa:a-whit}
	We will now give an example of an $A$-stable Whittaker datum that is not $A$-admissible.

	Consider $G=\tx{SL}_4$ with the standard pinning $(T,B,\{X_\alpha\})$, where $T$ is the group of diagonal matrices, $B$ is the group of upper triangular matrices, and $X_\alpha$ are the matrices that contain a single non-zero entry, equal to $1$, in the first off-diagonal. Let $A=\<a\>$ with $a$ the non-trivial automorphism preserving this pinning. It is given by the composition of transpose, inverse, and conjugation by the anti-diagonal matrix with entries alternating between $+1$ and $-1$.

	Write $T_\tx{ad}=T/Z(G)$ and $T(F)_\tx{ad}=T(F)/Z(G)(F)$. We will now examine the inclusions
	\begin{equation} \label{eq:exaadm}
		\frac{[T_\tx{ad}]^a(F)}{[T(F)_\tx{ad}]^a} \subset \frac{\{t \in [T_\tx{ad}](F)| t^{-1}a(t) \in T(F)_\tx{ad}\}}{T(F)_\tx{ad}} \subset \frac{T_\tx{ad}(F)}{T(F)_\tx{ad}}.
	\end{equation}
	We have the diagram
	\[ \xymatrix{
		1\ar[r]&Z(G)(F)\ar[r]&T(F)\ar[r]&T_{\tx{ad}}(F)\ar[d]^\cong\ar[r]^-{\tx{det}}&F^\times/F^{\times,4}\ar[r]&1\\
		&&&(F^\times)^3\ar[ur]^\eta
	}\]
	with exact top row, where $F^{\times,4}$ denotes the image of the map $(-)^4 : F^\times \to F^\times$, the vertical isomorphism is given by $(\alpha,\beta,\gamma)$, where $\alpha,\beta,\gamma$ are the simple roots, ordered so that $a(\alpha)=\gamma$ and $a(\beta)=\beta$, and $\eta(x,y,z)=xy^2z^3$. The diagram is $a$-equivariant if let $a$ act on $F^\times$ by inversion and on $(F^\times)^3$ by $a(x,y,z)=(z,y,x)$.

	The right-most group in the chain \eqref{eq:exaadm} is mapped by $\tx{det}$ isomorphically to $F^\times/F^{\times,4}$. The middle group is mapped by $\tx{det}$ to the subgroup of $F^\times/F^{\times,4}$ fixed by inversion. This group is represented by elements $x \in F^\times$ such that $x^2 \in F^{\times,4}$, which is equivalent to $x \in F^{\times,2} \cup -F^{\times,2}$. The left-most group is mapped by $\tx{det}$ to the subgroup $F^{\times,2}/F^{\times,4}$.

	We conclude that the index of the left-most group in the middle group is equal to $1$ if $-1 \in F^{\times,2}$ and equal to $2$ otherwise. Since the left-most group acts simply transitively on the set of $A$-admissible Whittaker data, and the middle group acts simply transitively on the set of $A$-stable Whittaker data, we conclude that, when $-1 \notin F^{\times,2}$, there exists an $A$-stable Whittaker datum that is not $A$-admissible.
\end{exa}

\subsection{$A$-admissible maximal tori}

\begin{dfn}
	A maximal $F$-torus $S \subset G$ is called \emph{$A$-admissible}, if it is $A$-stable and there exists an $A$-stable $\bar F$-Borel subgroup of $G$ containing $S$. 
\end{dfn}

\begin{lem}
	Any two $A$-admissible maximal $F$-tori are conjugate under $G^A(\bar F)$.
\end{lem}
\begin{proof}
	This follows from Proposition \ref{pro:stein}(6) applied to $G_{\bar F}$.
\end{proof}

Recall the definition of the toral invariant $f_{(S,G)} : R(S,G)_\tx{sym} \to \{\pm1\}$ for a maximal $F$-torus $S \subset G$, defined in \cite[\S4]{KalEpi}. Choose any non-zero $X_\alpha \in \mf{g}_\alpha(F_\alpha)$, then
\[ f_{(S,G)}(\alpha) = \kappa_\alpha\Big(\frac{[X_\alpha,\sigma_\alpha(X_\alpha)]}{H_\alpha}\Big), \]
where $F_{\pm\alpha} \subset F_\alpha$ are the fields of definition of ${\pm\alpha}$ and $\alpha$, $\sigma_\alpha$ is the non-trivial element of $\tx{Gal}(F_\alpha/F_{\pm\alpha})$, and $\kappa_\alpha$ is the isomorphism $F_{\pm\alpha}^\times/N_{F_\alpha/F_{\pm\alpha}}(F_\alpha^\times) \to \{\pm1\}$.

\begin{lem}
	Let $S \subset G$ be an $A$-admissible maximal torus and let $S^1=S \cap G^1$ be the corresponding maximal $F$-torus of $G^1$. For $\alpha \in R(S,G)_\tx{sym}$ mapping to $\alpha^1 \in R(S^1,G^1)_\tx{sym}$ we have
	\[ f_{(S,G)}(\alpha) = \kappa_\alpha(n_\alpha) \cdot f_{(S^1,G^1)}(\alpha^1)^{[F_\alpha:F_{\alpha_1}]},\]
	where $\kappa_\alpha$ is the sign character of the extension $F_\alpha/F_{\pm\alpha}$ and $n_\alpha \in \{1,2\}$ is the number of $\alpha' \in R(S,G)$ mapping to $\alpha^1$ such that $\alpha+\alpha' \in R(S,G)$.
\end{lem}
\begin{proof}
	Choose $0 \neq X_\alpha \in \mf{g}_\alpha(F_\alpha)$. By definition we have $f_{(G,S)}(\alpha)=\kappa_\alpha(\eta_\alpha)$ with $\eta_\alpha=[X_\alpha,\sigma_\alpha(X_\alpha)]/H_\alpha \in F_{\pm\alpha}^\times$. Since the actions of $A$ and $\Gamma$ commute, we have $F_{a(\alpha)}=F_\alpha$ and $F_{\pm a(\alpha)}=F_{\pm\alpha}$ for all $a \in A$. Hence, for $a \in A$, we can use $0 \neq a(X_\alpha) \in \mf{g}_{a(\alpha)}(F_\alpha)$ and obtain $\eta_{a(\alpha)}=\eta_\alpha$.
	
	According to Proposition \ref{pro:stein}(8) the restriction of $\alpha$ to $S^1$ is non-divisible, and Proposition \ref{pro:fold1} shows that no $a \in A$ that fixes $\alpha$ acts non-trivially on the component of $\alpha$. Lemma \ref{lem:linesigns} shows that any $a \in A$ that fixes $\alpha$ also fixes $X_\alpha$. Therefore we can form $X_{\alpha^1}=\sum_{a \in A/A_\alpha}a(X_\alpha)$, a non-zero element of $\mf{g}^A=\mf{g}^1$ that belongs to the $\alpha^1$-root line $\mf{g}^1_{\alpha^1}$. Any $\sigma \in \Gamma_{\alpha_1}$ permutes the fiber over $\alpha_1$ of the restriction map $R(S,G) \to R(S^1,G^1)$. According to Proposition \ref{pro:stein}(8) this fiber is precisely the $A$-orbit of $\alpha$. Thus we see that $X_{\alpha^1} \in \mf{g}^1(F_{\alpha^1})$.
	
	Since the restriction map $R(S,G) \to R(S,G)_\tx{res}$ is $\Gamma$-equivariant and $\Gamma$ preserves the subsystem of non-divisible elements in $R(S,G)_\tx{res}$, which equals $R(S^1,G^1)$ by Proposition \ref{pro:stein}(8), we see that $\Gamma_\alpha \subset \Gamma_{\alpha^1}$, $\Gamma_{\pm\alpha} \subset \Gamma_{\pm\alpha^1}$, and $\sigma_\alpha\alpha^1=-\alpha^1$, hence $\Gamma_{\pm\alpha}/\Gamma_\alpha \to \Gamma_{\pm\alpha^1}/\Gamma_{\alpha^1}$ is an isomorphism.

	By Proposition \ref{pro:fold1}(4) we have $H_{\alpha^1}=n_\alpha\sum_{a \in A/A_\alpha}a(H_\alpha)$. Using the isomorphism $\Gamma_{\pm\alpha}/\Gamma_\alpha \to \Gamma_{\pm\alpha^1}/\Gamma_{\alpha^1}$ we can replace $\sigma_{\alpha^1}$ by $\sigma_\alpha$ in the computation of $[X_{\alpha^1},\sigma_{\alpha^1}(X_{\alpha^1})]$. For any $a \in A$ we have $\sigma_\alpha(a(X_\alpha))=a(\sigma_\alpha(X_\alpha)) \in \mf{g}_{-a(\alpha)}$, and hence $[X_{\alpha^1},\sigma_\alpha(a(X_\alpha))]=[a(X_\alpha),a(\sigma_\alpha(X_\alpha))]=a[X_\alpha,\sigma_\alpha(X_\alpha)]=\eta_\alpha a(H_{\alpha})$. Hence
	\begin{eqnarray*}
		\eta_{\alpha_1}H_{\alpha_1}&=&[X_{\alpha^1},\sigma_\alpha(X_{\alpha^1})]\\
		&=&\sum_{a \in A/A_\alpha} [X_{\alpha^1},\sigma_\alpha(a(X_\alpha))]\\
		&=&\eta_\alpha \sum_{a \in A/A_\alpha} a(H_\alpha)\\
		&=&\eta_\alpha n_\alpha^{-1}H_{\alpha^1},
	\end{eqnarray*}
	concluding $\eta_\alpha=n_\alpha\eta_{\alpha^1}$, and hence 
	\[ f_{(G,S)}(\alpha) = \kappa_\alpha(\eta_\alpha) = \kappa_\alpha(n_\alpha) \kappa_{\alpha_1}(N_{F_\alpha/F_{\alpha_1}}(\eta_{\alpha_1})) = \kappa_\alpha(n_\alpha) f_{(G^1,S^1)}(\alpha^1)^{[F_\alpha:F_{\alpha_1}]}. \]

\end{proof}

\section{The conjecture for pure inner forms} \label{sec:pure}

\subsection{Pure inner forms} \label{sub:pure}

Let $z \in Z^1(\Gamma,G)$ and let $\tilde z : \Gamma \to G \rtimes \Gamma$ be the corresponding section. We have the inner form $\tilde G_z$ as in Subsection \ref{sub:inner}. We will call such inner forms pure, in analogy with the case of connected groups. In the exact sequence
\[ 1 \rw G_{z}(F) \rw \tilde G_{z}(F) \rw A^{[z]} \rw 1, \]
the group $A^{[z]}$ is the stabilizer in $A^\Gamma$ of the cohomology class $[z] \in H^1(\Gamma,G)$.

\subsection{Rational and stable conjugacy classes} \label{sub:pure_rat}

For a given $a \in A$ we want to describe those $\tilde\delta = \delta\rtimes a \in G \rtimes A$ that are rational for $\tilde G_z$, i.e. $\tilde\delta \in \tilde G_z(F)$. This is by definition equivalent to the commutativity of $\tilde z(\sigma) = z(\sigma)\rtimes \sigma$ and $\tilde\delta$ for all $\sigma \in \Gamma$. Following Vogan's suggestion \cite{Vog93} in the case of connected reductive groups, we shall consider all pure inner forms together, and are thus lead to consider the set of pairs $(\tilde z,\tilde\delta)$, where $\tilde z \in \tilde Z^1(\Gamma,G)$ and $\tilde\delta \in G(\bar F) \rtimes A$ commute. This is the set of rational elements of all pure inner forms of $\tilde G$. The group $G(\bar F)$ acts on this set by conjugation. Two elements $(\tilde z,\tilde\delta_1)$ and $(\tilde z,\tilde\delta_2)$ with the same first component lie in the same conjugacy class if and only if $\tilde\delta_1$ and $\tilde\delta_2$ are conjugate under $G_z(F)$. Thus the set of $G(\bar F)$-orbits of commuting pairs $(\tilde z,\tilde\delta)$ can be seen as the set of rational conjugacy classes of rational elements of pure inner forms of $\tilde G$.

The set of rational elements, and its quotient under rational conjugacy, have the following cohomological interpretation. Given a pair $(\tilde z,\tilde\delta)$, with $\tilde z(\sigma)=z(\sigma)\rtimes\sigma$ and $\tilde\delta=\delta\rtimes a$, the commutativity of $\tilde z$ and $\tilde\delta$ is equivalent to the equation $a(z(\sigma))=\delta^{-1}z(\sigma)\sigma(\delta)$ for all $\sigma \in \Gamma$. This equation says that $\delta$ is a coboundary between the 1-cocycles $z$ and $a(z)$. This leads us to consider the set
\[ Z^1(\Gamma,G \accentset{1}{\underaccent{\ \ a}{\rightrightarrows}} G)
\]
consisting of pairs $(z,\delta)$, where $z \in Z^1(\Gamma,G)$ and $\delta \in G$ satisfy the above equation. Slightly more generally one could consider for two group homomorphisms $(b,a) : G \rightrightarrows G$ the set of pairs $(z,\delta)$ consisting of $z \in Z^1(\Gamma,G)$ and $\delta \in G$ such that $a(z(\sigma))=\delta^{-1}b(z(\sigma))\sigma(\delta)$. For our purposes the case $b=\tx{id}$ will be sufficient. In order to ease typesetting, we shall use the notation $Z^1_a(\Gamma,G \rightrightarrows G)$ instead. As just discussed, the set $Z^1_a(\Gamma,G \rightrightarrows G)$ is identified with the disjoint union $\bigsqcup_{z \in Z^1(\Gamma,G)} [G \rtimes a]_z(F)$. Taking the union over $a \in A$ we obtain an identification between the disjoint union $\bigsqcup_{z \in Z^1(\Gamma,G)} \tilde G_z(F)$ and the disjoint union $\bigsqcup_{a \in A} Z^1_a(\Gamma,G \rrw G)$.

Fix $a \in A$. The action of $G(\bar F)$ by conjugation on the set of pairs $(\tilde z,\tilde\delta)$ is translated to the action of $G(\bar F)$ on $(z,\delta) \in Z^1_a(\Gamma,G \rrw G)$ by $g(z,\delta)=(gz(\sigma)\sigma(g^{-1}),g\delta a(g)^{-1})$. We let $H^1_a(\Gamma,G \rrw G)$ be the set of orbits of that action and thus obtain an identification of
\[ \bigsqcup_{a \in A} H^1_a(\Gamma, G \rrw G) \]
with the set of rational conjugacy classes of rational elements of pure inner forms of $\tilde G$.

Keeping in line with our notation, we shall write $\tilde Z^1_a(\Gamma,G \rrw G)$ for the set of commuting $\tilde z \in \tilde Z^1(\Gamma,G)$ and $\tilde\delta \in G \rtimes a$, and $\tilde H^1_a(\Gamma,G \rrw G)$ for their $G$-conjugacy classes, and will freely use the bijections $Z^1_a(\Gamma,G \rrw G) \to \tilde Z^1_a(\Gamma,G \rrw G)$ and $H^1_a(\Gamma,G \rrw G) \to \tilde H^1_a(\Gamma,G \rrw G)$ given by $(z,\delta) \mapsto (\tilde z,\tilde\delta)$.

\subsection{The invariant} \label{sub:pure_inv}

We are particularly interested in the $G$-conjugacy classes of pairs $(\tilde z,\tilde \delta)$ for which $\tilde\delta$ is semi-simple and strongly regular. According to Lemma \ref{lem:c2} such a conjugacy class has a norm $(S,\gamma)$, well-defined up to $G^{a,\circ}(\bar F)$-conjugacy. We shall now define an element $\tx{inv}(\gamma,(z,\delta)) \in H^1_a(\Gamma,S \rrw S)$.

\begin{lem} \label{lem:inv}
\begin{enumerate}
	\item If $(z^*,\delta^*)$ is a representative of the equivalence class of $(z,\delta)$ such that $\delta^* \in S$ and the image of $\delta^*$ in $S_a$ is $\gamma$, then $z^*(\sigma) \in S$ and hence $(z^*,\delta^*) \in Z^1_a(\Gamma,S \rrw S)$. The class $\tx{inv}(\gamma,(z,\delta)) \in H^1_a(\Gamma,S \rrw S)$ of $(z^*,\delta^*)$ is independent of the choice of $(z^*,\delta^*)$.
	\item If $(S',\gamma')$ is another norm of the same equivalence class, and $(z',\delta')$ the corresponding representative, the unique isomorphism $\tx{Ad}(g) : S \to S'$ mapping $\gamma$ to $\gamma'$ induces an isomorphism $H^1_a(\Gamma,S \rrw S) \to H^1_a(\Gamma,S' \rrw S')$ identifying the class $\tx{inv}(\gamma,(z,\delta))$ of $(z^*,\delta^*)$ with the class $\tx{inv}(\gamma',(z,\delta))$ of $(z',\delta')$.
\end{enumerate}
\end{lem}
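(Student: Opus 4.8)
The plan is to verify each of the two points by directly tracking cocycle identities and using the uniqueness statements from Lemmas \ref{lem:c1} and \ref{lem:c2}. For the first point, I would start by observing that any representative $(z^*,\delta^*)$ with $\delta^* \in S$ and image $\gamma \in S_a$ exists by Definition \ref{dfn:norm} and Lemma \ref{lem:c2}(1). The defining equation $a(z^*(\sigma)) = (\delta^*)^{-1} z^*(\sigma) \sigma(\delta^*)$ then must be analyzed: since $\delta^* \rtimes a$ normalizes its unique $a$-admissible maximal torus $S$ (because $\tilde\delta^* = \delta^* \rtimes a$ is strongly regular semi-simple and $S$ is the centralizer of its centralizer), and since $\tilde z^*(\sigma) = z^*(\sigma) \rtimes \sigma$ commutes with $\tilde\delta^*$, conjugation by $\tilde z^*(\sigma)$ fixes $\tilde\delta^*$ and hence preserves $S$ — but $\tilde z^*(\sigma)$ also, by definition of the Galois action, must preserve the $F$-rational structure, and since $S$ is defined over $F$ and $a$-admissible, $z^*(\sigma)$ normalizes $S$. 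Strong regularity forces the induced element of $\Omega(S,G)^a$ to be trivial (an element of $\Omega(S,G)^a$ fixing the strongly regular $\gamma$ must be $1$, as used in the proof of Lemma \ref{lem:c2}), so $z^*(\sigma) \in S$. Thus $(z^*,\delta^*) \in Z^1_a(\Gamma, S \rrw S)$, and its class is the definition of $\tx{inv}(\gamma,(z,\delta))$.

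For independence of the choice of $(z^*,\delta^*)$: any two such representatives differ by the conjugation action of some $g \in G(\bar F)$ with $g(z,\delta)g^{-1} = (z^*, \delta^*)$ in both cases, so two valid choices $(z_1^*,\delta_1^*)$ and $(z_2^*,\delta_2^*)$ are related by $h \in G(\bar F)$ with $h \delta_1^* a(h)^{-1} = \delta_2^*$ and $h z_1^*(\sigma) \sigma(h)^{-1} = z_2^*(\sigma)$. Since both $\delta_i^* \in S$ map to $\gamma \in S_a$, the element $\tilde\delta_i^* = \delta_i^* \rtimes a$ has $S$ as its unique $a$-admissible torus in both cases, so $h$ conjugates $S$ to $S$ — i.e. $h \in N(S,G)$ — and moreover (by the argument of Lemma \ref{lem:c1}(3), since $h$ fixes $\gamma$) the image of $h$ in $\Omega(S,G)$ lies in $\Omega(S,G)^a$ and fixes $\gamma$, hence is trivial by strong regularity, so $h \in S(\bar F)$. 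Therefore $(z_1^*,\delta_1^*)$ and $(z_2^*,\delta_2^*)$ are $S(\bar F)$-conjugate, proving they define the same class in $H^1_a(\Gamma, S \rrw S)$.

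For the second point, I would use Lemma \ref{lem:c2}(2): given two norms $(S,\gamma)$ and $(S',\gamma')$ of the same class, there is a canonical isomorphism $\tx{Ad}(g) : S \to S'$, $g \in G^{a,\circ}(\bar F)$, carrying $\gamma$ to $\gamma'$, and it is defined over $F$. Since $g \in G^{a,\circ}(\bar F)$, conjugation by $g$ commutes with $a$, so $\tx{Ad}(g)$ carries $Z^1_a(\Gamma, S \rrw S)$ to $Z^1_a(\Gamma, S' \rrw S')$ and, being $F$-rational, respects the Galois action; it therefore induces $H^1_a(\Gamma, S \rrw S) \to H^1_a(\Gamma, S' \rrw S')$. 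If $(z^*,\delta^*)$ is a representative adapted to $(S,\gamma)$, then applying $\tx{Ad}(g)$ gives a pair $(g z^*(\sigma) g^{-1}, g\delta^* a(g)^{-1})$; since $g \in G^{a,\circ}(\bar F)$ we have $a(g) = g$ so the second component is $g\delta^*g^{-1} \in S'$, with image $\gamma'$ in $S'_a$. Hence this is exactly a representative adapted to $(S',\gamma')$, and by construction its class is $\tx{inv}(\gamma',(z,\delta))$. The main obstacle, and the point requiring the most care, is the repeated extraction of "$z^*(\sigma)$ normalizes $S$, hence lies in $S$" — one must be careful that it is the correct strong-regularity argument (no nontrivial $a$-stable Weyl element fixes $\gamma$) that forces the Weyl component to vanish, exactly as in the proofs of Lemmas \ref{lem:c1} and \ref{lem:c2}; everything else is bookkeeping with the $\rrw$-cocycle formalism and the fact that $G^{a,\circ}$-conjugation is $a$-equivariant.
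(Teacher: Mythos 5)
Your argument for point (2) is correct and essentially matches the paper's. For point (1), however, there is a gap in the step that concludes $z^*(\sigma)\in S$ (and the same gap appears when you conclude $h\in S$ in the independence argument). You correctly show that $z^*(\sigma)$ normalizes $S$ and that its image $w_\sigma$ in $\Omega(S,G)$ is $a$-fixed, but you then assert that $w_\sigma$ fixes $\gamma$, citing ``the argument of Lemma \ref{lem:c1}(3)'' and strong regularity. That argument does not apply here: in Lemma \ref{lem:c1}(3) the conjugating element lies in $G^{a,\circ}(\bar F)$, whereas $z^*(\sigma)$ and $h$ are only in $G(\bar F)$ and need not be $a$-fixed. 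What the relation $z^*(\sigma)\sigma(\delta^*)a(z^*(\sigma))^{-1}=\delta^*$ actually gives upon projection to $S_a$ is $w_\sigma(\gamma)\cdot c_\sigma=\gamma$, where $c_\sigma$ is the image in $S_a$ of $z^*(\sigma)a(z^*(\sigma))^{-1}\in S$; this is not the same as $w_\sigma(\gamma)=\gamma$. One can in fact show $c_\sigma=1$ (choose an $a$-fixed lift of $w_\sigma$, using $\Omega(S,G)^a=\Omega(S_\tx{sc}^a,G_\tx{sc}^a)$, to see $z^*(\sigma)a(z^*(\sigma))^{-1}\in(1-a)S$), but that observation is precisely what is missing from your proof and is not supplied by the citation.

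The paper avoids the Weyl-group detour entirely. Since $\delta^*$ and $\sigma(\delta^*)$ both lie in $S$ and map to $\gamma$, and the fiber of $S\to S_a$ over $\gamma$ is a single $a$-twisted $S(\bar F)$-conjugacy class (proof of Lemma \ref{lem:c1}(4)), there is $s(\sigma)\in S(\bar F)$ with $\sigma(\tilde\delta^*)=\tx{Ad}(s(\sigma))\tilde\delta^*$; then $z^*(\sigma)s(\sigma)$ commutes with $\tilde\delta^*$, hence lies in $\tx{Cent}(\tilde\delta^*,G)=S^a\subset S$, so $z^*(\sigma)\in S$. For independence one similarly finds $s\in S(\bar F)$ with $(sh)\delta^*a(sh)^{-1}=\delta^*$, so $sh\in S^a$ and $h\in S$. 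Either the paper's direct argument or your normalizer route supplemented by the $a$-fixed-lift observation will close the gap.
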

\begin{proof}
Since the element $\gamma$ is $\Gamma$\-fixed, the $S(\bar F)$-conjugacy class of $\tilde\delta^*$ is $\Gamma$\-fixed (in fact the two statements are equivalent). For $\sigma \in \Gamma$ let $s(\sigma) \in S$ be such that $\sigma(\tilde\delta^*)=\tx{Ad}(s(\sigma))\tilde\delta^*$. Since $\tilde z^*(\sigma)=z^*(\sigma)\rtimes\sigma$ commutes with $\tilde\delta^*$ we see that $z^*(\sigma)s(\sigma)$ commutes with $\tilde\delta^*$, thus $z^*(\sigma)s(\sigma) \in S^a$, thus $z^*(\sigma) \in S$.

Let us show that the class of $(\tilde z^*,\tilde\delta^*)$ in $\tilde H^1_a(\Gamma,S \rrw S)$ is independent of the choice of $(\tilde z^*,\tilde \delta^*)$. Another such choice is of the form $\tx{Ad}(h)(\tilde z^*,\tilde \delta^*)$ for some $h \in G(\bar F)$. By assumption $h\delta^*a(h^{-1})$ maps to $\gamma$, so there exists $s \in S(\bar F)$ such that $sh\delta^*a(sh)^{-1}=\delta^*$, i.e. $sh \in \tx{Cent}(\tilde\delta,G)=S^a$, thus $h \in S$, as claimed.

Now let $(S',\gamma')$ be another norm and choose by Lemma \ref{lem:c1} an element $g \in G^{a,\circ}(\bar F)$ such that $\tx{Ad}(g) : S \to S'$ carries $\gamma$ to $\gamma'$. Then $\tx{Ad}(g)(\tilde z^*,\tilde\delta^*)$ is a representative of the conjugacy class of $(\tilde z,\tilde\delta)$ and lies in $\tilde Z^1(\Gamma,S' \rrw S')$.
\end{proof}

\begin{rem} \label{rem:inv}
	We have noted in Remark \ref{rem:norm} that the pair $(z^*,\delta^*)$ in Lemma \ref{lem:inv}(1) cannot in general be chosen to additionally satisfy $\delta^* \in S(F)$. Let us for a moment consider the situation where $\delta^* \in S(F)$. Then $\tilde\delta^*$ commutes with both $\sigma$ and $z^*(\sigma) \rtimes \sigma$. This implies that $z^*(\sigma)$ takes values in the centralizer of $\tilde\delta^*$, which is $S^a$.
	
	Thus $z^*$ provides a class in $H^1(\Gamma,S^a)$, while $\delta^*$ provides a class in $H^0(\Gamma,S)$, and we see that $\tx{inv}(\gamma,(z,\delta))$ is the image of the pair of classes $([z^*],[\delta^*])$ under the natural map $H^1(\Gamma,S^a) \times H^0(\Gamma,S) \to H^1_a(\Gamma,S \rrw S)$.
\end{rem}

\subsection{Comparison with \cite{KS99}} 
It would be informative to compare the notions of norms and invariants given here with those in \cite{KS99}. In short, the notions of norms are the same apart from cosmetics, while the notion of invariant introduced here is the twisted analog of the untwisted absolute invariant introduced in \cite[\S2.1]{KalECI}, and thus refines the relative invariant introduced in \cite[\S4.4]{KS99}, and generalizes the absolute invariant introduced in the setting of quasi-split groups in \cite[\S5.3]{KS99}.

More precisely, let $(\tilde z,\tilde\delta) \in \tilde Z^1_a(\Gamma,G \rrw G)$. Then $\theta=\tx{Ad}(\tilde\delta)$ is an automorphism of $G_z$ defined over $F$. The element $\tilde\delta$ is semi-simple and strongly-regular if and only if $1 \in G_z(F)$ is $\theta$-semi-simple and $\theta$-strongly regular. Let $\theta^*=a$. The map $m : Cl(G_z,\theta) \rw Cl(G,\theta^*)$ of \cite[\S3.1]{KS99} is given by $h \mapsto h \cdot \tilde\delta$ and in particular sends $1$ to $\tilde\delta$. The 1-cochain defined in \cite[Lemma 3.1.A]{KS99} and denote by $z_\sigma$ there (beware that this is not the same as our $z_\sigma$ here) is identically equal to $1$.

Now let $(\tilde z^*,\tilde\delta^*)$ and $\gamma \in S_a(F)$ be be as in Lemma \ref{lem:inv}. Then $\gamma$ is a norm of $1$ in the sense of \cite[\S3.3]{KS99}. Moreover, if $g \in G$ is such that $g^{-1}(\tilde z^*,\tilde \delta^*)g=(\tilde z,\tilde \delta)$, then the 1-cocycle $v(\sigma)=gu(\sigma)\sigma(g)^{-1}$ considered in \cite[Lemma 4.4.A]{KS99}, which takes values in $S_\tx{sc}$, when composed with the natural map $S_\tx{sc} \rw S$, becomes equal to $z^*$.

We have the the isomorphism
\begin{equation} \label{eq:ksiso}
 Z^1_a(\Gamma,S \rrw S) \rw Z^1(\Gamma,S\stackrel{1-a}{\lrw}S),\qquad (z,\delta) \mapsto (z^{-1},\delta)
\end{equation}
and it allows us to view $\tx{inv}(\gamma,(z,\delta))$ as an element of $H^1(\Gamma,S\stackrel{1-a}{\lrw}S)$. This element is then an absolute version of the relative invariant $\tx{inv}(\gamma,\delta;\bar\gamma,\bar\delta)$ introduced in \cite[\S4.4]{KS99}, and a generalization of the absolute invariant $\tx{inv}(\gamma,\delta)$ that was introduced in \cite[\S5.3]{KS99}.

\subsection{The dual group} \label{sub:dual}

Let $\hat G$ be a complex dual group for $G$. We rigidify it by fixing a $\Gamma$\-invariant pinning $(\hat T,\hat B,\{\hat X_\alpha\})$ and requiring it to be dual to the fixed pinning of $G$. That is, we assume given an identification $X^*(\hat T) = X_*(T)$ under which the $B$-positive coroots are identified with the $\hat B$-positive roots. We define the $L$-group of $G$ as $^LG=\hat G \rtimes W_F$, where $W_F$ acts on $\hat G$ by fixing the pinning. We also let the group $A$ act on $\hat G$ by fixing the pinning. More precisely, given $a \in A$, we have the automorphism $a_*$ of $X_*(T)$ given by $(a_*\lambda)(x)=a(\lambda(x))$ for $x \in \mb{G}_m$, and we let $a$ act on $\hat T = \tx{Hom}(X_*(T),\C^\times)$ by $[at](\lambda)=t(a_*^{-1}\lambda)$ for $t \in \hat T$ and $\lambda \in X_*(T)$. Note that the automorphism $a$ of $\hat G$ obtained in this way is related to the automorphism $\hat \theta$ of $\hat G$ obtained from $\theta^*=a$ as in \cite[\S1.2]{KS99} by $a=\hat \theta^{-1}$. This will later have the effect of $H^1(\Gamma,S \stackrel{1-a}{\lrw} S)$ being paired with $H^1(W_F,\hat S \stackrel{1-a^{-1}}{\lrw} \hat S)$.

\subsection{The local correspondence} \label{sub:pure_llc}

Given an irreducible admissible representation of the locally profinite group $\tilde G_z(F)$, its restriction to $G_z(F)$ is a finite length semi-simple admissible representation. We shall say that a representation of $\tilde G_z(F)$ is $G$-tempered respectively $G$-discrete, if its restriction to $G_z(F)$ contains (equivalently, consists of) a tempered respectively discrete representations.

We will now begin formulating the refined local Langlands conjecture for the disconnected groups $\tilde G_z$. The irreducible admissible $G$-tempered representations of $\tilde G_z$ will again be parameterized by pairs $(\phi,\rho)$. The first part of the pair, the Langlands parameter $\phi$, will remain unchanged. That is, we will use the same tempered Langlands parameters $\phi : L_F \rw {^LG}$ as for the connected group $G$. However, we will change what we mean by equivalence of parameters. Two parameter will be seen as $\tilde G$-equivalent if they are conjugate under the group $\hat G \rtimes A$. Given a parameter $\phi$, its group of $\tilde G$-self-equivalences is then $\tilde S_\phi=\tx{Cent}(\phi,\hat G \rtimes A)$. This group contains the group $S_\phi = \tx{Cent}(\phi,\hat G)$ of $G$-self-equivalences of $\phi$. We have the exact sequence
\[ 1 \rw S_\phi \rw \tilde S_\phi \rw A^{[\phi]} \rw 1, \]
where $A^{[\phi]}$ is the stabilizer in $A$ of the $G$-equivalence class of $\phi$. This exact sequence leads to the exact sequence
\[ 1 \rw \pi_0(S_\phi) \rw \pi_0(\tilde S_\phi) \rw A^{[\phi]} \rw 1. \]
Recall from \cite{Kot86} that the cohomology class $[z]$ gives a character $\pi_0(Z(\hat G)^\Gamma) \rw \C^\times$, which we will also denote by $[z]$. The stabilizer of this character in $A$ is equal to the stabilizer of the cohomology class of $z$ -- this is immediate if $F$ is $p$-adic, and can be checked if $F=\R$.
Let $A^{[\phi],[z]} = A^{[\phi]} \cap A^{[z]}$. If we pull back the above extension to $A^{[\phi],[z]}$ we obtain the extension
\[ 1 \rw \pi_0(S_\phi) \rw \pi_0(\tilde S_\phi^{[z]}) \rw A^{[\phi],[z]} \rw 1, \]
where $\tilde S_\phi^{[z]}=\tx{Cent}(\phi,\hat G \rtimes A^{[z]})$. The pull-back of an irreducible representation of $\pi_0(\tilde S_\phi^{[z]})$ to $\pi_0(Z(\hat G)^\Gamma)$ is either $[z]$-isotypic, or it does not contain $[z]$. We write $\tx{Irr}(\pi_0(\tilde S_\phi^{[z]}),{[z]})$ for the set of irreducible representations of $\pi_0(\tilde S_\phi^{[z]})$ whose pull-back to $\pi_0(Z(\hat G)^\Gamma)$ is $[z]$-isotypic.

We remark that we could alternatively consider the set $\tx{Irr}(\pi_0(\tilde S_\phi),{[z]})$ of those irreducible representations whose restriction to $\pi_0(Z(\hat G)^\Gamma)$ contains the character $[z]$. Since $Z(\hat G)^\Gamma$ is not central in $\tilde S_\phi$, this restriction will contain other characters as well. According to Clifford theory induction from $\tilde S_\phi^{[z]}$ to $\tilde S_\phi$ gives a bijection between $\tx{Irr}(\pi_0(\tilde S_\phi^{[z]}),{[z]})$ and $\tx{Irr}(\pi_0(\tilde S_\phi),{[z]})$. Indeed, any element of $\tilde S_\phi$ that normalizes $\tilde S_\phi^{[z]}$ and stabilizes $\rho \in \tx{Irr}(\pi_0(\tilde S_\phi^{[z]}),{[z]})$ also stabilizes $[z]$ and thus belongs to $\tilde S_\phi^{[z]}$, so $\tx{Ind}\rho$ is irreducible. However, working with $\tx{Irr}(\pi_0(\tilde S_\phi^{[z]}),{[z]})$ will be more convenient for us.

Consider for a moment the special case $z=1$. Choose an $A$-admissible Whittaker datum $\mf{w}$ for $G$. Any $\mf{w}$-generic representation $\pi$ of $G(F)$ has a canonical extension $\tilde\pi$ to $\tilde G(F)=G(F) \rtimes A^\Gamma$, obtained by setting $\tilde\pi(a)$ to be the unique $G(F)$-map $\pi \circ a^{-1} \to \pi$ that preserves one (hence any) $\mf{w}$-Whittaker functional. We shall say that these $\tilde\pi$ are $\mf{w}$-generic representations of $\tilde G(F)$. We can now state the first part of the local Langlands conjecture for the groups $\tilde G_z$. In the case $F=\R$ set $^K\tilde G_z$ to be the associated $K$-group, i.e. the disjoint union of $\tilde G_{z'}$ for all $z'$ in the image of $H^1(\R,G_{z,\tx{sc}}) \to H^1(\R,G_z) \to H^1(\R,G)$.

\begin{cnj} \label{cnj:llc_pure_is} The choice of an $A$-admissible Whittaker datum $\mf{w}$ on $G$ determines a bijection between the set of irreducible admissible $G$-tempered representations of $\tilde G_z(F)$ when $F/\Q_p$, or of $^K\tilde G_z(F)$ when $F=\R$, and the set of $(\hat G \rtimes A^{[z]})$-conjugacy classes of pairs $(\phi,\tilde\rho)$, where $\phi : L_F \to {^LG}$ is a tempered Langlands parameter, and $\tilde\rho \in \tx{Irr}(\pi_0(\tilde S_\phi^{[z]}),{[z]})$. When $z=1$ the representation corresponding to $(\phi,\tilde\rho)$ is $\mf{w}$-generic if and only if $\tilde\rho=1$.
\end{cnj}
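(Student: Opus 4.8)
The plan is to bootstrap the conjecture for $\tilde G_z$ from the already-known refined local Langlands correspondence for the connected group $G_z$ (which we take as input, since the excerpt says no nontrivial harmonic analysis is performed), together with Clifford theory applied on both sides of the correspondence. Concretely, fix a tempered parameter $\phi$. On the representation-theoretic side, an irreducible admissible $G$-tempered representation $\tilde\pi$ of $\tilde G_z(F)$ restricts to a finite-length semisimple representation of $G_z(F)$, all of whose irreducible constituents lie in a single $\tilde G_z(F)$-orbit; I would argue that this orbit, being stable under the image $A^{[z]}$ of $\tilde G_z(F)$ in $A$, consists of representations $\pi$ lying in packets $\Pi_{\phi'}(G_z)$ for $\phi'$ ranging over an $A^{[z]}$-orbit of parameters, and one normalizes by choosing $\phi$ in that orbit. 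Conversely, every $\pi \in \Pi_\phi(G_z)$ arises from some $\tilde\pi$, and the $\tilde\pi$ inducing a given $\pi$ are classified — via Clifford theory for the exact sequence $1 \to G_z(F) \to \tilde G_z(F) \to A^{[z]} \to 1$ — by the irreducible representations of (a central extension of) the stabilizer in $A^{[z]}$ of the isomorphism class of $\pi$, twisted appropriately.

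The key technical step is to match these two Clifford-theory bookkeeping problems. On the spectral side the relevant stabilizer is a subgroup of $A^{[z]}$, and passing from $\pi$ to the $\tilde\pi$ above $it$ requires a choice of intertwiners $\pi\circ a^{-1} \cong \pi$, which are canonical only up to scalar, producing a $2$-cocycle and hence a central extension of that stabilizer by $\C^\times$. On the Galois/parameter side, the connected LLC for $G_z$ attaches to $\pi$ an irreducible $\rho_\pi$ of $\pi_0(S_\phi^+)$ (with prescribed $[z]$-isotypic behavior on $\pi_0(Z(\hat{\bar G})^+)$); the action of $A^{[\phi],[z]}$ on the packet $\Pi_\phi(G_z)$ is expected — and this is exactly the ``compatibility of LLC with automorphisms'' input flagged in the introduction and Appendix~A — to be implemented on $\tx{Irr}(\pi_0(S_\phi^+),[z])$ by the conjugation action coming from $\pi_0(\tilde S_\phi^{+,[z]})$. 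Granting that compatibility, the stabilizer of $\pi$ in $A^{[z]}$ matches the stabilizer of $\rho_\pi$, and the two central extensions one obtains — one from intertwiner scalars, one from Clifford theory for $1 \to \pi_0(S_\phi) \to \pi_0(\tilde S_\phi^{[z]}) \to A^{[\phi],[z]} \to 1$ — must be shown to be canonically isomorphic (this is the disconnected-torus toy model described in the introduction, now in the general connected setting), so that inducing up on both sides gives the asserted bijection $\tilde\pi \leftrightarrow (\phi,\tilde\rho)$ with $\tilde\rho \in \tx{Irr}(\pi_0(\tilde S_\phi^{[z]}),[z])$.

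For the genericity clause, specialize to $z=1$, so $\tilde G(F) = G(F) \rtimes A^\Gamma$ and the sequence splits. A $\mf{w}$-generic $\pi$ has the canonical extension $\tilde\pi$ defined in the text via the unique intertwiners preserving a $\mf{w}$-Whittaker functional; because these are genuinely canonical (not just up to scalar), the cocycle on the spectral side is trivialized along $\tilde\pi$, i.e. $\tilde\pi$ corresponds to the trivial character of the relevant stabilizer. On the dual side, the connected LLC normalized by the $A$-special (hence $A$-invariant) Whittaker datum $\mf{w}$ sends the unique $\mf{w}$-generic member of $\Pi_\phi(G)$ to the trivial representation of $\pi_0(S_\phi^+)$, and the $A^{[\phi]}$-equivariance of the whole setup forces the corresponding extension to $\pi_0(\tilde S_\phi)$ to be the trivial character as well; matching trivial-to-trivial under the canonical isomorphism of extensions gives $\tilde\rho = 1$. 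I expect the main obstacle to be the canonical identification of the two central extensions: showing not merely that the abstract group extensions of $A^{[\phi],[z]}$ by $\C^\times$ (pushed out from $\pi_0(S_\phi^{(+)}) \to \C^\times$ via $[z]$, respectively the extension of $A^{[z]}$ built from intertwiner scalars composed with the central character) are isomorphic, but that there is a distinguished isomorphism, uniformly in $\phi$, compatible with the connected correspondence and with change of Whittaker datum. This is precisely the point the introduction emphasizes has ``no a-priori reason'' to hold, and in the general case it will rest on the automorphism-compatibility conjecture of Appendix~A plus a careful analysis of how Whittaker functionals transform under $A$.
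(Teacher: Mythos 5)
Your reduction plan is essentially what the paper itself carries out: since this is stated as a conjecture, the paper does not prove it outright but, in Proposition~\ref{pro:slice} (via the Clifford-theory Lemmas~\ref{lem:ex1} and~\ref{lem:ex2}), reduces it --- assuming the connected refined correspondence and the automorphism-compatibility Conjecture~\ref{cnj:func} --- to Conjecture~\ref{cnj:coset}, which is exactly your ``canonical isomorphism of two pushout central extensions'' step, packaged as the existence of the canonical extension $(\pi\boxtimes\rho^\vee)^{\tx{can}}$ of $\pi\boxtimes\rho^\vee$ to $\tilde G_z(F)_\pi \times_{A^{[z]}_\pi}\pi_0(\tilde S_{\phi,\rho}^{+,[z]})$, pinned down uniquely by twisted endoscopic character identities (Remark~\ref{rem:canext}). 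Where the paper goes beyond this conditional reduction is in \S\ref{sec:tori}, where for $G$ a torus the distinguished isomorphism of extensions you flag as the crux is actually constructed (Proposition~\ref{pro:isoh}), using the explicit cochain $h(a)$ built from the generalized Tate--Nakayama pairing; your proposal correctly identifies this as the obstacle and recognizes it as the torus toy model from the introduction, so the routes agree.
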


Let us write $\tilde \Pi_{\phi,z}$ for the finite set of representations of $\tilde G_z(F)$ corresponding to pairs $(\phi,\tilde\rho)$ for a fixed $\phi$ and varying $\tilde\rho$. These can be called $L$-packets for the disconnected group $\tilde G_z(F)$. In the \S\ref{sub:pure_charid} we will add another piece of the conjecture, which will in particular determine uniquely the sets $\tilde\Pi_{\phi,z}$ in terms of the $L$-packets of the connected group $G_z$. The new information in Conjecture \ref{cnj:llc_pure_is} is thus contained in the bijection between $\tilde\Pi_{\phi,z}$ and $\tx{Irr}(\pi_0(\tilde S_\phi^{[z]}),{[z]})$.

\subsection{Endoscopic data} \label{sub:endo}

We shall use essentially the same notion of endoscopic data as in \cite[\S2.1]{KS99}, with one minor but important difference that affects both the definition of datum as well as of an isomorphism of data. More precisely, an endoscopic datum will be a tuple $\mf{e}=(G^\mf{e},\mc{G}^\mf{e},\tilde s^\mf{e},\xi^\mf{e})$ consisting of
\begin{enumerate}[label=(\thesubsection.\arabic*)]
	\item a quasi-split connected reductive group $G^\mf{e}$ defined over $F$;
	\item a split extension $\mc{G}^\mf{e}$ of $W_F$ by $\hat G^\mf{e}$ (but without the choice of splitting);
	\item a semi-simple element $\tilde s^\mf{e} \in \hat G \rtimes A$;
	\item a homomorphism $\xi^\mf{e} : \mc{G}^\mf{e} \to {^LG}$ of extensions;
\end{enumerate}
and satisfying
\begin{enumerate}[label=(\thesubsection.\arabic*),resume]
	\item the homomorphism $W_F \to \tx{Out}(\hat G^\mf{e})$ arising from the extension $\mc{G}^\mf{e}$ is transported under the canonical identification $\tx{Out}(\hat G^\mf{e})=\tx{Out}(G^\mf{e})$ to the one given by the rational structure of $G^\mf{e}$;
	\item $\xi^\mf{e}$ induces an isomorphism $\hat G^\mf{e} \to \tx{Cent}(\tilde s^\mf{e},\hat G)^\circ$; \label{item:e0}
	\item $\tilde s^\mf{e}$ commutes with the image of $\xi^\mf{e}$. \label{item:e1}
\end{enumerate}
This completes the description of the tuple $\mf{e}$. An isomorphism $\mf{e} \to \mf{e'}$ is an element $g \in \hat G$ satisfying
\begin{enumerate}[label=(\thesubsection.\arabic*),resume]
	\item $\xi^\mf{e'}=\tx{Ad}(g) \circ \xi^\mf{e}$;
	\item $\tilde s^\mf{e'}=\tx{Ad}(g)\tilde s^\mf{e}$ modulo $Z(\hat G)^\circ$. \label{item:e2}
\end{enumerate}
The difference between these definitions and those in \cite{KS99} is the following. First, we are only considering here the case $\omega=1$ and hence $\mathbf{a}=1$. Second, our requirement \ref{item:e1} is stricter than \cite[(2.1.4a)]{KS99}. The definition \cite[(2.1.6)]{KS99} of isomorphism however implies that every isomorphism class of endoscopic data in the sense of \cite{KS99} contains a representative that satisfies \ref{item:e1}. Third, our requirement \ref{item:e2} is stricter than \cite[(2.1.6)]{KS99}. This implies that a single isomorphism class in the sense of \cite{KS99} can consist of multiple isomorphism classes in the sense of our definition.

\subsection{Two constructions of endoscopic data} \label{sub:endocnst}

We now review two constructions of endoscopic data, one geometric and one spectral. In the case of connected groups, they are summarized in \cite[\S4.2]{She83}. In the twisted case the geometric appears in the proof of \cite[Lemma 7.2]{KS99} and the spectral one appears at the end of \cite[\S2]{KS99}.

We begin with the spectral construction, which is a little easier to describe. Let $\phi : L_F \to {^LG}$ be an $L$-parameter and $\tilde s \in \tilde S_\phi$ a semi-simple element. The pair $(\phi,\tilde s)$ leads to an endoscopic datum as follows. Set $\hat G^\mf{e}=\tx{Cent}(\tilde s,\hat G)^\circ$, $\mc{G}^\mf{e}=\hat G^\mf{e}\cdot \phi(W_F)$, and let $\xi^\mf{e}$ be the natural inclusion. Let $G^\mf{e}$ be the quasi-split group defined over $F$ that is dual to $\hat G^\mf{e}$ and whose rational structure is determined by $\Gamma \to \tx{Out}(\hat G^\mf{e})=\tx{Out}(G^\mf{e})$, the first map coming from the extension $\mc{G}^\mf{e}$. Then $(G^\mf{e},\mc{G}^\mf{e},\tilde s,\xi^\mf{e})$ is an endoscopic datum. Note that $\phi$ factors through $\xi^\mf{e}$ and thus becomes a parameter for $\mc{G}^\mf{e}$ (in order to relate it to $G^\mf{e}$, one needs to further choose a z-pair).

For the geometric construction, let $\tilde G_{\bar z}$ be an inner form of $\tilde G$ and let $\tilde \delta \in \tilde G_{\bar z}(F)$ be strongly regular semi-simple. Let $S' \subset G$ be the maximal torus $\tx{Cent}(\tx{Cent}(\tilde\delta,G),G)$. As a maximal torus of $G_{\bar z}$ it is defined over $F$, and $\tx{Ad}(\tilde\delta) : S' \to S'$ is an automorphism defined over $F$. Let $\kappa \in H^1(W_F,(1-\tilde\delta) : \hat S' \to \hat S')$. The pair $(\tilde\delta,\kappa)$ leads to an endoscopic datum $(G^\mf{e},\mc{G}^\mf{e},\tilde s^\mf{e},\xi^\mf{e})$ and a stable conjugacy class of elements $\gamma^\mf{e} \in G^\mf{e}(F)$ as follows.

Choose a norm $(S,\gamma)$ of $\tilde\delta$ and $g \in G(\bar F)$ such that $g^{-1}\tilde\delta g = \tilde\delta^* = \delta^* \rtimes a \in S(\bar F) \rtimes a$ with $\delta^* \mapsto \gamma$. Then $\tx{Ad}(g)$ provides an isomorphism $H^1(W_F,(1-\tilde\delta) : \hat S' \to \hat S') \to H^1(W_F,(1-a) : \hat S \to \hat S)$. Choose an $a$-invariant Borel pair $(\hat T,\hat B)$ of $\hat G$ and an $a$-invariant Borel subgroup $C$ containing $S$. These lead to an equivariant isomorphism $\hat S \to \hat T$ under which a 1-hypercocycle representing $\kappa$ is transported to a pair $(t_w^{-1},s)$ satisfying, for every $w \in W_F$, the relation $s^{-1}\sigma_S(s)=t_w^{-1}a(t_w)$, where $\sigma \in \Gamma$ is the image of $w$ and $\sigma_S$ is the transport of the action of $\sigma$ on $\hat S$ to $\hat T$. This transport is given by $\omega_\sigma \rtimes \sigma$, for a uniquely determined $\omega_\sigma \in \Omega(\hat T,\hat G)^a$. The map $\sigma \mapsto \omega_\sigma$ belongs to $Z^1(\Gamma,\Omega(\hat T,\hat G)^a)$. In the long exact sequence of $\Gamma$-cohomology associated to the short exact sequence
\[ 1 \to \hat T^{a,\circ} \to N(\hat T^{a,\circ},\hat G^{a,\circ}) \to \Omega(\hat T,\hat G)^a \to 1 \]
the image of the class of $\omega_\sigma$ is an element of $H^2(\Gamma,\hat S)$, whose restriction to $H^2(W_F,\hat S)$ vanishes according to \cite[Lemma 4]{Lan79}. It follows that there exist lifts $n_\sigma \in N(\hat T^{a,\circ},\hat G^{a,\circ})$ of $\omega_\sigma$ so that $w \mapsto n_w \rtimes w$ is a homomorphism $W_F \to N(\hat T^{a,\circ},\hat G^{a,\circ}) \rtimes W_F$. Then $\eta : w \mapsto t_wn_w \rtimes w$ is a group homomorphism $W_F \to N(\hat T,\hat G)$ whose image commutes with $s \rtimes a$. Define $\tilde s^\mf{e}=s \rtimes a$, $\hat G^\mf{e}=\tx{Cent}(\tilde s^\mf{e},\hat G)^\circ$, $\mc{G}^\mf{e}=\hat G^\mf{e} \cdot \eta(W_F)$, and let $\xi^\mf{e}$ be the natural inclusion. Let $G^\mf{e}$ be the quasi-split group defined over $F$, dual to $\hat G^\mf{e}$, and with rational structure determined by $\Gamma \to \tx{Out}(\hat G^\mf{e}) = \tx{Out}(G^\mf{e})$, where the first map comes from the extension $\mc{G}^\mf{e}$. It is immediately checked that $(G^\mf{e},\mc{G}^\mf{e},\tilde s^\mf{e},\xi^\mf{e})$ is an endoscopic datum.

The $a$-equivariant isomorphism $\hat S \to \hat T$ and the inclusion $\hat T^{a,\circ} \to \hat G^\mf{e}$ give a canonical $G^\mf{e}(\bar F)$-conjugacy class of embeddings $S_a \to G^\mf{e}$. Thus the element $\gamma$ gives a canonical $G^\mf{e}(\bar F)$-conjugacy class of strongly regular semi-simple elements of $G^\mf{e}(\bar F)$. This class is $\Gamma$-invariant, so by \cite[Corollary 2.2]{Kot82} gives a stable class of elements $\gamma^\mf{e} \in G^\mf{e}(F)$. This completes the geometric construction.

\subsection{Normalized transfer factor invariant under $\tilde G_z(F)$} \label{sub:pure_tf}

Fix an $A$-admissible Whittaker datum as in \S\ref{sub:awhit}. Let $\mf{e}=(G^\mf{e},\mc{G}^\mf{e},\tilde s^\mf{e},\xi^\mf{e})$ be an endoscopic datum. There may or may not exist an isomorphism $\mc{G}^\mf{e} \to  {^LG}^\mf{e}$ of extensions of $W_F$ by $\hat G^\mf{e}$. If it does we choose one such, denote it by $\xi^\mf{z}$ and write $G^\mf{z}=G^\mf{e}$. If it does not, we can choose a $z$-extension $G^\mf{z} \to G^\mf{e}$ and apply \cite[Lemma 2.2.A]{KS99} which guarantees that the inclusion $\hat G^\mf{e} \to \hat G^\mf{z}$ always extends to an $L$-embedding $\xi^\mf{z} : \mc{G}^\mf{e} \to {^LG}^\mf{z}$. We denote by $\mf{z}$ the pair $(G^\mf{z},\xi^\mf{z})$.

We will define a normalized absolute transfer factor $\Delta[\mf{w},\mf{e},\mf{z}]$ as a function that assigns complex values to pairs $(\gamma^\mf{z},\tilde\delta)$ of $G^\mf{z}(F) \times \tilde G_z(F)$, where both $\gamma^\mf{z}$ and $\tilde\delta$ are strongly regular semi-simple. As a function of $\tilde\delta$ the transfer factor $\Delta[\mf{w},\mf{e},\mf{z}]$ will be conjugation\-invariant under the full group $\tilde G_z(F)$. The definition is by the formula
\begin{equation} \label{eq:pure_tf} \Delta[\mf{w},\mf{e},\mf{z}](\gamma^\mf{z},\tilde\delta) = \sum_{c \in \tilde G_z(F)/G_z(F)} \Delta_{KS}[\mf{w},\mf{e},\mf{z}](\gamma^\mf{z},c\tilde\delta c^{-1}), \end{equation}
which in turn uses a normalized absolute Kottwitz-Shelstad transfer factor $\Delta_{KS}[\mf{w},\mf{e},\mf{z}]$ that we will define below. The latter is a function that assigns complex values to pairs $(\gamma^\mf{z},\tilde\delta)$ of $G^\mf{z}(F) \times [G \rtimes b^{-1}]_z(F)$, where $b \in A$ is the image of $\tilde s^\mf{e}$ and both $\gamma^\mf{z}$ and $\tilde\delta$ are strongly regular semi-simple. In the variable $\tilde\delta$ this function is only $G_z(F)$\-conjugation invariant.

Following \cite[\S5.5]{KS12}, the factor $\Delta_{KS}[\mf{w},\mf{e},\mf{z}]$ is defined by
\begin{equation} \label{eq:pure_tf1} \Delta_{KS}[\mf{w},\mf{e},\mf{z}]:=
\epsilon_L(V,\psi)(\Delta_I^\tx{new})^{-1}\Delta_{II}(\Delta_{III}^\tx{new})^{-1}\Delta_{IV}. \end{equation}
The terms $\epsilon_L(V,\psi)$, $\Delta_I^\tx{new}$, $\Delta_{II}$, and $\Delta_{IV}$ have already been defined, in \cite[\S5.3]{KS99}, \cite[\S3.4]{KS12}, \cite[\S4.3]{KS99}, and \cite[\S4.5]{KS99}, respectively, but we will recall them for the convenience of the reader below. They are absolute, i.e. they depend on a single pair of elements $(\gamma^\mf{z},\tilde\delta)$. The term $\Delta_{III}^\tx{new}$ will be defined in this paper. It is also absolute. A relative version of it, i.e. one depending on two pairs of related elements, was defined in \cite[\S4.4]{KS99}; in the quasi-split case $z=1$ an absolute version was defined in \cite[\S5.3]{KS99}; in the untwisted case an absolute version was defined in \cite{KalECI} for pure inner forms of $p$-adic groups, and generalized in \cite{KalRI} to arbitrary inner forms of connected groups over local fields. In this paper we will define an absolute version for arbitrary $z$ in the twisted setting. 
When $\bar z=1$ the term $\Delta_{III}^\tx{new}$ coincides with the absolute term $\Delta_{III}$ defined in \cite[\S5.3]{KS99}. The factor $\Delta_{KS}$ is an absolute transfer factor whose relative version is the factor $\Delta'$ of \cite[\S5.4]{KS12}. When $z=1$ then $\Delta_{KS}$ coincides with the absolute factor \cite[(5.5.2)]{KS12}. When $b=1$ the factor $\Delta_{KS}$ coincides with 
the factor \cite[(5.10)]{KalRI}

Before we come to $\Delta_{III}^\tx{new}$ we briefly recall the definition of the other factors. These factors depend on auxiliary data that we describe first. Let $(T,B,\{X_\alpha\})$ be an $F$-pinning of $G$ invariant under $b$. Let $\psi : F \to \C^\times$ be a non-trivial character. It is assumed that the Whittaker datum arising from the pinning and the character is the given datum $\mf{w}$. Let $R_\tx{res}(S,G)$ be the set of restrictions to $S_\tx{sc}^b$ of the absolute roots of $S$ in $G$. Fix $a$-data and $\chi$-data for $R_\tx{res}(S,G)$. 

Let $\gamma^\mf{e} \in G^\mf{e}$ be the image of $\gamma^\mf{z}$ under the natural map $G^\mf{z} \to G^\mf{e}$. The complex number $\Delta_{KS}[\mf{w},\mf{e},\mf{z}](\gamma^\mf{z},\tilde\delta)$ is zero unless $\gamma^\mf{e}$ transfers to a norm of $\tilde\delta$. More precisely, let $(S,\gamma)$ be a norm of $\tilde\delta$ in the sense of Definition \ref{dfn:norm}. It exists and is unique up to $G^{b}(\bar F)$-conjugacy according to Lemma \ref{lem:c2}. In order for $\Delta_{KS}[\mf{w},\mf{e},\mf{z}](\gamma^\mf{z},\tilde\delta)$ to not be zero, there must exists an admissible isomorphism $S^\mf{e} \to S_{b}$ carrying $\gamma^\mf{e}$ to $\gamma$, where $S^\mf{e}$ is the centralizer of $\gamma^\mf{e}$. We now assume such an isomorphism exists. It is then uniquely determined by the pair $(\gamma^\mf{e},\gamma)$ and we call it $\varphi_{\gamma^\mf{e},\gamma}$. Via this isomorphism we obtain an embedding $R(S^\mf{e},G^\mf{e}) \to R_\tx{res}(S,G)$. We can transport the chosen $a$-data and $\chi$-data to $R(S^\mf{e},G^\mf{e})$. Recall that $S$ is a $b$-admissible maximal torus of $G$, $\gamma \in S_b(F)$, and there exists $g \in G(\bar F)$ such that $g^{-1}\tilde\delta g = \tilde\delta^*=\delta^* \rtimes b^{-1}$ with $\delta^* \in S(\bar F)$ mapping to $\gamma$.

The term $\epsilon_L(V,\psi)$ is the root number of the virtual $\Gamma$-representation $V=X^*(T)^b_\C-X^*(T^\mf{e})_\C$, where $T^\mf{e}$ is the (unique up to conjugation) minimal Levi subgroup of $G^\mf{e}$. 

The term $\Delta_{II}$ is a fraction. Its numerator is a product over $\Gamma$-orbits of $\alpha_\tx{res} \in R_\tx{res}(S,G)$, where the factor corresponding to $\alpha_\tx{res}$ is $\chi_{\alpha_\tx{res}}((N\alpha(\delta^*)-1)/a_{\alpha_\tx{res}})$ when $\alpha_\tx{res}$ is of type R1 or R2, and $\chi_{\alpha_\tx{res}}(N\alpha(\delta^*)+1)$ if $\alpha_\tx{res}$ is of type R3. Here $\alpha \in R(S,G)$ is any preimage of $\alpha_\tx{res}$ and $N\alpha$ is the sum of the members of the $b$-orbit of $\alpha$, which we recall is uniquely determined by $\alpha_\tx{res}$. The denominator is a product over $\Gamma$-orbits of $\alpha_\mf{e} \in R(S^\mf{e},G^\mf{e}) \subset R_\tx{res}(S,G)$, where the factor corresponding to $\alpha_\mf{e}$ is $\chi_{\alpha_\mf{e}}((\alpha_\mf{e}(\gamma^\mf{e})-1)/a_{\alpha_\mf{e}})$.

The term $\Delta_{IV}$ is again a fraction. Its numerator is a product over $\alpha_\tx{res} \in R_\tx{res}(S,G)$, where the factor corresponding to $\alpha_\tx{res}$ is $|N\alpha(\delta^*)-1|^\frac{1}{2}$ when $\alpha_\tx{res}$ is of type R1 or R2, and $|N\alpha(\delta^*)+1|^\frac{1}{2}$ if $\alpha_\tx{res}$ is of type R3. The denominator is a product over $\alpha_\mf{e} \in R(S^\mf{e},G^\mf{e}) \subset R_\tx{res}(S,G)$, where the factor corresponding to $\alpha_\mf{e}$ is $|\alpha_\mf{e}(\gamma^\mf{e})-1|^\frac{1}{2}$.

The term $\Delta_I$ is obtained by taking the Tate-Nakayama pairing of an element $t \in H^1(\Gamma,S_\tx{sc}^b)$ with an element $s_S \in \pi_0([\hat S_\tx{ad}]_b^\Gamma)$, or equivalently of the image of $t$ in $H^1(\Gamma,S^{b,\circ})$ with an element $s_S \in \pi_0([\hat S]_b^\Gamma)$. The element $t$ is the twisted splitting invariant of $S$, obtained as follows. Let $C \subset G$ be a Borel subgroup defined over $\bar F$ containing $S$ and invariant under $b$; its existence is the definition of $b$-admissibility of $S$. Choose $h \in G_\tx{sc}^b$ such that $h(T,B)h^{-1}=(S,C)$. Let $w(\sigma) \in N(T_\tx{sc}^b,G_\tx{sc}^b)/T_\tx{sc}^b$ be the image of $h^{-1}\sigma(h)$ and let $n(\sigma) \in N(T_\tx{sc}^b,G_\tx{sc}^b)$ be the Tits lift of $w(\sigma)$ with respect to the chosen pinning of $G$. Thus $n(\sigma)(h^{-1}\sigma(h))^{-1} \in T^b$. Let $y(\sigma)=\prod \alpha^\vee(a_{\alpha_\tx{res}}) \in S_\tx{sc}^b$, where the product runs over those $\alpha \in R(S,C)$ that satisfy $-\sigma(\alpha) \in R(S,C)$. Then $t$ is the class of $y(\sigma) \cdot hn(\sigma)\sigma(h)^{-1}$. To obtain the element $s_S \in \pi_0([\hat S]_b^\Gamma)$, choose a member of the canonical $\hat G^\mf{e}$-conjugacy class of embeddings $\hat S^\mf{e} \to \hat G^\mf{e}$ and compose it with $\xi^\mf{e}$ and $\hat\varphi_{\gamma^\mf{e},\gamma}$ to obtain an embedding $\hat S^{b,\circ} \to \hat G$. The image of this embedding commutes with $\tilde s^\mf{e}$. It extends uniquely to an admissible embedding $\hat S \to \hat G$, see Lemma \ref{lem:lembex} below. Replacing $\mf{e}$ by an isomorphic datum if necessary we may arrange that the image of $\hat S$ is $b$-invariant. Writing $\tilde s^\mf{e}=s^\mf{e} \rtimes b$ we see that $s^\mf{e}$ commutes with the image of $\hat S^{b,\circ}$, hence also with the image of $\hat S$, and hence lies in that image. We transport $s^\mf{e}$ to $\hat S$ and project to $[\hat S]_b$ and obtain $s_S$.

\subsection{Normalized factor $\Delta_{KS}$ without $z$-pair} \label{sub:pure_tf1}

We turn to the construction of $\Delta_{III}^\tx{new}$. For simplicity in this subsection we shall assume that there exists an $L$-isomorphism $\xi^\mf{z} : \mc{G}^\mf{e} \to {^LG}^\mf{e}$, so that $G^\mf{z}=G^\mf{e}$ and $\gamma^\mf{z}=\gamma^\mf{e}$. The general case will be treated in the next subsection.

In Lemma \ref{lem:inv} we defined an element $\tx{inv}(\gamma,(z,\delta)) \in H^1_{b^{-1}}(\Gamma,S \rrw S)$ which we transport via the isomorphism \eqref{eq:ksiso} to $H^1(\Gamma,(1-b^{-1}):S \rw S)$. On the other hand, the constructions of \cite[\S4.4]{KS99} provide an element $A_0$ of $H^1(W_F,(1-b) : \hat S \to \hat S)$. We recall them here, as they take a particularly simple form in our set-up. Namely, transport the chosen $\chi$-data for $R_\tx{res}(S_{b},G)$ via the admissible isomorphism $S^\mf{e} \to S_{b}$ to $R_\tx{res}(S^\mf{e},G^\mf{e})$. From these $\chi$-data one obtains $L$-embeddings $\xi_S^\mf{e} : {^LS}^\mf{e} \rw {^LG^\mf{e}}$ and $\xi_S^1 : {^LS}_{b} \to {^LG^1}$, where $G^1$ is the principal endoscopic group of $G \rtimes b^{-1}$, i.e. the quasi-split connected reductive group with $L$-group $\hat G^1 \rtimes W_F$, where $\hat G^1=\hat G^{b,\circ}$. We have the natural embedding $^LG^1 \to {^LG}$ and the $L$-isomorphism ${^L\varphi}_{\gamma^\mf{e},\gamma} : {^LS_{b}} \to {^LS}^\mf{e}$ corresponding to $\varphi_{\gamma^\mf{e},\gamma}$. Thus we obtain two $L$-embeddings ${^LS_{b}} \to {^LG}$,  well-defined up to $\hat G$-conjugacy, given by $\xi_S^1$ and $\xi^\mf{z}\circ\xi_S^\mf{e}\circ{^L\varphi}_{\gamma^\mf{e},\gamma}$. By Corollary \ref{cor:lembcomp} there exists a (uniquely determined) 1-cocycle $a_S : W_F \rw \hat S$ such that $\xi^\mf{e}\circ\xi_S^\mf{e}\circ{^L\varphi}_{\gamma^\mf{e},\gamma}(t \rtimes w)=\xi_S(t a_S(w) \rtimes w)$ for all $t \rtimes w \in \hat S^b \rtimes W_F$, where $\xi_S : {^LS} \to {^LG}$ is the unique $L$-embedding extending $\xi_S^1$ by Lemma \ref{lem:lembex}. The property $\tx{Ad}(\tilde s^\mf{e})\xi^\mf{e}=\xi^\mf{e}$ and the above equation immediately imply that $\tilde s^\mf{e}$ commutes both with $\xi_S^1(\hat S_b)$ and with $w \mapsto \xi_S(a_S(w) \rtimes w)$. Commuting with $\xi_S^1(\hat S^b)$ implies $\tilde s^\mf{e} = s \rtimes b$ with $s \in \xi_S(\hat S)$. Let $s_S \in \hat S$ be the preimage of $s$ under $\xi_S$. The commuting of $\tilde s$ with $\xi_S(a_S(w) \rtimes w)$ is then equivalent to  $(1-b)a_S(w)=s_S \cdot (\sigma_S(w)s_S)^{-1}$, which says that $(a_S^{-1},s_S) \in Z^1(W_F,(1-b):\hat S \rw \hat S)$. We let $A_0$ be the class of $(a_S^{-1},s_S)$.

In \cite[\S A.3]{KS99} a pairing was defined between the cohomology groups $H^1(W_F,(1-b):\hat S \rw \hat S)$ and $H^1(\Gamma,(1-b^{-1}):S \rw S)$. We define 
\begin{equation} \label{eq:d3}
	\Delta_{III}^\tx{new}(\gamma^\mf{z},\tilde\delta)=\<\tx{inv}(\gamma,(z,\delta)),A_0\>. 	
\end{equation}

\begin{rem} \label{rem:d3}
	In ordinary endoscopy, the transfer factor constructed in \cite{LS87} contains two pieces $\Delta_{III_1}$ and $\Delta_{III_2}$. In twisted endoscopy these two pieces are glued together into the factor $\Delta_{III}$, which in general cannot be written as a product of two separate pieces. The reason for that was discussed in Remark \ref{rem:norm}: if $(S,\gamma)$ is a norm for $\tilde\delta$, then the pair $(z^*,\delta^*)$ in the equivalence class of $(z,\delta)$ with $\delta^* \in S(\bar F)$ and $\delta^* \mapsto \gamma$ cannot in general be chosen to satisfy in addition $\delta^* \in S(F)$.
	
	In the special case when this is possible, we have seen in Remark \ref{rem:inv} that then the class $\tx{inv}(\gamma,(z,\delta))$ breaks up into a product of classes $([z^*],[\delta^*]) \in H^1(\Gamma,S^b) \times H^0(\Gamma,S)$. Therefore, pairing $\tx{inv}(\gamma,(z,\delta))$ with $A_0$ is the same as pairing $([z^*],[\delta^*])$ with the image of $A_0$ under the natural map $H^1(W_F,(1-b):\hat S \rw \hat S) \to H^1(W_F,\hat S) \times H^0(W_F,\hat S_b)$. This image is the pair of classes $([a_S]^{-1},[s_b])$, where $s_b$ is the image of $s_S$ under $\hat S \to \hat S_b$. Then we obtain
	\[ \Delta_{III}^\tx{new}(\gamma^\mf{z},\tilde\delta) = \<[z^*]^{-1},[s_b]\> \cdot \<[\delta^*], [a_S]^{-1}\>, \]
	where the first pairing is the Tate-Nakayama pairing between $H^1(\Gamma,S^b)$ and $H^0(\Gamma,\hat S_b)$, while the second is the Langlands pairing between $H^0(\Gamma,S)$ and $H^1(W_F,\hat S)$. The first term is the twisted analog of the absolute term $\Delta_{III_1}^\tx{new}$ in ordinary endoscopy, defined in \cite[\S2.2]{KalECI} for pure inner twists and in \cite[\S5.3]{KalRI} in general. Indeed, by construction we have $[z^*]=\tx{inv}(\tilde\delta^*,\tilde\delta)$, where $\tilde\delta^*=\delta^* \rtimes a$. The second term is the twisted analog of the absolute term $\Delta_{III_2}$ defined in \cite[\S3.5]{LS87}. We thus have 
	\begin{equation}
		\Delta_{III}^\tx{new}(\gamma^\mf{z},\tilde\delta) = \<\tx{inv}(\tilde\delta^*,\tilde\delta)^{-1},[s_b]\> \cdot \<[\delta^*], [a_S]^{-1}\>. 
	\end{equation}
\end{rem}

\begin{lem} \label{lem:lembex}
Any $L$-embedding ${^LS_b} \to {^LG}$ extends uniquely to an $L$-embedding ${^LS} \to {^LG}$.
\end{lem}
\begin{proof}
Consider an $L$-embedding $\xi_S^1 : {^LS_b} \to {^LG}$. If an extension $\xi_S : {^LS} \to {^LG}$ were given, then for $s \rtimes w \in \hat S \rtimes W_F = {^LS}$ the equality $\xi_S(s \rtimes w)=\xi_S(s)\cdot \xi_S^1(w)$ would hold. Two different extensions of $\xi_S^1$ would thus differ by an element of $\Omega(S,G)(F)$ that induces a trivial action on $S_{b}$, equivalently on $S^{b}$, but this only holds for $1 \in \Omega(S,G)(F)$. This shows uniqueness. For existence we fix Borel pairs $(\hat T,\hat B)$ of $\hat G$ and $(T,B)$ of $G$, stable under $\Gamma$ and $b$. Then $(\hat T^b,\hat B^b)$ is a $\Gamma$-invariant Borel pair of $\hat G^{b,\circ}$. Fix a $b$-invariant Borel subgroup $C$ of $G$ defined over $\bar F$ and containing $S$, and let $g \in G^{b,\circ}$ conjugate $(T,B)$ to $(S,C)$. Composing the dual of $\tx{Ad}(g) : T \to S$ with the natural identification of the dual of $T$ with $\hat T$ given by $B$ and $\hat B$ gives an admissible isomorphism $\xi_S : \hat S \to \hat T$. Its restriction $\hat S^{b,\circ} \to \hat T^{b,\circ}$ is also an admissible isomorphism, and after conjugating $\xi_S^1$ within $\hat G^{b,\circ}$ we can arrange that this latter isomorphism coincides with $\xi_S^1$. Let $\omega_\sigma \in \Omega(T,G)^b=\Omega(\hat T,\hat G)^b$ be the image of $g^{-1}\sigma(g) \in N(S^b,G^{b,\circ})$. Then the transport via $\xi_S$ of the action of $w \in W_F$ on $\hat S$ is given by $\omega_{\sigma_w} \rtimes w$ on $\hat T$. The same is true for $\xi_S^1$ and we see that $\xi_S^1(1 \rtimes w) \in N(\hat T^{b,\circ},\hat G^{b,\circ}) \rtimes W_F$ lifts $\omega_{\sigma_w} \rtimes w$. It follows that $\tx{Ad}(\xi_S^1(1 \rtimes w))\xi_S(s) = \xi_S(wsw^{-1})$, hence $s \rtimes w \mapsto \xi_S(s) \cdot \xi_S^1(1 \rtimes w)$ is an $L$-embedding extending $\xi_S^1$.
\end{proof}

\begin{fct} \label{fct:lembimg}
Let $\mc{G}$ be an extension of $W_F$ by $\hat G$ and let $\xi : {^LS} \to \mc{G}$ be an $L$-embedding.
\begin{enumerate}
	\item The image of $\xi$ is the subgroup of $\mc{G}$ defined by
\[ \mc{S} = \{x \in \mc{G}| \forall s \in \hat S : x\xi(s \rtimes 1)x^{-1} = \xi( \sigma_x(s) \rtimes 1),\} \]
where $\sigma_x \in \Gamma$ is the image of $x$.
\item In particular, the image of $\xi$ depends only on the restriction of $\xi$ to $\hat S$.
\item $\xi$ is a homeomorphism onto its image.
\end{enumerate}
\end{fct}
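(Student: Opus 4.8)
The plan is to establish (1), from which (2) is immediate and (3) follows by a short topological argument. Throughout write $\xi(s)$ for $\xi(s\rtimes 1)$ and set $\mc{S}'=\tx{im}(\xi)$. Since $S$ is a maximal torus, $\hat S$ has dimension equal to the rank of $\hat G$; as $\xi$ is an $L$-embedding its restriction $\xi|_{\hat S}:\hat S\to\hat G$ is an injective morphism of complex algebraic groups, so $\xi(\hat S)$ is a subtorus of $\hat G$ of maximal dimension, hence a maximal torus, and therefore $\tx{Cent}(\xi(\hat S),\hat G)=\xi(\hat S)$ because $\hat G$ is connected reductive.

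For (1) I would first check $\mc{S}'\subseteq\mc{S}$: for $x=\xi(t\rtimes w)$ and any $s\in\hat S$ one has $x\xi(s)x^{-1}=\xi\big((t\rtimes w)(s\rtimes 1)(t\rtimes w)^{-1}\big)=\xi(w(s)\rtimes 1)=\xi(\sigma_x(s)\rtimes 1)$, using that $\hat S$ is abelian and that the image of $x$ in $W_F$, hence in $\Gamma$, governs the action on $\hat S$. Next, $\mc{S}$ is visibly a subgroup of $\mc{G}$ (its defining relation is multiplicative in $x$ since $\sigma_{xy}=\sigma_x\sigma_y$), it surjects onto $W_F$ (already $\mc{S}'$ does, via $\xi(1\rtimes w)\mapsto w$), and one computes the kernels of the two projections to $W_F$: for $\mc{S}'$ this kernel is $\mc{S}'\cap\hat G=\xi(\hat S)$, and for $\mc{S}$ it is $\{x\in\hat G:x\xi(s)x^{-1}=\xi(s)\ \forall s\}=\tx{Cent}(\xi(\hat S),\hat G)=\xi(\hat S)$. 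A subgroup of $\mc{G}$ that contains $\mc{S}'$, surjects onto $W_F$, and has the same kernel over $W_F$ must equal $\mc{S}'$; hence $\mc{S}=\mc{S}'=\tx{im}(\xi)$. Part (2) follows at once, because the subgroup $\mc{S}$ of (1) is defined using only the subset $\xi(\hat S)\subseteq\mc{G}$, i.e. only $\xi|_{\hat S}$.

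For (3), $\xi:{}^LS\to\mc{S}$ is a continuous bijective homomorphism and it remains to produce a continuous inverse. Under ${}^LS=\hat S\rtimes W_F$ the first component of $\xi^{-1}$ is the restriction of the projection $\mc{G}\to W_F$, hence continuous; writing $q:\mc{S}\to W_F$ for this restriction, one checks that $x\cdot\xi(1\rtimes q(x))^{-1}\in\xi(\hat S)$ and that the second component of $\xi^{-1}$ is $x\mapsto \xi|_{\hat S}^{-1}\big(x\cdot\xi(1\rtimes q(x))^{-1}\big)$. This is a composite of continuous maps, the only nonformal input being that $\xi|_{\hat S}:\hat S\to\xi(\hat S)$ is a homeomorphism; this holds because a bijective morphism of complex algebraic groups in characteristic zero is an isomorphism, so $\xi(\hat S)$ is closed in $\hat G$ and $\xi|_{\hat S}$ is a biholomorphism onto it. (Alternatively, having observed that $\mc{S}$ is closed in $\mc{G}$ — it is the intersection over $s\in\hat S$ of the closed conditions $x\xi(s)x^{-1}=\xi(\sigma_x(s))$, the action of $W_F$ on $\hat S$ factoring through a finite quotient — one may invoke the open mapping theorem for the continuous surjective homomorphism $\xi:{}^LS\to\mc{S}$ from a $\sigma$-compact locally compact group.) I do not anticipate a real obstacle; the one place to be careful is the kernel computation in (1), which depends on $\xi(\hat S)$ being a \emph{maximal} torus of $\hat G$, and hence on $S$ being a maximal torus of $G$ together with the injectivity of $\xi$.
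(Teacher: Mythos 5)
Your proposal is correct and rests on the same two pillars as the paper's proof: that $\xi(\hat S)$ is a maximal torus of $\hat G$ and hence its own centralizer, and the open mapping theorem for $\sigma$-compact locally compact groups for part (3). The cosmetic differences — packaging the surjectivity of $\xi$ onto $\mc S$ as an equality of subgroups with the same kernel and image over $W_F$ rather than directly writing $x=\xi(s\rtimes w)$, and offering an explicit continuous inverse as an alternative to the open mapping theorem — do not change the substance, and your remark that $\mc S$ is closed is a useful detail the paper leaves implicit.
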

\begin{proof}
Certainly the image of $\xi$ is contained in $\mc{S}$. Given $x \in \mc{S}$ let $w$ be its image in $W_F$ and consider $x' = \xi(1 \rtimes w) \in \mc{S}$. Then $x'x^{-1} \in \hat G$ commutes with $\xi(\hat S)$ and thus belong to $\xi(\hat S)$, i.e. $x=\xi(s \rtimes w)$. The second point is immediate from the first, and the third follows from the open mapping theorem and the fact that $^LS$ is locally compact, Hausdorff, and $\sigma$-compact.
\end{proof}

\begin{cor} \label{cor:lembcomp}
There exists a 1-cocycle $a_S : W_F \rw \hat S$ such that the $L$-embeddings $\xi^\mf{e}\circ\xi_S^\mf{e}\circ{^L\varphi}_{\gamma^\mf{e},\gamma}$ and $\xi_S \circ \tilde a_S$ are $\hat G$-conjugate.
\end{cor}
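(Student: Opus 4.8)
The plan is to realize both $L$-embeddings as arising from one and the same ``$L$-torus'' $\xi_S({^LS})\subset{^LG}$, differing only by a twist by an $\hat S$-valued $1$-cocycle.

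Set $\Phi:=\xi^\mf{e}\circ\xi_S^\mf{e}\circ{^L\varphi}_{\gamma^\mf{e},\gamma}$, an $L$-embedding ${^LS_b}\to{^LG}$, where $b\in A$ is the image of $\tilde s^\mf{e}$ and $S_b=S/(1-b)S$; note that $\hat S_b$ is canonically the subtorus $\hat S^{b,\circ}\subset\hat S$, so ${^LS_b}$ is a subgroup of ${^LS}$. By the arrangement made just before Lemma \ref{lem:lembex}, $\Phi$ and $\xi_S^1$ (composed with ${^LG^1}\hookrightarrow{^LG}$) have equal restriction to $\hat S_b$, and by Lemma \ref{lem:lembex} the embedding $\xi_S$ restricts on ${^LS_b}\subset{^LS}$ to $\xi_S^1$; hence $\Phi$ and $\xi_S|_{{^LS_b}}$ are two $L$-embeddings ${^LS_b}\to{^LG}$ agreeing on $\hat S_b$. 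I claim it suffices to prove the inclusion $\Phi({^LS_b})\subseteq\xi_S({^LS})$. Indeed, granting this, $\xi_S^{-1}\circ\Phi\colon{^LS_b}\to{^LS}$ is a continuous homomorphism over $W_F$ whose restriction to $\hat S_b$ is the natural inclusion $\hat S_b\hookrightarrow\hat S$; such a map necessarily has the form $t\rtimes w\mapsto t\,a_S(w)\rtimes w$ for a unique continuous $a_S\colon W_F\to\hat S$, and being a homomorphism forces $a_S\in Z^1(W_F,\hat S)$. With $\tilde a_S$ the resulting map one gets $\Phi=\xi_S\circ\tilde a_S$ on the nose, hence in particular up to $\hat G$-conjugacy; uniqueness of $a_S$ is then immediate from injectivity of $\xi_S$.

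For the inclusion, I would apply Fact \ref{fct:lembimg}(1) to the maximal torus $S$: $\xi_S({^LS})$ is exactly the set of $x\in{^LG}$ carrying $\hat T:=\xi_S(\hat S)$ to itself and inducing on it the action $\sigma_x$ transported from $S$. For $t\in\hat S_b$ we have $\Phi(t)=\xi_S(t)\in\hat T$. For $w\in W_F$ put $m(w):=\Phi(1\rtimes w)\,\xi_S(1\rtimes w)^{-1}\in\hat G$; since $\Phi$ and $\xi_S$ agree on $\hat S_b$, conjugation by $\Phi(1\rtimes w)$ and by $\xi_S(1\rtimes w)$ both carry $\xi_S(\hat S_b)=\hat T^{b,\circ}$ to $\xi_S(\sigma_w\hat S_b)$ by the same isomorphism, so $m(w)$ centralizes $\hat T^{b,\circ}$, i.e.\ $m(w)\in\hat M:=\tx{Cent}_{\hat G}(\hat T^{b,\circ})$, a Levi subgroup containing $\hat T$. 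What remains is to upgrade this to $m(w)\in\hat T$. For this I would first normalize $\mf{e}$ within its $\hat G$-conjugacy class so that $\xi^\mf{e}$ is an inclusion and $\tilde s^\mf{e}=s^\mf{e}\rtimes b$ with $s^\mf{e}\in\hat T$ --- legitimate because axiom \ref{item:e1} forces $s^\mf{e}$ to centralize $\Phi(\hat S_b)=\hat T^{b,\circ}$, hence to lie in $\hat M$ --- and then use that $\xi_S^\mf{e}$ and $\xi_S^1$ are the Langlands--Shelstad $L$-embeddings built from the \emph{same} $\chi$-data on $R_\tx{res}(S_b,G)$, the former involving only the subset $R(S^\mf{e},G^\mf{e})\subset R_\tx{res}(S_b,G)$. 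Unwinding both Langlands--Shelstad formulas on $1\rtimes w$ and invoking the commutation of the image of $\Phi$ with $\tilde s^\mf{e}$, one identifies $m(w)$ with the value at $w$ of the $\hat T$-valued cochain assembled from the $\chi$-data on $R_\tx{res}(S_b,G)\smallsetminus R(S^\mf{e},G^\mf{e})$ together with a Tits-section discrepancy inside $\hat T$; in particular $m(w)\in\hat T$, which is what we wanted.

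The hard part is exactly this last step --- upgrading $m(w)\in\hat M$ to $m(w)\in\hat T$. Everything else is formal bookkeeping: Fact \ref{fct:lembimg}, Lemma \ref{lem:lembex}, and the identification $\hat S_b=\hat S^{b,\circ}$. Pinning $m(w)$ down inside $\hat T$ is where one must actually enter the Langlands--Shelstad construction of the two embeddings $\xi_S^\mf{e}$ and $\xi_S^1$, exploit the nesting of their $\chi$-data, and use the commutation of the image of $\Phi$ with $\tilde s^\mf{e}$ to kill the potential $\hat M$-part; this is the true content of the comparison. The cochain $a_S$ produced this way is precisely the cocycle used immediately afterwards to build the class $A_0$.
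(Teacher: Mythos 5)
Your reduction to showing the inclusion $\Phi({^LS_b})\subseteq\xi_S({^LS})$ is sound, and the observation that $m(w):=\Phi(1\rtimes w)\,\xi_S(1\rtimes w)^{-1}$ centralizes $\hat T^{b,\circ}$, hence lies in $\hat M=\tx{Cent}_{\hat G}(\hat T^{b,\circ})$, is correct. But you correctly flag the remaining step as "the hard part" and then do not actually close it; your sketch of how one would upgrade $m(w)\in\hat M$ to $m(w)\in\hat T$ by unwinding the Langlands--Shelstad cochains on $R_\tx{res}(S_b,G)$ versus $R(S^\mf{e},G^\mf{e})$ is plausible in spirit, but it is not carried out and in fact more delicate than you acknowledge: $\Phi$ is only an $L$-embedding of ${^LS_b}$, so $\Phi(1\rtimes w)$ has no a priori reason even to normalize $\hat T$ (as opposed to $\hat T^{b,\circ}$), and $\hat M$ can be a genuinely larger Levi, so the passage $\hat M\leadsto\hat T$ is not a small correction.

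The paper avoids this computation entirely by a more economical idea that your approach misses: apply the extension principle of Lemma \ref{lem:lembex} not just to $\xi_S^1$ but \emph{also} to $\Phi=\xi^\mf{e}\circ\xi_S^\mf{e}\circ{^L\varphi}_{\gamma^\mf{e},\gamma}$, producing a second $L$-embedding $\xi_S'\colon{^LS}\to{^LG}$ with $\xi_S'|_{{^LS_b}}=\Phi$. Since $\xi_S'$ is an $L$-embedding of the \emph{full} $^LS$, its values $\xi_S'(1\rtimes w)=\Phi(1\rtimes w)$ automatically normalize $\hat T=\xi_S'(\hat S)$ --- precisely the property you were unable to establish for $\Phi$ directly. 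After conjugating so that $\xi_S|_{\hat S}=\xi_S'|_{\hat S}$ (both being admissible embeddings), Fact \ref{fct:lembimg}(2) gives $\xi_S({^LS})=\xi_S'({^LS})$ at once; Fact \ref{fct:lembimg}(3) makes both homeomorphisms onto this common image, so $\xi_S'\circ\xi_S^{-1}$ is an automorphism of $\hat S\rtimes W_F$ restricting to the identity on $\hat S$ and on $W_F$, hence multiplication by a 1-cocycle $a_S\in Z^1(W_F,\hat S)$. Restricting to ${^LS_b}$ gives the corollary. In short: the key idea you are missing is to extend \emph{before} comparing, rather than to compare on ${^LS_b}$ and then fight to control where $m(w)$ lives. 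As written, your proposal is an incomplete proof.
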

\begin{proof}
Let $\xi_S : {^LS} \to {^LG}$ and $\xi_S' : {^LS} \to {^LG}$ be the unique extensions of $\xi_S^1$ and $\xi^\mf{e}\circ\xi_S^\mf{e}\circ{^L\varphi}_{\gamma^\mf{e},\gamma}$ given by Lemma \ref{lem:lembex}. Their restrictions to $\hat S$ are $\hat G$-conjugate, so we assume they are equal. It follows from Fact \ref{fct:lembimg} that $\xi_S$ and $\xi_S'$ have the same image and are homeomorphisms onto it, so we may form $\xi_S' \circ \xi_S^{-1}$. This is an automorphism of the topological group $\hat S \rtimes W_F$ restricting to the identity on both $\hat S$ and $W_F$. Thus it is given by multiplication by $a_S \in Z^1(W_F,\hat S)$.
\end{proof}

\subsection{Normalized factor $\Delta_{KS}$ with $z$-pair} \label{sub:pure_tf2}

We now drop the assumption that there exists an $L$-isomorphism $^LG^\mf{e} \cong \mc{G}^\mf{e}$ and instead choose a $z$-pair $\mf{z}=(G^\mf{z},\xi^{\mf{z}})$. We denote by $S^\mf{z}$ the centralizer of $\gamma^\mf{z}$. Let $S^\mf{z}_1$ be the fiber product of $S \to S_{b} \cong S^\mf{e} \lw S^\mf{z}$. The automorphism $b \times \tx{id}$ of $S \times S^\mf{z}$ preserves $S^\mf{z}_1$ and we denote the automorphism it induces by $b_1$. It fixes the kernel of $S^\mf{z}_1 \to S$ pointwise. Hence the endomorphism $(1-b_1^{-1})$ of $S^\mf{z}_1$ induces a homomorphism $(1-b_1^{-1}) : S \to S^\mf{z}_1$. We are going to refine the invariant $\tx{inv}(\gamma,(z,\delta)) \in H^1_{b^{-1}}(\Gamma,S \rrw S) \cong H^1(\Gamma,(1-b^{-1}) : S \rw S)$ constructed above to an element $\tx{inv}(\gamma^\mf{z},(z,\delta)) \in H^1(\Gamma,(1-b_1^{-1}) : S \rw S^\mf{z}_1)$. If $(\tilde z^*,\tilde\delta^*)$ is a representative of the $G$-conjugacy class of $(\tilde z,\tilde \delta)$ as in Lemma \ref{lem:inv}, then $\delta^\mf{z} = (\delta^*,\gamma^\mf{z})$ belongs to $S^\mf{z}_1(\bar F)$ and satisfies $(b_1^{-1}-1)z^*(\sigma)=(\delta^\mf{z})^{-1}\sigma(\delta^\mf{z})$, so $(z^{*,-1},\delta^\mf{z}) \in Z^1(\Gamma,(1-b_1^{-1}) : S \rw S^\mf{z}_1)$ and its class is the invariant $\tx{inv}(\gamma^\mf{z},(z,\delta))$ we want.

This invariant will be paired with an element $A_0 \in H^1(W_F,(1-b) : \hat S_1^\mf{z} \to \hat S)$, whose construction is essentially the one given in \cite[\S4.4]{KS99}. We have as above the $L$-embeddings $\xi_S^1 : {^LS}_{b} \to {^LG^1}$ and $\xi_S^\mf{e} : {^LS}^\mf{e} \rw {^LG^\mf{e}}$. They will become part of a diagram as follows
\[ \xymatrix{
^LS\ar[drr]^{\xi_S}\\^LS_{b}\ar[r]_{\xi_S^1}\ar[dd]^{{^L\varphi_{\gamma^\mf{e},\gamma}}}\ar[u]&{^LG^1}\ar[r]&{^LG}\\
&\mc{G}^\mf{e}\ar[ur]_{\xi^\mf{e}}\ar[dr]^{\xi^\mf{z}}&&\mc{S}^\mf{e}\ar@{_(->}[ll]\ar@/_3pc/[uulll]_\beta\ar@/^3pc/[ddlll]^{\alpha_0}\\
^LS^\mf{e}\ar[r]^{\xi_S^\mf{e}}\ar[d]&{^LG^\mf{e}}\ar[r]&{^LG^\mf{z}}\\
^LS^\mf{z}\ar[urr]_{\xi_S^\mf{z}}
}
\]
All arrows are $L$-embeddings. The unnamed arrows $^LG^1 \to {^LG}$, $^LG^\mf{e} \to {^LG^\mf{z}}$, $^LS_{b} \to {^LS}$, and ${^LS^\mf{e}} \to {^LS^\mf{z}}$ are the canonical ones. The arrows $\xi_S$ and $\xi_S^\mf{z}$ are the unique ones extending $\xi_S^1$ and $\xi_S^\mf{e}$ by Lemma \ref{lem:lembex}. We would like to apply Corollary \ref{cor:lembcomp}, but unfortunately have no embedding $^LG^\mf{e} \to {^LG}$. Instead, following Fact \ref{fct:lembimg} we define
\[ \mc{S}^\mf{e} = \{x \in \mc{G}^\mf{e} | \forall t \in \hat S : \xi^\mf{e}(x) \xi_S(t \rtimes 1)\xi^\mf{e}(x)^{-1} = \xi_S(\sigma_x(t) \rtimes 1) \}, \]
which would be the image of $\xi_S^\mf{e}$ if we had an identification $^LG^\mf{e} \cong \mc{G}^\mf{e}$, which we do not. It is still an extension of $W_F$ by $\hat S^\mf{e}$ and Fact \ref{fct:lembimg} implies $\xi^\mf{z}(\mc{S}^\mf{e}) \subset \xi_S^\mf{z}({^LS^\mf{z}})$ and $\xi^\mf{e}(\mc{S}^\mf{e}) \subset \xi_S({^LS})$; in fact the latter statement can be strengthened to $\mc{S}^\mf{e} = (\xi^\mf{e})^{-1}(\xi_S({^LS}))$. Applying again the open mapping theorem we obtain $L$-embeddings $\alpha_0 : \mc{S}^\mf{e} \to {^LS^\mf{z}}$ and $\beta : \mc{S}^\mf{e} \to {^LS}$. Compose $\alpha_0$ with the $L$-automorphism of ${^LS^\mf{z}}$ given by inversion on $\hat S^\mf{z}$ to obtain $\alpha : \mc{S}^\mf{e} \to {^LS^\mf{z}}$, and consider $\alpha \times \beta : \mc{S}^\mf{e} \to {^L(S \times S^\mf{z})}$. Its composition with ${^L(S \times S^\mf{z})} \to {^LS^\mf{z}_1}$ kills $\hat S^\mf{e} \subset \mc{S}^\mf{e}$, thus descends to an $L$-homomorphism $\tilde a_S : W_F \to {^LS^\mf{z}_1}$, i.e. a 1-cocycle $a_S : W_F \to \hat S^\mf{z}_1$. As before one checks that $\tilde s^\mf{e} = \xi_S(s_S) \rtimes b$ and $(a_S^{-1},s_S) \in Z^1(W_F,(1-b_1) : \hat S^\mf{z}_1 \to \hat S)$, and we define $A_0$ to be the class of this element.

As in Subsection \ref{sub:pure_tf1} we define $\Delta_{III}^\tx{new}(\gamma^\mf{z},\tilde\delta)$ to be the pairing of $\tx{inv}(\gamma^\mf{z},(z,\delta))$ and $A_0$.

\subsection{Transfer of functions} \label{sub:trans}

In \S\ref{sub:pure_tf1} and \S\ref{sub:pure_tf2} we defined a factor $\Delta_{III}^\tx{new}$, which leads to the factor $\Delta_{KS}[\mf{w},\mf{e},\mf{z}]$ via \eqref{eq:pure_tf1}, which in turn leads to the factor $\Delta[\mf{w},\mf{e},\mf{z}]$ via \eqref{eq:pure_tf}.

Let $f \in \mc{C}^\infty_c(\tilde G_z(F))$. For any $\tilde\delta \in \tilde G_z(F)$ we can form the integral of $f$ over the $\tilde G_z(F)$-conjugacy class of $\tilde\delta$, after fixing an invariant measure on this conjugacy class. We will call this integral $O_{\tilde\delta}(f)$. Since the $\tilde G_z(F)$-conjugacy class of $\delta$ decomposes as a disjoint union of finitely many $G_z(F)$-conjugacy classes, $O_{\tilde\delta}(f)$ is a sum of finitely many twisted orbital integrals.

\begin{lem} \label{lem:trans} For any function $f \in \mc{C}^\infty_c(\tilde G_z(F))$ there exists a function $f^\mf{z} \in \mc{H}(G^\mf{z})$ such that for all strongly regular $\gamma^\mf{z} \in G^\mf{z}(F)$ we have
\[ SO_{\gamma^\mf{z}}(f^\mf{z}) = \sum_{\tilde\delta} \Delta[\mf{w},\mf{e},\mf{z}](\gamma^\mf{z},\tilde\delta) O_{\tilde\delta}(f) \]
where the sum runs over the set of strongly regular $\tilde G_z(F)$-conjugacy classes in $\tilde G_z(F)$. More precisely, if $f^{\mf{z},KS}$ is the function that satisfies
\[ SO_{\gamma^\mf{z}}(f^{\mf{z},KS}) = \sum_{\tilde\delta} \Delta_\tx{KS}[\mf{w},\mf{e},\mf{z}](\gamma^\mf{z},\tilde\delta) O_{\tilde\delta}(f), \]
where now $\tilde\delta$ runs over the strongly regular semi-simple elements in $[G\rtimes a]_z(F)$ modulo $G_z(F)$-conjugacy and $a^{-1} \in A$ is the image of $\tilde s^\mf{e}$, then $f^\mf{z}=f_0^{\mf{z},KS}$, where $f_0(\tilde \delta)=\sum_{c \in \tilde G_z(F)/G_z(F)}f(c^{-1}\tilde\delta c)$.
\end{lem}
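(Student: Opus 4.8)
The plan is to obtain the lemma from the fundamental transfer theorem for twisted endoscopy (recalled above; see \cite{KS99,KS12}) by applying it to the averaged function $f_0$ rather than to $f$, and then to observe that the averaging built into $\Delta[\mf{w},\mf{e},\mf{z}]$ via \eqref{eq:pure_tf} is precisely dual to the averaging $f\mapsto f_0$. Write $a\in A$ for the element with $\tilde s^\mf{e}$ mapping to $a^{-1}$, so that $\Delta_\tx{KS}[\mf{w},\mf{e},\mf{z}]$ is supported on $[G\rtimes a]_z(F)$. Since $f_0$ is a $\tilde G_z(F)$-conjugation-invariant compactly supported function, its restriction to $[G\rtimes a]_z(F)$ lies in $\mc{C}^\infty_c([G\rtimes a]_z(F))$, so the transfer theorem produces $f_0^{\mf{z},KS}\in\mc{H}(G^\mf{z})$ with
\[
  SO_{\gamma^\mf{z}}(f_0^{\mf{z},KS}) \;=\; \sum_{\tilde\delta'}\Delta_\tx{KS}[\mf{w},\mf{e},\mf{z}](\gamma^\mf{z},\tilde\delta')\, O_{\tilde\delta'}(f_0),
\]
the sum over strongly regular $G_z(F)$-conjugacy classes in $[G\rtimes a]_z(F)$. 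I would then take $f^\mf{z}:=f_0^{\mf{z},KS}$, so that the lemma reduces to the identity of geometric sides
\[
  \sum_{\tilde\delta'}\Delta_\tx{KS}[\mf{w},\mf{e},\mf{z}](\gamma^\mf{z},\tilde\delta')\, O^{G_z}_{\tilde\delta'}(f_0) \;=\; \sum_{\tilde\delta}\Delta[\mf{w},\mf{e},\mf{z}](\gamma^\mf{z},\tilde\delta)\, O^{\tilde G_z}_{\tilde\delta}(f),
\]
where $\tilde\delta'$ runs over strongly regular $G_z(F)$-conjugacy classes in $[G\rtimes a]_z(F)$, $\tilde\delta$ over strongly regular $\tilde G_z(F)$-conjugacy classes, and $O^{G_z}$, $O^{\tilde G_z}$ denote the orbital integrals taken inside $G_z(F)$ and inside $\tilde G_z(F)$ respectively.

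To prove that identity I would fix a strongly regular semi-simple $\tilde\delta\in\tilde G_z(F)$ and set $n(\tilde\delta)=[\tx{Cent}_{\tilde G_z(F)}(\tilde\delta):\tx{Cent}_{G_z(F)}(\tilde\delta)]$; because $G_z(F)$ is normal in $\tilde G_z(F)$, this is the order of the image of $\tx{Cent}_{\tilde G_z(F)}(\tilde\delta)$ in $A^{[z]}$, and it depends only on the $\tilde G_z(F)$-conjugacy class of $\tilde\delta$. Two parallel computations then feed in. On the transfer-factor side: normality of $G_z(F)$ makes $\Delta_\tx{KS}(\gamma^\mf{z},c\tilde\delta c^{-1})$ depend on $c$ only through $cG_z(F)$, and the map from $\tilde G_z(F)/G_z(F)$ to the set of $G_z(F)$-conjugacy classes inside the $\tilde G_z(F)$-class of $\tilde\delta$ has every fiber of size $n(\tilde\delta)$; hence \eqref{eq:pure_tf} reads $\Delta(\gamma^\mf{z},\tilde\delta)=n(\tilde\delta)\sum_{[\tilde\delta']\subseteq[\tilde\delta]}\Delta_\tx{KS}(\gamma^\mf{z},\tilde\delta')$. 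On the test-function side: expanding $f_0(\tilde\eta)=\sum_{cG_z(F)}f(c^{-1}\tilde\eta c)$ inside the $G_z(F)$-orbital integral and recombining $\tx{Cent}_{G_z(F)}(\tilde\delta')\backslash G_z(F)$ together with the cosets $\tilde G_z(F)/G_z(F)$ into $\tx{Cent}_{G_z(F)}(\tilde\delta')\backslash\tilde G_z(F)$ yields $O^{G_z}_{\tilde\delta'}(f_0)=n(\tilde\delta')\cdot O^{\tilde G_z}_{\tilde\delta'}(f)$. Feeding both expansions into the left-hand side and regrouping by $\tilde G_z(F)$-conjugacy class, the factors $n$ match up and the right-hand side emerges; this is the required identity and it proves the lemma.

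I expect the only point needing genuine care to be the measure bookkeeping underlying $O^{G_z}_{\tilde\delta'}(f_0)=n(\tilde\delta')\,O^{\tilde G_z}_{\tilde\delta'}(f)$: one must fix a Haar measure on $\tilde G_z(F)$ restricting to a Haar measure on $G_z(F)$, and normalize the invariant measures on $G_z(F)$- and $\tilde G_z(F)$-conjugacy classes compatibly---both induced from a single measure on the common connected centralizer torus together with counting measure on the component group of the centralizer---so that the $n(\tilde\delta')$-fold covering $\tx{Cent}_{G_z(F)}(\tilde\delta')\backslash\tilde G_z(F)\to\tx{Cent}_{\tilde G_z(F)}(\tilde\delta')\backslash\tilde G_z(F)$ contributes exactly that factor. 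Everything else is formal: no new harmonic analysis is needed, the single substantial input being the already established twisted transfer theorem applied to $f_0$.
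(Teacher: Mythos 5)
Your proof is correct, and it rests on the same single substantive input as the paper's argument—the twisted geometric transfer theorem applied to the averaged function $f_0$—so the two differ only in the organization of the elementary geometric re-indexing. The paper re-indexes the left-hand side from $\tilde G_z(F)$-conjugacy classes to $G_z(F)$-conjugacy classes at the outset (immediate because $O_{\tilde\delta}(f)$ is by construction the sum of the constituent twisted orbital integrals and $\Delta$ is constant on $\tilde G_z(F)$-classes), then plugs in \eqref{eq:pure_tf}, switches the $c$- and $\tilde\delta$-sums using the $G_z(F)$-conjugation-invariance of $\Delta_{KS}$, and performs the substitution $\tilde\delta\mapsto c^{-1}\tilde\delta c$, $x\mapsto cxc^{-1}$ to move the $c$-sum into the argument of $f$; the index $n(\tilde\delta)$ never appears. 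Your route introduces $n(\tilde\delta)$ explicitly on both sides of the auxiliary identity and cancels it. Both of your intermediate formulas are correct: the map from $\tilde G_z(F)/G_z(F)$ to the set of $G_z(F)$-conjugacy classes inside the $\tilde G_z(F)$-class of $\tilde\delta$ does have constant fiber size $n(\tilde\delta)$, and $O^{G_z}_{\tilde\delta'}(f_0)=n(\tilde\delta')\,O^{\tilde G_z}_{\tilde\delta'}(f)$ does follow from the $n(\tilde\delta')$-fold covering $\tilde G_z(F)/G_z(F)_{\tilde\delta'}\to\tilde G_z(F)/\tilde G_z(F)_{\tilde\delta'}$; since $n$ is constant along each $\tilde G_z(F)$-class, the cancellation goes through. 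The measure compatibility you flag is real but elementary—one must check that the quotient measure on $\tilde G_z(F)/\tilde G_z(F)_{\tilde\delta'}$ obtained from Haar measures restricting compatibly reproduces the sum-of-twisted-orbital-integrals normalization built into $O_{\tilde\delta}(f)$, which it does—and the paper's organization simply has the mild advantage of never needing that check.
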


\begin{proof}
This follows immediately from the deep results on geometric transfer in twisted endoscopy due to Shelstad \cite{She12} in the archimedean case and Ngo \cite{Ngo10} and Waldspurger \cite{Wal97}, \cite{Wal08} in the non-archimedean case. Indeed, in $\sum_{\tilde\delta} \Delta[\mf{w},\mf{e},\mf{z}](\gamma^\mf{z},\tilde\delta) O_{\tilde\delta}(f)$ we are summing over $\tilde G_z(F)$-conjugacy classes in $\tilde G_z(F)$, and then integrating over each such class. We may equally well sum over $G_z(F)$-conjugacy classes in $\tilde G_z(F)$, and then integrate over each such class. After this reparameterization, we plug in \eqref{eq:pure_tf} and use the fact that $\Delta_{KS}$ is invariant under $G_z(F)$-conjugation in the variable $\tilde\delta$ to switch the sums over $c$ and $\tilde\delta$. This brings the right hand side to
\[ \sum_{c \in \tilde G_z(F)/G_z(F)} \sum_{\tilde\delta \in \tilde G_z(F)/G_z(F)-conj}\!\!\!\!\!\!\!\!\!\!\!\!\Delta_{KS}[\mf{w},\mf{e},\mf{z}](\gamma^\mf{z},c\tilde\delta c^{-1})\int_{x \in G_z(F)/G_z(F)_{\tilde\delta}}\!\!\!\!\!\!\!\!f(x\tilde\delta x^{-1})dx. \]
Changing variables to replace $\tilde\delta$ and $x$ by $c^{-1}\tilde\delta c$ and $cxc^{-1}$ and moving the sum over $c$ to the right we obtain
\[ \sum_{\tilde\delta \in \tilde G_z(F)/G_z(F)-conj}\!\!\!\!\!\!\!\!\!\!\!\!\Delta_{KS}[\mf{w},\mf{e},\mf{z}](\gamma^\mf{z},\tilde\delta)\int_{x \in G_z(F)/G_z(F)_{\tilde\delta}}\sum_{c \in \tilde G_z(F)/G_z(F)}\!\!\!\!\!\!\!\!f(c^{-1}x\tilde\delta x^{-1}c)dx. \]
Let $a^{-1} \in A$ be the image of $\tilde s^\mf{e}$. Then $\Delta_{KS}=0$ unless $\tilde\delta \in [G \rtimes a]_z(F)$. Fix $\tilde\delta_0 \in [G \rtimes a]_z(F)$ and write $\theta=\tx{Ad}(\tilde\delta_0)$. Let $f_0(\delta)=\sum_{c \in \tilde G_z(F)/G_z(F)} f(c^{-1}\delta\tilde\delta_0 c)$.
Then we obtain
\[ \sum_{\delta \in G_z(F)/\theta-conj} \Delta_{KS}[\mf{w},\mf{e},\mf{z}](\gamma^\mf{z},\delta\tilde\delta_0)\int_{x \in G_z(F)/G_z(F)_{\delta\theta}}f_0(x\delta\theta(x^{-1})). \] 
By construction $\Delta_{KS}[\mf{w},\mf{e},\mf{z}](\gamma^\mf{z},\delta\tilde\delta_0)$ is a normalization of the Kottwitz-Shelstad transfer factor for the twisted group $(G_z,\theta)$ and its twisted endoscopic datum $\mf{e}$, evaluated at $(\gamma^\mf{z},\delta)$. The results of Shelstad, Ngo, and Waldspurger now imply the existence of a function $f^\mf{z}$ so that the above formula becomes equal to $SO_{\gamma^\mf{z}}(f^\mf{z})$.
\end{proof}

\subsection{Character identities} \label{sub:pure_charid}

Consider a parameter $\phi : L_F \to {^LG}$ and a semi-simple element $\tilde s \in \tilde S_\phi^{[z]}$. The pair $(\phi,\tilde s)$ leads to an endoscopic datum  $\mf{e}=(G^\mf{e},\mc{G}^\mf{e},\tilde s,\xi^\mf{e})$ by the spectral construction described in Subsection \ref{sub:endocnst}. Choose a $z$-pair $(G^\mf{z},\xi^\mf{z})$ and let $\phi^\mf{z}=\xi^\mf{z}\circ\phi$, a tempered parameter for $G^\mf{z}$. We assume the existence of an $L$-packet $\Pi_{\phi^\mf{z}}$ on $G^\mf{z}(F)$ and of its stable character $S\Theta_{\phi^\mf{z}}$. Let us write the injection $\tilde \Pi_{\phi,z} \to \tx{Irr}(\pi_0(\tilde S_\phi^{[z]}),{[z]})$ from Conjecture \ref{cnj:llc_pure_is} as $\tilde\pi \mapsto \rho_{\tilde\pi}$.

\begin{cnj} \label{cnj:llc_pure_ci}
For any pair of functions $f$ and $f^\mf{z}$ as in Lemma \ref{lem:trans} we have
\begin{equation} \label{eq:charid} S\Theta_{\phi^\mf{z}}(f^\mf{z}) = e(G_z)\sum_{\tilde\pi \in \tilde \Pi_{\phi,z}} \tx{tr}{\rho_{\tilde\pi}}(\tilde s) \cdot \Theta_{\tilde\pi_{\tilde\rho}}(f). \end{equation}
\end{cnj}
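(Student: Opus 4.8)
The plan is to reduce \eqref{eq:charid} to twisted endoscopy for the connected group $G_z$, together with Conjecture \ref{cnj:llc_pure_is} and the expected compatibility of the connected-group correspondence with automorphisms. First I would unwind both sides coset by coset. Let $b\in A$ be the image of $\tilde s$, so that by \S\ref{sub:pure_tf} the factor $\Delta_{KS}[\mf{w},\mf{e},\mf{z}]$ is supported on $[G\rtimes b^{-1}]_z(F)$, and by Lemma \ref{lem:trans} the function $f^\mf{z}$ depends only on the restriction of $f_0=\sum_{c\in\tilde G_z(F)/G_z(F)}f(c^{-1}(\cdot)c)$ to that coset. On the spectral side, for $\tilde\pi\in\tilde\Pi_{\phi,z}$ the restriction $\tilde\pi|_{G_z(F)}$ is, by Clifford theory for $1\to G_z(F)\to\tilde G_z(F)\to A^{[z]}\to 1$ and the definition of $\tilde\Pi_{\phi,z}$, a sum over a single $A^{[\phi],[z]}$-orbit in $\Pi_\phi(G_z)$; fixing $\tilde\delta_0\in[G\rtimes b^{-1}]_z(F)$ and setting $\theta=\tx{Ad}(\tilde\delta_0)$, the number $\Theta_{\tilde\pi}(f)$ evaluated on the $[G\rtimes b^{-1}]_z(F)$-part of $f$ is a sum of $\theta$-twisted traces $\tx{tr}\big(\pi'(f_0)\,U_{\pi'}\big)$ over the $\theta$-fixed members $\pi'$ of that orbit, where $U_{\pi'}$ is the intertwiner realizing $\theta$ on $\pi'$ read off from the chosen extension $\tilde\pi$. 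This turns \eqref{eq:charid} into a comparison of $S\Theta_{\phi^\mf{z}}$ with a sum of $\theta$-twisted characters of $G_z(F)$.

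Second, I would invoke the twisted endoscopic character identity for the pair $(G_z,\theta)$ and the twisted endoscopic datum $\mf{e}$ of \S\ref{sub:endo}, in an amplified form. Its geometric half is the transfer statement already available from Shelstad, Ngo and Waldspurger and recorded in Lemma \ref{lem:trans} for the normalization $\Delta_{KS}[\mf{w},\mf{e},\mf{z}]$. Its spectral half should read $S\Theta_{\phi^\mf{z}}(f^{\mf{z},KS})=\sum_{\pi'}\langle\tilde s,\pi'\rangle\,\tx{tr}\big(\pi'(f_0)\,U_{\pi'}\big)$, the sum over the $\theta$-fixed members $\pi'$ of $\Pi_\phi(G_z)$, with $U_{\pi'}$ the normalized intertwiner realizing $\theta$ on $\pi'$ and $\langle\tilde s,\pi'\rangle$ the value at $\tilde s$ of the character of $\pi_0(\tilde S_\phi^{[z]})$ attached to $\pi'$ by the internal structure of $\Pi_\phi(G_z)$, i.e. by Conjecture \ref{cnj:llc_pure_is} applied to the connected group $G_z$. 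The ``amplification'' is that one needs this with the individual intertwiners $U_{\pi'}$ pinned down, not merely their transfer-factor-weighted sum; this is precisely what the $\mf{w}$-normalization of \S\ref{sub:awhit} provides, the $U_{\pi'}$ being normalized coherently across the packet starting from the $\mf{w}$-generic member, in harmony with the normalization built into Conjecture \ref{cnj:llc_pure_is}.

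Third, I would carry out the combinatorial matching. One has the two extensions $1\to\pi_0(S_\phi)\to\pi_0(\tilde S_\phi^{[z]})\to A^{[\phi],[z]}\to 1$ and $1\to G_z(F)\to\tilde G_z(F)\to A^{[z]}\to 1$, together with the bijection $\Pi_\phi(G_z)\leftrightarrow\tx{Irr}(\pi_0(S_\phi),[z])$ of the connected correspondence. Clifford theory decomposes both $\tilde\Pi_{\phi,z}$ and $\tx{Irr}(\pi_0(\tilde S_\phi^{[z]}),[z])$ into blocks indexed by $A^{[\phi],[z]}$-orbits, each governed by a $2$-cocycle on the stabilizer of a point of the orbit. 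Granting that these two $2$-cocycles coincide — equivalently, that the $A^{[\phi],[z]}$-action on $\Pi_\phi(G_z)$ lifts to the same projective action on the relevant spaces of Whittaker functionals as the dual-side action lifts on $\tx{Irr}(\pi_0(S_\phi))$, which is the automorphism-compatibility conjecture of Appendix \ref{app:func} — the orthogonality relations for irreducible projective characters recalled in Appendix \ref{app:projchar} convert $\tx{tr}\,\tilde\rho(\tilde s)$ into exactly the coefficient of $\Theta_{\tilde\pi_{\tilde\rho}}(f)$ produced in the second step, and summing over $\tilde\rho\in\tx{Irr}(\pi_0(\tilde S_\phi^{[z]}),[z])$ yields \eqref{eq:charid}. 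For $F=\R$ one works throughout with the $K$-group, replacing $\tilde\Pi_{\phi,z}$ by the disjoint union over ${}^K\tilde G_z$ as in Conjecture \ref{cnj:llc_pure_is}.

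The main obstacle is the agreement of these two $2$-cocycles: concretely, producing a canonical isomorphism between the extension of $A^{[\phi],[z]}$ controlling the internal structure of the disconnected $L$-packet and the one controlling $\pi_0(\tilde S_\phi^{[z]})$, and checking that it carries the $\mf{w}$-normalized intertwining operators to the dual-side normalization and is compatible with $\Delta_{KS}$. This is the same phenomenon the paper isolates in the torus case, where it is established by an explicit construction; in general it has to be assumed, in the form of the automorphism-compatibility conjecture of Appendix \ref{app:func} together with the amplified twisted endoscopic character identity, after which everything remaining is bookkeeping with Clifford theory and the transfer theorems underlying Lemma \ref{lem:trans}.
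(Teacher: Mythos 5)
The statement is a conjecture; the paper does not prove it in general, but rather reduces it (Proposition \ref{pro:slice}, stated in the rigid setting but with the pure case running parallel) to Conjecture \ref{cnj:coset} --- the coset-sliced, amplified twisted-endoscopic character identity with a canonical extension $(\pi\boxtimes\rho^\vee)^{\tx{can}}$ --- together with the automorphism-compatibility Conjecture \ref{cnj:func}. Your three steps match that reduction in substance, so you have essentially reconstructed the paper's route. Two small calibrations are worth noting. First, the agreement of the two $2$-cocycles $\alpha_\pi=\alpha_\rho$ is in the paper a piece of Conjecture \ref{cnj:coset} (the existence of $(\pi\boxtimes\rho^\vee)^{\tx{can}}$), not of Conjecture \ref{cnj:func}; the latter only gives $A^{[z]}_\pi=A^{[\phi]}_\rho$ and the equivariance of the connected correspondence under conjugation by $A$. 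Second, the combinatorial matching in Proposition \ref{pro:slice} does not invoke the projective orthogonality relations of Appendix \ref{app:projchar} (those are reserved for the torus case); it instead runs through the Clifford/Mackey machinery of Lemmas \ref{lem:ex1} and \ref{lem:ex2} together with the Frobenius character formula for the induced representation $\bigoplus_{\tilde\pi}\tilde\pi\boxtimes\tilde\rho_{\tilde\pi}^\vee$. Also, the normalized intertwiners $U_{\pi'}$ are not pinned down by $\mf{w}$ independently and then checked to satisfy the identity; rather the canonical extension is \emph{characterized} by the identity it is supposed to satisfy (Remark \ref{rem:canext}), so its existence is part of what is conjectured. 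These are bookkeeping differences, not differences of approach.
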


As we have already remarked in \S\ref{sub:pure_tf}, equation \eqref{eq:charid} applied to the connected case $\tilde G=G$ recovers equation \cite[(5.9),(5.11)]{KalRI}.

We will refer to Conjectures \ref{cnj:llc_pure_is} and \ref{cnj:llc_pure_ci} together as the \emph{refined local Langlands conjecture for pure inner forms of quasi-split disconnected groups}.

\begin{rem}
	The reader might wonder if $e(G_z)$ is the correct sign to use in \eqref{eq:charid}, given that we are now working with a disconnected group, and whether a generalization of this sign to the disconnected case is necessary. Indeed, certain new signs appear in the harmonic analysis of disconnected groups, but they can be decomposed into a product of the sign $e(G_z)$ associated to the identity component of $\tilde G_z$ and another sign associated to a particular coset of $G_z$ in $\tilde G_z$. We have chosen to absorb this second sign into the internal structure of $L$-packets for disconnected groups, so that it does not appear in \eqref{eq:charid}. Details about this second sign will appear in \cite{KM} and \cite{KalSteinberg}.
\end{rem}

\begin{rem}
Let $a \in A$ be the image of $\tilde s$. If the function $f$ is supported away from the $G_z(F)$-cosets in $\tilde G_z(F)$ which are $A$-conjugate to $a^{-1}$, then $f^\mf{z}=0$. Thus the conjecture contains the statement that the right hand side is also zero in this case.	
\end{rem}

\begin{rem} Let $\chi : A \to \C^\times$ be a character. Then $\tilde\pi_{\chi\otimes\tilde\rho}=\chi\otimes\tilde\pi_{\tilde\rho}$. If $a \in A$ is the image of $\tilde s$ and $f$ is a function on $\tilde G_z
(F)$ supported on the $G_z(F)$-coset of $b \in A$, then $\tx{tr}(\chi\otimes\tilde\rho)(\tilde s)=\chi(a)\tx{tr}(\tilde\rho)(\tilde s)$, while $\Theta_{\chi\otimes\tilde\pi_{\tilde\rho}}(f)=\chi(b)\Theta_{\tilde\pi_{\tilde\rho}}(f)$. From this it follows that the right hand side above is zero if $f$ is supported only on cosets for $b \in A$ such that $ab \neq 1$ in $A^\tx{ab}$.
\end{rem}

\section{The conjecture for rigid inner forms} \label{sec:rigid}

In the preceding section, we introduced a refined local Langlands conjecture for pure inner forms of quasi-split disconnected groups. Those are inner forms of quasi-split disconnected groups $G \rtimes A$ that arise from $H^1(\Gamma,G)$. A general inner form of $G \rtimes A$ arises from $H^1(\Gamma,G/Z(G)^A)$ and in this section we are going to extend the conjecture from pure inner forms to general inner forms. Just like in the connected setting, the notion of an inner form needs to be rigidified. For this we can use the cohomology set $H^1(u \to W,Z(G)^A \to G)$ defined in \cite{KalRI}, see also \cite{KalRIBG}. However, in order to normalize the transfer factors in the disconnected case, we shall need a generalization of this cohomology set to complexes of tori of length 2, as well as a Tate-Nakayama duality theorem for this generalization. This will be the concern in the first two subsections below. Thankfully, what is needed is little more than a combination of the arguments of \cite{KalRI} and \cite[App. A]{KS99}.

\subsection{Definitions of hyper(co)homology groups} \label{sub:coho}

Consider a complex $Z \to T \to U$, where $T$ and $U$ are tori, $Z$ is finite, and $Z \to T$ is injective. We write $f$ for the map $T \to U$, and leave the map $Z \to T$ unnamed. Let $\bar T$ be the quotient $T/Z$. The map $f$ induces a map $\bar f : \bar T \to U$.

We shall first define and study a cohomology group $H^1(u \to W,Z \to T \to U)$ that combines the group $H^1(u \to W,Z \to T)$ of \cite{KalRI} and the group $H^1(\Gamma,T \to U)$ of \cite[App. A]{KS99}. Define $Z^1(u \to W,Z \to T \to U)$ to consists of pairs $z \in Z^1(u \to W,Z \to T)$ and $c \in C^0(\Gamma,U)$ such that $\bar f(\bar z)=\partial c$, where $\bar z \in Z^1(\Gamma,\bar T)$ is the image of $z$. Define $H^1(u \to W,Z \to T \to U)$ to be the quotient of $Z^1(u \to W,Z \to T \to U)$ by the subgroup $B^1(\Gamma,T \to U)$ consisting of $\{(t^{-1}\sigma(t), f(t))|t \in T(\bar F)\}$.

This definition involves a particular choice of extension $1 \to u \to W \to \Gamma \to 1$ in the distinguished isomorphism class. Just like in the case of $H^1(u \to W,Z \to G)$, the cohomology set $H^1(u \to W,Z \to T \to U)$ is independent of that choice, in that there is a unique isomorphism between the two versions of it coming from two choices of extensions. The argument is as follows. It is enough to show that an automorphism of the extension $W$ acts trivially on $H^1(u \to W,Z \to T \to U)$. The vanishing of $H^1(\Gamma,u)$ asserted in \cite[Theorem 3.1]{KalRI} implies that such an automorphism is of the form $\tx{Ad}(x)$ for some $x \in u$. An element $(z,u) \in Z^1(u \to W,Z \to T \to U)$ is sent by $\tx{Ad}(x)$ to $(z',u)$ where $z'(w)=z(xwx^{-1})=z(x \cdot \sigma(x^{-1}))z(w)=z(x)\cdot \sigma(z(x)))^{-1}\cdot z(w)$, where $\sigma \in \Gamma$ is the image of $w$. So the difference between $(z,u)$ and $(z',u)$ is measured by $(z(x)\cdot \sigma(z(x))^{-1},1) \in B^1(\Gamma,T \to U)$. We are using here that $f|_Z=1$.

We have the following analog of \cite[(3.6)]{KalRI}:

\begin{equation} \label{eq:bfd2}
\resizebox{\linewidth}{!}{$
\xymatrix{
&\bar T(F)\ar[d]\ar@{=}[r]&\bar T(F)\ar[d]\\
1\ar[r]&H^1(\Gamma,Z)\ar[r]^-{\tx{Inf}}\ar[d]&H^1(u \rw W,Z \rw Z)\ar[r]^-{\tx{Res}}\ar[d]&\mathrm{Hom}(u,Z)^\Gamma\ar@{=}[d]\\
1\ar[r]&H^1(\Gamma,T \to U)\ar[r]^-{\tx{Inf}}\ar@{=}[d]&H^1(u\rw W,Z\to T \to U)\ar[r]^-{\tx{Res}}\ar[d]^a&\mathrm{Hom}(u,Z)^\Gamma\ar[d]\ar[r]&H^2(\Gamma,T \to U)\ar@{=}[d]\\
&H^1(\Gamma,T \to U)\ar[r]&H^1(\Gamma,\bar T \to U)\ar[r]\ar[d]&H^2(\Gamma,Z)\ar[d]\ar[r]&H^2(\Gamma,T \to U)\\
&&1&1
}$} \end{equation}

We also have the following analog of the long exact sequence \cite[(A.1.1)]{KS99}
\[ \begin{aligned}
0&\to H^0(\Gamma,Z)\to H^0(\Gamma,T)\to H^0(\Gamma,U)\to\\
&\to H^1(u \to W,Z \to T \to U)\to H^1(u \to W,Z \to T)\to H^1(\Gamma,U)\to\\
&\to H^2(\Gamma,\bar T \to U)\to H^2(\Gamma,\bar T)\to H^2(\Gamma,U)\to\\
&\to\dots
\end{aligned}
\]
Note that the kernel of $H^1(u \to W,Z \to T \to U)\to H^1(u \to W,Z \to T)$ lies in the subgroup $H^1(\Gamma,T \to U)$ of $H^1(u \to W,Z \to T \to U)$ and the map $H^1(u \to W,Z \to T)\to H^1(\Gamma,U)$ factors through the surjection $H^1(u \to W,Z \to T)\to H^1(\Gamma,\bar T)$. The difference between \cite[(A.1.1)]{KS99} and \eqref{eq:bfd2} is that we have replaced $H^1(\Gamma,T \to U)$ by $H^1(u \to W,Z \to T \to U)$, $H^i(\Gamma,T \to U)$ by $H^i(\Gamma,\bar T \to U)$, and $H^i(\Gamma,T)$ by $H^i(\Gamma,\bar T)$, for $i>1$.

Finally let $K$ and $C$ be the kernel and cokernel of $f$, respectively, so that we have an exact sequence $1 \to K \to T \to U \to C \to 1$ of diagonalizable groups. By assumption $Z \subset K$ and thus we also have $1 \to \bar K \to \bar T \to U \to C$, where $\bar K=K/Z$. We have the commutative diagram with exact rows
\[ 
\resizebox{\linewidth}{!}{$
\xymatrix{
0\ar[r]&H^1(\Gamma,K)\ar[r]\ar@{^(->}[d]&H^1(\Gamma,T \to U)\ar[r]\ar@{^(->}[d]&H^0(\Gamma,C)\ar[r]\ar@{=}[d]&H^2(\Gamma,K)\ar[d]\\
0\ar[r]&H^1(u \to W,Z \to K)\ar[r]\ar@{->>}[d]&H^1(u \to W,Z \to T \to U)\ar[r]\ar@{->>}[d]&H^0(\Gamma,C)\ar[r]\ar@{=}[d]&H^2(\Gamma,\bar K)\ar@{=}[d]\\
0\ar[r]&H^1(\Gamma,\bar K)\ar[r]&H^1(\Gamma,\bar T \to U)\ar[r]&H^0(\Gamma,C)\ar[r]&H^2(\Gamma,\bar K)
}$}
\]

Next we shall define a functor $\bar Y_{+,\tx{tor}}(Z \to T \to U)$ that combines the functor $\bar Y_{+,\tx{tor}}(Z \to T)$ of \cite{KalRI} and the homology groups $H_0(W_{K/F},X_*(T) \to X_*(U))_0$ of \cite[App. A]{KS99}. Consider the homomorphism $f_* : X_*(T) \to X_*(U)$. The assumption $Z \subset \tx{ker}(f)$ implies that this homomorphism extends (necessarily uniquely) to $f_* : X_*(\bar T) \to X_*(U)$. We consider this as a complex placed in degrees $0$ and $1$. For every finite Galois extension $K/F$ splitting $T$ and $U$ we have the hyperhomology groups $H_0(W_{K/F},X_*(T) \to X_*(U))$ and $H_0(W_{K/F},X_*(\bar T) \to X_*(U))$, as well as their subgroups $H_0(-)_0$ defined in \cite[App. A.3]{KS99}. Let us recall some details. The group of inhomogeneous $n$-chains $C_n(W_{K/F},X_*(T))$ consists of all set-theoretic maps $W_{K/F}^n \to X_*(T)$ with finite support. If $y$ is such a map, its differential $\partial y : W_{K/F}^{n-1} \to X_*(T)$ is given by
\begin{eqnarray*}
\partial y (w_1,\dots,w_{i-1})&=&\sum_x x^{-1}y(x,w_1,\dots,w_{n-1})\\
&+&\sum_{i=1}^{n-1}(-1)^i\sum_x y(w_1,\dots,w_{i-1},w_ix^{-1},x,w_{i+1},\dots,w_{n-1})\\
&+&(-1)^n\sum_x y(w_1,\dots,w_{n-1},x),
\end{eqnarray*}
where $x$ runs over $W_{K/F}$. The group $Z_0(W_{K/F},X_*(T) \to X_*(U))$ has the explicit description as the set of pairs $\{(\lambda,\mu_1)| \lambda \in C_0(W_{K/F},X_*(T)),\mu_1 \in C_1(W_{K/F},X_*(U)),f_*(\lambda)=\partial \mu_1)$, while the group $B_0(W_{K/F},X_*(T) \to X_*(U))$ is given by $\{(\partial \lambda_1,f_*(\lambda_1)-\partial\mu_2)|\lambda_1 \in C_1(W_{K/F},X_*(T)),\mu_2 \in C_2(W_{K/F},X_*(U))\}$. Then $H_0=Z_0/B_0$. The subgroup $Z_0(-)_0$ consists of those $(\lambda,\mu_1)$ satisfying in addition $N_{K/F}\lambda=0$, and $H_0(-)_0=Z_0(-)_0/B_0$. Note that $H_0(W_{K/F},X_*(T))_0=H^{-1}_\tx{Tate}(\Gamma_{K/F},X_*(T))=[X_*(T)/IX_*(T)]_\tx{tor}$, where $I$ is the augmentation ideal in $\Gamma_{K/F}$, or equivalently in $\Gamma$. In \cite{KalRI} we used the notation $Y_\tx{tor}(T)$ for this finite abelian group.

\begin{fct} \label{fct:tn++esy1}
We have the exact sequence
\[ \begin{aligned}
&H_1(W_{K/F},X_*(T))\to H_1(W_{K/F},X_*(U))\to\\
\to&H_0(W_{K/F},X_*(T)\to X_*(U))_0 \to Y_\tx{tor}(T) \to Y_\tx{tor}(U).	
\end{aligned}
 \]
\end{fct}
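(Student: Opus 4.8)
The plan is to derive this exact sequence from the standard hyperhomology long exact sequence attached to the short exact sequence of complexes
\[
0 \to (X_*(U)[1]) \to (X_*(T) \to X_*(U)) \to (X_*(T)[0]) \to 0,
\]
where $[1]$ and $[0]$ indicate placement in degrees $1$ and $0$ respectively, and then restricting to the ``$0$'' subgroups in degree $0$. First I would write down the long exact sequence in $W_{K/F}$-hyperhomology: for the complex concentrated in degree $1$ we have $H_n(W_{K/F}, X_*(U)[1]) = H_{n-1}(W_{K/F},X_*(U))$, for the complex concentrated in degree $0$ we have $H_n(W_{K/F},X_*(T)[0]) = H_n(W_{K/F},X_*(T))$, and the connecting map $H_0(\text{two-term complex}) \to H_{-1}(W_{K/F},X_*(U)[1]) = H_{-2}$ is not what we want — so instead I would index so that the relevant piece reads
\[
H_1(W_{K/F},X_*(T)) \to H_1(W_{K/F},X_*(U)) \to H_0(W_{K/F},X_*(T)\to X_*(U)) \to H_0(W_{K/F},X_*(T)) \to H_0(W_{K/F},X_*(U)).
\]
This is exactly the shape of Fact \ref{fct:tn++esy1} except that the last two terms are $H_0(W_{K/F},X_*(T))$ and $H_0(W_{K/F},X_*(U))$ rather than $Y_\tx{tor}(T)$ and $Y_\tx{tor}(U)$, and the third term is $H_0$ of the complex rather than $H_0(-)_0$. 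So the real content is to check that restricting to the subgroups defined by the norm-vanishing condition $N_{K/F}\lambda = 0$ produces an exact sequence, using the identification $H_0(W_{K/F},X_*(T))_0 = Y_\tx{tor}(T)$ recorded in the text.

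The key steps, in order, are: (1) recall from \cite[App. A.3]{KS99} the explicit chain-level descriptions of $Z_0$, $B_0$, $Z_0(-)_0$, $B_0$ for each of the three complexes, which are already reproduced in the excerpt; (2) observe that the maps in the long exact sequence — the map induced by $f_*$ on chains, the inclusion $\mu_1 \mapsto (0,\mu_1)$, and the projection $(\lambda,\mu_1)\mapsto \lambda$ — all respect the norm map $N_{K/F}$, because $f_*$ commutes with $N_{K/F}$, the zero $0$-chain has zero norm, and $N_{K/F}$ of the $0$-chain $\lambda$ appearing in a pair $(\lambda,\mu_1)$ is precisely the quantity constrained to vanish; (3) run a diagram chase to check exactness of the restricted sequence at each of the three spots. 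At the term $H_0(W_{K/F},X_*(T)\to X_*(U))_0$: an element in the kernel of the projection to $H_0(W_{K/F},X_*(T))$ lifts, by exactness of the unrestricted sequence, to some $\mu_1 \in C_1(W_{K/F},X_*(U))$ with $\partial \mu_1 = 0$, i.e.\ a class in $H_1(W_{K/F},X_*(U))$ — here one must check that the norm-vanishing on the original pair imposes no further constraint on $\mu_1$, which holds because $H_1$ has no norm condition. At the term $H_1(W_{K/F},X_*(U))$: the image of $H_1(W_{K/F},X_*(T))$ matches the kernel of the map to the degree-$0$ hyperhomology, again directly from the unrestricted sequence since neither $H_1$ carries a norm condition. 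At the term $Y_\tx{tor}(T) = H_0(W_{K/F},X_*(T))_0$: the kernel of $Y_\tx{tor}(T) \to Y_\tx{tor}(U)$ consists of classes $[\lambda]$ with $N_{K/F}\lambda = 0$ and $f_*(\lambda) = \partial\mu_1 + N_{K/F}(\text{something})$ modulo boundaries; one checks this lifts to a genuine element of $Z_0(-)_0$ of the complex, i.e.\ that the correction terms can be absorbed — this is where a small amount of care with the definition of $B_0$ versus $B_0$-of-the-complex is needed, but it is the same bookkeeping as in \cite[App. A.3]{KS99}.

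The main obstacle I expect is step (3) at the last spot, namely verifying that the norm-vanishing subgroups splice together exactly rather than merely giving a complex: concretely, that a class in $Y_\tx{tor}(T)$ killed in $Y_\tx{tor}(U)$ actually comes from a norm-trivial pair $(\lambda,\mu_1)$ in $Z_0(-)_0$, not just from some pair in $Z_0$ of the complex whose $0$-component happens to have vanishing norm only after modification. This requires chasing the definitions of $B_0$ carefully and using that $N_{K/F}$ annihilates boundaries $\partial\lambda_1$ automatically (since the norm of a boundary is a boundary of a norm, and in degree $-1$ Tate cohomology that vanishes). Apart from that, everything is a formal consequence of the hyperhomology long exact sequence of the two-term complex together with the compatibility of all maps with $N_{K/F}$, exactly paralleling \cite[Lemma A.3.?]{KS99}; I would present it as such, pointing to loc.\ cit.\ for the routine verifications and spelling out only the norm-compatibility and the final splicing argument.
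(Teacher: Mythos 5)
Your approach is correct and is exactly what the paper's "Left to the reader" has in mind: take the hyperhomology long exact sequence of the short exact sequence of complexes, and check that the norm-vanishing subgroups in degree $0$ splice together exactly, using $H_0(-)_0 = Y_\tx{tor}(-)$ and the fact that $N_{K/F}$ kills boundaries. One small slip worth flagging: in the final exactness check the kernel of $Y_\tx{tor}(T)\to Y_\tx{tor}(U)$ is precisely the set of classes $[\lambda]$ with $N_{K/F}\lambda=0$ and $f_*(\lambda)=\partial\mu_1$ for some $\mu_1$ — there is no extra $N_{K/F}(\cdot)$ correction term, since $H_0(-)_0$ is defined as $Z_0(-)_0/B_0$ with the same $B_0$ as for $H_0$; so the lift $(\lambda,\mu_1)$ to $Z_0(-)_0$ is immediate and the "obstacle" you anticipated at this spot does not actually arise.
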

\begin{proof}
Left to the reader.
\end{proof}

We define $H_0(W_{K/F},X_*(T) \to X_*(\bar T) \to X_*(U))_0=Z_0(W_{K/F},X_*(\bar T) \to X_*(U))_0/B_0(W_{K/F},X_*(T) \to X_*(U))$.

\begin{fct} \label{fct:tn++esy2}
We have the exact sequence
\[ \begin{aligned}
&H_1(W_{K/F},X_*(T))\to H_1(W_{K/F},X_*(U)) \to \\
\to&H_0(W_{K/F},X_*(T) \to X_*(\bar T) \to X_*(U))_0
\to \bar Y_{+,\tx{tor}}(Z \to T) \to Y_\tx{tor}(U).	
\end{aligned}
 \]
\end{fct}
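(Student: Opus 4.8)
The plan is to mimic the derivation of Fact \ref{fct:tn++esy1} (the exact sequence for the complex $X_*(T) \to X_*(U)$) while keeping track of the extra quotient $Z \hookrightarrow T$, exactly as the hypercohomology group $H_0(W_{K/F}, X_*(T) \to X_*(\bar T) \to X_*(U))_0$ was manufactured from $Z_0(W_{K/F}, X_*(\bar T)\to X_*(U))_0$ and $B_0(W_{K/F},X_*(T)\to X_*(U))$ by a deliberate mismatch of numerator and denominator. The first step is to set up the short exact sequence of complexes (of $W_{K/F}$-modules, placed in degrees $0$ and $1$) relating the three relevant two-term complexes: the complex $[X_*(T)\to 0]$, the complex $[X_*(T)\to X_*(\bar T)\to X_*(U)]$ (a three-term object, but homologically a two-term hyper-object once we recall the definitions above), and the complex $[X_*(U)\to X_*(U)]$ (identity, hence acyclic in the relevant range is the wrong statement — instead its homology computes $X_*(U)$ shifted). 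Concretely I would work directly with the chain groups: the numerator of $H_0(W_{K/F},X_*(T) \to X_*(\bar T) \to X_*(U))_0$ is $Z_0(W_{K/F},X_*(\bar T) \to X_*(U))_0$, which fits into the sequence of Fact \ref{fct:tn++esy1} applied to $\bar T$ in place of $T$, giving
\[
H_1(W_{K/F},X_*(\bar T))\to H_1(W_{K/F},X_*(U))\to H_0(W_{K/F},X_*(\bar T)\to X_*(U))_0 \to Y_\tx{tor}(\bar T)\to Y_\tx{tor}(U).
\]
Now $\bar Y_{+,\tx{tor}}(Z\to T)$ was defined in \cite{KalRI} precisely as a modification of $Y_\tx{tor}(\bar T)$ in which the boundary subgroup is taken from $X_*(T)$-chains rather than $X_*(\bar T)$-chains; making the same replacement of the denominator $B_0(W_{K/F},X_*(\bar T)\to X_*(U))$ by $B_0(W_{K/F},X_*(T)\to X_*(U))$ in the hypercohomology group has the effect of replacing $Y_\tx{tor}(\bar T)$ by $\bar Y_{+,\tx{tor}}(Z\to T)$ and, simultaneously, replacing the leftmost term $H_1(W_{K/F},X_*(\bar T))$ by $H_1(W_{K/F},X_*(T))$. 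This is the content I want.

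The key steps, in order: (i) recall from \cite{KalRI} the precise definition of $\bar Y_{+,\tx{tor}}(Z\to T)$ as $Z_0(W_{K/F},X_*(\bar T))_0 / B_0(W_{K/F},X_*(T))$ (with the boundary taken after applying the projection $X_*(T)\to X_*(\bar T)$ to $1$-chains) together with the exact sequence it sits in — namely the analog $H_1(W_{K/F},X_*(T))\to 0 \to \bar Y_{+,\tx{tor}}(Z\to T) \to H^2(\Gamma,Z)\to\cdots$ or more precisely the piece relating it to $Y_\tx{tor}(\bar T)$; (ii) write down the evident map of chain complexes $[X_*(T)\to X_*(\bar T)\to X_*(U)] \to [X_*(T)\to X_*(\bar T)]$ (forget $U$) and $[X_*(T)\to X_*(\bar T)\to X_*(U)] \leftarrow [X_*(U)]$ (include $U$), inducing maps on $H_0(-)_0$; (iii) run the long exact sequence of the resulting short exact sequence of complexes, identifying the connecting maps with $f_*$ on homology; (iv) substitute the known identifications $H_0(W_{K/F},X_*(U)\text{-part})_0 = Y_\tx{tor}(U)$ and $H_0$ of the "$[X_*(T)\to X_*(\bar T)]$" part $= \bar Y_{+,\tx{tor}}(Z\to T)$, and read off the five-term exact sequence asserted. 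A clean alternative is to run a $3\times 3$ (or rather $2$-row) diagram chase directly at the level of cycles and boundaries, as was done for \cite[(A.3)]{KS99}; given that Fact \ref{fct:tn++esy1} is left to the reader, the present Fact is meant to be handled the same way and I would likewise present it concisely.

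The main obstacle is step (i)–(iii): reconciling the two slightly different bookkeeping conventions, namely the "$Z\to T$" modification of \cite{KalRI} (which affects only the $Y_\tx{tor}$/$H^2(\Gamma,Z)$ end of the sequence) and the "$T\to U$" complex of \cite[App. A]{KS99} (which affects the $H_1/H_0$ end). One must check that these two modifications are genuinely independent — that the extra boundary terms coming from $X_*(T)$-chains (as opposed to $X_*(\bar T)$-chains) do not interact with the $\mu_1\in C_1(W_{K/F},X_*(U))$ component of a cycle, which is immediate since $f_*$ kills $Z$ and hence the discrepancy lives entirely in the $X_*(\bar T)$-slot. Once that independence is verified, the five-term sequence falls out of the standard long exact sequence in hyperhomology exactly as in the proof of Fact \ref{fct:tn++esy1}, and the verification that the maps are the claimed ones ($f_*$-induced, and the natural projections) is routine. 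I expect the write-up to be one short paragraph invoking Fact \ref{fct:tn++esy1} for $\bar T$, the defining exact sequence of $\bar Y_{+,\tx{tor}}(Z\to T)$ from \cite{KalRI}, and a splice of the two.
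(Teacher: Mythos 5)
Your approach is correct; the paper leaves this to the reader, and the intended argument is exactly the direct chain-level verification you describe, organized by comparison with Fact~\ref{fct:tn++esy1} for $\bar T$. One caveat on the framing: the object $[X_*(T)\to X_*(\bar T)\to X_*(U)]$ is not itself a complex (the mismatch --- cycles taken for $X_*(\bar T)\to X_*(U)$, boundaries for $X_*(T)\to X_*(U)$ --- is not encoded by any two-term or three-term complex), so there is no literal short exact sequence of complexes whose hyperhomology long exact sequence yields the five-term sequence in one stroke; the ``$2$-row diagram chase at the level of cycles and boundaries'' you offer as an alternative is really the route, but it is short. The key observation that makes the ``slide'' from the $\bar T$-version of Fact~\ref{fct:tn++esy1} work is the injectivity of $X_*(T)\hookrightarrow X_*(\bar T)$: a pair $(0,\mu_1)$ lies in $B_0(W_{K/F},X_*(T)\to X_*(U))$ only if the witnessing $\lambda_1\in C_1(W_{K/F},X_*(T))$ has $\partial\lambda_1=0$ already in $X_*(T)$, so the kernel of $H_1(W_{K/F},X_*(U))\to H_0(-)_0$ is $f_*\bigl(H_1(W_{K/F},X_*(T))\bigr)$ rather than $f_*\bigl(H_1(W_{K/F},X_*(\bar T))\bigr)$; and at the other end, the map $H_0(-)_0\to \bar Y_{+,\tx{tor}}(Z\to T)$, $(\bar\lambda,\mu_1)\mapsto\bar\lambda$, descends because $B_0(W_{K/F},X_*(T)\to X_*(U))$ projects onto $IX_*(T)$, with image the preimage of $0$ under $\bar f_*$ by definition of $Z_0(W_{K/F},X_*(\bar T)\to X_*(U))_0$.
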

\begin{proof}
Left to the reader.
\end{proof}

There is a coinflation map $C_n(W_{L/F},X_*(T)) \to C_n(W_{K/F},X_*(T))$ for a tower $L/K/F$ defined by
\[ \tx{coinf}y(w_1,\dots,w_n) = \sum_{\dot w_i \in p^{-1}(w_i)}y(\dot w_1,\dots,\dot w_n), \]
where $p : W_{L/F} \to W_{K/F}$ is the natural projection. This map respects differentials and induces a corresponding map 
\[ H_n(W_{L/F},X_*(T) \to X_*(U)) \to H_n(W_{K/F},X_*(T) \to X_*(U)). \] 
It maps $H_0(W_{L/F},X_*(T) \to X_*(U))_0$ to $H_0(W_{K/F},X_*(T) \to X_*(U))_0$, this relies on the torsion-freeness of $X_*(T)$.

\begin{fct} \label{fct:ks1} Consider a tower of finite Galois extensions $L/K/F$ and assume $K$ splits $T$ and $U$. Then the following diagram commutes
\[ \xymatrix{
H_0(W_{L/F},X_*(T) \to X_*(U))_0\ar[r]\ar[d]&H^1(K/F,T(K) \to U(K))\\
H_0(W_{K/F},X_*(T) \to X_*(U))_0\ar[r]&H^1(L/F,T(L) \to U(L))\ar[u]\\
}
\]
where the left map is coinflation, the right map is inflation, and the horizontal maps are the isomorphisms \cite[(A.3.4)]{KS99}. Both vertical maps are isomorphisms.
\end{fct}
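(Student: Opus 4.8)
The plan is to unwind the explicit chain-level definitions of the maps involved, reduce to the case of a single torus, and there invoke the classical compatibility of the Tate--Nakayama isomorphism with enlargement of the splitting field. The three kinds of maps appearing in the square all come from maps of (bi)complexes: coinflation is the fibrewise-sum map recalled just above the statement, that is, the natural pushforward on group homology along $p:W_{L/F}\twoheadrightarrow W_{K/F}$; the inflation $H^1(K/F,T(K)\to U(K))\to H^1(L/F,T(L)\to U(L))$ is induced by $\Gamma_{L/F}\twoheadrightarrow\Gamma_{K/F}$ together with the inclusions $T(K)\hookrightarrow T(L)$ and $U(K)\hookrightarrow U(L)$; and the isomorphisms \cite[(A.3.4)]{KS99} are assembled at the chain level out of the relative Weil group extensions $1\to K^\times\to W_{K/F}\to\Gamma_{K/F}\to1$ and $1\to L^\times\to W_{L/F}\to\Gamma_{L/F}\to1$. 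It therefore suffices to check commutativity at the level of representing (co)chains.

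For the reduction to a single torus, I would use that $[X_*(T)\xrightarrow{f_*}X_*(U)]$ fits into a distinguished triangle with $X_*(T)$ and $X_*(U)$, yielding the long exact sequence of Fact~\ref{fct:tn++esy1}, and that $[T\xrightarrow{f}U]$ fits into the analogous triangle, yielding $\cdots\to H^0(\Gamma,U)\to H^1(\Gamma,T\to U)\to H^1(\Gamma,T)\to H^1(\Gamma,U)\to\cdots$. Coinflation and inflation commute with the connecting maps of these sequences, and \cite[(A.3.4)]{KS99} for the complex restricts and projects to \cite[(A.3.4)]{KS99} for $X_*(T)$ and for $X_*(U)$; so by the five lemma the claim for $[X_*(T)\to X_*(U)]$ follows from the claim for a single cocharacter lattice $X_*(T')$ with $T'$ split over $K$.

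In that case $H_0(W_{L/F},X_*(T'))_0$ and $H_0(W_{K/F},X_*(T'))_0$ are both equal to $H^{-1}_\tx{Tate}(\Gamma_{K/F},X_*(T'))$ --- the augmentation submodule is unchanged when $\Gamma_{K/F}$ is enlarged to $\Gamma_{L/F}$, and so is the relevant norm kernel, the latter precisely because $X_*(T')$ is torsion-free (the point noted before the statement) --- and under these identifications coinflation becomes the identity, while \cite[(A.3.4)]{KS99} is the Tate--Nakayama isomorphism onto $H^1(\Gamma_{K/F},T'(K))$, respectively $H^1(\Gamma_{L/F},T'(L))$. The required commutativity is exactly the statement that these two Tate--Nakayama isomorphisms agree under inflation, which holds in the Weil group formulation because the relative Weil groups form a strictly compatible system: pushforward along $W_{L/F}\twoheadrightarrow W_{K/F}$ matches inflation on $H^1$, the norm $L^\times\to K^\times$ implicit in $W_{L/F}\to W_{K/F}$ compensating the index $[L:K]$ by which $\mathrm{inf}(\alpha_{K/F})$ and $\alpha_{L/F}$ differ. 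Finally the vertical maps are isomorphisms: inflation is one by the inflation--restriction sequence and $H^1(\Gamma_{L/K},T'(L))=0$ (Hilbert 90, $T'$ being split over $K$), hence also inflation for the complex $[T\to U]$ by the five lemma, and then coinflation is one because the square commutes and the two maps \cite[(A.3.4)]{KS99} are isomorphisms by loc.\ cit.

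The conceptual content is classical; the main obstacle is the Weil-group bookkeeping in the single-torus step --- keeping the norm-zero conditions defining the subgroups $H_0(-)_0$ and the precise normalization of \cite[(A.3.4)]{KS99} compatibly matched on the $W_{L/F}$- and $W_{K/F}$-levels, so that no spurious factor $[L:K]$ survives. The analogous compatibility for short complexes of tori is essentially already contained in \cite[App.~A]{KS99}, so once these normalizations are pinned down the remaining diagram chase is routine.
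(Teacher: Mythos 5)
Your proposal diverges from the paper's argument in one substantive respect: the paper does not re-prove the commutativity of the square at all, but simply identifies it with diagram (A.3.11) of \cite{KS99}, whose commutativity is established there. Your proposal instead attempts to re-derive that commutativity from scratch, and it is in that re-derivation that a genuine gap appears.

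The reduction step is the problem. You invoke the five lemma to pass from commutativity of the square for the complex $[X_*(T)\to X_*(U)]$ to commutativity of the analogous squares for $X_*(T)$ and $X_*(U)$ separately. But the five lemma is a statement about a middle map being an isomorphism; it does not reduce commutativity of a square to commutativity of squares on either end of an exact sequence. Concretely, if you form the difference $\phi$ of the two compositions $H_0(W_{L/F},X_*(T)\to X_*(U))_0 \to H^1(L/F,T(L)\to U(L))$, then compatibility with the long exact sequences tells you that $\phi$ kills the image of $H_1(W_{L/F},X_*(U))$ and lands in the kernel of $H^1(L/F,T(L)\to U(L))\to H^1(L/F,T(L))$. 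That leaves you with a possibly nonzero factorization through a quotient of $Y_{\tx{tor}}(T)$ mapping into the image of $U(F)$, and nothing you have said forces this to vanish. (In the single-torus step you make an analogous unjustified leap: the identification of $H_0(W_{E/F},X_*(T'))_0$ with $H^{-1}_{\tx{Tate}}(\Gamma_{K/F},X_*(T'))$ for $E = K,L$ is fine, but the subsequent ``no spurious factor $[L:K]$ survives'' is exactly the normalization fact that has to be verified explicitly, and you do not do so.) Either you must carry out the chain-level computation directly for the length-two complex, as your opening paragraph suggests but does not execute, or you must cite the reference as the paper does.

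The second half of your argument --- that inflation is an isomorphism by the five lemma (now used correctly) applied to the exact sequence relating $H^1$ of the complex to $T(F)$, $U(F)$, $H^1(T)$, $H^1(U)$ via Hilbert 90, and that coinflation is then an isomorphism because the square commutes and the horizontal maps are isomorphisms --- is the same as the paper's argument and is fine, modulo the fact that it presupposes the commutativity you have not yet established.
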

\begin{proof}
This is diagram \cite[(A.3.11)]{KS99}, and its commutativity is proved there. The fact that inflation is an isomorphisms follows from the 5-lemma applied to the exact sequence
\[\resizebox{0.98\textwidth}{!}{$T(F) \to U(F) \to H^1(K/F,T(K) \to U(K)) \to H^1(K/F,T(K)) \to H^1(K/F,U(K))$} \]
and its $L/F$-analog. The fact that coinflation is an isomorphism follows from the commutativity of the above diagram.
\end{proof}

The coinflation map induces a map
\[ \resizebox{0.98\textwidth}{!}{$H_0(W_{L/F},X_*(T) \to X_*(\bar T) \to X_*(U))_0 \to H_0(W_{K/F},X_*(T) \to X_*(\bar T) \to X_*(U))_0$}. \]

\begin{fct} This is an isomorphism.
\end{fct}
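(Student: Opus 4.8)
The plan is to deduce this from Fact~\ref{fct:ks1} by wedging the group $H_0(W_{K/F},X_*(T) \to X_*(\bar T) \to X_*(U))_0$ into a short exact sequence whose two outer terms behave well under coinflation. Write $Y=X_*(\bar T)/X_*(T)$, a finite $\Gamma$-module. Since $C_0(W_{K/F},X_*(T))=X_*(T)$ sits inside $C_0(W_{K/F},X_*(\bar T))=X_*(\bar T)$ and the norm- and boundary-conditions are compatible with this inclusion, one gets an injection $Z_0(W_{K/F},X_*(T)\to X_*(U))_0 \hookrightarrow Z_0(W_{K/F},X_*(\bar T)\to X_*(U))_0$; moreover $B_0(W_{K/F},X_*(T)\to X_*(U))$ — which is contained in $Z_0(W_{K/F},X_*(T)\to X_*(U))_0$ because the norm of a boundary vanishes — is by definition the common denominator of both $H_0(W_{K/F},X_*(T)\to X_*(U))_0$ and $H_0(W_{K/F},X_*(T) \to X_*(\bar T) \to X_*(U))_0$. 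Hence there is a short exact sequence
\[ 0 \to H_0(W_{K/F},X_*(T)\to X_*(U))_0 \to H_0(W_{K/F},X_*(T) \to X_*(\bar T) \to X_*(U))_0 \to Q_{K} \to 0, \]
with $Q_{K}=Z_0(W_{K/F},X_*(\bar T)\to X_*(U))_0/Z_0(W_{K/F},X_*(T)\to X_*(U))_0$, and this sequence is functorial for coinflation (using, as in the rest of the subsection, that coinflation respects the chain-level inclusions, which rests on the torsion-freeness of $X_*(T)$ and $X_*(\bar T)$).

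Next I would identify $Q_K$ explicitly. The projection $X_*(\bar T)\to Y$ induces on $0$-chains a homomorphism $Z_0(W_{K/F},X_*(\bar T)\to X_*(U))_0 \to C_0(W_{K/F},Y)=Y$ sending $(\lambda,\mu_1)$ to the class $\bar\lambda$ of $\lambda$, whose kernel is exactly $Z_0(W_{K/F},X_*(T)\to X_*(U))_0$. So $Q_K$ is identified with the subgroup of $Y$ consisting of those $\bar\lambda$ admitting a lift $\lambda\in X_*(\bar T)$ with $N_{K/F}\lambda=0$ and $f_*(\lambda)\in \partial C_1(W_{K/F},X_*(U))=I_{W_{K/F}}X_*(U)$. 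Because $X_*(\bar T)$ and $X_*(U)$ are torsion-free, $N_{K/F}\lambda=0$ if and only if $N_{L/F}\lambda=0$ (the two norms differ by the factor $[L:K]$), and $I_{W_{K/F}}X_*(U)=I_{W_{L/F}}X_*(U)=I_\Gamma X_*(U)$ since $K$ already splits $U$; thus this subgroup of $Y$ is independent of the splitting field. Since coinflation acts as the identity on $0$-chains, the coinflation map $Q_{L}\to Q_{K}$ is the identity on this subgroup of $Y$, in particular an isomorphism.

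Finally, Fact~\ref{fct:ks1} applied to the complex $X_*(T)\to X_*(U)$ gives that coinflation is an isomorphism on $H_0(W_{-/F},X_*(T)\to X_*(U))_0$. Feeding this and the isomorphism $Q_{L}\xrightarrow{} Q_{K}$ into the short five lemma, applied to the coinflation map between the short exact sequences displayed above for the tower $L/F$ and for $K/F$, yields that coinflation is an isomorphism on the middle term $H_0(W_{-/F},X_*(T) \to X_*(\bar T) \to X_*(U))_0$, which is the assertion. The only genuinely careful step is the identification of $Q_K$ as a splitting-field-independent subgroup of $Y$ — that is, the two torsion-freeness arguments together with the computation that the $0$-chain map above has the stated kernel; everything else (compatibility of coinflation with the chain-level inclusions, the containment $B_0\subseteq Z_0(-)_0$, and the verification that $f_*$ extends to $X_*(\bar T)$) is routine and already implicit in the constructions of this subsection, so no essential obstacle arises.
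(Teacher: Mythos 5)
Your proof is correct, and it takes a genuinely different route from the paper. The paper's proof applies the five lemma to the five-term exact sequence of Fact~\ref{fct:tn++esy2},
\[ H_1(X_*(T)) \to H_1(X_*(U)) \to H_0(X_*(T) \to X_*(\bar T) \to X_*(U))_0 \to \bar Y_{+,\tx{tor}}(Z \to T) \to Y_\tx{tor}(U), \]
using that coinflation is the identity on the last two (stable) terms and applying Fact~\ref{fct:ks1} to the one-term complexes $1 \to T$ and $1 \to U$ to handle the first two. You instead build a \emph{short} exact sequence
\[ 0 \to H_0(W_{K/F},X_*(T)\to X_*(U))_0 \to H_0(W_{K/F},X_*(T) \to X_*(\bar T) \to X_*(U))_0 \to Q_K \to 0, \]
which falls out of the definitions since both $H_0$'s share the same denominator $B_0(W_{K/F},X_*(T)\to X_*(U))$, apply the full strength of Fact~\ref{fct:ks1} (i.e.\ for the two-term complex $X_*(T)\to X_*(U)$) to the left term, and identify $Q_K$ with a subgroup of $Y=X_*(\bar T)/X_*(T)$ cut out by conditions ($N\lambda=0$ for some lift, $f_*\lambda \in I_\Gamma X_*(U)$) that are manifestly splitting-field independent once one uses torsion-freeness of $X_*(\bar T)$, so that coinflation — which acts as the identity on $0$-chains — induces the identity on $Q$. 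The trade-off: the paper leans on the already-stated (if unproved) Fact~\ref{fct:tn++esy2}, keeping the argument short and uniform with the proof of Proposition~\ref{fct:tn++d2}; yours avoids the long exact sequence entirely at the cost of a small but self-contained chain-level computation identifying $Q_K$. Both are legitimate and of comparable length once the ingredients are accounted for.
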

\begin{proof} We apply the 5-lemma to the exact sequence
\[ \resizebox{0.98\textwidth}{!}{$H_1(X_*(T)) \to H_1(X_*(U)) \to H_0(X_*(T) \to X_*(\bar T) \to X_*(U))_0 \to \bar Y_{+,\tx{tor}}(T) \to Y_\tx{tor}(U)$}, \]
where we take homology of $W_{L/F}$, and then map it, via the coinflation map, to the same exact sequence but for $W_{K/F}$. For the last two terms coinflation induces the identity. For the first two terms, it is an isomorphism due to Fact \ref{fct:ks1} applied to the complexes $1 \to T$ and $1 \to U$.
\end{proof}

We define $\bar Y_{+,\tx{tor}}(Z \to T \to U)$ as the inverse limit of $H_0(W_{K/F},X_*(T) \to X_*(\bar T) \to X_*(U))_0$ with respect to coinflation.

\begin{fct} \label{fct:tn++esy3}
Let $H_1(X_*(T))$ denote the inverse limit of $H_1(W_{K/F},X_*(T))$ with respect to coinflation. We have the exact sequence
\[ H_1(X_*(T)) \to H_1(X_*(U)) \to \bar Y_\tx{tor}(Z \to T \to U) \to \bar Y_{+,\tx{tor}}(T) \to Y_\tx{tor}(U). \]
\end{fct}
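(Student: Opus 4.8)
The plan is to obtain the asserted five-term sequence by passing to the inverse limit over $K$ in the exact sequence of Fact~\ref{fct:tn++esy2}. Let $K$ run over the finite Galois extensions of $F$ splitting $T$ and $U$, ordered by inclusion, with all transition maps given by coinflation; recall that coinflation respects differentials, carries the subgroups $H_0(-)_0$ into one another, and induces coinflation on ordinary homology. Hence the sequences of Fact~\ref{fct:tn++esy2} for varying $K$ assemble into a directed system of exact sequences, with $H_1(W_{K/F},X_*(T))$ and $H_1(W_{K/F},X_*(U))$ at the left, the group $H_0(W_{K/F},X_*(T)\to X_*(\bar T)\to X_*(U))_0$ in the middle, and the $K$-independent groups $\bar Y_{+,\tx{tor}}(Z\to T)$ and $Y_\tx{tor}(U)$ at the right. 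Passing to $\varprojlim_K$, the first two terms become $H_1(X_*(T))$ and $H_1(X_*(U))$ by definition, and the middle one becomes $\bar Y_{+,\tx{tor}}(Z\to T\to U)$ by definition; moreover, by the Fact immediately preceding this one the transition maps on the middle term are isomorphisms, so that system, like the last two, is constant up to canonical isomorphism.

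It remains to check that exactness survives the limit at the three middle spots. At the spots $\bar Y_{+,\tx{tor}}(Z\to T\to U)$ and $\bar Y_{+,\tx{tor}}(Z\to T)$ this is formal, since all the systems appearing in the tail are constant up to isomorphism, hence satisfy Mittag-Leffler and have vanishing $\varprojlim^1$. At the spot $H_1(X_*(U))$ one argues as follows. Let $N_K$ and $J_K$ denote the kernel and image of $H_1(W_{K/F},X_*(T))\to H_1(W_{K/F},X_*(U))$; by the exactness in Fact~\ref{fct:tn++esy2}, $J_K$ is also the kernel of $H_1(W_{K/F},X_*(U))\to H_0(W_{K/F},X_*(T)\to X_*(\bar T)\to X_*(U))_0$. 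Left-exactness of $\varprojlim$ then identifies $\ker\big(H_1(X_*(U))\to\bar Y_{+,\tx{tor}}(Z\to T\to U)\big)$ with $\varprojlim_K J_K$, and likewise shows that the limit map $H_1(X_*(T))\to H_1(X_*(U))$ factors through $\varprojlim_K J_K$ by an injection; so the desired exactness comes down to the surjectivity of $H_1(X_*(T))\to\varprojlim_K J_K$. Applying the six-term $\varprojlim/\varprojlim^1$ exact sequence to $0\to N_K\to H_1(W_{K/F},X_*(T))\to J_K\to 0$, this surjectivity follows once $\varprojlim^1_K N_K=0$.

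The substantive point, and the one I expect to be the main obstacle, is thus the vanishing of $\varprojlim^1_K N_K$, i.e.\ the Mittag-Leffler property of $\{N_K\}_K$ under coinflation. I would establish it by the method of \cite[App.~A.3]{KS99} and \cite{KalRI}, on which the very construction of the $\bar Y_{+,\tx{tor}}$-type groups already relies. Concretely: writing $Q$ for the cokernel of $f_*:X_*(T)\to X_*(U)$ (and passing to $X_*(\bar T)$ as in the definition preceding Fact~\ref{fct:tn++esy1} when $f_*$ fails to be injective, which is legitimate because $Z\subset\ker f$), the long exact homology sequence realizes $N_K$ as the image of the Weil-group homology group $H_2(W_{K/F},Q)$ in $H_1(W_{K/F},X_*(T))$. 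Since the image of a Mittag-Leffler inverse system is again Mittag-Leffler, the claim reduces to the Mittag-Leffler property of the coinflation systems $\{H_j(W_{K/F},M)\}_K$ for finitely generated $\Gamma$-modules $M$ and $j\le 2$; and for those one reduces, via the five-lemma and Fact~\ref{fct:ks1} applied to the length-one complexes $1\to T$ and $1\to U$, to statements already available in loc.\ cit. Granting this, the $\varprojlim/\varprojlim^1$ six-term sequences attached to the short exact sequences of inverse systems into which Fact~\ref{fct:tn++esy2} factors splice together to give precisely the claimed five-term exact sequence.
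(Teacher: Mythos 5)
Your strategy — taking inverse limits over $K$ in the exact sequence of Fact~\ref{fct:tn++esy2} — is the right one, and your reduction of exactness at the spot $H_1(X_*(U))$ to $\varprojlim^1_K N_K=0$ is formally sound. But you work much harder than the situation requires, and the final detour through the long exact homology sequence of $f_*\colon X_*(T)\to X_*(U)$, introducing the cokernel $Q$ and $H_2(W_{K/F},Q)$, is both unnecessary and not quite correct as written: $f_*$ need not be injective (its kernel is $X_*(\ker(f)^\circ)$, which can be nonzero), so the short exact sequence whose boundary map you invoke may simply not exist, and passing from $X_*(T)$ to $X_*(\bar T)$ does not cure this.

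The decisive observation, which the paper records explicitly in the proof of the unnamed Fact immediately preceding \ref{fct:tn++esy3} and which you cite only as a final reduction target, is that the coinflation maps on $H_1(W_{K/F},X_*(T))$ and $H_1(W_{K/F},X_*(U))$ are \emph{already isomorphisms}: this is Fact~\ref{fct:ks1} applied to the length-one complexes $1\to T$ and $1\to U$. Together with the isomorphism on the middle term (that preceding Fact) and the identities on $\bar Y_{+,\tx{tor}}(Z\to T)$ and $Y_\tx{tor}(U)$, this shows that \emph{all five} inverse systems in Fact~\ref{fct:tn++esy2} have isomorphic transition maps. Once that is in hand there is nothing left to do: the inverse-limit sequence is canonically identified with the Fact~\ref{fct:tn++esy2} sequence for any sufficiently large $K$, hence exact, and in particular $\{N_K\}$, being the kernel of a map between systems with isomorphic transitions, is itself constant up to canonical isomorphism, so no Mittag-Leffler or $\varprojlim^1$ argument is needed.
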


Finally, we consider the dual homomorphism $\hat f : \hat U \to \hat T$. It lifts (uniquely) to a homomorphism $\hat{\bar f} : \hat U \to \hat{\bar T}$. Let $\hat Z$ be the kernel of the isogeny $\hat{\bar T} \to \hat T$, and let $\hat K$ and $\hat C$ be the kernel and cokernel of $\hat f$. Then $[\hat{\bar f}]^{-1}(\hat Z)=\hat K$.

We define the group $Z^1_\tx{cts}(W_F,\hat Z \to \hat{\bar T} \from \hat U)$ to consist of the pairs $(z,\dot c)$, where $z \in Z^1_\tx{cts}(W_F,\hat U)$ and $\dot c \in \hat{\bar T}$ satisfying $\partial c=\hat f(z)$, where $c \in \hat T$ is the image of $\dot c$. We define $B^1(W_F,\hat Z \to \hat{\bar T} \from \hat U)$ to consist of $(\partial u,\hat{\bar f}(u))$ for $u \in \hat U$, and $H^1=Z^1/B^1$. This group fits into the exact sequence
\[ H^1_\tx{cts}(W_F,\hat T) \lw H^1_\tx{cts}(W_F,\hat U) \lw H^1_\tx{cts}(W_F,\hat Z \to \hat{\bar T}\from \hat U) \lw [\hat{\bar T}]^+ \lw \hat U^\Gamma. \]
Define $H^1_\tx{cts}(W_F,\hat Z \to \hat{\bar T}\from \hat U)_\tx{red}$ to be the quotient of $H^1_\tx{cts}(W_F,\hat Z \to \hat{\bar T}\from \hat U)$ by the image of $[\hat{\bar T}]^{+,\circ}$. Then we obtain the exact sequence
\begin{equation} \label{eq:tn++esd1}
H^1_\tx{cts}(W_F,\hat T) \lw H^1_\tx{cts}(W_F,\hat U) \lw H^1_\tx{cts}(W_F,\hat Z \to \hat{\bar T}\from \hat U)_\tx{red} \lw \pi_0([\hat{\bar T}]^+) \lw \pi_0(\hat U^\Gamma). \end{equation}

We introduce on $H^1(u \to W,Z \to T \to U)$ the unique topology that makes the homomorphism $U(F) \to H^1(u \to W,Z \to T \to U)$ continuous and open. Analogously, we introduce on $\bar Y_\tx{tor}(Z \to T \to U)$ the unique topology that makes the homomorphisms $H_1(W_{K/F},X_*(U)) \to \bar Y_\tx{tor}(Z \to T \to U)$ continuous and open. Here $H_1(W_{K/F},X_*(U))$ is topologized to make the Langlands isomorphism a homeomorphism.

\subsection{Generalized Tate-Nakayama duality} \label{sub:tnd++}

We shall now define a perfect pairing
\begin{equation} \label{eq:tnd++}
H^1(u \to W,Z \to T \to U) \otimes H^1_\tx{cts}(W_F,\hat Z \to \hat{\bar T} \from \hat U)_\tx{red} \to \C^\times
\end{equation}
that generalizes the pairing \cite[(A.3.12),(A.3.16)]{KS99}, which can be seen as the special case $Z=1$. We do this in two steps -- first introducing a pairing of elementary nature between $H^1_\tx{cts}(W_F,\hat Z \to \hat{\bar T} \from \hat U)$ and $\bar Y_{+,\tx{tor}}(Z \to T \to U)$, and then an isomorphism of arithmetic nature $\bar Y_{+,\tx{tor}}(Z \to T \to U) \to H^1(u \to W,Z \to T \to U)$.

Given $(z,\dot c) \in Z^1_\tx{cts}(W_{K/F},\hat Z \to \hat{\bar T} \to \hat U)$ and $(\bar\lambda,\mu_1) \in Z_0(W_{K/F},X_*(\bar T) \to X_*(U))_0$ define $\<(z,\dot c),(\bar\lambda,\mu_1)\>_K \in \C^\times$ as
\[ \<\dot c,\bar\lambda\>_{\bar T} \cdot \prod_{w \in W_{K/F}} \<z(w),\mu_1(w)\>_U^{-1}, \]
where $\<-,-\>_{\bar T}$ is the pairing $\hat{\bar T} \times X_*(\bar T) \to \C^\times$ and $\<-,-\>_U$ is the analogous pairing for $U$. It is immediate that if $L/K/F$ is a tower of Galois extensions and $z$ is inflated from $W_{K/F}$ we have $\<(z,\dot c),\tx{coinf}(\bar\lambda,\mu_1)\>_K=\<(z,\dot c),(\bar\lambda,\mu_1)\>_L$. It is immediately checked that this pairing annihilates the (co)boundaries on both sides, as well as the image of $[\hat{\bar T}]^{+,\circ}$, and therefore induces a pairing
\begin{equation} \label{eq:elempair} H^1_\tx{cts}(W_F,\hat Z \to \hat{\bar T} \to \hat U)_\tx{red} \otimes \bar Y_\tx{tor}(Z \to T \to U) \to \C^\times \end{equation}
functorial in $Z \to T \to U$.

Recall the pairing $H^1_\tx{cts}(W_F,\hat U) \otimes H_1(W_F,X_*(U)) \to \C^\times$ that underlies the Langlands isomorphism $H^1_\tx{cts}(W_F,\hat U) \to \tx{Hom}_\tx{cts}(U(F),\C^\times)$ and the pairing $\pi_0(\hat T^\Gamma) \otimes Y_\tx{tor}(T) \to \C^\times$. The latter was generalized to $\pi_0([\hat{\bar T}]^+) \otimes \bar Y_{+,\tx{tor}}(Z \to T) \to \C^\times$ in \cite[Prop. 5.3]{KalRI}.

\begin{fct} \label{fct:tn++d1}
The pairing \eqref{eq:elempair} is compatible with the pairing $\pi_0([\hat{\bar T}]^+) \otimes \bar Y_{+,\tx{tor}}(Z \to T) \to \C^\times$, as well as the \emph{negative} of the pairing $H^1(W_F,\hat U) \otimes H_1(W_F,X_*(U)) \to \C^\times$, and induces an isomorphism
\[ H^1_\tx{cts}(W_F,\hat Z \to \hat{\bar T} \from \hat U)_\tx{red} \to \tx{Hom}_\tx{cts}(\bar Y_\tx{tor}(Z \to T \to U),\C^\times). \]
\end{fct}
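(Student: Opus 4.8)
The strategy is to reduce the claim to known dualities via the two exact sequences that have been established: the five-term sequence \eqref{eq:tn++esd1} on the dual side, and the five-term sequence of Fact \ref{fct:tn++esy3} on the geometric side, and then apply the five lemma. First I would set up the pairing \eqref{eq:elempair} against both sequences simultaneously. On the left end one has $H^1_\tx{cts}(W_F,\hat T) \lw H^1_\tx{cts}(W_F,\hat U)$ pairing (after the sign flip) with $H_1(X_*(T)) \to H_1(X_*(U))$; here the compatibility is exactly the classical Langlands/Tate duality for tori, applied to $T$ and $U$, together with functoriality in the map $f$. On the right end one has $\pi_0([\hat{\bar T}]^+) \lw \pi_0(\hat U^\Gamma)$ pairing with $\bar Y_{+,\tx{tor}}(T) \to Y_\tx{tor}(U)$; here the compatibility is the assertion of \cite[Prop. 5.3]{KalRI} for the $\hat{\bar T}$-term and the classical $\pi_0(\hat U^\Gamma) \otimes Y_\tx{tor}(U)$ duality for the $U$-term. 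The middle terms are then $H^1_\tx{cts}(W_F,\hat Z \to \hat{\bar T} \from \hat U)_\tx{red}$ and $\bar Y_\tx{tor}(Z \to T \to U)$, and the compatibilities I have just listed say precisely that the pairing \eqref{eq:elempair} fits into a morphism from \eqref{eq:tn++esd1} to the $\C^\times$-dual of the sequence in Fact \ref{fct:tn++esy3}.

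Next I would verify that \eqref{eq:tn++esd1} and the sequence of Fact \ref{fct:tn++esy3} are genuinely exact sequences of the appropriate category (topological abelian groups, with the topologies introduced at the end of \S\ref{sub:coho}); both are already recorded in the excerpt, so this is immediate. Applying the $\C^\times$-dualization functor $\tx{Hom}_\tx{cts}(-,\C^\times)$ to Fact \ref{fct:tn++esy3} — using Pontryagin-type duality, which is exact here because all groups involved are either profinite, discrete, or extensions built from such (the relevant groups are locally compact Hausdorff and the dualization is exact on the relevant class) — produces a five-term exact sequence. The diagram then has exact rows, the two outer vertical maps on each side are isomorphisms (Langlands duality and \cite[Prop. 5.3]{KalRI} together with the classical finite-torus statements), so the five lemma forces the middle vertical map
\[ H^1_\tx{cts}(W_F,\hat Z \to \hat{\bar T} \from \hat U)_\tx{red} \to \tx{Hom}_\tx{cts}(\bar Y_\tx{tor}(Z \to T \to U),\C^\times) \]
to be an isomorphism of abstract groups, and a standard open-mapping argument upgrades it to a homeomorphism, exactly as in \cite[App. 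A.3]{KS99}.

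The main obstacle I anticipate is bookkeeping rather than conceptual: one must check that the signs in \eqref{eq:elempair} are chosen so that the squares in the five-term diagram commute on the nose (not merely up to sign), since the statement explicitly demands compatibility with the \emph{negative} of the Langlands pairing on the $\hat U$-part but the \emph{positive} pairing on the $\hat{\bar T}$-part. This is the same delicate point that appears in \cite[App. A.3]{KS99}, and the way to handle it is to trace an explicit cocycle $(z,\dot c)$ through the connecting maps on both sides and compare with the defining formula $\<\dot c,\bar\lambda\>_{\bar T}\cdot\prod_w \<z(w),\mu_1(w)\>_U^{-1}$; the minus sign in the exponent of the $U$-factor is precisely what makes the square involving $H^1_\tx{cts}(W_F,\hat U)$ commute with the sign convention demanded. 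A secondary technical point is exactness of $\tx{Hom}_\tx{cts}(-,\C^\times)$ on the middle term, which requires knowing that $\bar Y_\tx{tor}(Z \to T \to U)$ is an extension of a finitely generated group by a profinite-by-discrete group of the type to which Pontryagin duality applies exactly; this follows from Fact \ref{fct:tn++esy3} since its outer terms are of that form. Granting these, the proof is a diagram chase and I would leave the routine verifications to the reader in the style of Facts \ref{fct:tn++esy1}--\ref{fct:tn++esy3}.
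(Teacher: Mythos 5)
Your proposal is correct and follows essentially the same route as the paper: establish the compatibilities at the ends of the two five-term sequences, dualize the geometric sequence of Fact \ref{fct:tn++esy3} using injectivity/divisibility of $\C^\times$ (together with the definition of the topology to keep exactness in $\tx{Hom}_\tx{cts}$), and conclude by the five lemma against \eqref{eq:tn++esd1}. The only cosmetic differences are that the paper handles the sign question tersely (``immediate from the explicit formula''), where you propose to trace cocycles, and the paper appeals directly to injectivity of $\C^\times$ rather than to Pontryagin duality for the exactness of dualization — both are fine.
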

\begin{proof}
The compatibility of the three pairings is immediate from the explicit formula defining \eqref{eq:elempair}. The compatibility with the negative Langlands pairing together with the definition of the topology on $\bar Y_\tx{tor}(Z \to T \to U)$ implies that the image of the resulting homomorphism 
\[ H^1_\tx{cts}(W_F,\hat Z \to \hat{\bar T} \to \hat U) \to \tx{Hom}(\bar Y_\tx{tor}(Z \to T \to U),\C^\times) \] 
lies in $\tx{Hom}_\tx{cts}(...)$. Applying the functor $\tx{Hom}_\tx{cts}(-,\C^\times)$ to the exact sequence of Fact \ref{fct:tn++esy3} produces an exact sequence: for $\tx{Hom}(-,\C^\times)$ this is because $\C^\times$ is an injective abelian group, and passing from abstract to continuous homomorphisms doesn't ruin exactness due to the definition of the topology on $\bar Y_\tx{tor}(Z \to T \to U)$. This exact sequence maps to the exact sequence \eqref{eq:tn++esd1}, with the first two maps being the negative Langlands pairing, the middle map being \eqref{eq:elempair}, and the fourth and fifth map coming from \cite[Proposition 5.3]{KalRI}. All maps except for the middle one are known to be isomorphisms, and the 5-lemma applies.
\end{proof}

We now turn to the isomorphism $\bar Y_{+,\tx{tor}}(Z \to T \to U) \to H^1(u \to W,Z \to T \to U)$. We fix as in \cite[\S4.4ff]{KalRI} an exhaustive tower $E_k/F$ of finite Galois extensions, compatible sections $s_k : \Gamma_{E_k/F} \to W_{E_k/F}$ and $\zeta_k : \Gamma_{E_k/F} \to \Gamma_{E_{k+1}/F}$, a co-final sequence $n_k$ of natural numbers, a compatible sequence $l_k : \bar F^\times \to \bar F^\times$ of $n_k$-roots. Define $c_k(\sigma,\tau)=\tx{rec}_k^{-1}(s_k(\sigma)s_k(\tau)s_k(\sigma\tau)^{-1})$. Then $\xi_k = dl_kc_k \sqcup_{E_k/F} \delta_e \in Z^2(\Gamma,u_k)$ gives rise to the extension $W_k = u_k\boxtimes_{\xi_k}\Gamma$ of $\Gamma$ by $u_k$. We have the 1-cochain $\alpha_k \in C^1(\Gamma,u_k)$ of \cite[(4.8)]{KalRI} leading to the surjective group homomorphism $f_k : W_{k+1} \to W_k$ defined by $f_k(x \boxtimes \sigma)=p(x)\alpha_k(\sigma)\boxtimes\sigma$, where $p : u_{k+1} \to u_k$ is the surjective group homomorphism of \cite[(3.2)]{KalRI}. Then $W=\varprojlim_k W_k$ is an extension of $\Gamma$ by $u$ in the distinguished isomorphism class.

We are now going to construct the isomorphism by refining and merging together the constructions of \cite[\S A.3]{KS99} and \cite[\S4.6]{KalRI}. More precisely, a central role in the constructions of \cite[\S A.3]{KS99} is played by two maps $\phi = \phi_{T,k} : C_1(W_{E_k/F},X_*(T)) \to T(E_k)$ and $\psi=\psi_{T,k} : C_0(W_{E_k/F},X_*(T))_0 \to Z^1(\Gamma_{E_k/F},T(E_k))$, where $C_0(W_{E_{k}/F},X_*(T))_0$ is simply the kernel of the norm map for the action of $\Gamma_{E_{k}/F}$ on $X_*(T)$. They are functorial in $T$ and satisfy $\phi\circ\partial=0$ and $\partial\circ\phi=\psi\circ\partial$. We shall now recall these maps and give a refinement $\dot\psi$ of $\psi$ using some material from \cite[\S4.6]{KalRI}.

Fix $k$ such that $E_k$ splits both $T$ and $U$ and $\tx{ord}(Z)$ divides $n_k$. Consider $\bar\lambda \in X_*(\bar T)$ and $\mu_1 : W_{E_k/F} \to X_*(U)$ such that  $(\bar\lambda,\mu_1) \in Z_0(W_{E_k/F},X_*(\bar T) \to X_*(U))_0$. As in \cite[\S A.3]{KS99} define $\phi_U(\mu_1) \in U(E_k)$ by
\[ \phi_{U,k}(\mu_1) = \prod_{\sigma,\tau,a} \sigma(\mu_1(as(\tau)))(c_k(\sigma,\tau)^{-1}\sigma(a)^{-1}), \]
the product running over $\Gamma_{E_k/F} \times \Gamma_{E_k/F} \times E_k^\times$. As explained there, this is an explicit formula for the restriction map of 1-chains $C_1(W_{E_k/F},X_*(U)) \to C_1(E_k^\times,X_*(U))$ composed with the isomorphism $C_1(E_k^\times,X_*(U)) \to X_*(U)\otimes_\Z E_k^\times=U(E_k)$. Furthermore, we define $\dot\psi_T(\bar\lambda) \in Z^1(u \to W,Z \to T)$ as the inflation along $W \to W_k=u_k \boxtimes_{\xi_k} \Gamma$ of the element $z_{\bar\lambda,k}$ of \cite[Lemma 4.7]{KalRI}, which we recall is defined as
\[ x \boxtimes \rho \mapsto \phi_{\bar\lambda,k}(x) \cdot (l_kc_k \sqcup_{E_k/F} n_k\bar\lambda)(\rho) = \phi_{\bar\lambda,k}(x)\cdot\prod_{\sigma \in \Gamma_{K/F}} \rho\sigma(n_k\bar\lambda)(l_kc_{\rho,\sigma}). \]
The image $\bar z_{\bar\lambda,k} \in Z^1(\Gamma,\bar T)$ of $z_{\bar\lambda,k}$ is given by $c_k \cup \bar\lambda=\psi_{\bar T}(\bar\lambda)$ and hence satisfies the equation $f(\bar z_{\bar\lambda,k})-\partial \phi_U(\mu_1)=f(\psi_{\bar T}(\bar\lambda))-\partial\phi_U(\mu_1)=\psi_U(f_*(\bar\lambda))-\psi_U(\partial\mu_1)=0$, due to the functoriality of $\psi$. We conclude that $(z_{\bar\lambda,k},\phi_U(\mu_1)) \in Z^1(u \to W,Z \to T \to U)$.

Now consider $(\partial\lambda_1,f_*(\lambda_1)-\partial\mu_2) \in B_0(W_{E_k/F},X_*(T) \to X_*(U))$. Then we have $\dot\psi_T(\partial\lambda_1)=\psi_T(\partial\lambda_1)=\partial \phi_T(\lambda_1)$, and hence $(\dot\psi_T(\partial\lambda_1),\phi_U(f_*(\lambda_1)-\partial\mu_2))=(\partial\phi_T(\lambda_1),f_*(\phi_T(\lambda_1)))$ is a coboundary.

We conclude that we have defined a group homomorphism
\[ H_0(W_{E_k/F},X_*(T) \to X_*(\bar T) \to X_*(U))_0 \to H^1(u \to W,Z \to T \to U). \]
Next, we consider the composition of this homomorphism with the coinflation map
\[ \resizebox{0.98\textwidth}{!}{$H_0(W_{E_{k+1}/F},X_*(T) \to X_*(\bar T) \to X_*(U))_0 \to H_0(W_{E_k/F},X_*(T) \to X_*(\bar T) \to X_*(U))_0$}. \]

In \cite[\S A.3]{KS99} a homomorphism 
\[ c : C_0(W_{E_{k+1}/F},X_*(T))_0 \to C^0(E_{k+1}/F,T(E_{k+1}))  \]
is defined, and it is shown that
\[ \tx{inf}\circ\phi_k\circ\tx{coinf} = \phi_{k+1} + c\partial, \]
\[ \tx{inf}\circ\psi_k\circ\tx{coinf} = \psi_{k+1} + \partial c. \]
The homomorphism $c$ is defined by the formula
\[ c(\lambda) = \prod_{\sigma \in \Gamma_{E_k/F}}(\sigma\lambda)\left(\prod_{\nu \in \Gamma_{E_{k+1}/E_k}} c_{k+1}(v,\zeta_k(\sigma)) \right). \]
The compatibility of the chosen sections $s_k$ and $s_{k+1}$ implies, via \cite[Lemma 4.4]{KalRI}, that this homomorphism is trivial, because the inner product is equal to $c_k(1,\sigma)=1$. It follows that for $\mu_1' : W_{E_{k+1}/F} \to X_*(U)$ the element $\phi_{U,k+1}(\mu_1') \in U(E_{k+1})$ is equal to the image of $\phi_{U,k}(\tx{coinf}(\mu_1')) \in U(E_k)$ under the natural inclusion $U(E_k) \to U(E_{k+1})$. On the other hand, the inflation of $z_{\bar\lambda,k}$ to $W_{k+1}$ equals $z_{\bar\lambda,k+1}$ according to \cite[Lemma 4.7]{KalRI}, which in our notation here means $\dot\psi_{T,k}(\tx{coinf}(\bar\lambda))=\dot\psi_{T,k+1}(\bar\lambda)$. This gives a commutative diagram
\[ \xymatrix{
	Z_0(W_{E_{k+1}/F},X_*(\bar T) \to X_*(U))_0\ar[dd]^{\tx{coinf}}\ar[rd]^{\dot\psi_{T,k+1},\phi_{U,k+1}}\\
	&Z^1(u \to W,Z \to T \to U)\\
	Z_0(W_{E_{k}/F},X_*(\bar T) \to X_*(U))_0\ar[ru]^{\dot\psi_{T,k},\phi_{U,k}}\\
}
\]
already on the level of (co)cycles, and it in turn induces a commutative diagram on the level of (co)homology, leading to a homomorphism
\begin{equation} \label{eq:arithiso} \bar Y_\tx{tor}(Z \to T \to U) \to H^1(u \to W,Z \to T \to U).\end{equation}

\begin{pro} \label{fct:tn++d2}
The homomorphism \eqref{eq:arithiso} is a functorial isomorphism. It is independent of the choices made in its construction.
\end{pro}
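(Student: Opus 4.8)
The plan is to deduce the proposition from a five-lemma comparison between the exact sequence of Fact~\ref{fct:tn++esy3} and the long exact sequence for $H^1(u\to W,Z\to T\to U)$ recorded after \eqref{eq:bfd2}, using as outer vertical maps the arithmetic isomorphism for the length-one complex $Z\to T$ from \cite{KalRI} together with the Langlands and Tate--Nakayama isomorphisms for tori. A preliminary remark: the homomorphism \eqref{eq:arithiso} is functorial in $Z\to T\to U$, since it is assembled from the cochain-level maps $\phi_{U,k}$ and $\bar\lambda\mapsto z_{\bar\lambda,k}$, both of which are functorial, and since the coinflation compatibility has been built into its construction. Functoriality is what lets us fit \eqref{eq:arithiso} into a morphism of the relevant exact sequences.

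Concretely, consider the commutative ladder whose top row is
\[ H_1(X_*(T)) \to H_1(X_*(U)) \to \bar Y_\tx{tor}(Z\to T\to U) \to \bar Y_{+,\tx{tor}}(Z\to T) \to Y_\tx{tor}(U) \]
(Fact~\ref{fct:tn++esy3}) and whose bottom row is the five-term segment
\[ H^0(\Gamma,T) \to H^0(\Gamma,U) \to H^1(u\to W,Z\to T\to U) \to H^1(u\to W,Z\to T) \to H^1(\Gamma,U). \]
The first two vertical maps are the Langlands isomorphisms $H_1(W_F,X_*(T))\xrightarrow{\sim}T(F)$ and $H_1(W_F,X_*(U))\xrightarrow{\sim}U(F)$, which on the level of chains are realized by the maps $\phi$ of \cite{KS99}; the fourth is the arithmetic isomorphism $\bar Y_{+,\tx{tor}}(Z\to T)\xrightarrow{\sim}H^1(u\to W,Z\to T)$ of \cite{KalRI}, i.e. $\bar\lambda\mapsto z_{\bar\lambda}$; the fifth is the Tate--Nakayama isomorphism $Y_\tx{tor}(U)=H^{-1}_\tx{Tate}(\Gamma,X_*(U))\xrightarrow{\sim}H^1(\Gamma,U)$; and the middle one is \eqref{eq:arithiso}. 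The two squares not touching \eqref{eq:arithiso} commute by the compatibility statements already contained in \cite{KS99} and \cite{KalRI}. The square with the forgetful maps $\bar Y_\tx{tor}(Z\to T\to U)\to\bar Y_{+,\tx{tor}}(Z\to T)$ and $H^1(u\to W,Z\to T\to U)\to H^1(u\to W,Z\to T)$ commutes on the nose, since both composites carry the class of $(\bar\lambda,\mu_1)$ to that of $z_{\bar\lambda,k}$. For the square with the connecting maps $H_1(X_*(U))\to\bar Y_\tx{tor}(Z\to T\to U)$, which sends a cycle $\mu_1$ to the class of $(0,\mu_1)$, and $H^0(\Gamma,U)\to H^1(u\to W,Z\to T\to U)$, which sends $u$ to the class of $(1,u)$, one uses precisely the fact that $\phi_{U,k}$ computes the Langlands isomorphism. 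Since the first vertical map is surjective, the second and fourth are isomorphisms, and the fifth is injective, the five lemma yields that \eqref{eq:arithiso} is an isomorphism.

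It remains to check independence of the auxiliary data entering the construction --- the tower $E_k/F$, the sections $s_k,\zeta_k$, the cofinal sequence $n_k$, and the roots $l_k$. Two choices yield isomorphic extensions $W$, hence canonical identifications between the resulting versions of $H^1(u\to W,Z\to T\to U)$ and of $\bar Y_\tx{tor}(Z\to T\to U)$ (the former exactly as in the paragraph before \eqref{eq:bfd2}, using $\tx{Ad}(x)$, $x\in u$, and $f|_Z=1$), and one checks directly on (co)cycles that these identifications intertwine the two instances of \eqref{eq:arithiso}. This reduces to the corresponding independence assertions for $z_{\bar\lambda,k}$ in \cite{KalRI} and for $\phi_{U,k}$ in \cite{KS99}; the only extra point is the vanishing of the correction term $c$ observed in the construction, which by \cite[Lemma~4.4]{KalRI} depends only on the compatibility of the $s_k$.

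The step I expect to be the main obstacle is the verification of commutativity of the two middle squares --- in particular, identifying the hypercohomology connecting map $H^0(\Gamma,U)\to H^1(u\to W,Z\to T\to U)$ explicitly and matching it against $\phi_{U,k}$ --- rather than anything conceptual; once the diagram is seen to commute, everything else is formal given \cite{KS99} and \cite{KalRI}.
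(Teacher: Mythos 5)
Your proposal matches the paper's proof essentially exactly: the same five-lemma ladder (with top row from Fact~\ref{fct:tn++esy3}, or its finite-level version Fact~\ref{fct:tn++esy2}, and bottom row the long exact sequence below \eqref{eq:bfd2}), the same vertical maps (Langlands isomorphisms, \eqref{eq:arithiso}, the arithmetic isomorphism of \cite{KalRI}, Tate--Nakayama), and the same strategy for independence of choices, namely comparing the two gerbes $W$, $W'$ via an automorphism and checking on cocycles. One recalibration of difficulty: you flag commutativity of the middle squares as the main obstacle, but those are in fact immediate from the cochain formulas for $\dot\psi_T$ and $\phi_U$; the paper's real work is in the independence argument, where Lemmas~\ref{lem:tn++i1} and \ref{lem:tn++i2} organize the bookkeeping needed to verify that the change in $\dot\psi_{T,k}$ coming from \cite{KalRI} and the change in $\phi_{U,k}$ coming from \cite[\S A.3]{KS99} (of the form $-\eta_k\cup f_*(\bar\lambda)$) combine into a single coboundary, a cancellation that ultimately rests on $f|_Z=1$ and requires the auxiliary elements $\dot\beta_{<k}$, which your sketch does not anticipate.
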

\begin{proof}
It is immediate from the construction that this homomorphism is functorial. The fact that it is an isomorphism follows from the 5-lemma, applied to the exact sequence just below diagram \eqref{eq:bfd2} and the corresponding exact sequence of Fact \ref{fct:tn++esy2}. The maps between the first two terms of these exact sequences are the Langlands isomorphism $H_1(X_*(T)) \to T(F)$ and its analog for $U$, the map between the third terms is \eqref{eq:arithiso}, between the fourth terms it is the isomorphism $\bar Y_{+,\tx{tor}}(Z \to T) \to H^1(u \to W,Z \to T)$ of \cite[\S4]{KalRI}, and between the fifth terms it is the Tate-Nakayama isomorphism $Y_\tx{tor}(U) \to H^1(\Gamma,U)$.

We next argue that this homomorphism is independent of the choices of sections $s_k$ (and also $\zeta_k$) and root maps $l_k$. For this, let $\zeta_k'$, $s_k'$, and $l_k'$ be other choices. We obtain $c_k' \in Z^2(\Gamma_{E_k/F},E_k^\times)$, $\xi_k' \in Z^2(\Gamma,u_k)$, $W_k'=u_k\boxtimes_{\xi_k'}\Gamma$. Let $W'=\varprojlim W_k'$. The construction above gives a group homomorphism
\[ \bar Y_\tx{tor}(Z \to T \to U) \to H^1(u \to W',Z \to T \to U). \]
Every isomorphism $W' \to W$ of extensions induces the same isomorphism $H^1(u \to W,Z \to T \to U) \to H^1(u \to W',Z \to T \to U)$ and we need to show that the triangle
\[ \xymatrix{
&H^1(u \to W,Z \to T \to U)\ar[dd]\\
\bar Y_\tx{tor}(Z \to T \to U)\ar[ru]\ar[rd]\\
&H^1(u \to W',Z \to T \to U)
}
\]
commutes. Define $\eta_k : \Gamma_{E_k/F} \to E_k^\times$ by $s_k'(\sigma)=\eta_k(\sigma)s_k(\sigma)$. Define $\alpha_{k',k} \in C^1(\Gamma,u_k)$ by
\[ \alpha_{k',k}(\sigma) = (l_k'c_k' \cdot (l_kc_k)^{-1} \cdot (dl_k\eta_k)^{-1}) \sqcup_{E_k/F} \delta_e. \]
\begin{lem} \label{lem:tn++i1} The assignment $x \boxtimes \sigma \mapsto x\alpha_{k',k}(\sigma) \boxtimes \sigma$ defines an isomorphism of extensions $\bar g_k : W'_k \to W_k$ that satisfies  $z_{\bar\lambda,k} \circ \bar g_k = z_{\bar\lambda,k}' \cdot d(l_k\eta_k \sqcup_{E_k/F} n_k\bar\lambda)^{-1}$.
\end{lem}
\begin{proof}
This is a direct computation, using \cite[Fact 4.3]{KalRI}.
\end{proof}

Consider the diagram
\[ \xymatrix{
	W_{k+1}'\ar[r]^{\bar g_{k+1}}\ar[d]^{f_k'}&W_{k+1}\ar[d]^{f_k}\\
	W_k'\ar[r]^{\bar g_k}&W_k
}
\]
This diagram does not commute. Define $\beta_k : \Gamma_{E_k/F} \to \bar F^\times$ by
\[ \beta_k(a) = l_k\eta_k(a)^{-1}\prod_{\substack{b \in \Gamma_{E_{k+1}/F}\\b \mapsto a}} l_k\eta_{k+1}(b). \]
\begin{lem} \label{lem:tn++i2}
\begin{enumerate}
	\item $\beta_k(\sigma)^{n_k}=1$ and hence $\beta_k \in u_k$;
	\item $f_k\circ \bar g_{k+1} = \tx{Ad}(\beta_k^{-1}) \circ \bar g_k \circ f_k'$.
\end{enumerate}
\end{lem}
\begin{proof}
We begin with the second point. From the definitions of $f_k$ and $\bar g_k$ we have
\begin{eqnarray*}
f_k(\bar g_{k+1}(x \boxtimes \sigma))&=&p(dl_{k+1}\eta_{k+1}\sqcup_{E_{k+1}/F}\delta_e)^{-1}(dl_k\eta_k\sqcup_{E_k/F}\delta_e)\cdot \bar g_k(f_k'(x \boxtimes \sigma))\\
&=&d[p(l_{k+1}\eta_{k+1}\sqcup_{E_{k+1}/F}\delta_e)^{-1}(l_k\eta_k\sqcup_{E_k/F}\delta_e)]\cdot \bar g_k(f_k'(x \boxtimes \sigma)).
 \end{eqnarray*}
Recall the torus $S_k$ defined as the quotient of $\tx{Res}_{E_k/F}\mb{G}_m$ by the diagonal copy of $\mb{G}_m$. Its subgroup $S_k[n_k]$ of $n_k$-torsion points is precisely $u_k$. We can compute $l_k\eta_k\sqcup_{E_k/F}\delta_e \in S_k$ explicitly and see that it is represented by the map $\Gamma_{E_k/F} \to \bar F^\times$ sending $a$ to $l_k\eta_k(a)$. The analogous formula holds for $l_{k+1}\eta_{k+1}\sqcup_{E_{k+1}/F}\delta_e \in S_{k+1}$, whose image under $p$ then sends $a$ to $\prod_b l_k\eta_{k+1}(b)$, where $b$ runs over the elements of $\Gamma_{E_{k+1}/F}$ mapping to $a$. Thus the argument of $d$ is $\beta_k^{-1}$ as claimed.

We come to the first point and need to prove that the function $\Gamma_{E_k/F} \to \bar F^\times$ defined by $ a\mapsto \eta_k(a)^{-1} \prod_{b\mapsto a} \eta_{k+1}(b)$ represents the trivial element of $S_k$. For this we recall that the sections $s_k$ and $s_{k+1}$ were chosen to satisfy
\[ s_{k+1}(y\zeta_k(x)) = s_{k+1}(y)s_{k+1}(\zeta_k(x))\quad\tx{and}\quad s_k(x)=\pi^W_k(s_{k+1}(\zeta_k(x))), \]
for $y \in \Gamma_{E_{k+1}/E_k}$ and $x \in \Gamma_{E_k/F}$, where $\pi_k^W$ is the natural projection $W_{E_{k+1}/F} \to W_{E_k/F}$. From these we obtain via direct calculation the following identities
\[ \eta_{k+1}(v\zeta_k'(a))=\eta_{k+1}(v)\cdot {^v\eta_{k+1}(\zeta_k'(a))}\quad\tx{and}\quad \eta_k(a)=\prod_{v\in \Gamma_{E_{k+1}}/E_k} {^v\eta_{k+1}(\zeta_k'(a))}, \]
which imply $\eta_k(a)^{-1}\prod_{b\mapsto a} \eta_{k+1}(b)=\prod_v\eta_{k+1}(v)$. This is a constant function in $a$, hence represents the trivial element of $S_k$.
\end{proof}

Choose $\dot\beta_k \in u$ mapping to $\beta_k \in u_k$. Define $\dot\beta_{<k}=\prod_{i=1}^{k-1} \dot\beta_i$. Define $g_k : W_k' \to W_k$ as $\tx{Ad}(\dot\beta_{<k})\circ\bar g_k$. Then $(g_k)_k$ commutes with the transition maps $f_k$ and $f_k'$ and induces an isomorphism $g : W' \to W$. We transport $z_{\bar\lambda}$ via $g$ and obtain an element $z_{\bar\lambda}'' \in Z^1(u \to W',Z \to S)$ that we want to compare with $z_{\bar\lambda}'$. Lemma \ref{lem:tn++i1} implies
\begin{eqnarray*}
z_{\bar\lambda,k}''(x \boxtimes \sigma)&=&z_{\bar\lambda,k}'(x \boxtimes\sigma) \cdot \phi_{\bar\lambda,k}(\dot\beta_{<k} \cdot {^\sigma\dot\beta_{<k}}^{-1})\cdot d(l_k\eta_k \sqcup_{E_k/F} n_k\bar\lambda)^{-1}\\
&=&z_{\bar\lambda,k}'(x \boxtimes\sigma) \cdot d(\phi_{\bar\lambda,k}(\dot\beta_{<k}) \cdot l_k\eta_k \sqcup_{E_k/F} n_k\bar\lambda)^{-1}
\end{eqnarray*}
On the other hand, the identity $\phi_{U,k}(\mu_1)=\phi_{U,k}'(\mu_1)-\eta_k \cup f_*(\bar\lambda)$ was verified in \cite[\S A.3]{KS99}. Since $f|_Z=1$ we have $f(\phi_{\bar\lambda,k}(\dot\beta_{<k}) \cdot l_k\eta_k \sqcup_{E_k/F} n_k\bar\lambda)=\bar f(\eta_k \cup \bar\lambda)= \eta_k \cup f_*(\bar\lambda)$. It follows that $(\dot\psi_{T,k}(\bar\lambda),\phi_{U,k}(\mu_1))$ is cohomologous to $(\dot\psi_{T,k}'(\bar\lambda),\phi_{U,k}'(\mu_1))$, and so are their inflations.

Finally we argue that the homomorphism is independent of the choices of sequences $n_k$ and $E_k$. If $n_k'$ is another sequence, we may reduce to the special case $n_k|n_k'$ by comparing both $n_k$ and $n_k'$ to $n_k''=n_kn_k'$. In the special case $n_k|n_k'$ choose a compatible system $l_k'$ with $l_{k+1}'^{n_{k+1}'/n_k'}=l_k'$ and define $l_k=l_k'^{n_k'/n_k}$. It is immediate to check that we have equality of cocycles $\xi_k=\xi_k'$ and $z_{\bar\lambda,k}=z_{\bar\lambda,k}'$ (we have of course chosen $\zeta_k=\zeta_k'$ and $s_k=s_k'$). This shows independence of the choice of $n_k$. For the choice of $E_k$, note first that passing to a co-final subsequence has no effect. If $E_k'$ is another sequence, we may pass to co-final subsequences of both $E_k$ and $E_k'$ to arrange $E_k \subset E_k' \subset E_{k+1} \subset E_{k+1}'$. Define $E_k''$ by $E_{2k}''=E_k$ and $E_{2k+1}''=E_k'$. Then $E_k''$ is again an exhaustive sequence, of which both $E_k$ and $E_k'$ are co-final subsequences. This shows independence of the choice of $E_k$.
\end{proof}

\begin{lem} \label{lem:tn++d3}
The isomorphism \ref{eq:arithiso} satisfies the following compatibilities.
\begin{enumerate}
	\item The maps $H_1(W_F,X_*(U)) \to \bar Y_{+,\tx{tor}}(Z \to T \to U)$ and $H^1(u \to W,Z \to T \to U) \to H^0(\Gamma,U)$ translate the isomorphism \ref{eq:arithiso} to the \emph{negative} of the Langlands isomorphism $H_1(W_F,X_*(U)) \to H^0(\Gamma,U)$.
	\item The maps $\bar Y_{+,\tx{tor}}(Z \to T \to U) \to \bar Y_{+,\tx{tor}}(Z \to T)$ and $H^1(u \to W,Z \to T \to U) \to H^1(u \to W,Z \to T)$ translate the isomorphism \eqref{eq:arithiso} to the isomorphism constructed in \cite[\S4]{KalRI}.
\end{enumerate}
\end{lem}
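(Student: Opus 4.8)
Both compatibilities will be checked directly on the level of (co)cycles, using the explicit description of \eqref{eq:arithiso} given above: for $k$ large enough, a cycle $(\bar\lambda,\mu_1)\in Z_0(W_{E_k/F},X_*(\bar T)\to X_*(U))_0$ is sent to the cocycle $(z_{\bar\lambda,k},\phi_{U,k}(\mu_1))\in Z^1(u\to W,Z\to T\to U)$, where $z_{\bar\lambda,k}$ is the inflation of the cocycle of \cite[Lemma 4.7]{KalRI} and $\phi_{U,k}$ is the restriction-of-$1$-chains formula recalled above from \cite[\S A.3]{KS99}; passing to the limit over $k$ is legitimate by the commutative diagram of cocycles preceding \eqref{eq:arithiso}.

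For part (2) I would note that the projection $\bar Y_{+,\tx{tor}}(Z\to T\to U)\to\bar Y_{+,\tx{tor}}(Z\to T)$ of Fact \ref{fct:tn++esy3} is induced on cycles by $(\bar\lambda,\mu_1)\mapsto\bar\lambda$, and the projection $H^1(u\to W,Z\to T\to U)\to H^1(u\to W,Z\to T)$ by $(z,c)\mapsto z$; applying both of them to the formula above leaves the assignment $\bar\lambda\mapsto z_{\bar\lambda,k}$, which is exactly the one used in \cite[\S4]{KalRI} to build the isomorphism $\bar Y_{+,\tx{tor}}(Z\to T)\to H^1(u\to W,Z\to T)$. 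As the coinflation transition maps on the two sides match (this being the $\bar T$-part of the diagram preceding \eqref{eq:arithiso}), passing to the inverse limit over $k$ gives part (2) with no further computation.

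For part (1) I would represent a class in $H_1(W_F,X_*(U))$ by a $1$-cycle $\mu_1$ of $W_{E_k/F}$ valued in $X_*(U)$ for $k$ large; its image in $\bar Y_{+,\tx{tor}}(Z\to T\to U)$ is the class of $(0,\mu_1)$ by Fact \ref{fct:tn++esy2}. Since the cocycle $z_{\bar\lambda,k}$ of \cite[Lemma 4.7]{KalRI} vanishes identically for $\bar\lambda=0$, the isomorphism \eqref{eq:arithiso} sends this to the class of $(1,\phi_{U,k}(\mu_1))$. The cocycle relation forces $\partial\phi_{U,k}(\mu_1)=1$, so $\phi_{U,k}(\mu_1)\in U(E_k)^\Gamma=H^0(\Gamma,U)$, and the class of $(1,\phi_{U,k}(\mu_1))$ in $H^1(u\to W,Z\to T\to U)$ is the image of $\phi_{U,k}(\mu_1)$ under the connecting map $H^0(\Gamma,U)\to H^1(u\to W,Z\to T\to U)$ of the long exact sequence analogous to \cite[(A.1.1)]{KS99}. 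It then remains only to identify $\phi_{U,k}(\mu_1)\in U(F)$ with the image of $[\mu_1]$ under the Langlands isomorphism $H_1(W_F,X_*(U))\to H^0(\Gamma,U)$, up to sign: the map $\phi_{U,k}$ is by construction the restriction of $1$-chains from $W_{E_k/F}$ to $E_k^\times$ followed by the identification with $U(E_k)$, and the assertion that on cycles this computes the \emph{negative} of the Langlands isomorphism is the content of the relevant verification in \cite[\S A.3]{KS99}; it is the same sign that already surfaces in Fact \ref{fct:tn++d1}, where $\mu_1$ enters the pairing \eqref{eq:elempair} with exponent $-1$. Alternatively, part (1) can be deduced from Fact \ref{fct:tn++d1} together with the description of \eqref{eq:tnd++} as the composite of \eqref{eq:elempair} and \eqref{eq:arithiso}.

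I do not anticipate a conceptual obstacle: the substance is thin and everything reduces to unwinding the construction of \eqref{eq:arithiso}. The one point demanding care is the bookkeeping of the normalizations inherited from \cite{KS99} --- in particular the inverses built into $\phi_{U,k}$ and into \eqref{eq:elempair} --- against those of \cite{KalRI}; it is precisely this mismatch that produces the sign in part (1), and making it come out consistently is the only real work.
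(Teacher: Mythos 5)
Your proof is correct and follows the same approach as the paper's, namely direct inspection of the construction of \eqref{eq:arithiso} on cocycles; you have simply carried the inspection out in more detail than the paper does, which states it essentially in one sentence, appealing for the sign to the inverse in the formula $\prod_{a \in K^\times} x_a(a^{-1})$ on page 131 of \cite{KS99}. Two minor caveats. First, your framing of the sign in part (1) as ``the same sign that already surfaces in Fact \ref{fct:tn++d1}'' is slightly misleading: the negation in Fact \ref{fct:tn++d1} comes from the $-1$ exponent in the elementary pairing \eqref{eq:elempair}, while the negation in Lemma \ref{lem:tn++d3}(1) comes from the inverse buried inside $\phi_{U,k}$; these are two \emph{independent} negations, and it is precisely their cancellation that produces the un-negated Langlands pairing in Corollary \ref{cor:tn++d4}(1). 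Second, the ``Alternatively'' remark at the end is circular as written: the compatibility of the composite pairing \eqref{eq:tnd++} is Corollary \ref{cor:tn++d4}, which the paper deduces \emph{from} Lemma \ref{lem:tn++d3}, so you cannot use it to establish the lemma. Since your main argument does not rely on this alternative, the proof stands, but the remark should be struck or reformulated.
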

\begin{proof}
This follows by inspecting the construction of \eqref{eq:arithiso}. Indeed, the definition of $\dot\psi_T(\bar\lambda)$ as the inflation of $z_{\bar\lambda,k}$ from $W_k$ to $W$ is the same as the construction in \cite[\S4.6]{KalRI}. On the other hand, the definition of $\phi_U$ used here is the same as the one in \cite[\S A.3]{KS99}. The fact that it yields the negative of the Langlands isomorphism comes from the inverse in the formula $\prod_{a \in K^\times} x_a(a^{-1})$ appearing in the middle of page 131 in loc. cit.
\end{proof}

\begin{cor} \label{cor:tn++d4}
The pairing \eqref{eq:tnd++} satisfies the following compatibilities.
\begin{enumerate}
	\item The maps $H^0(\Gamma,U) \to H^1(u \to W,Z \to T \to U)$ and $H^1(W_F,\hat Z \to \hat T \from \hat U) \to H^1(W_F,\hat U)$ translate the pairing \eqref{eq:tnd++} the the Langlands pairing.
	\item The maps $H^1(u \to W,Z \to T \to U) \to H^1(u \to W,Z \to T)$ and $\pi_0([\hat{\bar T}]^+) \to H^1(W_F,\hat Z \to \hat T \from \hat U)_\tx{red}$ translate the pairing \eqref{eq:tnd++} to the pairing \cite[Corollary 5.4]{KalRI}.
\end{enumerate}
\end{cor}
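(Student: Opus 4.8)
The plan is to unwind the definition of the pairing \eqref{eq:tnd++} and to deduce each of the two compatibilities from the corresponding property of its two constituents: the elementary pairing \eqref{eq:elempair}, via Fact \ref{fct:tn++d1}, and the arithmetic isomorphism \eqref{eq:arithiso}, via Lemma \ref{lem:tn++d3}. Recall that \eqref{eq:tnd++} is by construction obtained by transporting \eqref{eq:elempair} across \eqref{eq:arithiso}: for $y \in H^1(u \to W,Z \to T \to U)$ with preimage $\tilde y \in \bar Y_\tx{tor}(Z \to T \to U)$ under \eqref{eq:arithiso} and $\xi \in H^1_\tx{cts}(W_F,\hat Z \to \hat{\bar T} \from \hat U)_\tx{red}$, one has $\langle y,\xi\rangle_{\eqref{eq:tnd++}} = \langle \tilde y,\xi\rangle_{\eqref{eq:elempair}}$. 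Everything below is then a formal diagram chase combining this identity with the four compatibilities recorded in Fact \ref{fct:tn++d1} and Lemma \ref{lem:tn++d3}.

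For part (1), I would start from $x \in H^0(\Gamma,U)$ and its image $i(x)$. Lemma \ref{lem:tn++d3}(1) identifies the preimage of $i(x)$ under \eqref{eq:arithiso} with the image in $\bar Y_\tx{tor}(Z \to T \to U)$ of the class $\tilde x \in H_1(W_F,X_*(U))$ with $L(\tilde x) = -x$, where $L$ is the Langlands isomorphism for $U$. Thus $\langle i(x),\xi\rangle_{\eqref{eq:tnd++}} = \langle \tilde x,\xi\rangle_{\eqref{eq:elempair}}$, and by the \emph{negative}-Langlands compatibility of Fact \ref{fct:tn++d1} this equals $-\langle q(\xi),\tilde x\rangle_L$, where $q : H^1_\tx{cts}(W_F,\hat Z \to \hat{\bar T} \from \hat U)_\tx{red} \to H^1_\tx{cts}(W_F,\hat U)$ is the map from \eqref{eq:tn++esd1} and $\langle-,-\rangle_L$ is the pairing underlying Langlands duality for $U$. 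Since $\tilde x = -L^{-1}(x)$, the two sign changes cancel, leaving exactly $\langle x, q(\xi)\rangle_\tx{Langlands}$. I would also remark that the restriction of \eqref{eq:tnd++} along $i$ does factor through $q$; this is automatic once \eqref{eq:tnd++} is known to be perfect (Fact \ref{fct:tn++d1}), since the relevant segments of the exact sequences around \eqref{eq:bfd2} and \eqref{eq:tn++esd1} are mutually dual.

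For part (2) the argument runs in parallel, but without signs. Given $y$ with preimage $\tilde y$ under \eqref{eq:arithiso} and $c \in \pi_0([\hat{\bar T}]^+)$ with image $\iota(c)$, one has $\langle y,\iota(c)\rangle_{\eqref{eq:tnd++}} = \langle \tilde y,\iota(c)\rangle_{\eqref{eq:elempair}}$; by the first compatibility in Fact \ref{fct:tn++d1} this is the pairing of the image of $\tilde y$ in $\bar Y_{+,\tx{tor}}(Z \to T)$ with $c$ under \cite[Prop. 5.3]{KalRI}; by Lemma \ref{lem:tn++d3}(2) that image of $\tilde y$ corresponds under the isomorphism of \cite[\S4]{KalRI} to the image of $y$ in $H^1(u \to W,Z \to T)$; and \cite[Cor. 5.4]{KalRI} is by definition the pairing on $H^1(u \to W,Z \to T)$ obtained from \cite[Prop. 5.3]{KalRI} by transport along that isomorphism. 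Concatenating these identifications yields the claim. I expect no real obstacle here — the whole argument is formal — but the step to watch is the sign bookkeeping in part (1), where the ``negative Langlands'' normalization enters through both Lemma \ref{lem:tn++d3}(1) and Fact \ref{fct:tn++d1} and the two occurrences must be verified to cancel so as to recover the honest Langlands pairing.
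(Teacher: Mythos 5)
Your proposal is correct and takes essentially the same approach as the paper: the paper's proof simply invokes Fact~\ref{fct:tn++d1} and Lemma~\ref{lem:tn++d3} and notes that the two negations in the Langlands case cancel, which is precisely the cancellation you track explicitly in part (1). Your write-up is a more detailed unwinding of the same diagram chase, including the correct observation that the sign bookkeeping is the one point requiring care.
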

\begin{proof}
This follows directly from Fact \ref{fct:tn++d1} and Lemma \ref{lem:tn++d3}. Note that in the case of the Langlands pairing both the Fact and the Lemma contain a negation, and the two cancel out.
\end{proof}

\subsection{Rational classes and invariants for rigid inner forms} \label{sub:rigid_rat}

With the cohomological preliminaries out of the way, we can now extend the considerations of Section \ref{sec:pure} to the case of general inner forms. In this subsection we extend the concepts of rational classes and their invariants.

We begin again with a quasi-split disconnected group $\tilde G = G \rtimes A$. More precisely, let $G$ be a connected reductive group, defined and quasi-split over $F$. Let $(T,B,\{X_\alpha\})$ be an $F$-pinning of $G$ and let $A$ be a finite group that acts on $G$ by pinned automorphisms. Assume given an action of $\Gamma$ on $A$ so that for $\sigma \in \Gamma$ we have $\sigma(a(g)) = \sigma(a)(\sigma(g))$. As we argued in Subsection \ref{sub:pure} we may replace $A$ by $A^\Gamma$ and therefore assume that $\Gamma$ acts trivially on $A$.

A given $\bar z \in Z^1(\Gamma,G/Z(G)^A)$ leads to the inner form $\tilde G_{\bar z}$ of $G \rtimes A$, where $\Gamma$ acts on $\tilde G_{\bar z}(\bar F)$ via the twisted action $\sigma \mapsto \tx{Ad}(\bar z(\sigma)) \rtimes \sigma$. The elements of $\tilde G_{\bar z}(F)$ are those $\tilde\delta \in (G \rtimes A)(\bar F)$ that commute with $\bar z(\sigma) \rtimes \sigma$. Given a norm $(S,\gamma)$ of $\tilde\delta=\delta\rtimes a$ we would like to define a cohomological invariant measuring the relative position of $(S,\gamma)$ and $\tilde\delta$. If we mimic the constructions of Subsection \ref{sub:pure_inv} we would arrive at an element $\tx{inv}(\gamma,(\bar z,\delta))$ of $H^1(\Gamma,S/Z(G)^A \stackrel{1-a}{\lrw} S)$, but that would be too crude for our purposes.

In order to define the right invariant, we need to work with $z \in Z^1(u \to W,Z(G)^A \to G)$ instead of $\bar z \in Z^1(\Gamma,G/Z(G)^A)$. Thus we consider the set of pairs $(z,\tilde\delta)$, where $z \in Z^1(u \to W,Z(G)^A \to G)$, $\tilde\delta \in (G \rtimes A)(\bar F)$, and $\tilde\delta$ commutes with $\bar z(\sigma)\rtimes\sigma$, where now $\bar z \in Z^1(\Gamma,G/Z(G)^A)$ is the image of $z$ modulo $Z(G)^A$. This is the set of rational elements of rigid inner forms of $G \rtimes A$. The surjectivity of $Z^1(u \to W,Z(G)^A \to G) \to Z^1(\Gamma,G/Z(G)^A)$ asserted in \cite[Proposition 3.6]{KalRI} implies that this set surjects onto the set of rational elements of inner forms considered above. Furthermore, the set of rational elements of pure inner forms of $G \rtimes A$ injects into the set of rational elements of rigid inner forms of $G \rtimes A$. The group $G$ acts on the latter set by the same formula as in the case of pure inner forms, and the orbits of that action are the set of rational conjugacy classes of rational elements of rigid inner forms.

We can extend the cohomological notation of Subsection \ref{sub:pure_rat} as follows. Given two homomorphisms $(a,b) : G \rightrightarrows G$ and a central subgroup $Z \subset G$ that equalizes them, we consider the set $Z^1_{b,a}(u \to W,Z(G)^A \to G \rightrightarrows G)$ of pairs $(z,\delta)$, where $z \in Z^1(u \to W,Z \to G)$ and $\delta \in G$ satisfying $a(z(w))=\delta^{-1}b(z(w))\sigma_w(\delta)$, where $\sigma_w \in \Gamma$ is the image of $w \in W$. In our applications we will take $b=\tx{id}$ and abbreviate $Z^1_{b,a}$ to $Z^1_a$. As before, $(z,\delta)$ lies in $Z^1_a$ if and only if $\tilde\delta=\delta \rtimes a$ commutes with $\tilde z(w)=z(w) \rtimes \sigma_w$, and we write $\tilde Z^1_a$ for the set of commuting pairs $(\tilde z,\tilde\delta)$. The group $G$ acts by conjugation on the set $\tilde Z^1_a$, or equivalently by $(g^{-1}z(w)\sigma_w(g),g^{-1}\delta a(g))$ on the set $Z^1_a$, and the sets of orbits under this action are denoted by $\tilde H^1_a$ respectively $H^1_a$. The set of rational elements of rigid inner forms of $G \rtimes A$ is $\bigcup_{a \in A} \tilde Z^1_a$, and the set of rational conjugacy classes of rational elements is the set $\bigcup_{a \in A} \tilde H^1_a$.

As in the case of pure inner forms, given a rational element $(\tilde z,\tilde\delta)$ and a norm $(S,\gamma)$ for the $G$-conjugacy class of $\tilde\delta$, we choose a representative $(\tilde z^*,\tilde\delta^*)$ of the $G$-orbit of $(\tilde z,\tilde\delta)$ as in Lemma \ref{lem:inv} and the same argument implies that $(\tilde z^*,\tilde\delta^*) \in \tilde Z^1_a(u \to W;Z(G)^A \to S \rightrightarrows S)$ and its cohomology class is independent of the choice of $(\tilde z^*,\tilde \delta^*)$. Moreover, $(z^{*,-1},\delta^*)$ lies in the set $H^1(u \to W,Z(G)^A \to S \stackrel{1-a}{\lrw} S)$ defined in Subsection \ref{sub:coho}. We shall denote either of these classes by $\tx{inv}(\gamma,(z,\delta))$ or $\tx{inv}(\gamma,(\tilde z,\tilde\delta))$. The image of this invariant in $H^1(\Gamma,S/Z(G)^A \stackrel{1-a}{\lrw} S)$ is equal to the cruder invariant $\tx{inv}(\gamma,(\bar z,\delta))$ mentioned above.

\subsection{Refined endoscopic data} \label{sub:ref_endo}

As in the case of connected groups, rigid inner forms require a refinement of the notion of endoscopic datum. The necessary refinement is directly analogous to that in the connected case. Namely, let $Z \subset Z(G)^A$ be finite, $\bar G=G/Z$, and $\hat{\bar G} \to G$ the isogeny dual to $G \to \bar G$. Given an endoscopic datum $\mf{e}=(G^\mf{e},\mc{G}^\mf{e},\tilde s^\mf{e},\xi^\mf{e})$ in the sense of Subsection \ref{sub:endo}, a refinement consists of choosing a preimage $\dot s^\mf{e} \in \hat{\bar G} \rtimes A$ of $\tilde s^\mf{e}$. The refined endoscopic datum is then $\mf{\dot e}=(G^\mf{e},\mc{G}^\mf{e},\dot s^\mf{e},\xi^\mf{e})$. An isomorphism $\mf{\dot e} \to \mf{\dot e}'$ of two such data is given by $g \in \hat G$ satisfying $\xi^{\mf{e}'}=\tx{Ad}(g)\circ\xi^\mf{e}$ and $\dot s^{\mf{e}'}=\tx{Ad}(g)\dot s^\mf{e}$ modulo $Z(\hat{\bar G})^\circ$.

\subsection{Normalized transfer factors} \label{sub:rigid_tf}

Given a refined endoscopic datum $\mf{\dot e}$ and a $z$-pair $\mf{z}$ for $\mf{e}$ we shall now define a normalized transfer factor: a function $\Delta[\mf{w},\mf{\dot e},\mf{z}]$ that assigns complex numbers to pairs $(\gamma^\mf{z},\tilde\delta)$ of strongly regular semi-simple elements $\gamma^\mf{z} \in G^\mf{z}(F)$ and $\tilde\delta \in \tilde G_z(F)$. This factor is given by the same formula \eqref{eq:pure_tf} as in the case of pure inner forms, but with a different construction of $\Delta_{KS}[\mf{w},\mf{\dot e},\mf{z}]$, which depends on the refinement $\mf{\dot e}$ of $\mf{e}$. That in turn is given by the same formula \eqref{eq:pure_tf1}, but we have to specify what $\Delta_{III}^\tx{new}$ is. We shall now give this construction in the general case involving a $z$-pair.

The considerations are rather analogous to those of Subsection \ref{sub:pure_tf2}. We follow the notation there. Thus we have $\gamma^\mf{z} \in S^\mf{z}(F)$, $(\tilde z,\tilde\delta) \in \tilde Z^1_{b^{-1}}(u \to W,Z \to G \rightrightarrows G)$, a norm $(S,\gamma)$ for $\tilde\delta$, and a representative $(\tilde z^*,\tilde\delta^*)$ of the $G$-conjugacy class of $(\tilde z,\tilde\delta)$ with $\delta^* \in S(\bar F)$ mapping to $\gamma \in S_b(F)$. The element $\delta^\mf{z}=(\delta^*,\gamma^\mf{z})$ lies in the fiber product $S_1^\mf{z}$ of $S \to S_b \cong S^\mf{e} \from S^\mf{z}$. Under the homomorphism $(b_1^{-1}-1) : S \to S_1^\mf{z}$, the 1-cocycle $z^* \in Z^1(u \to W,Z \to S)$ maps to a 1-cocycle $(b_1^{-1}-1)z^* \in Z^1(\Gamma,S_1^\mf{z})$ that satisfies $(b_1^{-1}-1)z^*(\sigma)=(\delta^\mf{z})^{-1}\sigma(\delta^\mf{z})$, and so $(z^{*,-1},\delta^\mf{z})$ belongs to $Z^1(u \to W,Z \to S \stackrel{1-b_1^{-1}}{\lrw} S_1^\mf{z})$. The class $\tx{inv}(\gamma^\mf{z},(z,\delta))$ of this element is independent of the choice of $(z^*,\delta^*)$. Its image in $Z^1(u \to W,Z \to S \stackrel{1-b^{-1}}{\lrw} S)$ equals the class $\tx{inv}(\gamma,(z,\delta))$ defined in Subsection \ref{sub:rigid_rat}.

Next we define a class $\dot A_0 \in H^1(W_F,\hat Z \to \hat S \from \hat S_1^\mf{z})$ refining the class $A_0 \in H^1(W_F,\hat S \to \hat S)$ of Subsection \ref{sub:pure_tf2}. Following the definition of $A_0$ we have the element $(a_S^{-1},s_S) \in Z^1(W_F,(1-b_1):\hat S_1^\mf{z} \to \hat S)$. In addition to $\tilde s^\mf{e}=\xi_S(s_S) \rtimes b$, we now also have $\dot s^\mf{e}=\xi_S(\dot s_S) \rtimes b$, where we form $\bar S=S/Z$ and use the unique extension of $\xi_S$ to $^L{\bar S} \to {^L{\bar G}}$ to define $\dot s_S \in \hat{\bar S}$. Then $(a_S^{-1},\dot s_S) \in Z^1(W_F,\hat Z \to \hat S \from \hat S_1^\mf{z})$ and its class is $\dot A_0$.

We now define $\Delta_{III}^\tx{new}(\gamma^\mf{z},(z,\delta))$ to be the value of the pairing constructed in Subsection \ref{sub:tnd++} at the classes $\tx{inv}(\gamma^\mf{z},(z,\delta))$ and $\dot A_0$.

\subsection{The local correspondence and character identities} \label{sub:llc_rigid}

Let $\phi : L_F \to {^LG}$ be a tempered Langlands parameter. In subsection \ref{sub:pure_llc} we introduced the group of $\tilde G$-equivalences $\tilde S_\phi=\tx{Cent}(\phi,\hat G \rtimes A)$. It was part of an exact sequence
\[ 1 \to S_\phi \to \tilde S_\phi \to A^{[\phi]} \to 1, \]
where $A^{[\phi]}$ is the stabilizer in $A$ of the $G$-equivalence class of $\phi$. For a finite subgroup $Z \subset Z(G)^A$ we have the isogenies $G \to \bar G=G/Z$ and $\hat{\bar G} \to \hat G$ and we define $\tilde S_\phi^+$ to be the preimage in $\hat{\bar G} \rtimes A$ of $\tilde S_\phi$. This is analogous to the definition of $S_\phi^+$ as the preimage in $\hat{\bar G}$ of $S_\phi$ given in \cite[\S5.4]{KalRI}. We have again the exact sequence
\[ 1 \to S_\phi^+ \to \tilde S_\phi^+ \to A^{[\phi]} \to 1. \]
We are now interested in the rigid inner form $\tilde G_z$ for some $z \in Z^1(u \to W,Z \to G)$. Let $A^{[z]}$ be the stabilizer of the class of $z$, and $A^{[\phi],[z]}=A^{[\phi]} \cap A^{[z]}$. Pulling back the above exact sequence along the inclusion $A^{[\phi],[z]} \to A^{[\phi]}$ we obtain the exact sequence
\[ 1 \to S_\phi^+ \to \tilde S_\phi^{+,[z]} \to A^{[\phi],[z]} \to 1. \]
In the case $F=\R$ set $^K\tilde G_z$ to be the associated $K$-group, i.e. the disjoint union of $\tilde G_{z'}$ for all $z'$ in the image of $H^1(\R,G_{z,\tx{sc}}) \to H^1(\R,G_z) \to H^1(u \to W,Z(G)^A \to G_z) \to H^1(u \to W,Z(G)^A \to G)$.

\begin{cnj} \label{cnj:llc_rigid}
\begin{enumerate}
	\item The choice of an $A$-admissible Whittaker datum $\mf{w}$ on $G$ determines a bijection between the set of irreducible admissible $G$-tempered representations of $\tilde G_z(F)$ when $F/\Q_p$, or of $^K\tilde G_z(F)$ when $F=\R$, and the set of $(\hat G \rtimes A^{[z]})$-conjugacy classes of pairs $(\phi,\tilde\rho)$, where $\phi : L_F \to {^LG}$ is a tempered Langlands parameter, and $\tilde\rho \in \tx{Irr}(\pi_0(\tilde S_\phi^{+,[z]}),{[z]})$. When $z=1$ the representation corresponding to $(\phi,\tilde\rho)$ is $\mf{w}$-generic if and only if $\tilde\rho=1$.
	\item This bijection satisfies the character identity \eqref{eq:charid} for a pair of functions $f$ and $f^\mf{z}$ as in Lemma \ref{lem:trans}, where now the transfer factor is the one constructed in Subsection \ref{sub:rigid_tf}.
\end{enumerate}
\end{cnj}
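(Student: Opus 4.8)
The plan is to prove Conjecture \ref{cnj:llc_rigid} by reducing it to three inputs for \emph{connected} groups: the refined local Langlands conjecture for the connected inner form $G_z$ (parameters, packets, internal structure of packets, and endoscopic character identities), the compatibility of that correspondence with automorphisms (the conjecture of Appendix \ref{app:func}), and an amplification of the twisted endoscopic character identity in the sense of twisted endoscopy for the pair $(G_z,\theta)$. The bookkeeping that glues these together is Clifford theory for the two extensions $1\to G_z(F)\to\tilde G_z(F)\to A^{[z]}\to 1$ and $1\to\pi_0(S_\phi^+)\to\pi_0(\tilde S_\phi^{+,[z]})\to A^{[\phi],[z]}\to 1$, together with the projective orthogonality relations recalled in Appendix \ref{app:projchar}; the general shape of the argument is the one indicated in \S\ref{sub:comp_coset}.

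For part (1): an irreducible admissible $G$-tempered representation $\tilde\pi$ of $\tilde G_z(F)$ restricts to $G_z(F)$ as a multiple of a single $A^{[z]}$-orbit of tempered representations, and by the connected correspondence for $G_z$ this orbit lies in a packet $\Pi_\phi(G_z)$ for a parameter $\phi$ well-defined up to $\hat G$-conjugacy, with stabilizer $A^{[\phi],[z]}$ in $A^{[z]}$. After a harmless reduction (organising both sides into $\hat G\rtimes A$-orbits) one may assume $A=A^{[\phi],[z]}$, i.e. $\phi$ fixed by all of $A$. Each $a\in A$ then defines an automorphism of $G_z$, well-defined up to inner automorphism, hence an action on $\Pi_\phi(G_z)$, and, using the $A$-special Whittaker datum $\mf{w}$ to single out and canonically extend the $\mf{w}$-generic member, a \emph{normalized} intertwiner $I_a(\pi):\pi\circ a^{-1}\to\pi$ for every $\pi\in\Pi_\phi(G_z)$. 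On the dual side $a$ acts by conjugation on $\pi_0(\tilde S_\phi^{+,[z]})$, hence on $\tx{Irr}(\pi_0(S_\phi^+),[z])$. The compatibility conjecture of Appendix \ref{app:func} supplies an $A$-equivariant identification $\Pi_\phi(G_z)\cong\tx{Irr}(\pi_0(S_\phi^+),[z])$; what is needed beyond its bare statement is that it be rigid enough to match the normalized intertwiners $I_a(\pi)$ with a preferred family of lifts of $A^{[\phi],[z]}$ into $\pi_0(\tilde S_\phi^{+,[z]})$, so that the two $2$-cocycles on $A^{[\phi],[z]}$ produced by Clifford theory — one from the composition defect of the $I_a(\pi)$ for $\tilde\pi$ lying over $\pi$, the other from the extension $1\to\pi_0(S_\phi^+)\to\pi_0(\tilde S_\phi^{+,[z]})\to A^{[\phi],[z]}\to 1$ — become canonically cohomologous. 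Granting this, Clifford theory identifies the irreducibles of $\tilde G_z(F)$ lying over the given orbit with the irreducible projective representations of the stabilizer for that cocycle, and likewise on the dual side, yielding the bijection of Conjecture \ref{cnj:llc_rigid}(1); the genericity clause for $z=1$ is the statement that the canonical extension of the $\mf{w}$-generic $\pi$ matches the trivial projective representation, which is immediate from the construction of $\mf{w}$-generic representations of $\tilde G(F)$ in \S\ref{sub:pure_llc}.

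For part (2), fix $(\phi,\tilde s)$ with $\tilde s\in\tilde S_\phi^+$ semisimple and image $a^{-1}\in A$, and let $\mf{\dot e}$ be the refined endoscopic datum it produces by the spectral construction, with $z$-pair $\mf{z}$ and $\phi^\mf{z}=\xi^\mf{z}\circ\phi$. By Lemma \ref{lem:trans}, $f^\mf{z}=f_0^{\mf{z},KS}$ is the twisted transfer, with respect to $\mf{e}$ viewed as a twisted endoscopic datum for $(G_z,\theta)$ with $\theta=\tx{Ad}(\tilde\delta_0)$ and $\tilde\delta_0\in[G\rtimes a]_z(F)$, of the function $f_0$ on the twisted space given by $f_0(\delta)=\sum_{c\in\tilde G_z(F)/G_z(F)}f(c^{-1}\delta\tilde\delta_0 c)$. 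The twisted endoscopic character identity for $(G_z,\theta)$ — the amplification of twisted endoscopy being assumed — gives $S\Theta_{\phi^\mf{z}}(f_0^{\mf{z},KS})=\sum_{\pi}\langle\dot s,\pi\rangle\,\tx{tr}\big(\pi(f_0)\circ I_{\tilde s}(\pi)\big)$, the sum over $\theta$-stable $\pi\in\Pi_\phi(G_z)$, with $I_{\tilde s}(\pi)$ the intertwiner normalized by $\mf{w}$ and by $\tilde s$ acting through $A$, and $\langle\dot s,\pi\rangle$ the pairing coming from the internal structure of $\Pi_\phi(G_z)$. It remains to expand the right-hand side of \eqref{eq:charid}: unwinding $\Theta_{\tilde\pi_{\tilde\rho}}(f)$ in terms of the twisted traces $\tx{tr}(\pi(f_0)\circ I_a(\pi))$ via the Clifford-theoretic description of $\tilde\pi_{\tilde\rho}$ from part (1), then summing against $\tx{tr}\,\tilde\rho(\tilde s)$ over $\tilde\rho\in\tx{Irr}(\pi_0(\tilde S_\phi^{+,[z]}),[z])$, the projective column-orthogonality relations of Appendix \ref{app:projchar} collapse the sum over $\tilde\rho$ into exactly the pairing $\langle\dot s,\pi\rangle$ above, because the two central extensions of $A^{[\phi],[z]}$ by $\C^\times$ implicit on the two sides were identified in part (1). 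This gives \eqref{eq:charid}, hence Conjecture \ref{cnj:llc_rigid}(2).

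I expect the main obstacle to be precisely the rigidified form of the compatibility-with-automorphisms input used in part (1): one must produce \emph{canonical} intertwiners on both sides — the Whittaker-normalized $I_a(\pi)$ on the automorphic side and a preferred lift of $A^{[\phi],[z]}$ into $\pi_0(\tilde S_\phi^{+,[z]})$ on the Galois side — and prove that their composition cocycles agree, not merely that the underlying permutation actions of $A^{[\phi],[z]}$ agree. This is the disconnected analogue of the ``canonical isomorphism of two group extensions'' that is the heart of the torus case in \S\ref{sec:tori}, and over $F=\R$ it carries the extra subtleties of passing to the $K$-group and of the $H^1(\R,G_{z,\tx{sc}})$-ambiguity; the change-of-Whittaker-datum analysis of \S\ref{sec:change_whit} will be needed to check that the matching is independent of $\mf{w}$, and hence that the statement of Conjecture \ref{cnj:llc_rigid} is well-posed.
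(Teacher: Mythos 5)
The statement you set out to prove is a \emph{conjecture}; the paper does not prove it in general. What the paper actually provides is a conditional reduction: Proposition~\ref{pro:slice} shows, assuming Conjecture~\ref{cnj:func}, that Conjecture~\ref{cnj:llc_rigid} together with Conjecture~\ref{cnj:llc_rest} (a compatibility with restriction along $B\to A$, which you do not name but which your Clifford-theoretic bookkeeping is silently using) is equivalent to Conjecture~\ref{cnj:coset}, an amplification of the twisted endoscopic character identity for $(G_z,\theta)$. Your plan is in spirit the same reduction, and your invocation of Clifford theory and the projective orthogonality relations of Appendix~\ref{app:projchar} matches exactly the machinery of Lemmas~\ref{lem:ex1} and~\ref{lem:ex2}. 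The case of tori is proved unconditionally in \S\ref{sec:tori}, but your proposal is not the torus argument.

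The genuine gap is in the way you propose to close the circle in part~(1). You want to produce, from the $A$-special Whittaker datum $\mf w$, a ``normalized intertwiner $I_a(\pi):\pi\circ a^{-1}\to\pi$ for every $\pi\in\Pi_\phi(G_z)$,'' and separately a ``preferred family of lifts of $A^{[\phi],[z]}$ into $\pi_0(\tilde S_\phi^{+,[z]})$,'' and then show the two composition $2$-cocycles agree. Neither object is available. On the automorphic side, the Whittaker datum canonically extends only the $\mf w$-generic member of the quasi-split packet $\Pi_\phi(G)$; for a general member $\pi$ of $\Pi_\phi(G_z)$ with $z\neq 1$ there is no Whittaker-normalized $I_a(\pi)$ to speak of, and producing one is precisely a form of the local intertwining relation, which is itself conjectural for inner forms. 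On the Galois side, the extension $1\to\pi_0(S_\phi^+)\to\pi_0(\tilde S_\phi^{+,[z]})\to A^{[\phi],[z]}\to1$ need not split, so a ``preferred lift'' does not exist; moreover the relevant Clifford cocycle lives on $A^{[\phi],[z]}_\rho$ and depends on $\rho$, not merely on the group extension, so comparing it to the extension class of $\pi_0(\tilde S_\phi^{+,[z]})$ is the wrong comparison. The paper sidesteps this entirely: Conjecture~\ref{cnj:coset} does not posit two separately normalized families and a comparison, but a single object $(\pi\boxtimes\rho^\vee)^{\mathrm{can}}$ extending $\pi\boxtimes\rho^\vee$ from $G_z(F)\times\pi_0(S_\phi^+)$ to the fiber product $\tilde G_z(F)_\pi\times_{A^{[z]}_\pi}\pi_0(\tilde S_{\phi,\rho}^{+,[z]})$, characterized (hence made canonical, cf.~Remark~\ref{rem:canext}) by the twisted endoscopic character identities. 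Your step~2 of part~(2), where projective orthogonality collapses the $\tilde\rho$-sum, is correct in outline, but it presupposes precisely the identification that Conjecture~\ref{cnj:coset} encodes; so your argument, as written, is circular at its crux rather than a reduction to an independent input. You should replace your ``match the intertwiners'' step by the statement of Conjecture~\ref{cnj:coset} itself and prove the equivalence cleanly, as in Proposition~\ref{pro:slice}, also keeping track of Conjecture~\ref{cnj:llc_rest} since the bijection of part~(1) needs the restriction compatibility to pass from $A^{[\phi],[z]}$ to $A$ by induction (cf.~\S\ref{sub:comp_rest1}) and from $\tilde G_z(F)^{[\phi]}$ to $\tilde G_z(F)$.
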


\section{Change of Whittaker data} \label{sec:change_whit}

In \cite{KalGen} we studied how the bijection $\tx{Irr}(\pi_0(S_\varphi^+)) \to \Pi_\varphi$ of the refined local Langlands conjecture depends on the Whittaker datum $\mf{w}$, in the case of a connected reductive group. Strictly speaking loc. cit. considered only pure and extended pure inner twists, but not rigid inner twists, which were unavailable at the time. In this section we shall extend these considerations to the case of rigid inner forms of quasi-split groups and may be connected or disconnected. 

Consider first the connected case, which will serve primarily to recall notation from \cite{KalGen}. Let $G$ be a quasi-split connected reductive group defined over $F$ and let $\mf{w}_1,\mf{w}_2$ be two Whittaker data. There is a unique element of $\tx{cok}(G(F) \to G_\tx{ad}(F))$ conjugating $\mf{w}_1$ to $\mf{w}_2$, which we denote by $(\mf{w}_1,\mf{w}_1)$. Recall from \cite[Lemma 4.1]{KalGen} that there is a natural injection
\[ \tx{cok}\Big(G(F) \to G_\tx{ad}(F)\Big) \to \tx{ker}\Big(H^1(W_F,Z(\hat G_\tx{sc})) \to H^1(W_F,Z(\hat G))\Big)^D. \]
It essentially comes from Poitou--Tate duality 
\[ H^1(\Gamma,Z(G_\tx{sc})) \otimes H^1(\Gamma,X^*(Z(G_\tx{sc}))) \to H^2(\Gamma,\mb{G}_m) \to \Q/\Z \] 
and the identification $X^*(Z(G_\tx{sc}))=Z(\hat G_\tx{sc})$ via the exponential map $\exp : X_*(\hat T_\tx{sc})\otimes_\Z\C \to \hat T_\tx{sc}$ with kernel $X_*(\hat T_\tx{sc})$. Given a tempered Langlands parameter $\phi : L_F \to {^LG}$ we endow the exact sequences
\[ 1 \to Z(\hat G_\tx{sc}) \to \hat G_\tx{sc} \to \hat G_\tx{ad} \to 1,\qquad 1 \to Z(\hat G) \to \hat G \to \hat G_\tx{ad} \to 1 \]
with $L_F$-action via $\tx{Ad}(\phi(-))$. The actions on $Z(\hat G_\tx{sc})$ and $Z(\hat G)$ are of course simply the $\Gamma$-action inflated to $L_F$ and $H^1(L_F,-)=H^1(W_F,-)$ for these two groups. The connecting homomorphism $H^0(L_F,\hat G_\tx{ad}) \to H^1(\Gamma,Z(\hat G_\tx{sc}))$ is continuous and thus factors through the component group of the complex algebraic group $H^0(L_F,\hat G_\tx{ad})$. We have $S_\phi=H^0(L_F,\hat G)$ and its image under that connecting homomorphism lands in $\tx{ker}(H^1(W_F,Z(\hat G_\tx{sc})) \to H^1(W_F,Z(\hat G)))$. Therefore $(\mf{w}_1,\mf{w}_2)$ induces a character of $\pi_0(S_\phi/Z(\hat G)^\Gamma)=\pi_0(S_\phi^+/Z(\hat{\bar G})^+)$. If
\[ \iota_i : \tx{Irr}(S_\phi^{+}) \to \Pi_\phi(G) \]
are the two bijections of the refined local Langlands correspondence, where we are using compound $L$-packets encompassing all rigid inner forms, then according to \cite[(1.1)]{KalGen} we have
\[ \iota_2(\rho)=\iota_1(\rho \otimes(\mf{w}_1,\mf{w}_2)).\]

We now turn to the disconnected case. Thus let $\tilde G=G \rtimes A$ be a quasi-split, (possibly) disconnected, reductive group, and let $\mf{w}_1,\mf{w}_2$ be $A$-admissible Whittaker data for $G$. Let $z \in Z^1(u \to W,Z(G)^A \to G)$. We denote by 
\[ \iota_i : \tx{Irr}(\pi_0(\tilde S_\phi^{+,[z]}),{[z]}) \to \Pi_\phi(\tilde G_z) \]
the bijections of Conjecture \ref{cnj:llc_rigid} with respect to the Whittaker data $\mf{w}_i$. The approach to comparing these is the same as in the connected case. We use the exact sequences
\[ 1 \to Z(\hat G_\tx{sc}) \to \hat G_\tx{sc} \rtimes A \to \hat G_\tx{ad} \rtimes A \to 1,\qquad 1 \to Z(\hat G) \to \hat G \rtimes A \to \hat G_\tx{ad} \rtimes A \to 1 \]
to obtain the connecting map $H^0(L_F,\hat G_\tx{ad} \rtimes A) \to H^1(W_F,Z(\hat G_\tx{sc}))$. This map is no longer a homomorphism, but rather a twisted homomorphism (i.e. a 1-cocycle) for the action for $\hat G_\tx{ad} \rtimes A$ on $Z(\hat G_\tx{sc})$ given by the projection to $A$ and the natural action of $A$ on $Z(\hat G_\tx{sc})$. Nonetheless, this map factors through $\pi_0(H^0(L_F,\hat G_\tx{ad} \rtimes A))$ and sends $\tilde S_\phi=H^0(L_F,\hat G \rtimes A)$ to $\tx{ker}(H^1(W_F,Z(\hat G_\tx{sc})) \to H^1(W_F,Z(\hat G)))$. In this way we obtain a twisted homomorphism 
\[ \pi_0(\tilde S_\phi/Z(\hat G)^\Gamma) \to \tx{ker}(H^1(W_F,Z(\hat G_\tx{sc})) \to H^1(W_F,Z(\hat G))). \] 
Since the character $(\mf{w}_1,\mf{w}_2)$ of $\tx{ker}(H^1(W_F,Z(\hat G_\tx{sc})) \to H^1(W_F,Z(\hat G)))$ is $A$-invariant, its pull back under this twisted homomorphism is a character of $\pi_0(\tilde S_\phi/Z(\hat G)^\Gamma)=\pi_0(\tilde S_\phi^+/Z(\hat{\bar G})^+)$, which we may pull back further to $\pi_0(\tilde S_\phi^{+,[z]})$.

\begin{pro} 
\[ \iota_2(\rho) = \iota_1(\rho\otimes(\mf{w}_1,\mf{w}_2)). \]
\end{pro}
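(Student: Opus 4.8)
The plan is to reduce the disconnected statement to the already-known connected case from \cite{KalGen} by restriction along $G_z(F) \hookrightarrow \tilde G_z(F)$. Concretely, I would argue that both bijections $\iota_1,\iota_2$ are compatible, via the restriction-of-representations and the restriction-of-parameterizing-data maps, with the corresponding connected bijections for $G_z$; since the connected comparison is $\iota_2^{G}(\rho')=\iota_1^{G}(\rho'\otimes(\mf{w}_1,\mf{w}_2))$, and the character $(\mf{w}_1,\mf{w}_2)$ we have defined on $\pi_0(\tilde S_\phi^{+,[z]})$ restricts on $\pi_0(S_\phi^+)$ to the connected $(\mf{w}_1,\mf{w}_2)$, the disconnected identity will follow by a Clifford-theoretic rigidity argument once we know how extensions of an $L$-packet member of $G_z(F)$ to $\tilde G_z(F)$ are pinned down by the data $\tilde\rho$.

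First I would recall that the new content of Conjecture \ref{cnj:llc_rigid} is precisely the bijection between $\tilde\Pi_{\phi,z}$ and $\tx{Irr}(\pi_0(\tilde S_\phi^{+,[z]}),{[z]})$, together with its compatibility with the connected parameterization: restricting $\tilde\pi$ to $G_z(F)$ gives an $L$-packet-saturated representation, and restricting $\tilde\rho \in \tx{Irr}(\pi_0(\tilde S_\phi^{+,[z]}),{[z]})$ to $\pi_0(S_\phi^+)$ gives the representation whose isotypic components match, via the connected bijection, the constituents of $\tilde\pi|_{G_z(F)}$. This is a structural feature of how the disconnected bijection is built (it is the analogue of the push-out picture in the introduction), so I would state it as the governing compatibility and then note that the change-of-Whittaker character $(\mf{w}_1,\mf{w}_2)$ on $\pi_0(\tilde S_\phi/Z(\hat G)^\Gamma)$ is, by its very construction via the same connecting map restricted along $\hat G \hookrightarrow \hat G\rtimes A$, an extension of the connected character. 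The $A$-invariance of $(\mf{w}_1,\mf{w}_2)$, which we used to see the twisted homomorphism produces an honest character, is exactly what guarantees this extension is well defined and $\tilde S_\phi$-equivariant.

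Next I would carry out the comparison. Fix $\tilde\rho \in \tx{Irr}(\pi_0(\tilde S_\phi^{+,[z]}),{[z]})$ and set $\tilde\pi_2=\iota_2(\tilde\rho)$, $\tilde\pi_1=\iota_1(\tilde\rho\otimes(\mf{w}_1,\mf{w}_2))$. Restricting to $G_z(F)$ and using the governing compatibility together with the connected result \cite[(1.1)]{KalGen}, I would show $\tilde\pi_1|_{G_z(F)}\cong\tilde\pi_2|_{G_z(F)}$: indeed $(\tilde\rho\otimes(\mf{w}_1,\mf{w}_2))|_{\pi_0(S_\phi^+)}=\tilde\rho|_{\pi_0(S_\phi^+)}\otimes(\mf{w}_1,\mf{w}_2)$, and applying $\iota_1^{G}$ to this versus $\iota_2^{G}$ to $\tilde\rho|_{\pi_0(S_\phi^+)}$ gives the same packet-with-multiplicities datum on $G_z(F)$. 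So $\tilde\pi_1$ and $\tilde\pi_2$ are two irreducible representations of $\tilde G_z(F)$ with the same restriction to $G_z(F)$; by Clifford theory they differ by a character of $\tilde G_z(F)/G_z(F)\cdot(\text{stabilizer})$, i.e. by a character of a subquotient of $A^{[\bar z]}$. To pin down that this character is trivial I would invoke the remaining compatibility of the disconnected bijection, namely that $\iota_i$ intertwine the $A^{[\phi],[z]}$-grading on $\tx{Irr}(\pi_0(\tilde S_\phi^{+,[z]}),{[z]})$ coming from the exact sequence with the corresponding grading of extensions — and then observe that twisting $\tilde\rho$ by $(\mf{w}_1,\mf{w}_2)$, which is trivial on the kernel $\pi_0(S_\phi^+)$ but need not be trivial on all of $\pi_0(\tilde S_\phi^{+,[z]})$, is accounted for on the representation side by exactly the same twist, because the character $(\mf{w}_1,\mf{w}_2)$ of $A^{[\phi],[z]}$ is the one appearing in the connected change-of-Whittaker formula for the quasi-split group $G$ itself, whose $A$-equivariant extensions were used to build the $\mf{w}_i$-normalized bijections.

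The main obstacle will be the second half of that last step: carefully checking that the normalization of the extension of a $\mf{w}$-generic representation to $\tilde G(F)=G(F)\rtimes A$ — the one fixed by requiring $\tilde\pi(a)$ to preserve a $\mf{w}$-Whittaker functional — transforms under change of $\mf{w}$ by precisely the character $(\mf{w}_1,\mf{w}_2)$ of $A^{[\phi]}$ and not merely by some a priori unknown character. Concretely one must compare, for $a\in A$, the two intertwiners $\pi\circ a^{-1}\to\pi$ singled out by $\mf{w}_1$-Whittaker functionals versus $\mf{w}_2$-Whittaker functionals, and identify the scalar discrepancy with the value at $a$ of the character coming from the connecting map $H^0(L_F,\hat G_\tx{ad}\rtimes A)\to H^1(W_F,Z(\hat G_\tx{sc}))$. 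I expect this to reduce, after unwinding the definition of $A$-special Whittaker data in \S\ref{sub:awhit} and Corollary \ref{cor:asw}, to the already-established connected statement applied to $G$ together with the functoriality of the genericity normalization under the $A$-action; but the bookkeeping of base points and of the identification $X^*(Z(G_\tx{sc}))=Z(\hat G_\tx{sc})$ is where the real work lies, and it is essentially the only place where anything beyond formal Clifford theory is needed.
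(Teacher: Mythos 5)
Your proposal takes a genuinely different route from the paper, but it has a gap that it does not close.

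The paper's proof is short and goes through transfer factors: as in \cite[Theorem 4.3]{KalGen}, the endoscopic character identities plus linear independence of characters pin down the bijection $\iota_i$ uniquely given the transfer factor normalization $\Delta_{KS}[\mf{w}_i]$, so the proposition reduces to the identity $\Delta_{KS}[\mf{w}_2]=\langle(\mf{w}_1,\mf{w}_2),s\rangle^{-1}\Delta_{KS}[\mf{w}_1]$. That identity is then checked term by term: only $\Delta_I$ depends on the pinning, and the Langlands--Shelstad argument \cite[(2.3.1)]{LS87} (applied to the $A$-invariant exact sequence $1 \to Z(G_\tx{sc})^A \to G_\tx{sc}^A \to G_\tx{ad}^A \to 1$) computes the change in the twisted splitting invariant explicitly. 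Your proposal instead tries to restrict to $G_z(F)$, use the connected result there, and then argue by Clifford theory. The problem is exactly the one you flag at the end: after the restriction step, $\iota_2(\rho)$ and $\iota_1(\rho\otimes(\mf{w}_1,\mf{w}_2))$ are only determined up to a character of a subquotient of the component group, and all the content of the proposition is in showing that remaining character is trivial. You acknowledge this but do not prove it, and the route you sketch — comparing the $\mf{w}_1$- and $\mf{w}_2$-normalized intertwiners $\pi\circ a^{-1}\to\pi$ on a generic member — only makes sense in the quasi-split case $z=1$, where Whittaker functionals exist. For general $z$ there is no Whittaker functional on $G_z(F)$, so the approach as stated cannot even be formulated there, let alone propagated. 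Moreover, the "governing compatibility" you invoke (that $\iota_i$ intertwines the $A^{[\phi],[z]}$-grading in a way that forces the character to be trivial) is not a given: it is itself a consequence of the character identities, i.e. of the same data the paper exploits directly. So you would end up having to pass through the transfer factors anyway, and at that point the paper's direct computation of $\Delta_I$ is both shorter and works uniformly in $z$.
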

\begin{proof}
Let $\tilde s = s \rtimes a \in \tilde S_\phi$. As in the proof of \cite[Theorem 4.3]{KalGen} it is enough to prove the identity
\[ \Delta_{KS}[\mf{w}_2] = \<(\mf{w}_1,\mf{w}_2),s\>^{-1} \cdot \Delta_{KS}[\mf{w}_1]. \] 
To prove this we choose a non-trivial unitary character $\psi_F$ of $F$ and $A$-invariant $F$-pinnings $\tx{spl}_i$ giving rise to $\mf{w}_i$ according to Corollary \ref{cor:adm}. We write $\Delta_{KS}$ according to \eqref{eq:pure_tf1} and note that only $\Delta_I$ depends on the pinnings. Recall from Proposition \ref{pro:stein}(2,3) that $G_\tx{sc}^A$ and $G_\tx{ad}^A$ are connected, and the latter is the adjoint group of $G^1$. Therefore we have the exact sequence
\[ 1 \to Z(G_\tx{sc})^A \to G_\tx{sc}^A \to G_\tx{ad}^A \to 1. \]
Furthermore, by Proposition \ref{pro:stein}(9) there exists an $F$-rational point of the adjoint group of $G^1$ conjugating $\tx{spl}_1$ to $\tx{spl}_2$. We let $g \in G_\tx{sc}^A$ lift this $F$-rational point and let $x \in H^1(F,Z(G_\tx{sc})^A)$ be the image of $g$ under the connecting homomorphism for the above exact sequence.

The argument of \cite[(2.3.1)]{LS87} shows that the twisted splitting invariant in $H^1(F,T_\tx{sc}^a)$ with respect to $\tx{spl}_2$ is the product of the twisted splitting invariant with respect to $\tx{spl}_1$ with $x$ (note that loc. cit. uses conjugation on the right, so their $g$ is our $g^{-1}$). Therefore $\Delta_I[\mf{w}_2]=\<x,s\>^{-1}\Delta_I[\mf{w}_1]$, as claimed.
\end{proof}

\section{Change of component group} \label{sec:change_comp}

\subsection{Restriction} \label{sub:comp_rest}

Assume now given a map of finite groups $B \to A$. We can consider the disconnected groups $G^A = G \rtimes A$ and $G^B = G \rtimes B$, where $B$ acts on $G$ via its map to $A$. We can consider restriction of representations along the map $G^B_z(F) \to G^A_z(F)$. Dually, the map $\hat G \rtimes B \to \hat G \rtimes A$ induces for each tempered Langlands parameter $\phi$ a map $\pi_0(S_\phi^{B,+,[z]}) \to \pi_0(S_\phi^{A,+,[z]})$ and we can consider restriction of representations along this map as well.

Let the $G$-tempered representations $\pi^A$ of $G^A_z(F)$ and $\pi^B$ of $G^B_z(F)$ correspond under Conjecture \ref{cnj:llc_rigid} to the pairs $(\phi,\rho^A)$ and $(\phi,\rho^B)$, respectively, where $\rho^A \in \tx{Irr}(\pi_0(S_\phi^{A,+,[z]}),{[z]})$ and $\rho^B \in \tx{Irr}(\pi_0(S_{\phi'}^{B,+,[z]}),{[z]})$.

\begin{cnj} \label{cnj:llc_rest}
	The multiplicity of $\pi^B$ in $\tx{Res}\,\pi^A$ is equal to the multiplicity of $\rho^B$ in $\tx{Res}\,\rho^A$.
\end{cnj}
	
\begin{rem}
	Let $\tilde G=G \rtimes A$ and $B=\{1\}$. Applying Conjecture \ref{cnj:llc_rest} we obtain a complete description of the set $\tilde\Pi_{\phi,z}$ in terms of the $L$-packet $\Pi_{\phi,z}$ of the connected group $G_z(F)$: An irreducible $G$-tempered representation $\tilde\pi$ of $\tilde G_z(F)$ belongs to $\tilde\Pi_{\phi,z}$ if and only if its restriction to $G_z(F)$ intersects $\Pi_{\phi,z}$. Equivalently, the set $\tilde\Pi_{\phi,z}$ consists precisely of the irreducible constituents of the inductions to $\tilde G_z(F)$ of the elements of $\Pi_{\phi,z}$. Hence the content of Conjecture \ref{cnj:llc_rigid} is in the internal structure and character identities with normalized transfer factors. Note that, just like in the connected case, the packets $\tilde\Pi_{\phi,z}$ are disjoint and exhaust the set of isomorphism classes of irreducible admissible $G$-tempered representations, assuming this is the case for the packets $\Pi_{\phi,z}$.
\end{rem}
	
As another immediate consequence we obtain the following seemingly more general statement, which could be more convenient for some applications.

Let the $G$-tempered representation $\pi^A$ of $G^A_z(F)$ correspond under Conjecture \ref{cnj:llc_rigid} to the pair $(\phi,\rho^A)$ with $\phi : L_F \to {^LG}$ and $\rho^A \in \tx{Irr}(\pi_0(S_\phi^{A,+,[z]}),{[z]})$. Let the $G$-tempered representation $\pi^B$ of $G^B_z(F)$ correspond under Conjecture \ref{cnj:llc_rigid} to the pair $(\phi',\rho^B)$ with $\phi' : L_F \to {^LG}$ and $\rho^B \in \tx{Irr}(\pi_0(S_{\phi'}^{B,+,[z]}),{[z]})$. 

\begin{cor} \label{cor:llc_rest}
The multiplicity of $\pi^B$ in $\tx{Res}\,\pi^A$ is zero unless $\phi$ and $\phi'$ are conjugate under $\hat G \rtimes A^{[z]}$. Assuming that and conjugating $(\phi,\rho^A)$ under $\hat G \rtimes A$ to arrange $\phi=\phi'$, this multiplicity is equal to the multiplicity of $\rho^B$ in $\tx{Res}\,\rho^A$.
\end{cor}
\begin{proof}
	Since $\pi^A$ depends only on the $\hat G \rtimes A^{[z]}$-conjugacy class of $\phi$, we are free to replace $\phi$ with another member of this conjugacy class. If $\phi'$ lies in it, we arrange $\phi=\phi'$ and apply Conjecture \ref{cnj:llc_rest}. 

	If $\phi'$ does not lie in the $\hat G \rtimes A^{[z]}$-conjugacy class of $\phi$, then the $\hat G \rtimes A^{[z]}$-conjugacy class of $\phi$ and the $\hat G \rtimes B^{[z]}$-conjugacy class of $\phi'$ are disjoint. But 
	\[ \tx{Res}^{G^A_z(F)}_{G_z(F)}\Pi_\phi(G^A_z) = \cup_{\phi_1} \Pi_{\phi_1}(G_z) \]
	where $\phi_1$ runs over the set of $\hat G$-conjugacy classes inside of the  $\hat G \rtimes A^{[z]}$-conjugacy class of $\phi$, and analogously 
	\[ \tx{Res}^{G^B_z(F)}_{G_z(F)}\Pi_\phi(G^B_z) = \cup_{\phi_1'} \Pi_{\phi_1'}(G_z) \]
	where $\phi_1'$ runs over the set of $\hat G$-conjugacy classes inside of the  $\hat G \rtimes B^{[z]}$-conjugacy class of $\phi'$. Since no pair $(\phi_1,\phi_1')$ has equal members, and all the tempered packets on $G_z(F)$ are disjoint, the multiplicity of $\pi^B$ in $\tx{Res}\,\pi^A$ is zero. 
\end{proof}

\begin{rem}
	We can factor the map $B \to A$ as $B \to \bar B \to A$, where $\bar B$ is the image of $B$ in $A$. Then $G^B_z(F) \to G^{\bar B}_z(F)$ is surjective, hence the pull-back of an irreducible representation is irreducible. Therefore, for the purposes of discussing multiplicities in restriction, we may assume that $B \to A$ is injective.
\end{rem}

\subsection{Slicing by cosets} \label{sub:comp_coset}

In this section we assume the validity of the refined local Langlands correspondence for connected groups, as well as its functoriality as expressed in Conjecture \ref{cnj:func}. Given a tempered parameter $\phi : L_F \to {^LG}$ we then have the $L$-packet $\Pi_\phi(G_z)$. If $\pi \in \Pi_\phi(G_z)$ and $\rho \in \tx{Irr}(\pi_0(S_\phi^+))$ corresponding to each other then Conjecture \ref{cnj:func} implies $A^{[z]}_\pi=A^{[\phi]}_\rho$. We have the elements $\alpha_\pi \in H^2(A^{[z]}_\pi,\C^\times)$ and $\alpha_\rho \in H^2(A^{[\phi]}_\rho,\C^\times)$ corresponding to the projective extension of $\pi$ to $\tilde G_z(F)_\pi$ and of $\rho$ to $\pi_0(\tilde S_{\phi,\rho}^{+,[z]})$, respectively. The elements $\alpha_\pi$ and $\alpha_\rho$ are equal if and only if the representation $\pi \boxtimes \rho^\vee$ of $G_z(F) \times \pi_0(S_\phi^+)$ has an extension to $\tilde G_z(F)_\pi \times_{A^{[z]}_\pi} \pi_0(\tilde S_{\phi,\rho}^{+,[z]})$. Such an extension is then well-defined up to a character of $A_{\pi}^{[z]}$.

\begin{cnj} \label{cnj:coset}
Let $\phi : L_F \to {^LG}$ be a tempered parameter. Let $\pi \in \Pi_\phi(G_z)$ and $\rho \in \tx{Irr}(\pi_0(S_\phi^+))$ correspond to each other. The representation $\pi\boxtimes\rho^\vee$ of $G_z(F) \times \pi_0(S_\phi^+)$ has an extension $(\pi\boxtimes\rho^\vee)^\tx{can}$ to $\tilde G_z(F)_\pi \times_{A^{[z]}_\pi} \pi_0(\tilde S_{\phi,\rho}^{+,[z]})$ such that for $a \in A$, $\tilde f \in \mc{C}^\infty_c([G \rtimes a]_z(F))$ and $\tilde s \in \tilde S_\phi$ mapping to $a^{-1}$ we have 
\[ S\Theta_{\phi^\mf{z}}(f^\mf{z})=e(G_z)\sum_{\substack{\pi \in \Pi_\phi(G_z)\\\pi\circ a\cong\pi}} \tx{tr}\,(\pi\boxtimes\rho^\vee)^\tx{can}(\tilde f,\tilde s^{-1}),\]
where $\mf{e}$ is the endoscopic datum corresponding to the pair $(\tilde s,\phi)$ by the spectral construction of \S\ref{sub:endocnst}, $\mf{z}$ is a $z$-pair for it, and
$f^\mf{z} \in \mc{C}^\infty_c(G^\mf{z}(F))$ satisfies 
\begin{equation} \label{eq:matchslice}
	SO_\gamma(f^\mf{z})\quad =\qquad \ssum{\tilde\delta \in [G \rtimes a]_z(F)/G_z(F)-conj} \Delta_{KS}[\mf{w},\mf{e},\mf{z}](\gamma^\mf{z},\tilde\delta)\int_{x \in G_z(F)/G_z(F)_{\tilde\delta}}\tilde f(x\tilde\delta x^{-1}).
\end{equation}
\end{cnj}

\begin{rem} \label{rem:canext}
We note that the extension $(\pi\boxtimes\rho^\vee)^\tx{can}$ is unique if it exists, due to the character identities it is supposed to satisfy. Furthermore, these character identities imply that for any $b \in A$ the isomorphism 
\[ \tilde G_z(F)_\pi \times_{A^{[z]}_\pi} \pi_0(\tilde S_{\phi,\rho}^{+,[z]}) \to \tilde G_z(F)_{b\pi} \times_{A^{[z]}_{b\pi}} \pi_0(\tilde S_{b\phi,b\rho}^{+,[z]}) \]
(given up to an inner automorphism) of conjugation by $b$ identifies the extension $(\pi \boxtimes \rho^\vee)^\tx{can}$ with $(b\pi \boxtimes b\rho^\vee)^\tx{can}$.
\end{rem}

\begin{pro} \label{pro:slice}
Conjecture 
\ref{cnj:coset} is equivalent to Conjectures \ref{cnj:llc_rigid} and \ref{cnj:llc_rest}. 
\end{pro}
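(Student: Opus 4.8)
The plan is to establish the equivalence by two implications, translating between the ``connected plus twisted'' data of Conjecture~\ref{cnj:coset} and the ``disconnected'' data of Conjectures~\ref{cnj:llc_rigid} and~\ref{cnj:llc_rest}. First I would fix a tempered parameter $\phi$ and recall the bookkeeping: by Conjecture~\ref{cnj:func} (functoriality with automorphisms) the stabilizer $A^{[z]}_\pi$ of the isomorphism class of $\pi \in \Pi_\phi(G_z)$ equals the stabilizer $A^{[\phi]}_\rho$ of the corresponding $\rho \in \tx{Irr}(\pi_0(S_\phi^+))$, and one has matching $2$-cocycle classes $\alpha_\pi,\alpha_\rho \in H^2(A^{[z]}_\pi,\C^\times)$ governing the obstruction to extending $\pi$ to $\tilde G_z(F)_\pi$ and $\rho$ to $\pi_0(\tilde S_{\phi,\rho}^{+,[z]})$. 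The key algebraic input is that the existence of the extension $(\pi \boxtimes \rho^\vee)^\tx{can}$ to the fibre product $\tilde G_z(F)_\pi \times_{A^{[z]}_\pi} \pi_0(\tilde S_{\phi,\rho}^{+,[z]})$ is equivalent to $\alpha_\pi = \alpha_\rho$, and that such an extension, when it exists, is unique up to a character of $A^{[z]}_\pi$ (Remark~\ref{rem:canext}); moreover the character identities it is required to satisfy pin down that character, hence make $(\pi\boxtimes\rho^\vee)^\tx{can}$ genuinely canonical.

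For the direction ``Conjectures~\ref{cnj:llc_rigid} and~\ref{cnj:llc_rest} $\Rightarrow$ Conjecture~\ref{cnj:coset}'': starting from the bijection $\iota : \tx{Irr}(\pi_0(\tilde S_\phi^{+,[z]}),[z]) \to \Pi_\phi(\tilde G_z)$ of Conjecture~\ref{cnj:llc_rigid}, I would use Conjecture~\ref{cnj:llc_rest} with $B = \{1\}$ to identify the restriction to $G_z(F)$ of an irreducible $\tilde\pi \in \tilde\Pi_{\phi,z}$ with a sum over an $A$-orbit of $\pi$'s in $\Pi_\phi(G_z)$, and dually the restriction of $\tilde\rho$ to $\pi_0(S_\phi^+)$ with the corresponding orbit of $\rho$'s. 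Clifford theory then lets one assemble, out of the individual $\tilde\pi$ and $\tilde\rho$ over a fixed $\pi$ with stabilizer $A^{[z]}_\pi$, the sought-after extension of $\pi\boxtimes\rho^\vee$ to the fibre product: the point is that the collection $\{\tilde\pi\}$ restricting to the $\pi$-isotypic part is a torsor under characters of $A^{[z]}_\pi$, and likewise $\{\tilde\rho\}$, and pairing them compatibly via $\iota$ produces a well-defined representation of $\tilde G_z(F)_\pi \times_{A^{[z]}_\pi}\pi_0(\tilde S_{\phi,\rho}^{+,[z]})$ in which the ambiguities cancel. The character identity in Conjecture~\ref{cnj:coset} is then obtained by expanding $\Theta_{\tilde\pi}(\tilde f)$ for $\tilde f$ supported on the coset $[G\rtimes a]_z(F)$ via the explicit form of $\tilde\pi|_{\tilde G_z(F)_\pi}$, summing over $\tilde\rho$ using Conjecture~\ref{cnj:llc_rigid}'s identity~\eqref{eq:charid}, and recognizing the result as a trace of $(\pi\boxtimes\rho^\vee)^\tx{can}$ against $(\tilde f,\tilde s^{-1})$; this uses Lemma~\ref{lem:trans} to match the function $f^\mf{z}$ appearing in the two statements.

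For the converse ``Conjecture~\ref{cnj:coset} $\Rightarrow$ Conjectures~\ref{cnj:llc_rigid} and~\ref{cnj:llc_rest}'': given the canonical extensions $(\pi\boxtimes\rho^\vee)^\tx{can}$ for all $\pi \in \Pi_\phi(G_z)$, one builds the disconnected $L$-packet by inducing: for each $A$-orbit of $\pi$ with stabilizer $A^{[z]}_\pi$, decompose $(\pi\boxtimes\rho^\vee)^\tx{can}$ under the action of $A^{[z]}_\pi$ acting via characters, and define $\tilde\pi_{\tilde\rho} := \tx{Ind}_{\tilde G_z(F)_\pi}^{\tilde G_z(F)}$ of the appropriate isotypic piece, with $\tilde\rho$ the matching induced representation of $\pi_0(\tilde S_\phi^{+,[z]})$. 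Irreducibility and the bijection property follow from Clifford theory exactly as in the passage between $\tx{Irr}(\pi_0(\tilde S_\phi^{[z]}),[z])$ and $\tx{Irr}(\pi_0(\tilde S_\phi),[z])$ discussed after Conjecture~\ref{cnj:llc_pure_is}. Conjecture~\ref{cnj:llc_rest} (the restriction compatibility) is then essentially built in by construction, and Conjecture~\ref{cnj:llc_rigid}'s character identity is recovered by reversing the computation of the previous paragraph. One must check independence of all choices (of orbit representative $\pi$, of the extension within its character-torsor) — here Remark~\ref{rem:canext}, which says conjugation by $b \in A$ carries $(\pi\boxtimes\rho^\vee)^\tx{can}$ to $(b\pi\boxtimes b\rho^\vee)^\tx{can}$, is exactly what guarantees well-definedness.

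I expect the main obstacle to be the careful Clifford-theoretic bookkeeping that makes the extension $(\pi\boxtimes\rho^\vee)^\tx{can}$ and the induced packet members genuinely well-defined rather than merely well-defined up to a character: one has two independent character torsors (on the $\tilde G_z(F)$ side and on the $\pi_0(\tilde S_\phi^{+,[z]})$ side), and the content of the equivalence is that the refined Langlands correspondence forces them to be identified in a way compatible with all the coset-by-coset character identities. Keeping track of how the normalized transfer factor $\Delta_{KS}[\mf{w},\mf{e},\mf{z}]$ (via~\eqref{eq:pure_tf} and~\eqref{eq:pure_tf1}, with the Kottwitz-sign term $e([G\rtimes b^{-1}]_{\bar z})$) enters on both sides, and verifying that the averaging over $c \in \tilde G_z(F)/G_z(F)$ in~\eqref{eq:pure_tf} is precisely what converts the twisted-endoscopic statement into the disconnected one, is the technical heart of the argument; everything else is formal manipulation of traces and inductions.
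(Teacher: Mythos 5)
Your overall structure (two implications, Clifford theory at the core, Remark~\ref{rem:canext} guaranteeing compatibility under $A$-conjugation) matches the paper, and you correctly identify the construction of the canonical extension $(\pi\boxtimes\rho^\vee)^{\mathrm{can}}$ as the crux. But the argument you give for the direction ``\ref{cnj:llc_rigid} $+$ \ref{cnj:llc_rest} $\Rightarrow$ \ref{cnj:coset}'' has a real gap. You assert that since $\{\tilde\pi\}$ and $\{\tilde\rho\}$ over a fixed matching $(\pi,\rho)$ are torsors under characters of $A^{[z]}_\pi$, ``pairing them compatibly via $\iota$ produces a well-defined representation \ldots in which the ambiguities cancel.'' Unpack what you need: if $\iota(\tilde\rho)=\tilde\pi$, then restricting $\tilde\pi\boxtimes\tilde\rho^\vee$ to the fibre product should be independent of which $\tilde\rho$ one picks. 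Replacing $\tilde\rho$ by $\tilde\rho\otimes\chi$ forces $\tilde\pi$ to be replaced by $\iota(\tilde\rho\otimes\chi)$; the restriction to the fibre product is unchanged \emph{only if} $\iota(\tilde\rho\otimes\chi)=\iota(\tilde\rho)\otimes\chi$. That twisting compatibility is not part of the hypothesis (the first point of Conjecture~\ref{cnj:llc_rigid} together with Conjecture~\ref{cnj:llc_rest} give the bijection and its behaviour under restriction to subgroups $B\subset A$, but twisting by a character of $A^{[z]}_\pi$ is not a restriction), and in the paper's proof the converse direction deliberately avoids invoking the character identities of Conjecture~\ref{cnj:llc_rigid}, so you cannot extract the twisting compatibility from those either without circularity.

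The paper closes this gap by a genuinely different idea. It observes that the fibre product $\tilde G_z(F)_\pi \times_{A^{[z]}_\pi} \pi_0(\tilde S_{\phi,\rho}^{+,[z]})$ is the group of $F$-points of an auxiliary disconnected group $G\rtimes_A \pi_0(\tilde S_{\phi,\rho}^{+,[z]})$ that itself satisfies Condition~\ref{cnd:main}, and then applies Conjectures~\ref{cnj:llc_rigid} and~\ref{cnj:llc_rest} to \emph{that} group. This places the extensions of $\pi\boxtimes\rho^\vee$ in natural bijection with the extensions of $\rho\boxtimes\rho^\vee$ from $\pi_0(S_\phi^+)\times\pi_0(S_\phi^+)$ to $\tilde S_{\phi,\rho}^{+,[z]}\times_{A^{[\phi],[z]}_\rho}\tilde S_{\phi,\rho}^{+,[z]}$ --- an extension problem posed entirely on the dual side, pairing one object against its \emph{own} dual rather than two a priori unrelated objects. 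There Lemma~\ref{lem:ex2}(1) produces a canonical extension by pure cocycle cancellation: any projective extension $\tilde\rho$ works, and $\tilde\rho\boxtimes\tilde\rho^\vee$ is linear and independent of the choice because the projective ambiguity enters with opposite signs in the two factors. No twisting compatibility of $\iota$ is needed. The opposite implication and the equivalence of the character identities in your sketch are, in outline, the paper's argument via Lemma~\ref{lem:ex1}, Lemma~\ref{lem:trans}, and the Frobenius/Mackey bookkeeping, so that part is fine in spirit, though it is kept at a level of informality (``decompose under the action of $A^{[z]}_\pi$ acting via characters'' is not literally an action on the extension) that would need the explicit induction-in-stages packaged in Lemmas~\ref{lem:ex1} and~\ref{lem:ex2} to be made rigorous.
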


As a preparation for the proof we need the following elementary discussion. Consider an exact sequence of locally pro-finite groups
\[ 1 \rw H \rw \tilde H \rw A \rw 1 \]
with $A$ finite and an $A$-invariant subset $X$ of the set of isomorphism classes of irreducible smooth representations of $H$. The group
\[ \tilde H \times_A \tilde H = \{(\tilde h_1,\tilde h_2) \in \tilde H \times \tilde H| \tilde h_1 \in \tilde h_2 H \} \] fits into the exact sequence
\[ 1 \to H \times H \to \tilde H \times_A \tilde H \to A \to 1. \]
\begin{lem} \label{lem:ex2}
\begin{enumerate}
	\item If $X=\{x\}$ there exists a projective representation $\tilde x$ of $\tilde H$ extending $x$ and satisfying $\tilde x(h \tilde h)=x(h) \circ \tilde x(\tilde h)$ for $h \in H$ and $\tilde h \in \tilde H$. The external tensor product $\tilde x \boxtimes \tilde x^\vee$ is a linear representation of $\tilde H \times_A \tilde H$ depending only on $x$, but not on $\tilde x$.
	\item The isomorphism class of the representation
	\[ \tilde X = \bigoplus_x \tx{Ind}_{\tilde H_{x} \times_{A_x} \tilde H_{x}}^{\tilde H \times_A \tilde H} \tilde x\boxtimes\tilde x^\vee, \]
	where $x$ runs over a set of representatives for the $A$-orbits in $X$, is independent of that set and is an extension of $\bigoplus_{x \in X}x\boxtimes x^\vee$.
	\item We have
	\[ \tx{Ind}_{\tilde H \times_A \tilde H}^{\tilde H \times \tilde H}\tilde X = \bigoplus (\xi \boxtimes\xi^\vee), \]
	where $\xi$ runs over the set of irreducible representations of $\tilde H$ lying over $X$.
	\item Given a diagram of extensions
	\[ \xymatrix{
	1\ar[r]&H\ar[r]\ar@{=}[d]&\tilde H_1\ar[r]\ar[d]&A_1\ar[r]\ar[d]&1\\
	1\ar[r]&H\ar[r]&\tilde H_2\ar[r]&A_2\ar[r]&1 }\]
	and an $A_2$-invariant set $X_2$, let $X_1$ be the set $X_2$ with the action of $A_1$ restricted from that of $A_2$. The representation $\tilde X_1$ of $\tilde H_1 \times_{A_1} \tilde H_1$ is the pull-back of the representation $\tilde X_2$ of $\tilde H_2 \times_{A_2} \tilde H_2$.
\end{enumerate}
\end{lem}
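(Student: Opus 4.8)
The statement is purely a piece of elementary representation theory of the extension $1 \to H \to \tilde H \to A \to 1$ with $A$ finite, so the plan is to verify the four assertions in order, each reducing to standard facts about projective representations and Clifford theory.

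For (1), fix the isomorphism class $x$ and a representation space $V$. For each $a \in A$ choose a lift $\tilde h_a \in \tilde H$; since $x$ is $A$-invariant, $x \circ \tx{Ad}(\tilde h_a) \cong x$, so there is an intertwiner $\tilde x(\tilde h_a) : V \to V$, unique up to scalar by Schur's lemma. Extend to all of $\tilde H$ by $\tilde x(h\tilde h_a) = x(h)\tilde x(\tilde h_a)$; then $\tilde x$ is a projective representation with a cocycle $\alpha \in Z^2(A,\C^\times)$ inflated to $\tilde H$, and by construction $\tilde x(h\tilde h) = x(h)\tilde x(\tilde h)$ for $h \in H$. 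Now $\tilde x^\vee$ is projective with cocycle $\alpha^{-1}$ (inflated), so $\tilde x \boxtimes \tilde x^\vee$ on $\tilde H \times_A \tilde H$ has cocycle $\alpha \cdot \alpha^{-1} = 1$ because the two factors land in the \emph{same} coset of $H$, hence are hit by the same $\tilde h_a$; thus it is a genuine linear representation. Independence of the choice of $\tilde x$: a different extension differs by a character $\chi$ of $A$ pulled back to $\tilde H$, and on $\tilde H \times_A \tilde H$ this twists the two factors by $\chi$ and $\chi^{-1}$, which again cancel because both factors project to the same element of $A$; so $\tilde x \boxtimes \tilde x^\vee$ depends only on $x$.

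For (2), apply (1) to each orbit representative $x$ and form the induction from $\tilde H_x \times_{A_x} \tilde H_x$; that $\tx{Ind}\,\tilde x \boxtimes \tilde x^\vee$ restricts on $H \times H$ to $\bigoplus_{x' \in A\cdot x} x' \boxtimes x'^\vee$ follows from Mackey's formula (the double cosets are indexed by $A/A_x$, and the twist by $a$ of $\tilde x \boxtimes \tilde x^\vee$ is $(a x)\boxtimes(a x)^\vee$). Summing over orbit representatives gives an extension of $\bigoplus_{x \in X} x \boxtimes x^\vee$; independence of the choice of representatives is immediate from (1) and transitivity of induction. For (3), compute $\tx{Ind}_{\tilde H \times_A \tilde H}^{\tilde H \times \tilde H} \tilde X$ by transitivity of induction and the observation that $\tilde H \times \tilde H / (\tilde H \times_A \tilde H) \cong A$; one gets $\bigoplus_x \tx{Ind}_{\tilde H_x \times \tilde H_x}^{\tilde H \times \tilde H}(\tilde x \boxtimes \tilde x^\vee)$, and by Clifford theory the representations of $\tilde H$ over the orbit of $x$ are exactly the $\xi$ whose restriction to $H$ contains $x$, with $\xi \boxtimes \xi^\vee = \tx{Ind}(\tilde x \boxtimes \tilde x^\vee)$ by the projective Mackey/Clifford correspondence (a $\alpha$-projective irreducible of $A_x$ determines $\xi$, and the tensor with the dual kills the cocycle dependence). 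Finally (4) is a naturality check: the pullback of $\tilde x_2$ along $\tilde H_1 \to \tilde H_2$ is a valid choice of $\tilde x_1$ in the sense of (1) (lift compatibly), and induction commutes with the fiber-product pullback along the given diagram, so $\tilde X_1$ is the pullback of $\tilde X_2$.

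The only genuinely delicate point is keeping track of the cocycles: one must check that the cocycle of $\tilde x$ is the \emph{inflation} to $\tilde H$ of a class on $A$ (not merely some $2$-cocycle on $\tilde H$), so that "$\tilde x \boxtimes \tilde x^\vee$ trivializes on $\tilde H \times_A \tilde H$" is literally correct rather than just "on cohomology". This is where the hypothesis that both factors of the fiber product project to the \emph{same} element of $A$ is used, and it is the one step I would write out carefully; the rest is bookkeeping with Mackey's formula and Clifford theory.
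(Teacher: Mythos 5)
Your proof follows essentially the same route as the paper: construct a projective extension $\tilde x$ whose cocycle is inflated from $A$, observe the cancellation on $\tilde H \times_A \tilde H$, use Mackey/Clifford for the induction statements, and reduce point (4) to injectivity and surjectivity in the cartesian square. Two points deserve flagging.

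In (1), the independence argument is slightly misstated: two projective extensions of $x$ satisfying $\tilde x(h\tilde h)=x(h)\tilde x(\tilde h)$ need not differ by a \emph{character} of $A$; they differ by an arbitrary $1$-cochain $c\in C^1(A,\C^\times)$, since the scalar ambiguity in each $\tilde x(\tilde h_a)$ is unconstrained. The cancellation $c(a)c(a)^{-1}=1$ on $\tilde H\times_A\tilde H$ still goes through, so the conclusion is unaffected, but "character" should be "$1$-cochain."

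In (4), the claim that "induction commutes with the fiber-product pullback along the given diagram" is too glib and hides a genuine step. When $A_1\to A_2$ is injective, the pullback of $\tilde X_2$ to $\tilde H_1\times_{A_1}\tilde H_1$ is computed by Mackey's theorem: it decomposes over the double cosets $A_1\backslash A_2/A_{2,x}$, and one must then observe that these double cosets parameterize exactly the $A_1$-orbits in $X$, and that the summand indexed by a representative $a$ is the contribution of the orbit of $ax$ to $\tilde X_1$ (using the compatibility $\tilde y\boxtimes\tilde y^\vee=(\tilde x\boxtimes\tilde x^\vee)\circ\tx{Ad}(\tilde h_a)$ from part (1)). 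That matching is the content of the injective case and is not a formal commutation of functors; it should be written out. The surjective case, where pullback really is inflation along a map with the same kernel, is as routine as you indicate. So the approach is correct but point (4) needs the Mackey bookkeeping spelled out.

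Your proof is also organizationally different from the paper's, which proves a companion lemma (the asymmetric two-extension version) first and then deduces this lemma by specializing; yours is self-contained. That is fine and does not change the substance.
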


\begin{lem} \label{lem:ex1}
We are given two extensions $1 \to H_i \to \tilde H_i \to A \to 1$ of locally profinite groups with $A$ finite. Let $X$ be an $A$-set equipped with $A$-equivariant injections $X \to \tx{Irr}(H_i)$. For $x \in X$ we write $x_i$ for its image in $\tx{Irr}(H_i)$ and $\alpha_{x_i} \in H^2(A_x,\C^\times)$ for the associated class. We assume $\alpha_{x_1}=\alpha_{x_2}$. Assume given an extension to $\tilde H_{1,x} \times_{A_x} \tilde H_{2,x}$ of the representation $x_1 \boxtimes x_2^\vee$ of $H_1 \times H_2$, and call this extension $\tilde x$. Assume that for $a \in A$ and $y=ax$ we have $\tilde y = \tilde x \circ \tx{Ad}(a^{-1})$. Then
\begin{enumerate}
	\item If $X$ is a single $A$-orbit the isomorphism class of the representation 
	\[ \tilde X := \tx{Ind}_{\tilde H_{1,x} \times_{A_x} \tilde H_{2,x}}^{\tilde H_1 \times_A \tilde H_2} \tilde x \]
	is independent of the choice of $x \in X$. For general $X$ set
	\[ \tilde X := \bigoplus_{X' \in X/A} \tilde X'. \]
	\item The representation $\tilde X$ is an extension to $\tilde H_1 \times_A \tilde H_2$ of $\bigoplus_{x \in X} x_1 \boxtimes x_2^\vee$.
	\item Let 
	\[ \tx{Ind}_{\tilde H_1 \times_A \tilde H_2}^{\tilde H_1 \times \tilde H_2} \tilde X = \bigoplus (\xi_1 \boxtimes \xi_2^\vee)^{m(\xi_1,\xi_2)} \]
	be the decomposition into irreducible pieces. Then $m(\xi_1,\xi_2) \leq 1$ and the correspondence $\tx{Irr}(\tilde H_1) \leftrightarrow \tx{Irr}(\tilde H_2)$ afforded by $m$ is a bijection between the sets of irreducible representations of $\tilde H_i$ lying over the sets $X_i$.
	\item Let $A' \subset A$ be a subgroup and write $\tilde H_i' \subset \tilde H_i$ for the preimage of $A'$. All previous points can be applied to $\tilde H_i'$ in place of $\tilde H_i$. Let $\xi_i \in \tx{Irr}(\tilde H_i)$ and $\xi_i' \in \tx{Irr}(\tilde H_i')$ be such that $\xi_1 \leftrightarrow \xi_2$ under the bijection of point 3, and $\xi_1' \leftrightarrow \xi_2'$ under the analogous bijection. Then the multiplicity of $\xi_1'$ in $\xi_1|_{\tilde H_1'}$ equals the multiplicity of $\xi_2'$ in $\xi_2|_{\tilde H_2'}$.
\end{enumerate}
\end{lem}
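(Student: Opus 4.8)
The plan is to treat all four assertions as standard Clifford theory relative to the extension $\mathcal{H} := \tilde H_1\times_A\tilde H_2$ of $A$ by $H_1\times H_2$, with the hypothesis $\alpha_{x_1}=\alpha_{x_2}$ supplying the one nontrivial input. For $x\in X$ write $\mathcal{H}_x\subset\mathcal{H}$ for the preimage of the stabilizer $A_x$ (which is also the stabilizer in $A$ of $x_1\boxtimes x_2^\vee$, since the injections $X\to\tx{Irr}(H_i)$ are $A$-equivariant). The obstruction in $H^2(A_x,\C^\times)$ to extending $x_1\boxtimes x_2^\vee$ from $H_1\times H_2$ to $\mathcal{H}_x$ is $\alpha_{x_1}\alpha_{x_2}^{-1}=1$, and the given representation $\tilde x$ is precisely a choice of such an extension, well defined up to a character of $A_x$. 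With this set-up the argument runs in close parallel to the proof of Lemma~\ref{lem:ex2} (which is the special case $\tilde H_1=\tilde H_2$, $x_1=x_2$, $\tilde x=\tilde x'\boxtimes\tilde x'^\vee$).

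Assertion (1) is the conjugation-invariance of induction: if $y=ax$, the compatibility hypothesis $\tilde y=\tilde x\circ\tx{Ad}(a^{-1})$, read as conjugation by any lift of $a$ to $\mathcal{H}$, gives $\tx{Ind}_{\mathcal{H}_y}^{\mathcal{H}}\tilde y\cong\tx{Ind}_{\mathcal{H}_x}^{\mathcal{H}}\tilde x$, which also makes the orbit-wise direct sum defining $\tilde X$ unambiguous. For (2) apply Mackey's restriction formula to $\tilde X':=\tx{Ind}_{\mathcal{H}_x}^{\mathcal{H}}\tilde x$ for a single orbit $X'=Ax$: the double cosets $(H_1\times H_2)\backslash\mathcal{H}/\mathcal{H}_x$ are indexed by $A/A_x$, and the $a$-conjugate of $\tilde x$ restricts on $H_1\times H_2$ to $(ax_1)\boxtimes(ax_2)^\vee$; injectivity of $X\to\tx{Irr}(H_i)$ makes these pairwise non-isomorphic, so $\tilde X'|_{H_1\times H_2}=\bigoplus_{y\in Ax}y_1\boxtimes y_2^\vee$, and summing over orbits gives the claim.

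The heart is (3). By Frobenius reciprocity $m(\xi_1,\xi_2)=\dim\tx{Hom}_{\mathcal{H}}\big((\xi_1\boxtimes\xi_2^\vee)|_{\mathcal{H}},\tilde X\big)$, which vanishes unless $\xi_1,\xi_2$ lie over the same orbit $Ax$; fixing that orbit and using (2), a second Mackey computation — keeping only the summand of $\xi_i|_{\mathcal{H}_x}$ whose $H_i$-restriction is $x_i$-isotypic — reduces this to $\dim\tx{Hom}_{\mathcal{H}_x}\big((\rho_1\boxtimes\rho_2^\vee)|_{\mathcal{H}_x},\tilde x\big)$, where $\xi_i=\tx{Ind}_{\tilde H_{i,x}}^{\tilde H_i}\rho_i$ and $\rho_i$ is an irreducible of the preimage $\tilde H_{i,x}$ of $A_x$ lying over $x_i$. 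Fix one $2$-cocycle $\alpha$ on $A_x$ representing $\alpha_{x_1}=\alpha_{x_2}$ and write $\rho_i=\tilde x_i\otimes\tx{infl}(\sigma_i)$ with $\tilde x_i$ an $\alpha$-projective extension of $x_i$ and $\sigma_i$ an irreducible $\alpha^{-1}$-projective representation of $A_x$. Now $(\tilde x_1\boxtimes\tilde x_2^\vee)|_{\mathcal{H}_x}$ is a genuine (linear) extension of $x_1\boxtimes x_2^\vee$ to $\mathcal{H}_x$, so it differs from $\tilde x$ by a character of $A_x$, which we absorb into $\tilde x_1$; thus we may take $\tilde x=(\tilde x_1\boxtimes\tilde x_2^\vee)|_{\mathcal{H}_x}$. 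Then $(\rho_1\boxtimes\rho_2^\vee)|_{\mathcal{H}_x}\cong\tilde x\otimes\tx{infl}_{\mathcal{H}_x\to A_x}(\sigma_1\otimes\sigma_2^\vee)$, a linear representation because $\sigma_1$ and $\sigma_2$ carry the same cocycle; and since $\tilde x$ extends the irreducible $x_1\boxtimes x_2^\vee$ we have $\tx{End}_{H_1\times H_2}(\tilde x)=\C$ with trivial $\mathcal{H}_x$-action, whence $m(\xi_1,\xi_2)=\dim\tx{Hom}_{A_x}(\sigma_1\otimes\sigma_2^\vee,\mathbf{1})=\langle\sigma_1,\sigma_2\rangle_{A_x}$ by the orthogonality relations of Appendix~\ref{app:projchar}. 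This is $1$ if $\sigma_1\cong\sigma_2$ and $0$ otherwise, so $m\le 1$ and the rule $\xi_1\leftrightarrow\xi_2\Leftrightarrow\sigma_1\cong\sigma_2$ is the desired bijection between irreducibles of $\tilde H_i$ over $X_i$.

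For (4), first establish the analog of Lemma~\ref{lem:ex2}(4): using $\tilde{(ax')}=\tilde x'\circ\tx{Ad}(a^{-1})$ to match the orbit-wise summands, one checks that the representation $\tilde X$ built from $(\tilde H_1',\tilde H_2')$ is the restriction of $\tilde X$ along $\tilde H_1'\times_{A'}\tilde H_2'\hookrightarrow\mathcal{H}$. Combining this with the Clifford parameterization of (3) applied to both $A$ and $A'$, the multiplicity of $\xi_i'$ in $\xi_i|_{\tilde H_i'}$ is computed by a Mackey sum over $A_x\backslash A/A'$ whose terms are multiplicities among $\alpha$-projective representations of subgroups of $A$; the only $i$-dependent ingredients are the $x_i$ and the chosen extensions $\tilde x_i,\tilde x_i'$, and these enter only through the common cocycle $\alpha$ and cancel out of the multiplicity, so the two sides coincide. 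I expect the bookkeeping in (4) to be the main obstacle: one must choose the $\alpha$-projective extensions compatibly (e.g.\ with $\tilde x_i'$ a restriction of a conjugate of $\tilde x_i$) and carefully track identifications through the iterated Mackey decomposition so that this cancellation is manifest; in (3) the only subtlety is the routine passage between projective representations for cohomologous versus equal $2$-cocycles, handled by fixing the representative $\alpha$ once and for all.
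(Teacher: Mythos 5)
Your proof is correct, and your overall framework is the same Clifford theory the paper uses; the deviation is in part~(3), where you compute the multiplicity $m(\xi_1,\xi_2)$ as a Hom space via Frobenius reciprocity and Schur's lemma, whereas the paper instead works on the other side of the adjunction: it performs induction in stages through $\tilde H_{1,x}\times\tilde H_{2,x}$, recognizes the inner induction $\tx{Ind}_{\tilde H_{1,x}\times_{A_x}\tilde H_{2,x}}^{\tilde H_{1,x}\times\tilde H_{2,x}}\mathbf{1}$ as the inflation of the twisted group algebra $\C[A_x]_{\alpha}$ seen as a bimodule over itself, and reads off the decomposition $\bigoplus_\tau(\tau\otimes\tilde x_1)\boxtimes(\tau\otimes\tilde x_2)^\vee$ with $\tau$ running over irreducible $\alpha$-projective representations of $A_x$. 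The two calculations are adjoint to one another and land in the same place, but the paper's version has a concrete payoff: it produces, rather than merely characterizes, the parameterization $\xi_i=\tx{Ind}_{\tilde H_{i,x}}^{\tilde H_i}(\tilde x_i\otimes\tau)$ of the irreducibles over $x_i$ by a single common $\tau$, and part~(4) then falls out immediately from one application of Mackey's restriction formula over $A'\lmod A/A_x$ (only $c=1$ survives; the surviving term decomposes as $\bigoplus_{\tau''}\tx{Ind}_{\tilde H_{i,x}'}^{\tilde H_i'}(\tilde x_i\otimes\tau'')^{m(\tau,\tau'')}$, and $m(\xi_i,\xi_i')=m(\tau,\tau')$ is visibly $i$-independent). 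Your worry about bookkeeping in~(4) is therefore mostly self-inflicted: the Hom-space route you chose for~(3) does not leave you with that explicit parameterization in hand, so you would need to re-establish it before the Mackey step becomes as clean as in the paper. Your observation that one should first check the $A'$-version of $\tilde X$ agrees with the restriction of $\tilde X$ (the analog of Lemma~\ref{lem:ex2}(4)) is a sound way to pin down the otherwise ambiguous choice of $\tilde x'$, and in fact makes explicit something the paper leaves implicit by reusing the same $\tilde x_i$ for both $A$ and $A'$.
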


\begin{proof}[Proof of Lemma \ref{lem:ex1}]
Since all statements break up according to the orbits of $A$ in $X$ we assume for the rest of the proof that $X$ is a single $A$-orbit. The independence of $\tilde X$ from $\tilde x$ follows from the assumption $\tilde y=\tilde x\circ\tx{Ad}(a^{-1})$ for $y=ax$ and the fact that $X$ is a transitive $A$-set. The fact that $\tilde X$ is an extension of $\bigoplus x_1 \boxtimes x_2^\vee$ follows from the induction-restriction formula. 

For the third claim we perform induction in stages
\[ \tx{Ind}_{\tilde H_{1,x} \times \tilde H_{2,x}}^{\tilde H_1 \times \tilde H_2} \tx{Ind}_{\tilde H_{1,x} \times_{A_x} \tilde H_{2,x}}^{\tilde H_{1,x} \times \tilde H_{2,x}} \tilde x\]
and consider first the inner induction. By assumption there exist projective representations $\tilde x_i$ of $\tilde H_{i,x}$ such that we have an equality $\alpha_{\tilde x_1}=\alpha_{\tilde x_2}$ of elements of $Z^2(A_x,\C^\times)$ and such that the restriction of $\tilde x_1 \boxtimes \tilde x_2^\vee$ from $\tilde H_{1,x} \times \tilde H_{2,x}$ to $\tilde H_{1,x} \times_{A_x} \tilde H_{2,x}$ is equal to $\tilde x$. Write $\alpha_{\tilde x}$ for the common value of $\alpha_{\tilde x_i}$. Then
\[ \tx{Ind}_{\tilde H_{1,x} \times_{A_x} \tilde H_{2,x}}^{\tilde H_{1,x} \times \tilde H_{2,x}} \tilde x = \left(\tx{Ind}_{\tilde H_{1,x} \times_{A_x} \tilde H_{2,x}}^{\tilde H_{1,x} \times \tilde H_{2,x}} \tb{1}\right) \otimes (\tilde x_1 \boxtimes \tilde x_2^\vee),\]
where on the right we are performing twisted induction with 2-cocycle $(\alpha_{\tilde x}^{-1},\alpha_{\tilde x})$ in the left factor, and then tensoring with the $(\alpha_{\tilde x},\alpha_{\tilde x}^{-1})$-projective representation $\tilde x_1 \boxtimes \tilde x_2^\vee$ to obtain a linear representation of $\tilde H_{1,x} \times \tilde H_{2,x}$. The representation $\tx{Ind}_{\tilde H_{1,x} \times_{A_x} \tilde H_{2,x}}^{\tilde H_{1,x} \times \tilde H_{2,x}} \tb{1}$ of $\tilde H_{1,x} \times \tilde H_{2,x}$ is the inflation of the representation $\tx{Ind}_{A_x}^{A_x \times A_x} \tb{1}$, where $A_x$ is embedded diagonally into $A_x \times A_x$. The latter is isomorphic to the twisted group algebra $\C[A_x]_{\alpha_{\tilde x}}$ seen as a left-right-bimodule over itself, and as such decomposes as the direct sum $\bigoplus_\tau \tau\boxtimes\tau^\vee$, where $\tau$ runs over the set of isomorphism classes of irreducible $\alpha_{\tilde x}$-projective representations of $A_x$. This shows that
\[ \tx{Ind}_{\tilde H_{1,x} \times_{A_x} \tilde H_{2,x}}^{\tilde H_{1,x} \times \tilde H_{2,x}} \tilde x = \bigoplus_\tau (\tau\otimes \tilde x_1)\boxtimes(\tau\otimes\tilde x_2)^\vee. \]
As $\tau$ runs over the set of isomorphism classes of $\alpha_{\tilde x}$-projective representations of $A_x$, $\tau\otimes\tilde x_i$, runs over the set of irreducible linear representations of $\tilde H_{i,x}$ whose restriction to $H_i$ contains $x_i$, and $\tx{Ind}_{\tilde H_i,x}^{\tilde H_i}$ runs over the set of irreducible linear representations of $\tilde H_i$ whose restriction to $H_i$ contains $x_i$.

For the fourth claim we write $\xi_i=\tx{Ind}_{\tilde H_{i,x}}^{\tilde H_i} \tilde x_i \otimes \tau$, where $\tilde x_i$ is an extension of $x_i$ to a projective representation of $\tilde H_{i,x}$ with 2-cocycle $\alpha_{\tilde x} \in Z^2(A_x,\C^\times)$ and $\tau$ is a projective representation of $A_x$ with 2-cocycle $\alpha_{\tilde x}^{-1}$. Write correspondingly $\xi_i'=\tx{Ind}_{\tilde H_{i,x}'}^{\tilde H_i'}\tilde x_i \otimes \tau'$, where we take the restriction of $\tilde x_i$ to $\tilde H_{i,x}'$ and $\tau'$ is a projective representation of $A'_x$ with 2-cocycle given by the restriction of $\alpha_{\tilde x}^{-1}$. The Mackey formula shows that
\[ \tx{Res}_{\tilde H_i'}^{\tilde H_i}\tx{Ind}_{\tilde H_{i,x}}^{\tilde H_i}\tilde x_i \otimes\tau = \bigoplus_{c \in A' \lmod A/A_x} \tx{Ind}_{\tilde H_{i,cx}'}^{\tilde H_i'}\tx{Res}_{\tilde H_{i,cx}'}^{\tilde H_{i,cx}}c(\tilde x_i\otimes\tau). \]
The summation index parameterizes the $A'$-orbits in the $A$-orbit of $x$. Since $\xi_i'$ lies over the $A'$-orbit of $x$, the multiplicity of $\xi_i'$ in the above restriction is zero for all summands except possibly the one indexed by $c=1$. This summand decomposes into the irreducible representations as
\[ \bigoplus_{\tau''}\tx{Ind}_{\tilde H_{i,x}'}^{\tilde H_i'}(\tilde x_i\otimes\tau'')^{m(\tau,\tau'')}, \] 
where $\tau''$ runs over the irreducible projective representations of $A_x'$ and $m(\tau,\tau'')$ is the multiplicity of $\tau''$ in the restriction of $\tau$. We conclude that $m(\xi_i,\xi'_i)=m(\tau,\tau')$.
\end{proof}

\begin{proof}[Proof of Lemma \ref{lem:ex2}]
For the first point we may fix a set $\dot A \subset \tilde H$ of representatives for $A=\tilde H/H$, an isomorphism $\tilde x(\dot a) : V_x \to V_x$ of complex vector spaces with the intertwining property $\tilde x(\dot a)\circ x(\dot a^{-1}h\dot a) = x(h)\circ\tilde x(\dot a)$ for all $h \in H$, and define $\tilde x(h\dot a)=x(h)\circ\tilde x(\dot a)$ for all $h \in H$ and $a \in A$. This has the required properties. We define the automorphism $\tilde x^\vee(\tilde h) := \tilde x(\tilde h)^{*,-1}$ of $V_x^\vee$. The linearity of $\tilde x \boxtimes \tilde x^\vee$ follows from the fact that the 2-cocycle of $\tilde x$ is inflated from $A$ and the 2-cocycle of $\tilde x^\vee$ is its inverse. Keeping $\dot A$ fixed, the independence from $\tilde x$ follows because another choice is of the form $c \cdot \tilde x$ for $c\in C^1(A,\C^\times)$. The independence of the choice of $\dot A$ now follows readily.

If $y=x\circ\tx{Ad}(a)$ choose $\tilde h_a \in \tilde H$ mapping to $a$ and define $\tilde y := \tilde x \circ\tx{Ad}(\tilde h_a)$. Then $\tilde y$ is a projective extension of $y$ and satisfies the conditions of point 1. We have $\tilde y \boxtimes \tilde y^\vee = (\tilde x \boxtimes \tilde x^\vee)\circ\tx{Ad}(\tilde h_a)$ and the second point follows from Lemma \ref{lem:ex1}.

The third point follows from the proof of the previous lemma, for we see from the argument given there that the right hand side is 
\[ \bigoplus_\tau \tx{Ind}_{\tilde H_x \times \tilde H_x}^{\tilde H \times \tilde H}(\tilde x\otimes\tau)\boxtimes(\tilde x\otimes\tau)^\vee, \] 
where $\tau$ runs over the irreducible $\alpha_{\tilde x}$-projective representations of $A_x$.

For the fourth point we note that the right square in the diagram is automatically cartesian. Since pull-back is transitive it is enough to treat the extreme cases when $A_1 \to A_2$ is injective respectively surjective, a property that is then inherited by the maps $\tilde H_1 \to \tilde H_2$ and $\tilde H_1 \times_{A_1} \tilde H_1 \to \tilde H_2 \times_{A_2} \tilde H_2$. We may assume that $X_2$ is a single $A_2$-orbit. Choose $x \in X_2$. In the injective case we apply point 2 and the Mackey formula to see that
\[ \tx{Res}_{\tilde H_1 \times_{A_1} \tilde H_1}^{\tilde H_2 \times_{A_2} \tilde H_2}\tx{Ind}_{\tilde H_{2,x} \times_{A_{2,x}} \tilde H_{2,x}}^{\tilde H_2 \times_{A_2} \tilde H_2} \tilde x \boxtimes \tilde x^\vee \]
is given by
\[ \bigoplus_{a \in A_1 \lmod A_2/A_{2,x}} \tx{Ind}_{\tilde H_{1,ax} \times_{A_{1,ax}} \tilde H_{1,ax}}^{\tilde H_1 \times_{A_1} \tilde H_1}\tx{Res}_{\tilde H_{1,ax} \times_{A_{1,ax}} \tilde H_{1,ax}}^{\tilde H_{2,ax} \times_{A_{2,ax}} \tilde H_{2,ax}} a(\tilde x \boxtimes \tilde x^\vee).\]
We have already argued that $a\tilde x$ is a projective extension of $ax$ to $\tilde H_{2,ax}$ with the required property for point 1, and it is clear that its restriction to $\tilde H_{1,ax}$ is such as well. Since $A_1 \lmod A_2/A_{2,x}$ parameterizes the $A_1$-orbits in $X$, the claim follows again from point $2$.

In the surjective case the kernel $N$ of $A_1 \to A_2$ is also the kernel of the surjective maps $\tilde H_1 \to \tilde H_2$, $\tilde H_{1,x} \to \tilde H_{2,x}$, $\tilde H_1 \times_{A_1} \tilde H_1 \to \tilde H_2 \times_{A_2} \tilde H_2$, and $\tilde H_{1,x} \times_{A_{1,x}} \tilde H_{1,x} \to \tilde H_{2,x} \times_{A_{2,x}} \tilde H_{2,x}$. The set $X_1$ is a single $A_1$-orbit. We apply again point 2. If $\tilde x$ is a projective extension of $x$ to $\tilde H_{2,x}$ satisfying the condition of point 1, then its pull-back to $\tilde H_{1,x}$ is a projective extension of $x$ that satisfies the same condition. We have
\[ \tx{Inf}_{\tilde H_1 \times_{A_1} \tilde H_1}^{\tilde H_2 \times_{A_2} \tilde H_2}\tx{Ind}_{\tilde H_{2,x} \times_{A_{2,x}} \tilde H_{2,x}}^{\tilde H_2 \times_{A_2} \tilde H_2} \tilde x \boxtimes \tilde x^\vee = \tx{Ind}_{\tilde H_{1,x} \times_{A_{1,x}} \tilde H_{1,x}}^{\tilde H_1 \times_{A_1} \tilde H_1} \tx{Inf}_{\tilde H_{1,x} \times_{A_{1,x}} \tilde H_{1,x}}^{\tilde H_{2,x} \times_{A_{2,x}} \tilde H_{2,x}} \tilde x \boxtimes \tilde x^\vee,\]
where $\tx{Inf}$ stands for inflation, i.e. pull-back.
\end{proof}

\begin{proof}[Proof of Proposition \ref{pro:slice}]
Assume conjecture \ref{cnj:coset}. We consider the extensions $1 \to G_z(F) \to \tilde G_z(F)^{[\phi]} \to A^{[\phi],[z]} \to 1$ and $1 \to \pi_0(S_\phi^+) \to \pi_0(\tilde S_\phi^{+,[z]}) \to A^{[\phi],[z]} \to 1$. Let $X_1=\Pi_\phi(G_z)$ and $X_2=\tx{Irr}(\pi_0(S_\phi^+),[z])$. By Conjecture \ref{cnj:func} these sets are in an $A^{[\phi],[z]}$-equivariant bijection. Define $\Pi_\phi(\tilde G_z^{[\phi]})$ to be the set of irreducible representations of $\tilde G_z(F)^{[\phi]}$ whose restriction to $G_z(F)$ meets $X_1$. Conjecture \ref{cnj:coset} fulfills the assumptions of Lemma \ref{lem:ex1}, see also Remark \ref{rem:canext}. Point 3 of that Lemma provides a bijection between $\Pi_\phi(\tilde G_z^{[\phi]})$ and $\tx{Irr}(\pi_0(S_\phi^{+,[z]},[z]))$, and point 4 asserts that this bijection preserves multiplicities upon restriction along a map $B \to A$. By Conjecture \ref{cnj:func} the stabilizer in $\tilde G_z(F)$ of any element of $\Pi_\phi(\tilde G_z^{[\phi]})$ is contained in $\tilde G_z(F)^{[\phi]}$. Therefore induction gives a bijection from $\Pi_\phi(\tilde G_z^{[\phi]})$ to the set $\Pi_\phi(\tilde G_z)$ of irreducible representations of $\tilde G_z(F)$ whose restriction to $G_z(F)$ meets $\Pi_\phi(G_z)$, hence the first point of Conjecture \ref{cnj:llc_rigid} holds. Multiplicities are still preserved, hence Conjecture \ref{cnj:llc_rest} holds.

Conversely, assume Conjecture \ref{cnj:llc_rest} and the first point of Conjecture \ref{cnj:llc_rigid}. Let $\pi \in \Pi_\phi(G_z)$ correspond to $\rho \in \tx{Irr}(\pi_0(S_\phi^+))$. Consider the group $\tilde G_z(F)_\pi \times_{A^{[z]}_\pi} \pi_0(\tilde S_{\phi,\rho}^{+,[z]})$. We claim that this group arises by taking $F$-points of a disconnected algebraic group that fits in the framework discussed in this paper. Indeed, let $\tilde G_\pi$ denote the preimage in $\tilde G$ of $A_\pi^{[z]}$. We have the isomorphism of algebraic groups
\[ G \rtimes_A \pi_0(\tilde S_{\phi,\rho}^{+,[z]}) \to \tilde G_\pi \times_{A^{[z]}_\pi} \pi_0(\tilde S_{\phi,\rho}^{+,[z]}),\qquad (g \rtimes \tilde s) \mapsto (g \rtimes a_{\tilde s}) \times \tilde s, \]
where on the left the subscript $A$ indicates that the semi-direct product is taken for the action of $\pi_0(\tilde S_{\phi,\rho}^{+,[z]})$ on $G$ via the projection $\pi_0(\tilde S_\phi^{+,[z]}) \to A_\pi^{[z]}$, while on the right the group $\pi_0(\tilde S_\phi^{+,[z]})$ acts trivially on $\tilde G$. The above isomorphism is equivariant for the natural embedding of $G$ into both sides and hence induces an isomorphism between the rational forms over $F$ determined by the element $z \in Z^1(u \to W,Z \to G)$.

According to Conjectures \ref{cnj:llc_rigid} and \ref{cnj:llc_rest} the extensions of the representation $\pi\boxtimes\rho^\vee$ of $G_z(F) \times \pi_0(S_\phi^+)$ to a representation of $\tilde G_z(F)_\pi \times_{A^{[z]}_\pi} \pi_0(\tilde S_{\phi,\rho}^{+,[z]})$ are in natural bijection with the extensions of the representation $\rho\boxtimes\rho^\vee$ of 
\[ \pi_0(\tx{Cent}(\phi,\hat G \times \pi_0(S_\phi^+))^+)=\pi_0(S_\phi^+) \times \pi_0(S_\phi^+) \] 
to the group
\[ \pi_0(\tx{Cent}(\phi,\hat G \rtimes_A \pi_0(\tilde S_{\phi,\rho}^{+,[z]}))^{+,[z]}). \]
To compute this group we use the isomorphism
\[ \hat{\bar G} \rtimes_A \tilde S_{\phi,\rho}^{+,[z]} \to \hat{\bar G} \rtimes_c \tilde S_{\phi,\rho}^{+,[z]},\qquad (g,\tilde s) \mapsto (g s^{-1},\tilde s), \]
where the subscript $c$ on the right indicates that we are taking the semi-direct product with respect to the natural conjugation action of $\tilde S_{\phi,\rho}^{+,[z]} \subset \hat{\bar G} \rtimes A$ on $\hat{\bar G}$, and $\tilde s = s \rtimes a$. This isomorphism restricts to an isomorphism $\hat{\bar G} \times S_\phi^+ \to \hat{\bar G} \rtimes_c S_\phi^+$. We now apply $\tx{Cent}(\phi,-)^{+,[z]}$ to both sides of the inclusion $\hat{\bar G} \rtimes_c S_\phi^+ \to \hat{\bar G} \rtimes_c \tilde S_{\phi,\rho}^{+,[z]}$ and obtain the natural inclusion
\[ S_\phi^+ \rtimes_c S_\phi^+ \to S_\phi^+ \rtimes_c \tilde S_{\phi,\rho}^{+,[z]}, \]
which under the isomorphism $(s,\tilde s) \mapsto (s\tilde s,\tilde s)$ becomes the natural inclusion
\[ S_\phi^+ \times S_\phi^+ \to \tilde S_{\phi,\rho}^{+,[z]} \times_{A^{[\phi],[z]}_\rho} \tilde S_{\phi,\rho}^{+,[z]}. \]
Tracing through all identifications we see that we are looking for a natural extension of the representation $\rho\boxtimes\rho^\vee$ of the source of this inclusion to a representation of the target. But Lemma \ref{lem:ex2} provides just such an extension.

We come now to the character identities. The right hand side of the character identities in Conjecture \ref{cnj:llc_rigid} is
\[ \sum_{\tilde\pi \in \Pi_\phi(\tilde G_z)} \tx{tr}\, \tilde\pi\boxtimes\tilde\rho_{\tilde \pi}^\vee(\tilde f \times \tilde s^{-1}).\]
This is the character of the representation $\bigoplus_{\tilde\pi} \tilde\pi\boxtimes\tilde\rho_{\tilde\pi}^\vee$ evaluated at the function $\tilde f \otimes \delta_{\tilde s^{-1}}$. By the preceding discussion that representation is equal to
\[ \bigoplus_{\pi \in \Pi_\phi(G_z)/A^{[\phi],[z]}} \tx{Ind}_{\tilde G_z(F)_\pi \times_{A^{[z]}_\pi} \pi_0(\tilde S_{\phi,\rho}^{+,[z]})}^{\tilde G_z(F) \times \pi_0(\tilde S_{\phi}^{+,[z]})} \tilde\pi^\tx{can}. 
\]
Let $a \in A$ be the image of $\tilde s$. If $a \notin A_\pi$ the character of the corresponding induced representation is zero at $\tilde f \otimes \delta_{\tilde s^{-1}}$. Therefore we may restrict the sum by the condition $a \in A^{[z]}_\pi$, equivalently $a\pi \cong \pi$. Applying the Frobenius character formula we obtain 
\[ \sum_{\substack{\pi \in \Pi_\phi(G_z)/A^{[\phi],[z]}\\ a\pi \cong \pi}} \tx{tr}\,\tilde\pi^\tx{can}\left( \sum_{x}(\tilde f\otimes\delta_{\tilde s^{-1}})^x \right),\]
where $x$ runs over the coset space
\[ (\tilde G_z(F) \times  \pi_0(\tilde S_{\phi}^{+,[z]}))/(\tilde G_z(F)_\pi \times_{A^{[z]}_\pi} \pi_0(\tilde S_{\phi,\rho}^{+,[z]})) \cong A^{[z]} \times A^{[z],[\phi]}/A^{[z]}_\rho. \]
The compatibility of $\tilde\pi^\tx{can}$ with conjugation under $A$ implies that the above sum becomes
\[ \sum_{\substack{\pi \in \Pi_\phi(G_z)\\ a\pi \cong \pi}} \tx{tr}\,\tilde\pi^\tx{can}\left( \sum_{c \in A^{[z]}}\tilde f^c\otimes\delta_{\tilde s^{-1}} \right). \]
We conclude that
\[ \sum_{\tilde\pi \in \Pi_\phi(\tilde G_z)} \tx{tr}\, \tilde\pi\boxtimes\tilde\rho^\vee(\tilde f \times \tilde s^{-1}) = \sum_{\substack{\pi \in \Pi_\phi(G_z)\\ a\pi \cong \pi}} \tx{tr}\,\tilde\pi^\tx{can}\left( \tilde f_0\times\tilde s^{-1} \right), \]
where $\tilde f_0=\sum_{c \in A^{[z]}} \tilde f^c$. Recalling from Lemma \ref{lem:trans} that $\tilde f^\mf{z}=\tilde f_0^{\mf{z},KS}$ we see that the character identities in Conjectures \ref{cnj:llc_rigid} and \ref{cnj:coset} are equivalent.
\end{proof}

\begin{rem} Proposition \ref{pro:slice} reduces the proof of the endoscopic character identities to the case of a cyclic $A$. It does not completely reduce the internal structure of $L$-packets to the case of cyclic $A$, because in the case when $A_\pi$ is not cyclic one still needs to show the existence of the extension $\tilde\pi^\tx{can}$.
\end{rem}

\subsection{The cyclic case} \label{sub:comp_cyc}

In this subsection we revisit the classical setting where we have a connected reductive group equipped with an automorphism. We begin with a quasi-split connected reductive group $G$ equipped with an automorphism $\theta$ fixing an $F$-pinning. We further assume $\theta$ is of finite order. Set $A=\<\theta\>$ and $\tilde G = G \rtimes \<\theta\>$. Let $z \in Z^1(u \to W,Z(G)^A \to G)$. The map $\tilde G_z(F) \to A$ is surjective if and only if the class $[z]$ is fixed by $\theta$, which we assume from now on, for otherwise we can pass to a power of $\theta$ without changing $\tilde G_z(F)$. Fix an arbitrary $\tilde \delta_z \in \tilde G_z(F)$ mapping to $\theta$ and set $\theta_z =\tx{Ad}(\tilde\delta_z)$. The twisted group we are interested in is $G_z$ with automorphism $\theta_z$.

Let $\phi : L_F \to {^LG}$ be such that its $\hat G$-conjugacy class is fixed by $\theta$. Then we have
\begin{equation} \label{eq:sphicyc} 
1 \to \pi_0(S_\phi) \to \pi_0(\tilde S_\phi) \to A \to 1 
\end{equation}
and $\tilde S_\phi^{[z]}=\tilde S_\phi$. This isomorphism class of a representation $\pi \in \Pi_\phi(G_z)$ is $\theta_z$-fixed if and only if the isomorphism class of the corresponding $\rho \in \tx{Irr}(S_\phi^+,[z])$ is $\theta$-fixed. Assuming that this is the case, there is a natural extension of the representation $\pi\boxtimes\rho^\vee$ of $G_z(F) \times \pi_0(S_\phi^+)$ to a representation of $\tilde G_z(F) \times_A \pi_0(\tilde S_\phi^+)$ given as follows: Since $A$ is cyclic $\rho$ extends to a representation $\tilde\rho$ of $\pi_0(\tilde S_\phi^+)$. By Conjecture \ref{cnj:llc_rigid} there is a corresponding extension $\tilde\pi$ of $\pi$ to $\tilde G_z(F)$. Another extension of $\rho$ is of the form $\tilde\rho\otimes\chi$ for some character $\chi$ of $A$. The representation of $\tilde G_z(F)$ corresponding to $\tilde\rho\otimes\chi$ is then $\tilde\pi\otimes\chi$. Therefore the representation $\tilde\pi \boxtimes\tilde\rho^\vee$ of $\tilde G_z(F) \times \pi_0(\tilde S_\phi^+)$, when pulled back to $\tilde G_z(F) \times_A \pi_0(\tilde S_\phi^+)$, is independent of the choice of $\tilde\rho$. This is $(\pi\boxtimes\rho^\vee)^\tx{can}$ of Conjecture \ref{cnj:coset}.

\subsection{Passing from $A$ to $A^{[z],[\phi]}$} \label{sub:comp_rest1}

Proposition \ref{pro:slice} shows that, once $[\phi]$ and $[z]$ have been fixed and Conjecture \ref{cnj:func} has been assumed, Conjecture \ref{cnj:llc_rigid} for the group $G \rtimes A$ reduces to the same conjecture for the group $G \rtimes A^{[z],[\phi]}$. More explicitly, let $B=A^{[z],[\phi]}$ and write $G^A=G \rtimes A$ and $G^B=G \rtimes B$. Let $\pi^B \in \Pi_\phi(G^B_z)$. All members of $\tx{Res}^{G^B_z(F)}_{G_z(F)}\pi^B$ belong to the packet $\Pi_\phi(G_z)$. An element of $G^A_z(F)$ that normalizes $G^B_z(F)$ and intertwines $\pi^B$ must therefore lie in $G^B_z(F)$. Thus $\pi^A:=\tx{Ind}_{G^B_z(F)}^{G^A_z(F)}\pi^B$ is irreducible, and in this way one obtains a bijection $\Pi_\phi(G^B_z) \to \Pi_\phi(G^A_z)$. In the same way one obtain a bijection $\tx{Irr}(\pi_0(\tilde S_\phi(G^B)^{+,[z]}),[z]) \to \tx{Irr}(\pi_0(\tilde S_\phi(G^A)^{+,[z]}),[z])$.

\subsection{Induction} \label{sub:ind}

Let $\tilde G=G \rtimes A$ be a quasi-split disconnected reductive group and $A \to B$ an embedding. Set $H=\tx{Ind}_A^BG = \{h : B \to G|h(ab)=a(h(b))\}$ with pointwise multiplication and $\tilde H = H \rtimes B$. The purpose of this subsection is to show that Conjecture \ref{cnj:coset} for $G \rtimes A$ implies this same conjecture for $H \rtimes B$. This will be accomplished in Proposition \ref{pro:ind}. The discussion required in the proof is elementary, but unfortunately quite technical and cumbersome.

Let $a \in A$. An element $z_G \in Z^1(u \to W,Z(G)^A \to G)$ has $a$-invariant cohomology class if and only if there exists $g_a \in G$ such that 
\begin{equation}
a(z_G(w))=g_a^{-1}z_G(w)\sigma_w(g_a). \label{eq:ind1a}
\end{equation}
This is equivalent to $g_a \rtimes a \in \tilde G_{z_G}(F)$. Assuming that, a representation $\pi_G$ of $G_{z_G}(F)$ has an $a$-invariant isomorphism class if and only if there exists a vector space isomorphism $\tilde\pi_A(g_a \rtimes a) : V_{\pi_G} \to V_{\pi_G}$ satisfying
\begin{equation}
\pi_G(g_a \cdot a(g) \cdot g_a^{-1})\circ\tilde\pi_G(g_a \rtimes a)=\tilde\pi_G(g_a \rtimes a)\circ \pi_G(g). \label{eq:ind1b}
\end{equation}
Note that $g_a\cdot a(g) \cdot g_a^{-1}=(g_a \rtimes a)\cdot g \cdot (g_a \rtimes a)^{-1}$, and further that the existence of $\tilde\pi_G(g_a \rtimes a)$ is independent of the choice of $g_a$, for any other choice will be of the form $g'_ag_a$ with $g'_a \in G_{z_G}(F)$ and we can take $\tilde\pi_G(g'_ag_a \rtimes a)=\pi_G(g'_a)\circ\tilde\pi_G(g_a \rtimes a)$.

Let $z_H \in Z^1(u \to W,Z(H)^B \to H)$. Thus $z_H(w_1w_2,b)=z_H(w_1,b) \cdot w_1z_H(w_2,b)$ and $z_H(w,ab)=a(z_H(w,b))$. Let $b \in B$. The cohomology class of $z_H$ is $b$-invariant if and only if there exists $h_b \in H$ satisfying the analog of Equation \eqref{eq:ind1a}. Again this is equivalent to $\tilde h = h_b \rtimes b \in \tilde H$ lying in $\tilde H_{z_H}(F)$ and in terms of the function $h_b : B \to G$ means
\begin{equation} \label{eq:ind1c}
z_H(w,b'b)=h^{-1}_b(b')z_H(w,b')\sigma_w(h_b(b')),\qquad\forall b' \in B. \end{equation}

A representation $(\pi_H,V_{\pi_H})$ of $H_{z_H}(F)$ can be represented as a collection of vector spaces $\{V_c|c \in A \lmod B\}$ and on each $V_c$ a family of representations $\pi_H^{\dot c} : G_{z_H(-,\dot c)}(F) \to \tx{Aut}_\C(V_c)$ indexed by $\dot c \in c$ satisfying the compatibility relation
\[ \pi_H^{a\dot c}(a(g))=\pi_H^{\dot c}(g) \]
for all $a \in A$ and $g \in G_{z_H(-,\dot c)}(F)$. Then we have $V_{\pi_H}=\otimes_c V_c$ and $\pi_H(h)=\otimes_{c \in A \lmod B} \pi_H^c(h(c))$ and each factor is well-defined. We shall write $\pi_H=\boxtimes_c \pi_H^c$.

Assuming the existence of $h_b \rtimes b \in H_{z_H}(F)$, a  representation $\pi_H$ of $H_{z_H}(F)$ has a $b$-invariant isomorphism class if and only if there exists a vector space isomorphism $\tilde\pi_H(h_b \rtimes b) : V_{\pi_H} \to V_{\pi_H}$ satisfying the analog of Equation \eqref{eq:ind1b}, which in terms of the data $\{V_c\}$ and $\{\pi_H^{\dot c}\}$ can be expressed as 
\[ \tilde\pi_H(h_b \rtimes b)(\otimes_c v_c)=\otimes_c \tilde\pi_H(h_b \rtimes b)_c(v_{cb}), \]
where 
\[ \tilde\pi_H(h_b \rtimes b)_c : V_{cb} \to V_c \]
is an isomorphism of vector spaces satisfying
\begin{equation} \label{eq:ind1d}
\pi_H^{\dot c}(g)\circ\tilde\pi_H(h_b \rtimes b)_c = \tilde\pi_H(h_b \rtimes b)_c \circ \pi_H^{\dot cb}(h_b(\dot c)^{-1}g h_b(\dot c))\end{equation}
for one, hence any, lift $\dot c \in B$ of $c \in A \lmod B$.

For $g \in G$ and $b \in B$ we define $g^{\delta_b} \in H$ to be the function $B \to G$ supported on $Ab$ and sending $ab \in Ab$ to $a(g) \in G$. Given a section $s : A \lmod B \to B$ of the natural projection (which we may view as a map $B \to B$ invariant under left multiplication by $A$) we define the retraction $r : B \to A$ by $b=r(b)s(b)$. We have $r(ab)=ar(b)$.

\begin{lem} \label{lem:ind1}
\begin{enumerate}
	\item Given $z_H \in Z^1(u \to W,Z(H)^B \to H)$ define $z_G(w) = z_H(w,1)$. Then $[z_H] \mapsto [z_G]$ establishes a bijection between 
	\[ H^1(u \to W,Z(H)^B \to H)^B \to H^1(u \to W,Z(G)^A \to G)^A. \]
	\item Assume $z_G(w)=z_H(w,1)$. Given a representation $\pi_H$ of $H_{z_H}(F)$, present it as the collection $(\pi^{\dot c}_H)$ of representations as above, and define a representation $\pi_G$ of $G_{z_G}(F)$ by $\pi_G(g)=\pi_H^1(g)$. 
	Then $[\pi_H] \mapsto [\pi_G]$ establishes a bijection between the set of $B$-fixed isomorphism classes of irreducible representations of $H_{z_H}(F)$ and the set of $A$-fixed isomorphism classes of irreducible representations of $G_{z_G}(F)$.
	\item Fix a section $s : A \lmod B \to B$. Given $z_G \in Z^1(u \to W,Z(G)^A \to G)$ define $z_H \in Z^1(u \to W,Z(H)^B \to H)$ by $z_H(w,as(c))=a(z_G(w))$. Given a representation $\pi_G$ of $G_{z_G}(F)$ define a representation of $H_{z_H}(F)$ by $\boxtimes_c \pi_H^c$, where $\pi_H^{s(c)}=\pi_G$. These assignments are inverses of the above bijections.
\end{enumerate}
\end{lem}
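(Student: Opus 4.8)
The proof is a careful bookkeeping argument, organized around the three bijections in Lemma \ref{lem:ind1}, so I would begin by setting up the dictionary between data on $H \rtimes B$ and data on $G \rtimes A$ in a uniform way. For part (1), I would observe that an element $z_H \in Z^1(u \to W, Z(H)^B \to H)$ is entirely determined by its restriction $z_G(w) = z_H(w,1)$ together with the compatibility relation $z_H(w,ab) = a\,z_H(w,b)$, which determines $z_H(-,b)$ on the coset $Ab$; the cocycle condition in the second variable forces $z_G$ to lie in $Z^1(u \to W, Z(G)^A \to G)$, using that $Z(H)^B$ maps into $Z(G)^A$ under evaluation at $1$. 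Conversely, a choice of section $s : A\backslash B \to B$ lets one build $z_H$ from $z_G$ as in part (3), and different sections give cohomologous $z_H$ (the coboundary being visible in $H$). The $B$-invariance of $[z_H]$ is, via \eqref{eq:ind1c}, the existence of $h_b \rtimes b \in \tilde H_{z_H}(F)$; I would check that this translates under the dictionary into $A$-invariance of $[z_G]$ via \eqref{eq:ind1a}. This is essentially a Shapiro-type statement for the relative cohomology sets $H^1(u \to W, \cdot)$, and the only subtlety is that $H$ is \emph{not} $\tx{Res}$ of $G$ but $\tx{Ind}_A^B G = \{h : B \to G \mid h(ab)=ah(b)\}$; still the same computation goes through because $B$ acts by permuting the cosets $A\backslash B$ and acting through $A$ on each factor, so $Z^1$ and $H^1$ decompose accordingly.

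For part (2), I would use the description of a representation $\pi_H$ of $H_{z_H}(F)$ as a collection $\{V_c\}_{c \in A\backslash B}$ with families $\pi_H^{\dot c}$ satisfying $\pi_H^{a\dot c}(ag)=\pi_H^{\dot c}(g)$, and the external product $\pi_H = \boxtimes_c \pi_H^c$, exactly as set up in the excerpt just before the statement. Given $\pi_H$ irreducible, the factor $\pi_G(g) := \pi_H^1(g^{\delta_1})$ on $V_1$ is irreducible as a representation of $G_{z_G}(F)$: irreducibility of an external tensor product over the cosets forces each factor to be irreducible. The inverse assignment is part (3): from an irreducible $\pi_G$ of $G_{z_G}(F)$ and the section $s$, put $\pi_H^{s(c)} = \pi_G$ (transported appropriately along $z_H$) and extend to all $\dot c$ by the compatibility relation $\pi_H^{a\dot c}(ag)=\pi_H^{\dot c}(g)$, which pins down $\pi_H^{\dot c}$ uniquely once $\pi_H^{s(c)}$ is fixed. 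I would then verify the two composites are the identity on isomorphism classes: one direction is immediate by construction, and the other uses that changing the section $s$ changes $\pi_H$ only by an isomorphism (conjugating the gluing data), which I would make explicit. The main point for (2) is matching $B$-invariance of the isomorphism class of $\pi_H$ with $A$-invariance of that of $\pi_G$: by \eqref{eq:ind1d} a $b$-intertwiner for $\pi_H$ is a compatible family of isomorphisms $V_{cb} \to V_c$; restricting attention to how this family permutes the cosets, existence of such a family is equivalent (using $r : B \to A$, $b = r(b)s(b)$) to existence of an $r(b)$-intertwiner for $\pi_G$ on each factor, hence to $A$-invariance of $[\pi_G]$ since $B$ is generated over $A$ by the section values and the $b$-action is transitive-on-cosets.

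The \textbf{main obstacle} will be part (3) — checking that the construction from $z_G$ (resp. $\pi_G$) via a chosen section $s$ genuinely inverts the restriction maps, \emph{and} that the resulting objects are independent of $s$ up to the appropriate equivalence (cohomology in the cocycle case, isomorphism in the representation case). In the cocycle case this is a coboundary computation in $H$ comparing $z_H$ built from $s$ with $z_H'$ built from $s'$; in the representation case it is an explicit isomorphism of the two $\boxtimes_c$-decompositions permuting and relabelling the $V_c$. Neither is deep, but both require keeping the $u \to W$ versus $\Gamma$ distinction straight and being careful that the maps $Z(H)^B \to Z(G)^A$ and the induced $\hat H$-side data (needed later when this lemma is fed into the proof that Conjecture \ref{cnj:coset} for $G\rtimes A$ implies it for $H \rtimes B$) are compatible. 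I would isolate the section-independence as a sub-lemma proved once and invoked in both (1) and (3), to avoid repeating the argument. Everything else — the cocycle identities, the compatibility relations for $\pi_H^{\dot c}$, and the translation of invariance conditions — is a direct unwinding of the definitions recalled in the excerpt, so I would present those steps tersely.
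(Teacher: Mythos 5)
Your overall strategy matches the paper's: define the two maps explicitly, use equations \eqref{eq:ind1a}--\eqref{eq:ind1d} to translate the invariance conditions, and check mutual inversion. But two points in your write-up are off and would need fixing before this becomes a proof.

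First, your opening claim for part~(1), that "$z_H$ is entirely determined by $z_G(w)=z_H(w,1)$ together with the compatibility relation $z_H(w,ab)=a\,z_H(w,b)$," is wrong as stated. That equivariance relation only propagates the value of $z_H(w,-)$ from one representative of a coset $Ab$ to the whole coset; starting from $z_H(w,1)$ it tells you nothing about $z_H(w,b)$ for $b\notin A$. You implicitly correct this in the next sentence by invoking the section $s$, but the first assertion should simply be deleted or replaced by: a choice of one representative per coset of $A\backslash B$ determines $z_H$ from its values at those representatives.

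Second, and more substantively: you propose to isolate a "section-independence sub-lemma" asserting that two sections $s,s'$ yield cohomologous $z_H$, with "the coboundary visible in $H$," and to invoke it in both (1) and (3). But this is not a formal coboundary computation. If $s'(c)=a_c s(c)$, the two cocycles $z_H$ and $z_H'$ differ by the function $b\mapsto r(b)^{-1}r'(b)\in A$ acting on $z_G(w)$, and to absorb this into a coboundary $h\in H$ you need, for each of those elements $a''=r(b)^{-1}r'(b)$, an element $g_{a''}\in G$ with $a''z_G(w)=g_{a''}^{-1}z_G(w)\sigma_w(g_{a''})$; that is exactly the $A$-invariance of $[z_G]$ via \eqref{eq:ind1a}. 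So the sub-lemma is true but not hypothesis-free, and the ingredient you would be reusing in parts~(1) and~(3) is the invariance assumption, not a generic coboundary trick. The paper sidesteps this by not proving section-independence at all: it directly shows that the two composites are identities, using \eqref{eq:ind1c} with $b=s(c)^{-1}$, $b'=s(c)$ on the cocycle side and the corresponding instance of \eqref{eq:ind1d} on the representation side to exhibit the coboundary $h(as(c))=a\,h_{s(c)^{-1}}(s(c))$ (resp.\ the isomorphism $\tilde\pi_H(h_b\rtimes b)_c^{-1}$). Either organization works, but yours needs the invariance hypothesis made explicit in the sub-lemma's statement, and you must also handle the representation-side analogue with the same care, which your plan elides into "conjugating the gluing data."

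Minor: in part~(2), the phrase "since $B$ is generated over $A$ by the section values and the $b$-action is transitive-on-cosets" is not the right justification; the point is simply that applying \eqref{eq:ind1d} with $\dot c=1$ and $b=a\in A$ exhibits, from a $b$-intertwiner, an $a$-intertwiner for $\pi_G$, and conversely the construction with the section $s$ and $h_b(as(c))=a(g_{r(s(c)b)})$ manufactures the $b$-intertwiner from the $\{g_a\}$.
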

\begin{proof}
Let $z_H \in Z^1(u \to W,Z(H)^B \to H)$ and $\pi_H \in \tx{Irr}(H_{z_H}(F))$ have $B$-fixed classes. Then Equation \eqref{eq:ind1c} for $b'=1$ and $b=a\in A$ shows that the class of $z_G(w)=z_H(w,1)$ is $a$-fixed, while Equation \eqref{eq:ind1d} with $\dot c=1$ and $b=a \in A$ shows that the class of $\pi_G(g)=\pi_H^1(g)$ is $a$-fixed. It is clear that the classes of $z_G$ and $\pi_G$ depend only on those of $z_H$ and $\pi_H$.

Consider conversely $z_G \in Z^1(u \to W,Z(G)^A \to G)$ and $\pi_G \in \tx{Irr}(G_{z_G}(F))$ whose classes are $A$-fixed and let $g_a$ and $\tilde\pi_G(g_a \rtimes a)$ be chosen to satisfy Equations \eqref{eq:ind1a} and \eqref{eq:ind1b}. Fix a section $s : A \lmod B \to B$ and let $r : B \to A$ be defined by $b=r(b)s(b)$ for all $b \in B$. Set $h_b(as(c)) = a(g_{r(s(c)b)})$. Then $h_b \in H$ and an easy calculation shows that Equation \eqref{eq:ind1a} implies Equation \eqref{eq:ind1c} for $z_H(w,as(c)):=a(z_G(w))$, and further that if $h \in H_{z_H}(F)$ then $h(s(c)) \in G_{z_G}(F)$ for all $c \in A \lmod B$. This allows us to define a representation $\pi_H$ of $H_{z_H}(F)$ acting on the vector space $(V_{\pi_G})^{\otimes_c}$ by $\pi_H(h)=\otimes_c \pi_G(h(s(c)))$. In other words, the representation $\pi_H$ is given by the constant collection of vector spaces $\{V_c=V_{\pi_G}|c \in A \lmod B\}$ and for each $c \in A \lmod B$ we have $\pi_H^{as(c)}(g)=\pi_G(a^{-1}(g))$. Define $\tilde\pi_H(h_b \rtimes b)_c : V_{cb} \to V_c$ to be given by $\tilde\pi_G(g_a \rtimes a) : V_{\pi_G} \to V_{\pi_G}$ for $a=r(s(c)b)$. Then Equation \eqref{eq:ind1d} for $\dot c=s(c)$ follows from Equation \eqref{eq:ind1b}. It is clear that the classes of $z_H$ and $\pi_H$ depend only on those of $z_G$ and $\pi_G$. 

We have thus established the desired maps in both directions and must now check that they are mutually inverse. Starting with $z_G$ and $\pi_G$ and constructing $z_H$ and $\pi_H$ it is immediate that $z_H(w,1)=z_G(w)$ and $\pi_H^1(g)=\pi_G(g)$. Conversely start with $z_H$ and $\pi_H$ and define $z_G(w)=z_H(w,1)$ and $\pi_G(g)=\pi_H^1(g)$. Let now $z_H^0(w,as(c))=a(z_G(w))$ and $\pi_H^0(h)=\otimes_c \pi_G(h(s(c)))$. We need to show that the classes of $z_H$ and $z_H^0$ are equal, and the classes of $\pi_H$ and $\pi_H^0$ are equal.

For $z_H$ and $z_H^0$ we need to show the existence of $h \in H$ such that for all $a \in A$ and $c \in A \lmod B$ we have
\[ z_H^0(w,as(c))=h(as(c))^{-1}z_H(w,as(c))\sigma(h(as(c))), \]
which due to the $A$-equivariance of all terms and the definition of $z_H^0$ reduces to
\[ z_H(w,1)=h(s(c))^{-1}z_H(w,s(c))\sigma_w(h(s(c))), \]
which follows from Equation \eqref{eq:ind1c} with $h=h_b$, $b={s(c)^{-1}}$ and $b'=s(c)$. Thus the element $h$ we are looking for is given by $h(as(c)) := a(h_{s(c)^{-1}}(s(c)))$.

Before we can compare the classes of $\pi_H$ and $\pi_H^0$ we note that the former is a representation of $H_{z_H}(F)$, while the latter is a representation of $H_{z_H^0}(F)$. We must therefore precompose $\pi_H$ with the isomorphism $\tx{Ad}(h) : H_{z_H^0}(F) \to H_{z_H}(F)$. Thus we need to show the existence of a $(\pi_H\circ\tx{Ad}(h),\pi_H^0)$-equivariant vector space isomorphism $V_{\pi_H} \to V_{\pi_H^0}$. This reduces to finding for each $c \in A \lmod B$ a vector space isomorphism $V_c \to V_1$ translating the action of $G_{z_G}(F)$ on $V_c$ given by $\pi_H^{s(c)}\circ\tx{Ad}(h_{s(c)^{-1}}(s(c)))$ to the action of on $V_1$ given by $\pi_H^1$. According to Equation \eqref{eq:ind1d} such an isomorphism is given by $\tilde\pi_H(h_b \rtimes b)_c^{-1}$ for $b=s(c)^{-1}$.
\end{proof}

\begin{lem} \label{lem:ind2}
Under the bijection $\pi_G \leftrightarrow \pi_H$ of Lemma \ref{lem:ind1} the element of $H^2(B,\C^\times)$ corresponding to $\pi_H$ is the corestriction of the element of $H^2(A,\C^\times)$ corresponding to $\pi_G$.

More precisely, let $z_G \in Z^1(u \to W,Z(G)^A \to G)$ and $\pi_G \in \tx{Irr}(G_{z_G}(F))$ have $A$-fixed classes. For each $a \in A$ fix $g_a \in G$ and $\tilde\pi_G(g_a \rtimes a)$ satisfying Equations \eqref{eq:ind1a} and \eqref{eq:ind1b}, so that we have the element
\[ \alpha(a_1,a_2) = \tilde\pi_G( g_{a_1} \rtimes a_1)\circ\tilde\pi_G( g_{a_2} \rtimes a_2) \circ \tilde\pi_G( g_{a_1} \rtimes a_1 \cdot g_{a_2} \rtimes a_2)^{-1} \]
of $Z^2(A,\C^\times)$ representing the class associated to $\pi_G$, where the third term is defined via the rule $\tilde\pi_G(gg_a \rtimes a)=\pi_G(g)\tilde\pi_G(g_a \rtimes a)$ for $g \in G_{z_G}(F)$. Define $z_H(w,as(c))=a(z_G(w))$. Define the representation $\pi_H$ of $H_{z_H}(F)$ as $\pi_H=\boxtimes_c \pi_H^c$, $\pi_H^{s(c)}=\pi_G$. For each $b \in B$ define the element $h_b \in H$ by $h_b(as(c))=a(g_{r(s(c)b)})$ and the isomorphism $\tilde\pi_H(h_b \rtimes b) : \otimes_c V_{\pi_G} \to \otimes_c V_{\pi_G}$ by $\tilde\pi_H(h_b \rtimes b)(\otimes_c v_c)=\otimes_c \tilde\pi_G(g_{r(s(c)b)} \rtimes r(s(c)b))(v_{cb})$. Then $h_b$ and $\tilde\pi_H(h_b \rtimes b)$ satisfy Equations \eqref{eq:ind1c} and \eqref{eq:ind1d} and the associated element
\[ \beta(b_1,b_2) = \tilde\pi_H( h_{b_1} \rtimes b_1)\circ\tilde\pi_H( h_{b_2} \rtimes b_2) \circ \tilde\pi_H( h_{b_1} \rtimes b_1 \cdot h_{b_2} \rtimes b_2)^{-1} \]
of $Z^2(B,\C^\times)$ is obtained from $\alpha$ by applying the cochain formula for corestriction with respect to the section $s$.
\end{lem}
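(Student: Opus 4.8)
The plan is to compute the $2$-cocycle $\beta$ directly from the explicit formulas for $h_b$ and $\tilde\pi_H(h_b\rtimes b)$, and to match the outcome with the cochain-level formula for corestriction along $A\le B$ attached to the section $s$, namely
\[ \tx{cor}_s(\alpha)(b_1,b_2) \;=\; \prod_{c\in A\lmod B}\alpha\big(r(s(c)b_1),\,r(s(cb_1)b_2)\big), \]
where for $b\in B$ and a coset $c$ the element $r(s(c)b)\in A$ is determined by $s(c)b=r(s(c)b)\,s(cb)$ (this is the relation $b'=r(b')s(b')$ applied to $b'=s(c)b$, noting $s(s(c)b)=s(cb)$). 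The only structural identity beyond unwinding definitions is the cocycle identity for the section,
\[ r(s(c)b_1)\cdot r(s(cb_1)b_2) \;=\; r(s(c)b_1b_2), \]
which one gets by writing $s(c)b_1b_2=r(s(c)b_1)\,s(cb_1)\,b_2=r(s(c)b_1)r(s(cb_1)b_2)\,s(cb_1b_2)$ and comparing with $s(c)b_1b_2=r(s(c)b_1b_2)\,s(cb_1b_2)$.

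First I would record that $h_b$ and $\tilde\pi_H(h_b\rtimes b)$ satisfy Equations \eqref{eq:ind1c} and \eqref{eq:ind1d}; this is essentially contained in the proof of Lemma \ref{lem:ind1} (the map $r$ there was introduced for exactly this), so one only needs to note that the ingredient isomorphisms $\psi_b^c:=\tilde\pi_G(g_{r(s(c)b)}\rtimes r(s(c)b)):V_{cb}\to V_c$ assemble into $\tilde\pi_H(h_b\rtimes b)$ via $\tilde\pi_H(h_b\rtimes b)(\otimes_c v_c)=\otimes_c\psi_b^c(v_{cb})$, and that \eqref{eq:ind1d} for $\dot c=s(c)$ is \eqref{eq:ind1b} for $a=r(s(c)b)$. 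Writing $\pi_H=\boxtimes_c\pi_H^c$ with $\pi_H^{s(c)}=\pi_G$ and identifying all $V_c$ with $V_{\pi_G}$, one then computes that the $c$-output-component of $\tilde\pi_H(h_{b_1}\rtimes b_1)\circ\tilde\pi_H(h_{b_2}\rtimes b_2)$ applied to $\otimes_c v_c$ is $\psi_{b_1}^c\circ\psi_{b_2}^{cb_1}(v_{cb_1b_2})$; that is, it is a ``tensor product with the permutation $c\mapsto cb_1b_2$'' of the per-coset operators $\psi_{b_1}^c\circ\psi_{b_2}^{cb_1}$.

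Next I would handle $\tilde\pi_H(h_{b_1}\rtimes b_1\cdot h_{b_2}\rtimes b_2)$. Using $h_{b_1}\rtimes b_1\cdot h_{b_2}\rtimes b_2=(h_{b_1}\cdot b_1(h_{b_2}))\rtimes b_1b_2$ and the identity $[b_1(h_{b_2})](s(c))=h_{b_2}(s(c)b_1)=r(s(c)b_1)\big(h_{b_2}(s(cb_1))\big)=r(s(c)b_1)(g_{r(s(cb_1)b_2)})$, together with the section identity above, one finds $h_{b_1}\cdot b_1(h_{b_2})=h'\cdot h_{b_1b_2}$ with $h'\in H_{z_H}(F)$ and
\[ h'(s(c)) \;=\; g_{a_1}\,a_1(g_{a_2})\,g_{a_1a_2}^{-1},\qquad a_1:=r(s(c)b_1),\ a_2:=r(s(cb_1)b_2),\ a_1a_2=r(s(c)b_1b_2). \]
Since $\tilde\pi_G(g\,g_a\rtimes a)=\pi_G(g)\tilde\pi_G(g_a\rtimes a)$ exhibits $\tilde\pi_G$ as a projective representation of $\tilde G_{z_G}(F)$ whose $2$-cocycle is the inflation of $\alpha$, the definition of $\alpha$ gives $\tilde\pi_G(g_{a_1}\rtimes a_1)\tilde\pi_G(g_{a_2}\rtimes a_2)=\alpha(a_1,a_2)\,\pi_G(h'(s(c)))\,\tilde\pi_G(g_{a_1a_2}\rtimes a_1a_2)$, hence $\pi_G(h'(s(c)))=\alpha(a_1,a_2)^{-1}\,\psi_{b_1}^c\,\psi_{b_2}^{cb_1}\,(\psi_{b_1b_2}^c)^{-1}$. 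Therefore the $c$-output-component of $\tilde\pi_H(h_{b_1}\rtimes b_1\cdot h_{b_2}\rtimes b_2)=\pi_H(h')\,\tilde\pi_H(h_{b_1b_2}\rtimes b_1b_2)$ is $\alpha(a_1,a_2)^{-1}\,\psi_{b_1}^c\,\psi_{b_2}^{cb_1}$, again a tensor product with the same permutation $c\mapsto cb_1b_2$. Comparing the two operators factor by factor and collecting the scalars over the cosets $c$ yields
\[ \beta(b_1,b_2) \;=\; \prod_{c\in A\lmod B}\alpha\big(r(s(c)b_1),\,r(s(cb_1)b_2)\big) \;=\; \tx{cor}_s(\alpha)(b_1,b_2), \]
which is the claim.

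The main obstacle is purely bookkeeping: because $\tilde\pi_H(h_b\rtimes b)$ permutes the tensor factors of $V_{\pi_H}=\otimes_c V_{\pi_G}$ (the $cb$-th slot lands in the $c$-th slot), one must verify that $\tilde\pi_H(h_{b_1}\rtimes b_1)\circ\tilde\pi_H(h_{b_2}\rtimes b_2)$ and $\tilde\pi_H(h_{b_1}\rtimes b_1\cdot h_{b_2}\rtimes b_2)$ carry the \emph{same} permutation, so that their ratio is genuinely the \emph{product} over cosets of the per-factor scalars $\alpha(r(s(c)b_1),r(s(cb_1)b_2))$, and that the index shift $c\mapsto cb_1$ in the second argument of $\alpha$ is exactly the one appearing in the corestriction formula. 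Pinning down the relation $[b_1(h_{b_2})](s(c))=r(s(c)b_1)(g_{r(s(cb_1)b_2)})$ and the section identity cleanly is the crux; everything else is formal.
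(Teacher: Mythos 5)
Your proposal is correct and follows essentially the same route as the paper: compute $\tilde\pi_H(h_{b_1}\rtimes b_1 \cdot h_{b_2}\rtimes b_2)_c$ by writing $h_{b_1}\cdot b_1(h_{b_2}) = h'\cdot h_{b_1b_2}$ with $h'\in H_{z_H}(F)$, use the translation rule $\tilde\pi_H(hh_b\rtimes b)=\pi_H(h)\tilde\pi_H(h_b\rtimes b)$, apply the key relation $h_{b_2}(s(c)b_1)=r(s(c)b_1)(g_{r(s(cb_1)b_2)})$ together with the section identity $r(s(c)b_1)r(s(cb_1)b_2)=r(s(c)b_1b_2)$, and read off $\alpha(r(s(c)b_1),r(s(cb_1)b_2))$ factor-by-factor. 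Your explicit remark that both $\tilde\pi_H(h_{b_1}\rtimes b_1)\circ\tilde\pi_H(h_{b_2}\rtimes b_2)$ and $\tilde\pi_H(h_{b_1}\rtimes b_1\cdot h_{b_2}\rtimes b_2)$ implement the same permutation $c\mapsto cb_1b_2$ of the tensor factors makes explicit a point that the paper leaves implicit (the paper simply asserts the scalar endomorphism decomposes as a tensor product), but the underlying computation is identical.
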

\begin{proof}
That Equations \eqref{eq:ind1c} and \eqref{eq:ind1d} are satisfied was already discussed in the proof of Lemma \ref{lem:ind1}. It remains to prove the corestriction claim, which is the following identity:
\[ \beta(b_1,b_2) = \prod_c \alpha(r(s(c)b_1),r(s(cb_1)b_2)). \]
The (scalar) endomorphism
\[ \tilde\pi_H( h_{b_1} \rtimes b_1)\circ\tilde\pi_H( h_{b_2} \rtimes b_2) \circ \tilde\pi_H( h_{b_1} \rtimes b_1 \cdot h_{b_2} \rtimes b_2)^{-1} \]
of the vector space $\otimes_c V_{\pi_G}$ is by definition a tensor product of endomorphisms of $V_{\pi_G}$ and it is enough to show that the endomorphism of the tensor factor indexed by $c$ is given by multiplication by the scalar $\alpha(r(s(c)b_1),r(s(cb_1)b_2))$. By definition this endomorphism is given by
\[ \tilde\pi_H(h_{b_1} \rtimes b_1)_c \circ \tilde\pi_H(h_{b_2} \rtimes b_2)_{cb_1} \circ \tilde\pi_H(h_{b_1}\rtimes b_1 \cdot h_{b_2} \rtimes b_2)_c^{-1}, \]
where the subscript notation is as in the proof of Lemma \ref{lem:ind1}. We compute
\begin{eqnarray*}
&&\tilde\pi_H(h_{b_1}\rtimes b_1 \cdot h_{b_2} \rtimes b_2)_c\\
&=&\pi_H(h_{b_1}\cdot b_1h_{b_2}\cdot h_{b_1b_2}^{-1})\circ\tilde\pi_H(h_{b_1b_2} \rtimes b_1b_2)_c\\
&=&\pi_G(h_{b_1}(s(c))\cdot h_{b_2}(s(c)b_1)h_{b_1b_2}(s(c))^{-1})\circ\tilde\pi_G(g_{r(s(c)b_1b_2)}\rtimes r(s(c)b_1b_2))\\
&=&\tilde \pi_G(g_{r(s(c)b_1)}\cdot r(s(c)b_1)g_{r(s(cb_1)b_2)}\cdot g_{r(s(c)b_1b_2)}^{-1}\cdot g_{r(s(c)b_1b_2)}\rtimes r(s(c)b_1b_2))\\
&=&\tilde\pi_G(g_{r(s(c)b_1)} \rtimes r(s(c)b_1) \cdot g_{r(s(cb_1)b_2)} \rtimes r(s(cb_1)b_2)). 
\end{eqnarray*}
With this we see that the endomorphism of the tensor factor indexed by $c$ is given by 
\begin{align*}
\tilde\pi_G(g_{r(s(c)b_1)} \rtimes r(s(c)b_1))&\circ\tilde\pi_G(g_{r(s(cb_1)b_2)} \rtimes r(s(cb_1)b_2))\\
&\circ\tilde\pi_G(g_{r(s(c)b_1)} \rtimes r(s(c)b_1) \cdot g_{r(s(cb_1)b_2)} \rtimes r(s(cb_1)b_2)),
\end{align*}
which is precisely $\alpha(r(s(c)b_1),r(s(cb_1)b_2))$.
\end{proof}

We consider the group homomorphism
\[ \tx{ev}_1 : H \to G,\qquad h \mapsto h(1). \]
It is $A$-equivariant, hence extends to a group homomorphism
\[ \tx{ev}_1 : H \rtimes A \to G \rtimes A,\qquad h \rtimes a \mapsto \tx{ev}_1(h) \rtimes a. \]
It also respects rational structures under the convention $z_G(w)=z_H(w,1)$ that has been used so far.

More generally we consider $b \in B$ and a section $l : A \lmod B/\<b\> \to B$. For every $d \in A \lmod B/\<b\>$ let $n_d$ be the size of the orbit of the element $Al(d)$ of $A \lmod B$ for the action of $b$ on $A \lmod B$ by right multiplication. Equivalently, $n_d$ is the smallest non-negative number $n$ satisfying $b^n \in l(d)^{-1}Al(d)$. Write $A_d:=l(d)^{-1}Al(d) \subset B$ so that $b^{n_d} \in A_d$, and write $a_d=l(d)b^{n_d}l(d)^{-1} \in A$. We obtain a section $s : A \lmod B \to B$ by
\begin{equation} \label{eq:sec1}
s(Al(d)b^i) = l(d)b^i,\qquad i=0,\dots,n_d-1.	
\end{equation}
The group homomorphism
\[ \tx{ev}_{l(d)} : H \to G,\qquad h \mapsto h(l(d)) \]
satisfies $\tx{ev}_{l(d)}(l(d)^{-1}al(d)h)=a\tx{ev}_{l(d)}(h)$ for $a \in A$ and $h \in H$ and therefore extends to a group homomorphism
\[ \tx{ev}_{l(d)} : H \rtimes A_d \to G \rtimes A, \qquad h \rtimes l(d)^{-1}al(d) \mapsto h(l(d)) \rtimes a,\]
which is defined over $F$ under the assumption $z_H(w,l(d))=z_G(w)$, which is implied by the assumption $z_H(w,s(c))=z_G(w)$ for all $c \in A \lmod B$.

\begin{lem} \label{lem:indprod0}
Let $z_G^d \in Z^1(u \to W,Z(G)^A \to G)$. Define $z_H \in Z^1(u \to W,Z(H)^B \to B)$ by $z_H(w,al(d)b^i)=a(z_G^d(w))$. The map
\[ H \to \prod_d \prod_{i=0}^{n_d-1} G,\qquad h \mapsto \prod_d \prod_{i=0}^{n_d-1} h(l(d)b^i) \]
is an isomorphism of algebraic groups. It respects the quasi-split rational structures on both sides, as well as their twists by $z_H$ and $(z_G^d)_d$ respectively. It translates the action by $b$ to the action by $(\Theta_d)_d$, where $\Theta_d(g_{d,0},\dots,g_{d,n_d-1})=(g_{d,1},\dots,g_{n_d-1},a_d(g_0))$.
\end{lem}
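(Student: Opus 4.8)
The plan is to realize $H(\bar F)$ concretely as a product of copies of $G(\bar F)$ indexed by an explicit transversal of $A\lmod B$, and then observe that each of the four assertions becomes transparent in these coordinates.

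\textbf{The underlying isomorphism of $\bar F$-groups.} By definition $H(\bar F)=\{h:B\to G(\bar F)\mid h(ab)=ah(b)\ \forall a\in A\}$ with pointwise multiplication, so $h$ is determined by its restriction to any set of representatives for $A\lmod B$, and on such a set the values are unconstrained. I would first check that $\mc{T}:=\{\,l(d)b^i\mid d\in A\lmod B/\<b\>,\ 0\le i<n_d\,\}$, which is exactly the image of the section $s$ of \eqref{eq:sec1}, is a transversal for $A\lmod B$: the double-coset section $l$ gives $B=\bigsqcup_d Al(d)\<b\>$, and inside a fixed double coset $Al(d)\<b\>$ the single cosets $Al(d)b^i$ repeat with period exactly $n_d$, since $n_d$ is by construction the least $n>0$ with $b^n\in l(d)^{-1}Al(d)$, i.e. with $Al(d)b^n=Al(d)$. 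Hence $h\mapsto(h(l(d)b^i))_{d,i}$ is a bijection $H(\bar F)\to\prod_d\prod_{i=0}^{n_d-1}G(\bar F)$; it is visibly a group homomorphism for the pointwise product and is a morphism of varieties with polynomial inverse, hence an isomorphism of algebraic groups over $\bar F$.

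\textbf{Rational structures.} Since $B$ is a constant group scheme, $\Gamma$ fixes it pointwise and acts on $H(\bar F)$ by $(\sigma h)(b')=\sigma(h(b'))$, where $\sigma$ acts on $G(\bar F)$ through the quasi-split $F$-structure. The displayed map intertwines this with the diagonal $\Gamma$-action on $\prod_d\prod_iG(\bar F)$, so it is defined over $F$ and identifies the two quasi-split forms. For the twisted forms, the $\Gamma$-action defining $H_{z_H}$ is obtained by composing the one above with conjugation by (the image of) $z_H$, and similarly for each $G_{z_G^d}$. Since $z_H(w)$ is the function taking the value $z_G^d(w)$ at $l(d)b^i$, one has $\bigl(\tx{Ad}(z_H(w))h\bigr)(l(d)b^i)=\tx{Ad}(z_G^d(w))\bigl(h(l(d)b^i)\bigr)$, so the same map intertwines the $z_H$-twisted $\Gamma$-action with the product over $(d,i)$ of the $z_G^d$-twisted actions, and therefore descends to an isomorphism of $F$-groups $H_{z_H}\cong\prod_d\prod_{i=0}^{n_d-1}G_{z_G^d}$.

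\textbf{The action of $b$.} Inside $\tilde H=H\rtimes B$ the subgroup $B$ acts on $H$ by right translation, $(b_0\cdot h)(b')=h(b'b_0)$; this is precisely the convention for which the evaluation maps $\tx{ev}_1$ and $\tx{ev}_{l(d)}$ of the preceding pages are group homomorphisms, as a one-line computation confirms. Consequently the $(d,i)$-component of $b\cdot h$ equals $h(l(d)b^{i+1})$. For $0\le i\le n_d-2$ the element $l(d)b^{i+1}$ lies in $\mc{T}$, so this is the $(d,i+1)$-component of $h$. For $i=n_d-1$ we use $l(d)b^{n_d}=a_d\,l(d)$ (which is exactly the identity $a_d=l(d)b^{n_d}l(d)^{-1}$) together with $A$-equivariance of $h$ to get $h(l(d)b^{n_d})=a_d\bigl(h(l(d))\bigr)$, i.e. $a_d$ applied to the $(d,0)$-component. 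Writing $g_{d,j}$ for the components, $b\cdot h$ thus has $d$-block $(g_{d,1},\dots,g_{d,n_d-1},a_d(g_{d,0}))=\Theta_d(g_{d,0},\dots,g_{d,n_d-1})$, which is the claim.

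\textbf{Expected difficulty.} There is no substantive obstacle; the statement is pure bookkeeping. The only two points that need a little care are (a) fixing the right-translation convention for the $B$-action on $H$ by demanding compatibility with the evaluation homomorphisms $\tx{ev}_{l(d)}$, so that $\Theta_d$ comes out as stated and not as its inverse, and (b) the combinatorics of the transversal $\mc{T}$ — in particular identifying the ``wrap-around'' at $i=n_d-1$ with precisely the twist by $a_d$, which is where the definitions of $n_d$, $A_d$ and $a_d$ are genuinely used.
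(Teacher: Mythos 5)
The paper's proof is the single sentence "This is an immediate computation," and your proof is a correct, careful expansion of exactly that computation. Your three steps — identifying $\{l(d)b^i\}$ as a transversal for $A\lmod B$ via \eqref{eq:sec1}, checking $\Gamma$-equivariance (quasi-split and then $z$-twisted), and tracing the conjugation action of $b$ through the evaluation coordinates with the wrap-around $l(d)b^{n_d}=a_d\,l(d)$ producing the $a_d$-twist in $\Theta_d$ — are precisely the bookkeeping the paper is suppressing, and you have correctly pinned down the one convention-sensitive point (the $B$-action on $H$ is right translation $(b_0\cdot h)(b')=h(b'b_0)$, forced by compatibility with $\tx{ev}_{l(d)}$).
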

\begin{proof}
This is an immediate computation.
\end{proof}

Note that $h(l(d)b^i)=\tx{ev}_{l(d)}(b^ih)=\tx{ev}_{l(d)}(\tx{Ad}(1 \rtimes b)^ih)$. Since the action of $b$ on $H$, as well as the action of $a_d$ on $G$, need not respect the rational structures given by $z_H$ and $z_G^d$, respectively, the following slight variation of the above isomorphism will also be useful.

\begin{lem} \label{lem:indprod}
Let $\tilde h \in [H \rtimes b]_{z_H}(F)$. The map
\[ H \to \prod_d \prod_{i=0}^{n_d-1} G,\qquad h \mapsto \prod_d \prod_{i=0}^{n_d-1} \tx{ev}_{l(d)}(\tx{Ad}(\tilde h)^ih) \]
is an isomorphism of algebraic groups that respects the twists of the quasi-split rational structures by $z_H$ and $(z_G^d)_d$, respectively. It translates the action of conjugation by $\tilde h$ to the action sending $(g_{d,0},\dots,g_{d,n_d-1})_d$ to $(g_{d,1},\dots,g_{d,n_d-1},\tx{Ad}(\tilde g_d) g_0)_d$, where $\tilde g_d=\tx{ev}_{l(d)}(\tilde h^{n_d}) \in [G \rtimes a_d]_{z_G}(F)$.
\end{lem}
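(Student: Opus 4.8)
Here is how I would prove Lemma \ref{lem:indprod}. The idea is to realise the map $\Phi(h):=\prod_d\prod_{i=0}^{n_d-1}\tx{ev}_{l(d)}(\tx{Ad}(\tilde h)^ih)$ of the statement as an inner twist, over the product group $\prod_d\prod_{i=0}^{n_d-1}G$, of the map $\Phi_0(h):=\prod_d\prod_{i=0}^{n_d-1}h(l(d)b^i)$ provided by Lemma \ref{lem:indprod0}, and then to read off all the asserted properties from there. Writing $\tilde h=h_0\rtimes b$, a direct computation in $H\rtimes B$ gives $(h_0\rtimes b)^i=N_i(h_0)\rtimes b^i$ with $N_i(h_0)=h_0\cdot({}^bh_0)\cdots({}^{b^{i-1}}h_0)$, hence $\tx{Ad}(\tilde h)^ih=N_i(h_0)\cdot({}^{b^i}h)\cdot N_i(h_0)^{-1}$; evaluating at $l(d)$ and using $({}^{b^i}h)(l(d))=h(l(d)b^i)$ yields $\tx{ev}_{l(d)}(\tx{Ad}(\tilde h)^ih)=\tx{Ad}(c_{d,i})\big(h(l(d)b^i)\big)$ with $c_{d,i}:=N_i(h_0)(l(d))$. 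Since the elements $l(d)b^i$ for $d\in A\lmod B/\<b\>$ and $0\le i<n_d$ are exactly the representatives $s(c)$, $c\in A\lmod B$, of \eqref{eq:sec1}, this identifies $\Phi=\tx{Ad}(c)\circ\Phi_0$, where $c=(c_{d,i})$ acts coordinatewise. As $\Phi_0$ is an isomorphism of algebraic groups over $\bar F$ by Lemma \ref{lem:indprod0}, so is $\Phi$.

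Next I would check that $\Phi$ respects the twisted $F$-structures. The cleanest route is to observe that each coordinate map $h\mapsto\tx{ev}_{l(d)}(\tx{Ad}(\tilde h)^ih)$ is $F$-rational as a map $H_{z_H}\to G_{z_G^d}$: indeed $\tx{Ad}(\tilde h)$ is an $F$-automorphism of $H_{z_H}$ because $\tilde h\in\tilde H_{z_H}(F)$, and $\tx{ev}_{l(d)b^i}\colon H_{z_H}\to G_{z_G^d}$ is $F$-rational because, by the defining formula $z_H(w,al(d)b^i)=az_G^d(w)$ of Lemma \ref{lem:indprod0}, the twisted Galois action on $H_{z_H}$ at the coordinate $l(d)b^i$ is conjugation by $z_G^d(\sigma)$ followed by the Galois action on the $G$-value. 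Equivalently, one checks that $c_{d,i}\in G_{z_G^d}(F)$ modulo the centre, by evaluating the cocycle relation ${h_0=z_H(\sigma)\cdot\sigma(h_0)\cdot({}^bz_H(\sigma))^{-1}}$ that expresses $\tilde h\in\tilde H_{z_H}(F)$ at the coordinates $l(d)b^j$, $0\le j<n_d$.

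It then remains to compute the translated action. For $0\le i<n_d-1$ one has $\Phi(\tx{Ad}(\tilde h)h)_{d,i}=\tx{ev}_{l(d)}(\tx{Ad}(\tilde h)^{i+1}h)=\Phi(h)_{d,i+1}$. For $i=n_d-1$, using $b^{n_d}=l(d)^{-1}a_dl(d)\in A_d$ so that $\tilde h^{n_d}=N_{n_d}(h_0)\rtimes b^{n_d}$, and $({}^{b^{n_d}}h)(l(d))=h(l(d)b^{n_d})=h(a_dl(d))=a_d(h(l(d)))=a_d(\Phi(h)_{d,0})$, one gets $\Phi(\tx{Ad}(\tilde h)h)_{d,n_d-1}=\tx{Ad}(\tilde g_d)(\Phi(h)_{d,0})$ with $\tilde g_d:=N_{n_d}(h_0)(l(d))\rtimes a_d$. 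Unwinding the extension $\tx{ev}_{l(d)}\colon H\rtimes A_d\to G\rtimes A$, which sends $k\rtimes l(d)^{-1}al(d)\mapsto k(l(d))\rtimes a$, one sees $\tilde g_d=\tx{ev}_{l(d)}(\tilde h^{n_d})$; since $\tilde h^{n_d}\in\tilde H_{z_H}(F)$ already lies in $H\rtimes A_d$ and this extended evaluation respects the $F$-structures, $\tilde g_d\in[G\rtimes a_d]_{z_G^d}(F)$. This is exactly the asserted shape of the translated action.

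I expect the main obstacle to be organisational rather than conceptual: one must simultaneously keep track of the passage between the double-coset section $l$ and the single-coset section $s$ of \eqref{eq:sec1}, of the "wrap-around" twist by $a_d$ incurred at the end of each $b$-cycle, and of the fact — implicit already in Lemma \ref{lem:indprod0} — that the cocycle $z_G^d$ is $a_d$-invariant, which is precisely what guarantees that all coordinates $l(d)b^i$ with $0\le i<n_d$ carry the same twisted Galois action and hence makes the $F$-rationality verification in the second paragraph go through uniformly over the block indexed by $d$.
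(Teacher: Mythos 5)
The paper disposes of this lemma with ``This is an immediate computation,'' so there is no competing argument to compare against; your write-up is a correct and well-organised record of that computation. The key organising step --- writing $\tilde h = h_0 \rtimes b$, computing $\tilde h^i = N_i(h_0) \rtimes b^i$, and factoring the map of the lemma as $\Phi = \tx{Ad}(c)\circ\Phi_0$ with $\Phi_0$ the map of Lemma \ref{lem:indprod0} and $c_{d,i}=N_i(h_0)(l(d))$ --- is exactly the right bookkeeping, and the wrap-around identity $h(l(d)b^{n_d}) = a_d(h(l(d)))$, coming from $b^{n_d}=l(d)^{-1}a_dl(d)$, is the crux of the $i=n_d-1$ case, which you handle correctly, including matching $\tilde g_d = N_{n_d}(h_0)(l(d))\rtimes a_d$ against the extended evaluation $\tx{ev}_{l(d)}\colon H\rtimes A_d \to G\rtimes A$ defined in the text.

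One small presentational infelicity in the $F$-rationality paragraph: you invoke ``$\tx{ev}_{l(d)b^i}$ is $F$-rational'' alongside ``$\tx{Ad}(\tilde h)$ is an $F$-automorphism,'' but what you actually compose is $\tx{ev}_{l(d)}\circ\tx{Ad}(\tilde h)^i$ (not $\tx{ev}_{l(d)b^i}$). The cleaner phrasing of your own argument is simply: $\tx{Ad}(\tilde h)^i$ is $F$-rational on $H_{z_H}$ because $\tilde h \in \tilde H_{z_H}(F)$, and $\tx{ev}_{l(d)}\colon H_{z_H}\to G_{z_G^d}$ is $F$-rational by the formula $z_H(w,l(d))=z_G^d(w)$; the composition is therefore $F$-rational. (Your alternative check that $c_{d,i}$ is rational modulo the centre via the cocycle relation for $h_0$ is also fine, but it requires the more careful bookkeeping with $\bar z_H$ that you allude to.) Either way the content is right, and the proof is correct.
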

\begin{proof}
This is an immediate computation.
\end{proof}

For a moment we consider the following situation that encapsulates each factor in the first product of above lemma. 

\begin{lem} \label{lem:basictwist}
Let $J$ be a locally profinite group with an automorphism $\theta$ and consider the locally profinite group $I=J \times J \times \dots \times J$ with the automorphism $\Theta(j_0,\dots,j_{n-1})=(j_1,\dots,j_{n-1},\theta(j_0))$. Consider the maps
\[ m,p_0 : I \to J,\quad m(j_0,\dots,j_{n-1})=j_0\dots j_{n-1},\quad p_0(j_0,\dots,j_{n-1})=j_0\]
as well as
\[ \Delta,i_0 : J \to I,\quad \Delta(j)=(j,\dots,j),\quad i_0(j)=(j,1,\dots,1). \]
\begin{enumerate}
	\item We have $m(g_I^{-1}\cdot \delta_I\cdot \Theta(g_I))=p_0(g_I)^{-1}\cdot m(\delta_I)\cdot\theta(p_0(g_I))$ for $g_I,\delta_I \in I$.
	\item The map $m$ induces a bijection from the set of $\Theta$-twisted conjugacy classes in $I$ to the set of $\theta$-twisted conjugacy classes in $J$ with inverse given by $i_0$.
	\item The map $p_0$ induces an isomorphism of groups $\tx{Cent}_\Theta(\delta_I,I) \to \tx{Cent}_\theta(m(\delta_I),J)$ whose inverse sends $s$ to $\tx{Ad}(g_I)\Delta(s)$, where $g_I=(g_0,\dots,g_{n-1})$ and $g_i=(\delta_0\dots \delta_{i-1})^{-1}$.
	\item If $f_0,\dots,f_{n-1} \in \mc{C}^\infty_c(J)$, $f_I=f_0\otimes\dots\otimes f_{n-1}$, and $\delta_I \in I$, then
	\[ TO^{I,\Theta}_{\delta_I}(f_I)=TO^{J,\theta}_{m(\delta_I)}(f_0 * f_1 * \dots * f_{n-1}), \]
	where the convolution $f_0*\dots*f_{n-1} \in \mc{C}_c^\infty(J)$ is defined by $f_0*\dots*f_{n-1}(x)=\int f_0(h_1)f_1(h_1^{-1}h_2)\dots f_{n-2}(h_{n-2}^{-1}h_{n-1}) f_{n-1}(h_{n-1}^{-1}x)dh_1\dots dh_{n-1}$.
	\item Let $\pi$ be an admissible representation of $J$ and let $\tilde\pi : \pi\circ\theta^{-1} \to \pi$ be an isomorphism. Then $\pi_I = \pi\boxtimes\dots\boxtimes\pi$ is an admissible representation of $I$ and $\tilde\pi_I(v_0\otimes\dots\otimes v_{n-1})=v_1\otimes\dots\otimes v_{n-1}\otimes\tilde\pi(v_0)$ is an isomorphism $\pi_I \circ \Theta^{-1} \to \pi_I$. We have
\[ \tx{tr}(\pi_I(f_I)\circ\tilde\pi_I)=\tx{tr}(\pi(f_0 * \dots * f_{n-1})\circ\tilde\pi). \]
\end{enumerate}	
\end{lem}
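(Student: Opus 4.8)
The plan is to prove each of the five assertions in turn, treating them as elementary consequences of unwinding the definitions of the maps $m$, $p_0$, $\Delta$, $i_0$ and the twisted actions. The key structural observation driving everything is that the group $I = J^n$ with automorphism $\Theta$ is a ``twisted induction'' of $J$ with $\theta$, and that the element $g_I = (g_0,\dots,g_{n-1})$ with $g_i = (\delta_0\cdots\delta_{i-1})^{-1}$ is precisely the element that conjugates $\delta_I$ into the ``standard form'' $i_0(m(\delta_I))$. Once this is in hand, (1), (2), (3) are formal. For (1) I would simply compute both sides of $m(g_I^{-1}\delta_I\Theta(g_I))$ componentwise and observe that the product telescopes: writing $g_I^{-1}\delta_I\Theta(g_I)$ in coordinates, its $m$-image is $g_0^{-1}\delta_0 g_1 \cdot g_1^{-1}\delta_1 g_2 \cdots g_{n-1}^{-1}\delta_{n-1}\theta(g_0) = g_0^{-1}(\delta_0\cdots\delta_{n-1})\theta(g_0) = p_0(g_I)^{-1}m(\delta_I)\theta(p_0(g_I))$, since $p_0(g_I) = g_0$.

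For (2), I would check that $m\circ i_0 = \mathrm{id}_J$ on the nose, and that $i_0\circ m$ composed back agrees with $\delta_I$ up to $\Theta$-twisted conjugacy via the explicit $g_I$ above — i.e. $g_I^{-1}\delta_I\Theta(g_I) = i_0(m(\delta_I))$, a direct coordinate computation. Combined with (1) (which shows $m$ is well-defined on twisted conjugacy classes and sends $\Theta$-twisted conjugacy to $\theta$-twisted conjugacy), this gives the bijection. For (3), the map $p_0$ restricted to $\mathrm{Cent}_\Theta(\delta_I, I)$ lands in $\mathrm{Cent}_\theta(m(\delta_I),J)$ by (1) applied with $g_I$ fixed by $\Theta$-twisted conjugation by $\delta_I$; I would verify it is a group homomorphism, then exhibit the stated inverse $s \mapsto \mathrm{Ad}(g_I)\Delta(s)$ and check both compositions are the identity — the computation that $\mathrm{Ad}(g_I)\Delta(s)$ genuinely $\Theta$-twisted commutes with $\delta_I$ uses again the telescoping identity and the hypothesis $\theta(s)\,\text{relation with}\,m(\delta_I)$.

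Parts (4) and (5) are the analytic/spectral shadows of (1)–(3). For (4), I would write out the twisted orbital integral $TO^{I,\Theta}_{\delta_I}(f_I) = \int_{I} f_I(g_I^{-1}\delta_I\Theta(g_I))\,dg_I$ (modulo the twisted centralizer, with measures normalized compatibly via (3)), substitute $f_I = f_0\otimes\cdots\otimes f_{n-1}$, expand into an $n$-fold integral over the coordinates of $g_I$, perform the change of variables suggested by the telescoping in (1) to collapse all but the $p_0$-coordinate, and recognize the remaining iterated integral over the ``off-diagonal'' coordinates as exactly the convolution $f_0 * \cdots * f_{n-1}$ evaluated along the $\theta$-twisted orbit of $m(\delta_I)$. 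The bookkeeping of Haar measures — ensuring the quotient measures on the twisted conjugacy classes match under the bijection of (2)–(3) — is where I'd be most careful. For (5), I would note $\pi_I(f_I)\circ\tilde\pi_I$ acts on $V_\pi^{\otimes n}$ by an operator whose trace, by the cyclic structure of $\tilde\pi_I$ (it cyclically permutes tensor factors and applies $\tilde\pi$ on the last), equals the trace on $V_\pi$ of the composite $\pi(f_0)\pi(f_1)\cdots\pi(f_{n-1})\circ\tilde\pi = \pi(f_0 * \cdots * f_{n-1})\circ\tilde\pi$; this is the standard ``trace of a permuted tensor operator'' computation, and admissibility guarantees all traces are finite.

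The main obstacle I anticipate is not conceptual but organizational: getting the Haar measure normalizations in (4) exactly consistent with the identification of centralizers in (3), so that the orbital integral identity holds with no spurious constant. Everything else is a sequence of coordinate-level verifications that telescope cleanly; I expect the write-up to consist mostly of displaying the key telescoping identity $g_I^{-1}\delta_I\Theta(g_I)$ in coordinates once and then invoking it repeatedly, with the remark ``this is an immediate computation'' doing real work for the routine parts.
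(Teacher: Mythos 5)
Your overall plan matches the paper's proof closely: the telescoping identity in (1), the change of variables $h_0 = g_0$, $h_i = g_0^{-1}\delta_0\cdots\delta_{i-1}g_i$ for (4), and the "cyclic tensor trace" for (5) are exactly the paper's moves. But there is a concrete error in your key structural claim for (2). You assert that the element $g_I$ from part (3), namely $g_i = (\delta_0\cdots\delta_{i-1})^{-1}$, satisfies $g_I^{-1}\delta_I\Theta(g_I) = i_0(m(\delta_I))$. Computing coordinate by coordinate: the $0$-th coordinate is $g_0^{-1}\delta_0 g_1 = 1\cdot\delta_0\cdot\delta_0^{-1} = 1$, the middle coordinates are $(\delta_0\cdots\delta_{i-1})\delta_i(\delta_0\cdots\delta_i)^{-1} = 1$, and the last coordinate is $(\delta_0\cdots\delta_{n-2})\delta_{n-1}\theta(1) = m(\delta_I)$. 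So $g_I^{-1}\delta_I\Theta(g_I) = (1,\ldots,1,m(\delta_I))$, which is the image of $m(\delta_I)$ under the inclusion into the \emph{last} slot, not $i_0(m(\delta_I)) = (m(\delta_I),1,\ldots,1)$. The $g_I$ that appears in the statement of (3) (where it is chosen precisely so that $p_0(g_I) = g_0 = 1$, making $p_0\circ\mathrm{Ad}(g_I)\Delta = \mathrm{id}$) is not the same element you need to conjugate $\delta_I$ into $i_0$-form. The paper uses a \emph{different} $g_I$ for (2): $g_0 = 1$, $g_i = \delta_i\cdots\delta_{n-1}$ for $i>0$, which does give $g_I^{-1}\delta_I\Theta(g_I) = i_0(m(\delta_I))$. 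You should either use that $g_I$ for (2), or note that $(1,\ldots,1,m(\delta_I))$ and $i_0(m(\delta_I))$ have the same $m$-image and hence are $\Theta$-twisted conjugate by the already-established argument — but as stated your identity is false.

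One further point on (5): the "standard trace of a permuted tensor operator" is clean in finite dimensions, but here $V_\pi$ is typically infinite-dimensional and $\pi_I(f_I)\circ\tilde\pi_I$ is only trace-class because $\pi$ is admissible and the $\pi(f_i)$ have finite rank. The paper makes this rigorous by observing both sides are continuous and $n$-linear in $\phi_i = \pi(f_i)$ and reducing to rank-one operators $\phi_i = \lambda_i\otimes w_i$, where the trace identity $\lambda_0(w_1)\cdots\lambda_{n-2}(w_{n-1})\lambda_{n-1}(\tilde\pi(w_0))$ can be checked by hand on both sides. Your sketch would benefit from this reduction step; without it the "standard computation" invocation is not quite airtight in the admissible setting.
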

\begin{proof}
The first point is an immediate computation. It follows that $m$ induces a map between the sets of twisted conjugacy classes, and $p_0$ induces a map between the twisted centralizers. The fact that $i_0$ respects twisted conjugacy follows from $i_0(g^{-1}\delta\theta(g))=g_I^{-1} i_0(\delta) \Theta(g_I)$ for $g_I=(g,\theta(g),\dots,\theta(g))$. The fact that $i_0$ is inverse to $m$ as maps between twisted conjugacy classes follows from the trivial relation $m(i_0(\delta))=\delta$ and the relation $i_0(m(\delta_I))=g_I^{-1}\delta_I\Theta(g_I)$ for $\delta_I=(\delta_0,\dots,\delta_{n-1})$ and $g_I=(g_0,\dots,g_{n-1})$ with $g_0=1$ and $g_i=\delta_i \dots \delta_{n-1}$ for $i>0$. The fact that $p_0$ has the given inverse as maps between twisted centralizers is immediate.

For the equality of twisted orbital integrals we take $\delta_I=(\delta_0,\dots,\delta_{n-1})$ and write out the left-hand side as
\[ \int_{\tx{Cent}_\Theta(\delta_I,I) \lmod I} f_0(g_0^{-1}\delta_0g_1) f_1(g_1^{-1}\delta_1g_2)\dots f_{n-1}(g_{n-1}^{-1}\delta_{n-1}\theta(g_0))dg_0\dots dg_{n-1}, \]
where the integration variable is $g_I=(g_0,\dots,g_{n-1})$, 
and use the substitution $h_0=g_0$, $h_i=g_0^{-1}\delta_0\dots \delta_{i-1}g_i$ for $i>0$.

The fact that $\tilde\pi_I$ is an isomorphism $\pi_I \circ \Theta^{-1} \to \pi_I$ is immediate. To verify the equality of traces write $\phi_i=\pi(f_i) \in \tx{End}_\C(V_\pi)$. Then $\pi_I(f_I)$ is the operator $\phi_0 \otimes \dots \otimes \phi_{n-1}$ of $V_{\pi_I}=V_\pi \otimes \dots \otimes V_\pi$, while $\pi(f_0* \dots *f_{n-1})$ is the operator $\phi_0\circ\dots\circ\phi_{n-1}$ of $V_\pi$. Therefore the claimed equality is
\[ \tx{tr}(\phi_0\otimes\dots\otimes\phi_{n-1}\circ\tilde\pi_I|V_{\pi_I}) = \tx{tr}(\phi_0\circ\dots\circ\phi_{n-1}\circ\tilde\pi|V_\pi). \]
Both sides are continuous and $n$-linear in the $\phi_i$, which allows us to reduce the proof first to the case that $\phi_i$ has finite rank, and then to the case that it has rank $1$, i.e. $\phi_i=\lambda_i \otimes w_i$ for $\lambda_i \in V_\pi^*$ and $w_i \in V_\pi$. The operator on the left has rank $1$ and is given by $((\lambda_0\otimes\dots\otimes\lambda_{n-1})\circ\tilde\pi_I) \otimes (w_0\otimes\dots\otimes w_{n-1})$. Its trace is therefore $\lambda_0(w_1)\lambda_1(w_2)\dots\lambda_{n-1}(\tilde\pi(w_0))$. The operator on the right also has rank $1$ and is given by $(\lambda_0(w_1)\lambda_1(w_2)\dots\lambda_{n-2}(w_{n-1})) \cdot w_0 \otimes \lambda_{n-1}\circ\tilde\pi$. Its trace is therefore given by $\lambda_0(w_1)\dots \lambda_{n-2}(w_{n-1})\lambda_{n-1}(\tilde\pi(w_0))$. Thus the two traces are equal.
\end{proof}

We now return the group $H \rtimes B$. We fix $\tilde h \in [H \rtimes b]_{z_H}(F)$ and let $\tilde g_d = \tx{ev}_{l(d)}(\tilde h^{n_d}) \in [G \rtimes a_d]_{z_G}(F)$. Consider given functions $f_{d,i} \in \mc{C}^\infty_c(G_{z_G}(F))$ for $d \in A \lmod B/\<b\>$ and $i=0,\dots,n_d-1$. The tensor product $\otimes_{d,i} f_{d,i}$ becomes, via the isomorphism of Lemma \ref{lem:indprod}, a function $f_H \in \mc{C}^\infty_c(H_{z_H}(F))$. Write $\tilde f_H = R_{\tilde h}^{-1}f_H \in \mc{C}^\infty_c([H \rtimes b]_{z_H}(F))$ for the function $\tilde f_H(h \cdot \tilde h)=f_H(h)$. Analogously we obtain for each $d$ the function $R_{\tilde g_d}^{-1}(f_{d,0}* \dots *f_{d,n_d-1}) \in [G \rtimes a_d]_{z_G}(F)$.

Fix a collection $(\pi_d)_{d \in A \lmod B/\<b\>}$ of representations of $G_{z_G}(F)$ and isomorphisms $\tilde\pi_d : \pi_d \circ\tx{Ad}(\tilde g_d)^{-1} \to \pi_d$. Via the isomorphism of Lemma \ref{lem:indprod} we can transport $\boxtimes_d \pi_d^{\boxtimes n_d}$ to a representation $\pi_H$ of $H_{z_H}(F)$ and $\tilde\pi_H(\otimes_d (v_{d,0} \otimes \dots \otimes v_{d,n_d-1}))=\otimes_d (v_{d,1}\otimes \dots \otimes v_{d,n_d-1}\otimes\tilde\pi_d(v_{d,0}))$ to an isomorphism $\pi_H \circ\tx{Ad}(\tilde h)^{-1} \to \pi_H$. 

\begin{cor} \label{cor:indorb}
\begin{enumerate}
\item The map $[H \rtimes b]_{z_H}(F) \to \prod_d [G \rtimes a_d]_{z_G}(F)$ sending $\tilde h'$ to $(\tilde g'_d)_d$ defined as $\tilde g'_d = \tx{ev}_{l(d)}(\tilde h'^{n_d})$, induces a bijection between the set of $H_{z_H}(F)$-conjugacy classes in $[H \rtimes b]_{z_H}(F)$ and the set of $G_{z_G}(F)^d$-conjugacy classes in $\prod_d [G \rtimes a_d]_{z_G}(F)$.
\item If $\tilde h' \in [H \rtimes b]_{z_H}(F)$ maps to $(\tilde g'_d)_d \in \prod_d [G \rtimes a_d]_{z_G}(F)$ then
\[ O^H_{\tilde h'}(R_{\tilde h}^{-1}f_H) = \prod_d O^{G^d}_{\tilde g'_d}(R_{\tilde g_d}^{-1}(f_{d,0}*\dots*f_{d,n_d-1})). \]
\item We have
\[ \tx{tr}(\pi_H(f_H)\circ\tilde\pi_H) = \prod_d \tx{tr}(\pi_d(f_{d,0}*\dots*f_{d,n_d-1})\circ\tilde\pi_d). \]
\end{enumerate}
\end{cor}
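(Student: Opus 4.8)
The plan is to obtain all three assertions mechanically by combining the algebraic-group isomorphism of Lemma~\ref{lem:indprod} with the elementary twisted-induction identities of Lemma~\ref{lem:basictwist}, applied one block at a time over $d \in A \lmod B/\<b\>$.

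First I would fix $\tilde h \in [H \rtimes b]_{z_H}(F)$ and invoke Lemma~\ref{lem:indprod} to identify $H_{z_H}(F)$ with $\prod_d \prod_{i=0}^{n_d-1} G_{z_G}(F)$ in such a way that conjugation by $\tilde h$ becomes the automorphism $(\Theta_d)_d$, where $\Theta_d$ is the block-shift on $G_{z_G}(F)^{n_d}$ with twisting automorphism $\tx{Ad}(\tilde g_d)$ and $\tilde g_d = \tx{ev}_{l(d)}(\tilde h^{n_d}) \in [G \rtimes a_d]_{z_G}(F)$. Writing a general element of the coset as $\tilde h' = h'\tilde h$ with $h' \in H_{z_H}(F)$, conjugation of $\tilde h'$ by $x \in H_{z_H}(F)$ is $\tilde h$-twisted conjugation of $h'$, so under the identification it is $(\Theta_d)_d$-twisted conjugation on $\prod_d G_{z_G}(F)^{n_d}$; and from $(h'\tilde h)^{n_d} = h'\cdot\tx{Ad}(\tilde h)(h')\cdots\tx{Ad}(\tilde h)^{n_d-1}(h')\cdot\tilde h^{n_d}$, applying $\tx{ev}_{l(d)}$ gives that $\tx{ev}_{l(d)}(\tilde h'^{n_d})$ equals $m_d$ of the $d$-th block of the image of $h'$ --- with $m_d$ the product map of Lemma~\ref{lem:basictwist} for $J = G_{z_G}(F)$, $n = n_d$, $\theta = \tx{Ad}(\tilde g_d)$ --- multiplied on the right by $\tilde g_d$. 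Part~(1) then follows by applying Lemma~\ref{lem:basictwist}(2) in each block to pass from $\Theta_d$-twisted conjugacy classes to $\tx{Ad}(\tilde g_d)$-twisted conjugacy classes in $G_{z_G}(F)$, and translating the latter into $G_{z_G}(F)$-conjugacy classes in the coset $[G \rtimes a_d]_{z_G}(F)$ via $g \mapsto g\tilde g_d$ (the dictionary recorded in the Notation section); tracing through the identifications, the composite bijection is precisely $\tilde h' \mapsto (\tx{ev}_{l(d)}(\tilde h'^{n_d}))_d$.

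For~(2), the function $f_H$ is by construction the transport along Lemma~\ref{lem:indprod} of $\bigotimes_{d,i} f_{d,i}$, so $R_{\tilde h}^{-1}f_H$ transports the same tensor product viewed on the coset. Choosing invariant measures that correspond under the isomorphism, the twisted orbital integral $O^H_{\tilde h'}(R_{\tilde h}^{-1}f_H)$ equals the $(\Theta_d)_d$-twisted orbital integral of $\bigotimes_{d,i} f_{d,i}$ over the matching class in $\prod_d G_{z_G}(F)^{n_d}$, which factors as a product over $d$; Lemma~\ref{lem:basictwist}(4) then rewrites the $d$-th factor as the $\tx{Ad}(\tilde g_d)$-twisted orbital integral of $f_{d,0}*\cdots*f_{d,n_d-1}$ at $m_d$ of the block, which by the translation of part~(1) is $O^{G^d}_{\tilde g'_d}(R_{\tilde g_d}^{-1}(f_{d,0}*\cdots*f_{d,n_d-1}))$. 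Part~(3) is the same argument with representations: $\pi_H$ and $\tilde\pi_H$ are the transports along Lemma~\ref{lem:indprod} of $\boxtimes_d \pi_d^{\boxtimes n_d}$ and of $\otimes_d$ of the block-shift isomorphisms $\pi_d^{\boxtimes n_d}\circ\Theta_d^{-1} \to \pi_d^{\boxtimes n_d}$ built from $\tilde\pi_d$ as in Lemma~\ref{lem:basictwist}(5); hence $\tx{tr}(\pi_H(f_H)\circ\tilde\pi_H)$ equals the product over $d$ of the corresponding twisted traces on $G_{z_G}(F)^{n_d}$, since the trace of a tensor product of operators is the product of their traces, and Lemma~\ref{lem:basictwist}(5) computes each such trace as $\tx{tr}(\pi_d(f_{d,0}*\cdots*f_{d,n_d-1})\circ\tilde\pi_d)$.

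The one point I expect to require genuine care is the compatibility of measures on conjugacy classes in~(2): one must check that the invariant measure on the $H_{z_H}(F)$-conjugacy class of $\tilde h'$ matches, under Lemma~\ref{lem:indprod}, the product of the invariant measures on the $\tx{Ad}(\tilde g_d)$-twisted classes, and this reduces to matching the twisted centralizers block by block --- which is exactly what the explicit isomorphism of Lemma~\ref{lem:basictwist}(3) provides. Everything else is bookkeeping of the identification, which is presumably why the statement can be recorded as a corollary.
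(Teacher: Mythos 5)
Your proposal is correct and follows essentially the same route as the paper: translate the coset $[H \rtimes b]_{z_H}(F)$ to $H_{z_H}(F)$ with $\tx{Ad}(\tilde h)$-twisted conjugation, identify $H_{z_H}(F)$ with $\prod_d \prod_i G_{z_G}(F)$ via Lemma~\ref{lem:indprod}, apply Lemma~\ref{lem:basictwist} block by block, and translate back to the cosets $[G \rtimes a_d]_{z_G}(F)$ via $g' \mapsto g'\tilde g_d$. Your closing observation about matching invariant measures through the twisted centralizers (via Lemma~\ref{lem:basictwist}(3)) is a point the paper leaves implicit in its final sentence, but it is the right thing to flag.
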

\begin{proof}
We have the bijection $H_{z_H}(F) \to [H \rtimes b]_{z_H}(F)$ sending $h$ to $h \cdot \tilde h$. It translates the conjugation action of $H_{z_H}(F)$ on $[H \rtimes b]_{z_H}(F)$ to the twisted conjugation action of $H_{z_H}(F)$ on itself, with respect to the automorphism $\tx{Ad}(\tilde h)$. The isomorphism of Lemma \ref{lem:indprod} identifies the group $H_{z_H}(F)$ with the group $\prod_d \prod_i G_{z_G}(F)$ and the automorphism $\tx{Ad}(\tilde h)$ with the automorphism sending $(g_{d,0},\dots,g_{d,n_d-1})$ to $(g_{d,1},\dots,g_{d,n_d-1},\tx{Ad}(\tilde g_d)g_{d,0})$. According to Lemma \ref{lem:basictwist}, the map sending $\tilde h'=h' \cdot \tilde h \in [H \rtimes b]_{z_H}(F)$ to 
\[ (\tx{ev}_{l(d)}(h' \cdot (\tilde hh'\tilde h^{-1})\dots (\tilde h^{n_d-1}h'\tilde h^{1-n_d})))_d = (\tx{ev}_{l(d)}(\tilde h'^{n_d}) \cdot \tx{ev}_{l(d)}(\tilde h^{n_d})^{-1} \] 
is a bijection from the set of $H_{z_H}(F)$-conjugacy classes in $[H \rtimes b]_{z_H}(F)$ to the set of $G_{z_G}(F)^d$-twisted conjugacy classes in $G_{z_G}(F)^d$ with respect to the automorphism $(\tx{Ad}(\tilde g_d))_d$. Composing this bijection with the bijection $G_{z_G}(F) \to [G \rtimes a_d]_{z_G}(F)$ sending $g'$ to $g' \tilde g_d$ we obtain the first claim. With this translation set up, the other two claims follow readily from Lemma \ref{lem:basictwist}.
\end{proof}

We now turn to the dual side. We can take $\hat H=\tx{Ind}_A^B \hat G$. More precisely, if $T$ is a torus with $A$-action, we have the identification $\tx{Ind}_A^BX_*(T)=X_*(\tx{Ind}_A^BT)$ sending an element $\lambda^B : B \times \mb{G}_m \to T$ of $\tx{Ind}_A^BX_*(T)$ to the element $x \mapsto \lambda^B(-,x)$ of $X_*(\tx{Ind}_A^BT)$. The pairing 
\begin{equation} \label{eq:ind_duality}
\<\chi^B,\lambda^B\> = \prod_{c \in A \lmod B} \<\chi^B(c),\lambda^b(c)\>
\end{equation}
between $\tx{Ind}_A^BX^*(T)$ and $\tx{Ind}_A^BX_*(T)$ is perfect and equivariant for $\Gamma$ and $B$ and identifies $\tx{Ind}_A^BX^*(T)$ with $X^*(\tx{Ind}_A^BT)$ as $\Gamma$-modules with $B$-action. If $(T,C)$ and $(\hat T,\hat C)$ are $\Gamma$-invariant Borel pairs for $G$ and $\hat G$ respectively, then $(\tx{Ind}_A^BT,\tx{Ind}_A^BC)$ and $(\tx{Ind}_A^B\hat T,\tx{Ind}_A^B\hat C)$ are such pairs for $\tx{Ind}_A^BG$ and $\tx{Ind}_A^B\hat G$, respectively. The duality between $X_*(T)$ and $X_*(\hat T)$ that realizes the duality between $G$ and $\hat G$ induces, via the above pairing, a duality between $\tx{Ind}_A^BX_*(T)=X_*(\tx{Ind}_A^BT)$ and $\tx{Ind}_A^BX_*(\hat T)=X_*(\tx{Ind}_A^BX_*(\hat T))$, and therefore realizes the duality between $\tx{Ind}_A^BG$ and $\tx{Ind}_A^B\hat G$.

Let $a \in A$. A Langlands parameter $\phi_G : L_F \to {^LG}$, which we represent as $\phi_G(x)=\phi_{G,0}(x) \rtimes x$ with $\phi_{G,0} : L_G \to \hat G$, has $a$-invariant $\hat G$-conjugacy class if and only if there exists an element $\check g_a \in \hat G$ satisfying
\begin{equation}
a(\phi_{G,0}(x))=\check g_a^{-1}\phi_{G,0}(x)\sigma_x(\check g_a) \label{eq:ind1ad}.
\end{equation}
This is equivalent to $\check g_a \rtimes a \in \tilde S_{\phi_G}$. Assuming this, a representation $(\rho_G,V_G)$ of $S_{\phi_G}$ has an $a$-invariant isomorphism class if and only if there is a vector space isomorphism $\tilde\rho_G(\check g_a \rtimes a) : V_{\rho_G} \to V_{\rho_G}$ satisfying
\begin{equation}
\rho_G(\check g_a \cdot a(\check g) \cdot \check g_a^{-1})\circ\tilde\rho_G(\check g_a \rtimes a)=\tilde\rho_G(\check g_a \rtimes a)\circ \rho_G(\check g). \label{eq:ind1bd}
\end{equation}

Let $b \in B$. A Langlands parameter $\phi_H : L_F \to {^LH}$, which we again represent as $\phi_H(x)=\phi_{H,0}(x) \rtimes x$ with $\phi_{H,0} : L_F \to \hat H$, has a $b$-invariant $\hat H$-conjugacy class if and only if there exists an element $\check h_b \in \hat H$ satisfying
\begin{equation} \label{eq:ind1cd}
\phi_{H,0}(x,b'b)=\check h^{-1}_b(b')\phi_{H,0}(x,b')\sigma_x(\check h_b(b')),\qquad\forall b' \in B. \end{equation}
Again this means $\check h_b \rtimes b \in \tilde S_{\phi_H}$. Assuming this, a representation $(\rho_H,V_{\rho_H})$ of $S_{\phi_H}$, represented again by the collection $\{V_c|c \in A \lmod B\}$ of vector spaces and the collection $\rho_H^{\dot c}$ of representations $\rho_H^{\dot c} : S_{\phi_H(-,\dot c)} \to \tx{Aut}(V_c)$ for all $\dot c \in c$, subject to $\rho_H^{a\dot c}(as)=\rho_H^{\dot c}(s)$, has a $b$-invariant isomorphism class if and only if there exists a vector space isomorphism $\tilde\rho_H(\check h_b \rtimes b) : V_{\rho_H} \to V_{\rho_H}$ satisfying 
\[ \tilde\rho_H(\check h_b \rtimes b)(\otimes_c v_c)=\otimes_c \tilde\rho_H(\check h_b \rtimes b)_c(v_{cb}), \]
where 
\[ \tilde\rho_H(\check h_b \rtimes b)_c : V_{cb} \to V_c \]
is an isomorphism of vector spaces satisfying
\begin{equation} \label{eq:ind1dd}
\rho_H^{\dot c}(\check g)\circ\tilde\rho_H(\check h_b \rtimes b)_c = \tilde\rho_H(\check h_b \rtimes b)_c \circ \rho_H^{\dot cb}(\check h_b(\dot c)^{-1}\check g \check h_b(\dot c))\end{equation}
for one, hence any, lift $\dot c \in B$ of $c \in A \lmod B$.

\begin{lem} \label{lem:ind1d}
\begin{enumerate}
	\item The assignment $\phi_{G,0}(x)=\phi_{H,0}(x,1)$ establishes a bijection between the $A$-invariant $\hat G$-conjugacy classes of parameters $\phi_G(x)=\phi_{G,0}(x) \rtimes x$ and the $B$-invariant $\hat H$-conjugacy classes of parameter $\phi_H(x)=\phi_{H,0}(x) \rtimes x$.
	\item Assume $\phi_{G,0}(x)=\phi_{H,0}(x,1)$. The assignment $\rho_G(\check g)=\rho_H^1(\check g)$ establishes a bijection between the set of $B$-fixed isomorphism classes of irreducible representations of $S_{\phi_H}$ and the set of $A$-fixed isomorphism classes of irreducible representations of $S_{\phi_G}$.
	\item Fix a section $s : A \lmod B \to B$. The assignments $\phi_{H,0}(x,as(c))=a\phi_{G,0}(x)$ and $\rho_H(\check h)=\otimes_c \rho_G(\check h(s(c))) \in \tx{End}_\C(V_{\rho_G}^{\otimes_c})$ are inverses of the above bijections.
\end{enumerate}
\end{lem}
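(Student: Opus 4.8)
\textbf{Proof proposal for Lemma \ref{lem:ind1d}.}

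The plan is to mirror, on the dual side, the structure of the proof of Lemma \ref{lem:ind1}, which established the analogous statement for rational classes and representations of the groups themselves. The three assertions are formally parallel to the three parts of Lemma \ref{lem:ind1}, with $Z^1(u\to W,Z(G)^A\to G)$ replaced by the set of $\hat G$-conjugacy classes of parameters $\phi_G\colon L_F\to{}^LG$, the action of $\Gamma$ on cochains replaced by the action $\sigma_x$ built from $\phi(-)$, and $G_{z_G}(F)$ and $\pi_0(\ldots)$ replaced by the complex groups $S_{\phi_G}$ and their representations. The key formal input is the identification $\hat H=\tx{Ind}_A^B\hat G$ together with the $B$-equivariant duality \eqref{eq:ind_duality}, which guarantees that the dual of $H=\tx{Ind}_A^BG$ is indeed $\tx{Ind}_A^B\hat G$ and that a parameter into ${}^LH$ is literally a function $\phi_{H,0}\colon L_F\to\hat H$, i.e. a family of functions $b\mapsto\phi_{H,0}(-,b)$ valued in $\hat G$ with $\phi_{H,0}(x,ab)=a\phi_{H,0}(x,b)$. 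This is the exact dual of the description $H=\{h\colon B\to G \mid h(ab)=ah(b)\}$ used in the group-side argument.

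First I would prove part (1). Given a $B$-invariant $\hat H$-conjugacy class of $\phi_H$, Equation \eqref{eq:ind1cd} for $b'=1$ and $b=a\in A$ shows the $\hat G$-conjugacy class of $\phi_{G,0}(x):=\phi_{H,0}(x,1)$ is $a$-invariant, and this clearly depends only on the conjugacy class of $\phi_H$. Conversely, given an $A$-invariant class of $\phi_G$, choose $\check g_a\in\hat G$ satisfying \eqref{eq:ind1ad} for each $a\in A$; fix a section $s\colon A\lmod B\to B$ with associated $r\colon B\to A$, $b=r(b)s(b)$; set $\check h_b(as(c))=a(\check g_{r(s(c)b)})$ and $\phi_{H,0}(x,as(c))=a\phi_{G,0}(x)$. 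A direct computation, identical in form to the one in Lemma \ref{lem:ind1} but with $z_G$ replaced by $\phi_{G,0}$ and $\sigma_w$ replaced by $\sigma_x$, shows \eqref{eq:ind1cd} holds, so the $\hat H$-class of $\phi_H$ is $b$-invariant for all $b\in B$. The two assignments are mutually inverse by the same bookkeeping as in the group case (one checks $\phi_{H,0}(x,1)=\phi_{G,0}(x)$ directly, and for the other direction produces the explicit conjugating element $\check h$ with $\check h(as(c))=a\check h_{s(c)^{-1}}(s(c))$). For part (2), the representation-theoretic content is the same abstract fact: an irreducible representation of a group extension $1\to S_{\phi_H}(-,1)\to S_{\phi_H}\to(\text{image in }B)\to 1$ that is $B$-invariant corresponds, via the data $\{V_c\}$, $\{\rho_H^{\dot c}\}$, to an $A$-invariant representation of $S_{\phi_G}$; one reads off $\rho_G(\check g)=\rho_H^1(\check g^{\delta_1})$ from \eqref{eq:ind1dd} with $\dot c=1$, $b=a\in A$, and runs the inverse construction $\rho_H=\boxtimes_c\rho_G(\check h(s(c)))$ exactly as before, using \eqref{eq:ind1bd} and \eqref{eq:ind1dd} in place of \eqref{eq:ind1b} and \eqref{eq:ind1d}. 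Part (3) is then the explicit statement that the constructions of part (3) invert those of parts (1) and (2), verified by the same two direct computations (producing $\check h$ and a vector-space intertwiner, respectively) as in Lemma \ref{lem:ind1}.

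The main obstacle is essentially bookkeeping rather than conceptual: one must be careful that the group $S_{\phi_H}$ really does decompose as claimed, i.e. that the centralizer in $\hat H\rtimes B$ of $\phi_H$ restricts correctly over each coset $c\in A\lmod B$ to give a family $\rho_H^{\dot c}\colon S_{\phi_H(-,\dot c)}\to\tx{Aut}(V_c)$ with $S_{\phi_H(-,\dot c)}=\tx{Cent}(\phi_{G,0}(-)\text{-conjugate},\hat G)$, so that the reduction to $\hat G$ is literally the same as on the group side. Since $\hat H=\tx{Ind}_A^B\hat G$ and the parameter is a function on $B$, this is formally automatic, but it should be spelled out. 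Once this structural point is in place, the three parts follow by transcribing the proof of Lemma \ref{lem:ind1} verbatim, replacing $(G,A,z_G,\sigma_w)$ by $(\hat G,A,\phi_{G,0},\sigma_x)$ and the relevant cocycle/representation equations by their dual counterparts \eqref{eq:ind1ad}--\eqref{eq:ind1dd}. I would therefore present the proof as: "This is the dual of Lemma \ref{lem:ind1}; the argument is identical after the substitutions above, using \eqref{eq:ind1ad}--\eqref{eq:ind1dd} in place of \eqref{eq:ind1a}--\eqref{eq:ind1d} and the duality \eqref{eq:ind_duality} to identify $\hat H=\tx{Ind}_A^B\hat G$," and then indicate the one or two computations that need to be checked.
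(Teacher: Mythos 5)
Your proposal matches the paper exactly: the paper's proof of Lemma~\ref{lem:ind1d} is simply the remark that the argument is the same as for Lemma~\ref{lem:ind1}, with the dual substitutions you describe. Your expanded commentary on which equations play which roles and on the structural point about $S_{\phi_H}$ decomposing over cosets is a faithful elaboration of what the paper leaves implicit.
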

\begin{proof}
The proof is the same as for Lemma \ref{lem:ind1} so we will not repeat it. 
\end{proof}

In order to deal with rigid inner forms, and not just with pure inner forms, we need to work with the preimage of $S_{\phi_G}$ in finite covers of $\hat G$. It is more uniform to work with the projective limit $\hat{\bar G}$ of all finite covers of $\hat G$. It receives an action of $A$. Then $\hat{\bar H} := \tx{Ind}_A^B(\hat{\bar G})$ is the projective limit of all finite covers of $\hat H$. A class $[z_G] \in H^1(u \to W,Z(G)^A \to A)^A$ induces an $A$-invariant character of $Z(\hat{\bar G})^+$, hence a character of the coinvariants $(Z(\hat{\bar G})^+)_A$. One checks that the inclusion $Z(\hat{\bar G})^+ \to Z(\hat{\bar H})^+$ induces an isomorphism $(Z(\hat{\bar G})^+)_A \to (Z(\hat{\bar H})^+)_B$. Let $S_{\phi_G}^+$ be the preimage of $S_{\phi_G}$ in $\hat{\bar G}$ and let $S_{\phi_H}^+$ be the preimage of $S_{\phi_H}$ in $\hat{\bar H}$.

\begin{lem} \label{lem:ind1.5d}
	Let $[z_G] \in H^1(u \to W,Z(G)^A \to A)^A$ and  $[z_H] \in H^1(u \to W,Z(H)^B \to B)^B$ correspond under the bijection of Lemma \ref{lem:ind1}(1). Then the characters they induce by the Tate--Nakayama isomorphism correspond under the isomorphism
	\[ (Z(\hat{\bar G})^+)_A \to (Z(\hat{\bar H})^+)_B. \] 
	Moreover, the assignments of Lemma \ref{lem:ind1d}(2,3) extend to mutually inverse bijections 
	\[ \tx{Irr}(S_{\phi_G}^+,[z_G])^G \to \tx{Irr}(S_{\phi_H}^+,[z_H])^H.  \]
\end{lem}
\begin{proof}
	Left to the reader.
\end{proof}

\begin{lem} \label{lem:ind2d}
Under the bijection $\rho_G \leftrightarrow \rho_H$ of Lemma \ref{lem:ind1.5d} the element of $H^2(B,\C^\times)$ corresponding to $\rho_H$ is the corestriction of the element of $H^2(A,\C^\times)$ corresponding to $\rho_G$.

More precisely, let $\phi_G$ and $\rho_G \in \tx{Irr}(S_{\phi_G}^+,[z_G])$ have $A$-fixed classes. For each $a \in A$ fix $\check g_a \in \hat{\bar G}$ and $\tilde\rho_G(\check g_a \rtimes a)$ satisfying Equations \eqref{eq:ind1ad} and \eqref{eq:ind1bd}, so that we have the element
\[ \alpha(a_1,a_2) = \tilde\rho_G( \check g_{a_1} \rtimes a_1)\circ\tilde\rho_G( \check g_{a_2} \rtimes a_2) \circ \tilde\rho_G( \check g_{a_1} \rtimes a_1 \cdot \check g_{a_2} \rtimes a_2)^{-1} \]
of $Z^2(A,\C^\times)$ representing the class associated to $\pi_G$. Define $\phi_{H,0}(x,as(c))=a\phi_{G,0}(x)$ and the representation $\rho_H$ of $S_{\phi_H}^+$ on $\otimes_c V_{\rho_G}$ by $\rho_H(\check h)=\otimes_c \rho_G(\check h(s(c)))$. For each $b \in B$ define the element $\check h_b \in \hat{\bar H}$ by $\check h_b(as(c))=a(\check g_{r(s(c)b)})$ and the isomorphism $\tilde\rho_H(\check h_b \rtimes b) : \otimes_c V_{\rho_G} \to \otimes_c V_{\rho_G}$ by $\tilde\rho_H(\check h_b \rtimes b)(\otimes_c v_c)=\otimes_c \tilde\rho_G(\check g_{r(s(c)b)} \rtimes r(s(c)b))(v_{cb})$. Then $\check h_b$ and $\tilde\rho_H(\check h_b \rtimes b)$ satisfy Equations \eqref{eq:ind1cd} and \eqref{eq:ind1dd} and the associated element
\[ \beta(b_1,b_2) = \tilde\rho_H( \check h_{b_1} \rtimes b_1)\circ\tilde\rho_H( \check h_{b_2} \rtimes b_2) \circ \tilde\rho_H( \check h_{b_1} \rtimes b_1 \cdot \check h_{b_2} \rtimes b_2)^{-1} \]
of $Z^2(B,\C^\times)$ is obtained from $\alpha$ by applying the cochain formula for the corestriction with respect to the section $s$.
\end{lem}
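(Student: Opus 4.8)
The plan is to mirror, essentially verbatim, the proof of Lemma \ref{lem:ind2}, transporting it from the group side to the dual side. The dictionary is: replace $G$ by $\hat G$, replace $H$ by $\hat H=\tx{Ind}_A^B\hat G$, replace the rational cocycles $z_G \in Z^1(u \to W,Z(G)^A \to G)$ and $z_H$ by the parameter components $\phi_{G,0}$ and $\phi_{H,0}$, replace the representations $\pi_G,\pi_H$ of $G_{z_G}(F),H_{z_H}(F)$ by the representations $\rho_G,\rho_H$ of $S_{\phi_G},S_{\phi_H}$, and replace the intertwining elements $g_a,h_b$ together with the operators $\tilde\pi_G(g_a \rtimes a),\tilde\pi_H(h_b \rtimes b)$ by $\check g_a,\check h_b$ and $\tilde\rho_G(\check g_a \rtimes a),\tilde\rho_H(\check h_b \rtimes b)$. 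Under this dictionary Equations \eqref{eq:ind1a}--\eqref{eq:ind1d} become Equations \eqref{eq:ind1ad}--\eqref{eq:ind1dd}, while the combinatorics of $A \lmod B$, the section $s$, and the auxiliary map $r$ are left unchanged.

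First I would record that the compatibility equations \eqref{eq:ind1cd} and \eqref{eq:ind1dd} hold for the $\check h_b$ and $\tilde\rho_H(\check h_b \rtimes b)$ defined in the statement. This is exactly the verification already carried out in the (omitted) proof of Lemma \ref{lem:ind1d}, which is the formal analog of the corresponding step in the proof of Lemma \ref{lem:ind1}: one substitutes $\check h_b(as(c))=a(\check g_{r(s(c)b)})$ into \eqref{eq:ind1cd} and uses the cocycle relation \eqref{eq:ind1ad} for $\phi_{G,0}$, and likewise substitutes the definition of $\tilde\rho_H(\check h_b \rtimes b)_c$ into \eqref{eq:ind1dd} and uses \eqref{eq:ind1bd}.

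The heart of the proof is then the corestriction identity
\[ \beta(b_1,b_2) = \prod_{c \in A \lmod B} \alpha(r(s(c)b_1),\, r(s(cb_1)b_2)). \]
To establish it, observe that the scalar operator $\tilde\rho_H(\check h_{b_1} \rtimes b_1)\circ\tilde\rho_H(\check h_{b_2} \rtimes b_2)\circ\tilde\rho_H(\check h_{b_1}\rtimes b_1 \cdot \check h_{b_2}\rtimes b_2)^{-1}$ on $\otimes_c V_{\rho_G}$ is a tensor product of operators on the factors indexed by $c \in A \lmod B$, so it suffices to identify the factor at $c$ with multiplication by $\alpha(r(s(c)b_1),r(s(cb_1)b_2))$. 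That factor equals $\tilde\rho_H(\check h_{b_1}\rtimes b_1)_c \circ \tilde\rho_H(\check h_{b_2}\rtimes b_2)_{cb_1} \circ \tilde\rho_H(\check h_{b_1}\rtimes b_1\cdot \check h_{b_2}\rtimes b_2)_c^{-1}$, and one computes $\tilde\rho_H(\check h_{b_1}\rtimes b_1 \cdot \check h_{b_2}\rtimes b_2)_c$ by the same four-line expansion as in Lemma \ref{lem:ind2}: write it as $\rho_H(\check h_{b_1}\cdot b_1\check h_{b_2}\cdot \check h_{b_1b_2}^{-1})\circ\tilde\rho_H(\check h_{b_1b_2}\rtimes b_1b_2)_c$, evaluate the first factor at the lift $s(c)$, and use $\check h_b(as(c))=a(\check g_{r(s(c)b)})$ to collapse the whole expression into $\tilde\rho_G(\check g_{r(s(c)b_1)}\rtimes r(s(c)b_1)\cdot \check g_{r(s(cb_1)b_2)}\rtimes r(s(cb_1)b_2))$. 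Plugging this back in and invoking the definition of $\alpha$ produces the claimed scalar.

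I do not expect a genuine obstacle here: once the dictionary is fixed the argument is routine and identical in structure to Lemma \ref{lem:ind2}. The only points requiring a little care are the bookkeeping of which lift $\dot c \in B$ of $c \in A \lmod B$ is used at each stage (so that the "for one, hence any, lift" clause in \eqref{eq:ind1dd} is respected) and the consistent use of the normalization convention $\tilde\rho_G(\check g\check g_a \rtimes a)=\rho_G(\check g)\tilde\rho_G(\check g_a \rtimes a)$ for $\check g \in S_{\phi_G}$, which makes the third term of both $\alpha$ and $\beta$ unambiguous. Given that the parallel is already spelled out in the group case, I would present the proof by noting it is the same as that of Lemma \ref{lem:ind2} under the stated dictionary, at most displaying the corestriction identity above.
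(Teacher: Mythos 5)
The proposal is correct and takes exactly the approach the paper itself adopts: the paper's proof of Lemma \ref{lem:ind2d} simply says the argument is identical to that of Lemma \ref{lem:ind2}, and your dictionary and sketched computation reproduce that argument faithfully, including the correct corestriction identity and the four-line collapse of $\tilde\rho_H(\check h_{b_1}\rtimes b_1 \cdot \check h_{b_2}\rtimes b_2)_c$.
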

\begin{proof}
The proof is the same as for Lemma \ref{lem:ind2}, so we will not repeat it.
\end{proof}

Consider again $b \in B$ and a section $l : A \lmod B/\<b\> \to B$. Let $n_d$ and $s : A \lmod B \to B$ be as in \eqref{eq:sec1}. Recall $a_d =l(d)b^{n_d}l(d)^{-1} \in A$. 

\begin{lem} \label{lem:indprodd0}
Let $\phi_{G,0}^d : L_F \to \hat G$. Define $\phi_{H,0} : L_F \to \hat H$ by $\phi_{H,0}(x,al(d)b^i)=a\phi_G^d(x)$. The map
\[ \hat H \to \prod_d \prod_{i=0}^{n_d-1} \hat G,\qquad \check h \mapsto \prod_d \prod_{i=0}^{n_d-1} \check h(l(d)b^i) \]
is a $\Gamma$-equivariant isomorphism of algebraic groups. It translates conjugation by $\phi_H(x)$ to conjugation by $(\phi_G^d(x))_d$. It translates the action by $b$ to the action by $(\Theta_d)_d$, where $\Theta_d(\check g_{d,0},\dots,\check g_{d,n_d-1})=(\check g_{d,1},\dots,\check g_{n_d-1},a_d(\check g_0))$.
\end{lem}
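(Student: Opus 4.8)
The plan is to verify this by a direct computation, exactly as in the analogous group-theoretic statement Lemma \ref{lem:indprod0} on the geometric side. Since the excerpt already flags that lemma's proof as ``an immediate computation,'' the dual statement should be proved the same way, and indeed I would simply write \emph{``This is an immediate computation, dual to Lemma \ref{lem:indprod0}.''} If a little more detail is wanted, here is how it would unfold.

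First I would recall the explicit identification $\hat H = \tx{Ind}_A^B\hat G = \{\check h : B \to \hat G \mid \check h(ab) = a(\check h(b))\}$, so that an element of $\hat H$ is determined by its values $\check h(l(d)b^i)$ for $d \in A\lmod B/\<b\>$ and $i = 0,\dots,n_d-1$, because $\{l(d)b^i\}$ is a set of representatives for $A\lmod B$ by the definition \eqref{eq:sec1} of the section $s$. This gives the bijection of sets $\hat H \to \prod_d\prod_{i=0}^{n_d-1}\hat G$; it is a morphism of algebraic groups because pointwise multiplication in $\hat H$ corresponds to coordinatewise multiplication, and it is $\Gamma$-equivariant because the $\Gamma$-action on $\hat H$ is pointwise (inherited from $\hat G$) and $l(d)b^i$ are fixed representatives. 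Next I would check the statement about conjugation by $\phi_H(x)$: under the prescription $\phi_{H,0}(x,al(d)b^i)=a(\phi_{G,0}^d(x))$, conjugating the coordinate $\check h(l(d)b^i)$ by $\phi_H(x)$ means multiplying by $\phi_{H,0}(x,l(d)b^i) = \phi_{G,0}^d(x)$ on the appropriate side (and by its $\Gamma$-twisted inverse on the other), which is precisely conjugation by $\phi_G^d(x)$ in that coordinate. Finally, for the action of $b$: right translation by $b$ sends the value at $l(d)b^i$ to the value at $l(d)b^{i+1}$ for $i<n_d-1$, and sends the value at $l(d)b^{n_d-1}$ to the value at $l(d)b^{n_d} = a_d l(d)$, which by the $\hat H$-equivariance constraint equals $a_d$ applied to the value at $l(d)$; hence in the $d$-th block the induced action is $(\check g_{d,0},\dots,\check g_{d,n_d-1})\mapsto(\check g_{d,1},\dots,\check g_{d,n_d-1},a_d(\check g_{d,0})) = \Theta_d$.

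There is no real obstacle here; the content is entirely bookkeeping about the identification $\tx{Ind}_A^B$ and the chosen section, and the only point requiring any care is keeping track of which side the conjugation acts on (left versus $\Gamma$-twisted right), which is handled identically to the geometric Lemma \ref{lem:indprod0}. Accordingly I would present the proof simply as:
\begin{proof}
This is an immediate computation, formally dual to Lemma \ref{lem:indprod0}; one uses the description of $\tx{Ind}_A^B\hat G$ recalled above and the fact that $\{l(d)b^i \mid d \in A\lmod B/\<b\>,\ 0 \le i < n_d\}$ is a set of representatives for $A\lmod B$.
\end{proof}
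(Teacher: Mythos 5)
Your proposal is correct and takes essentially the same approach as the paper, whose entire proof is the two words ``Direct computation.'' The bookkeeping you spell out — the representatives $l(d)b^i$ of $A\lmod B$, the $\Gamma$-equivariance, the conjugation by $\phi_H(x)$, and in particular the computation $(b\cdot\check h)(l(d)b^{n_d-1})=\check h(l(d)b^{n_d})=\check h(a_d l(d))=a_d(\check h(l(d)))$ that produces the twist $a_d$ in the last slot of $\Theta_d$ — is exactly what is being left to the reader.
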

\begin{proof}
This is an immediate computation.
\end{proof}

\begin{lem}  \label{lem:indprodd}
Let $\check h_b \rtimes b \in \tilde S_{\phi_H}$. The map
\[ \hat H \to \prod_d \prod_{i=0}^{n_d-1}\hat G,\qquad \check h \mapsto \prod_d \prod_i \tx{ev}_{l(d)}(\tx{Ad}(\check h_b \rtimes b)^ih) \]
is an isomorphism of algebraic groups. It translates the action of $\tx{Ad}(\phi_H(w))$ to the diagonal action of $(\tx{Ad}(\phi_G^d(w)))_d$. It translates the action of conjugation by $\check h_b \rtimes b$ to the action sending $(\check g_{d,0},\dots,\check g_{d,n_d-1})_d$ to $(g_{d,1},\dots,g_{d,n_d-1},\tx{Ad}(\check g_d \rtimes a_d)\check g_{d,0})_d$, where $\check g_d \rtimes a_d=\tx{ev}_{l(d)}((\check h_b \rtimes b)^{n_d})$.
\end{lem}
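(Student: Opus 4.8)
The statement is the dual-side analogue of Lemma~\ref{lem:indprod}, and the plan is to mirror that proof verbatim on the dual group, so the argument is entirely a direct computation. First I would unwind the definitions. Recall $\hat H = \tx{Ind}_A^B \hat G = \{\check h : B \to \hat G \mid \check h(ab) = a(\check h(b))\}$ with pointwise multiplication, and the evaluation homomorphisms $\tx{ev}_{l(d)} : \hat H \to \hat G$, $\check h \mapsto \check h(l(d))$, which extend to $\tx{ev}_{l(d)} : \hat H \rtimes A_d \to \hat G \rtimes A$ where $A_d = l(d)^{-1}Al(d)$. The map in question sends $\check h \in \hat H$ to the tuple whose $(d,i)$-component is $\tx{ev}_{l(d)}(\tx{Ad}(\check h_b \rtimes b)^i \check h) = (\tx{Ad}(\check h_b \rtimes b)^i \check h)(l(d))$, and I would first check that this agrees, up to a coordinate change by $\check h_b$, with the ``static'' isomorphism $\check h \mapsto \prod_d\prod_i \check h(l(d)b^i)$ of Lemma~\ref{lem:indprodd0}: unravelling, $(\tx{Ad}(\check h_b \rtimes b)^i \check h)(l(d))$ differs from $\check h(l(d)b^i)$ only by conjugation by the element $\check h_b(l(d))\cdot b(\check h_b(l(d)))\cdots b^{i-1}(\check h_b(l(d)))$, i.e.\ an element of $\hat G$ determined by $\check h_b$ and independent of $\check h$.

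Next I would verify bijectivity and $\Gamma$-equivariance. Bijectivity follows because the coordinate change above is an automorphism of each factor $\hat G$, and Lemma~\ref{lem:indprodd0} already gives that $\check h \mapsto \prod_d\prod_i \check h(l(d)b^i)$ is an isomorphism of algebraic groups; composing with the (variable, but fixed once $\check h_b$ is chosen) coordinate-change automorphism preserves this. For the claim about $\tx{Ad}(\phi_H(w))$: the section $s$ in \eqref{eq:sec1} is chosen so that $\phi_{H,0}(w, al(d)b^i) = a(\phi_{G}^d(w))$, hence conjugating $\check h$ by $\phi_H(w)$ translates, coordinate by coordinate, to conjugating the $(d,i)$-component by $\phi_G^d(w)$ --- this is exactly the computation in Lemma~\ref{lem:indprodd0}, and the extra coordinate-change by $\check h_b$ is harmless because $\check h_b \rtimes b$ lies in $\tilde S_{\phi_H}$, i.e.\ commutes with $\phi_H$ up to the prescribed twist, so the two operations commute.

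Finally, the key point: the action of conjugation by $\check h_b \rtimes b$. Using the relation $b \cdot l(d)b^i = l(d)b^{i+1}$ for $i < n_d - 1$, and $l(d)b^{n_d} = a_d l(d)$ with $a_d \in A$, one sees that conjugation by $\check h_b \rtimes b$ cyclically shifts the index $i \mapsto i+1$ within each block $d$, and wraps around from $i = n_d-1$ back to $i = 0$ with an extra twist by $\tx{ev}_{l(d)}((\check h_b \rtimes b)^{n_d}) = \check g_d \rtimes a_d \in \hat G \rtimes A$. This is the defining telescoping identity: $(\check h_b \rtimes b)^{n_d}$, evaluated at $l(d)$, is precisely the ``monodromy'' of the orbit, and $\tx{ev}_{l(d)}$ sends it into $\hat G \rtimes A$ because $l(d)^{-1}Al(d) \ni b^{n_d}$ forces $(\check h_b \rtimes b)^{n_d}$ into $\hat H \rtimes A_d$. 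So the translated action is $(\check g_{d,0},\dots,\check g_{d,n_d-1})_d \mapsto (\check g_{d,1},\dots,\check g_{d,n_d-1}, \tx{Ad}(\check g_d \rtimes a_d)\check g_{d,0})_d$, as claimed. I do not expect any genuine obstacle --- the ``proof'' is one line (``This is an immediate computation.'') because every step is a direct bookkeeping exercise in the combinatorics of the double coset $A \lmod B / \langle b\rangle$; the only mild care needed is tracking that the chosen section $s$ of \eqref{eq:sec1} is the one compatible with $l$, so that the definitions of $\phi_{H,0}$ via $l$ and via $s$ coincide on the relevant cosets.
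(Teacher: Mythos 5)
Your proof is correct, and since the paper's own proof consists of the single sentence ``Direct computation,'' you have supplied precisely the computation it elides; the approach is the one the paper intends (compare with the static isomorphism of Lemma~\ref{lem:indprodd0} via an $\check h$-independent coordinate change, use commutativity of $\tx{Ad}(\check h_b \rtimes b)$ with $\tx{Ad}(\phi_H(w))$, and telescope the shift for the wrap-around term). One small notational slip in the first paragraph: the conjugating element should be $\check h_b(l(d)) \cdot \check h_b(l(d)b) \cdots \check h_b(l(d)b^{i-1})$, not ``$\check h_b(l(d)) \cdot b(\check h_b(l(d))) \cdots b^{i-1}(\check h_b(l(d)))$,'' since $b \in B$ does not act on $\hat G$ --- only the $A$-part of the extension does --- but this does not affect your conclusion that the correction is an automorphism of each $\hat G$-factor independent of $\check h$.
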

\begin{proof}
Direct computation.
\end{proof}

\begin{lem} \label{lem:basictwistd}
Let $J$ be a quasi-split connected reductive group with a pinned automorphism $\theta$. Consider $I=J \times \dots \times J$ with the pinned automorphism $\Theta$ defined by $\Theta(g_0,\dots,g_{n-1})=(g_1,\dots,g_{n_d-1},\theta(g_0))$. Let $\hat\theta$ and $\hat\Theta$ denote the duals of $\theta$ and $\Theta$. We have $\hat I=\hat J \times \dots \times \hat J$ and $\hat\Theta(\check g_0,\dots,\check g_{n-1})=(\hat\theta(\check g_{n-1}),\check g_0,\dots,\check g_{n-2})$.
\begin{enumerate}
	\item If $(J^\mf{e},\mc{J}^\mf{e},\tilde s_J^\mf{e},\xi_J^\mf{e})$ is a refined endoscopic datum for $J \rtimes \theta$ and we write $\tilde s_J^\mf{e}=\check s_J^\mf{e} \rtimes \hat\theta$ and $\xi_J^\mf{e}(\jmath)=\xi_{J,0}^\mf{e}(\jmath) \rtimes w_\jmath$, and define $I^\mf{e}=J^\mf{e}$, $\mc{I}^\mf{e}=\mc{J}^\mf{e}$, $\tilde s_I^\mf{e}=\check s_I^\mf{e} \rtimes\Theta$, $\check s_I^\mf{e}=(\check s_J^\mf{e},1,\dots,1)$, $\xi_I^\mf{e}(\jmath)=(\xi_{J,0}^\mf{e}(\jmath),\dots,\xi_{J,0}^\mf{e}(\jmath)) \rtimes w_\jmath$. Then $(I^\mf{e},\mc{I}^\mf{e},\tilde s^\mf{e}_I,\xi^\mf{e}_I)$ is a refined endoscopic datum for $I \rtimes \Theta$.

	\item If $(I^\mf{e},\mc{I}^\mf{e},\tilde s^\mf{e}_I,\xi^\mf{e}_I)$ is a refined endoscopic datum for $I \rtimes \Theta$ and we write $\tilde s_I^\mf{e}=\check s_I^\mf{e} \rtimes \hat\Theta$, $\check s_I^\mf{e}=(\check s_0,\dots,\check s_{n-1})$, $\xi^\mf{e}_I(\imath)=(\xi^\mf{e}_0(\imath),\dots,\xi^\mf{e}_{n-1}(\imath))\rtimes w_\imath$, and define $J^\mf{e}=I^\mf{e}$, $\mc{J}^\mf{e}=\mc{I}^\mf{e}$, $\tilde s_J^\mf{e}=\check s_J^\mf{e} \rtimes\hat\theta$, $\check s_J^\mf{e}=\check s_{n-1} \dots \check s_{0}$, $\xi^\mf{e}_J(\imath)=\xi_{n-1}^\mf{e}(\imath)\rtimes w_\imath$, then $(J^\mf{e},\mc{J}^\mf{e},\tilde s_J^\mf{e},\xi_J^\mf{e})$ is a refined endoscopic datum for $J \rtimes \theta$.

	\item The above constructions given mutually inverse bijections between the sets of isomorphism classes of refined endoscopic data for $J \rtimes \theta$ and $I\rtimes\Theta$.
	
	\item Let $\phi_{J,0} : L_F \to \hat J$ and define $\phi_{I,0}=(\phi_{J,0},\dots,\phi_{J,0})$. In the above constructions we have $\tilde s_I \in \tilde S_{\phi_I}^+$ if and only if $\tilde s_J \in \tilde S_{\phi_J}^+$ and the isomorphism classes of the resulting refined endoscopic data correspond under the above bijections.

	\item Let $z_J \in Z^1(u \to W,Z(J)^\theta \to J)$ and define $z_I=(z_J,\dots,z_J) \in Z^1(u \to W,Z(I)^\Theta \to I)$. If $\mf{e}$ is a refined endoscopic datum, both for $I \rtimes \Theta$ and for $J \rtimes \theta$ via the above bijections, $\mf{z}$ is a $z$-pair for $\mf{e}$, $\mf{w}$ is a $\theta$-special Whittaker datum for $J$ and hence a $\Theta$-special Whittaker datum for $I$, $\delta_I = (\delta_0,\dots,\delta_{n-1}) \in I_{z_I}(F)$, and $\delta_J = \delta_0\dots \delta_{n-1}$ we have
	\[ \Delta_{KS}[\mf{w},\mf{e},\mf{z}](\gamma^{\mf{z}},\delta_I \rtimes \Theta)=\Delta_\tx{KS}[\mf{w},\mf{e},\mf{z}](\gamma^\mf{z},\delta_J \rtimes \theta). \]
\end{enumerate}
\end{lem}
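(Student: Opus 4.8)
The plan is to view the statement as the twisted-endoscopy shadow of the product/diagonal bijections of Lemma \ref{lem:basictwist}, applied simultaneously to $J$ and to its dual group $\hat J$. Parts (1)--(4) will be formal consequences of computing twisted centralizers in $\hat I=\hat J^n$, and part (5) will be a term-by-term comparison of the two instances of \eqref{eq:pure_tf1}.

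For parts (1)--(4) I would first record the $\hat J$-analogue of Lemma \ref{lem:basictwist}(2),(3): applying that lemma to the topological group $\hat J$ with the automorphism $\hat\theta$ --- after relabelling the factors to absorb the reversed shift in $\hat\Theta$ --- the product map $\hat I\to\hat J$, $(\check g_0,\dots,\check g_{n-1})\mapsto\check g_{n-1}\cdots\check g_0$, and the standard inclusion $\hat g\mapsto(\hat g,1,\dots,1)$ give mutually inverse bijections on twisted conjugacy classes of semisimple elements, and for such an element in standard form the diagonal embedding $\hat J\hookrightarrow\hat I$ restricts to an isomorphism $\tx{Cent}(\check s\rtimes\hat\theta,\hat J)\to\tx{Cent}((\check s,1,\dots,1)\rtimes\hat\Theta,\hat I)$; a direct computation inside $\hat J^n\rtimes\langle\hat\Theta\rangle$ shows the latter centralizer is \emph{exactly} the diagonal image of the former, connected component included. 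Granting this, the recipes in (1) and (2) are the transports of $\tilde s^\mf{e}$, $\xi^\mf{e}$ and of the extension $\mc{G}^\mf{e}$ along these maps; since $\mc{I}^\mf{e}=\mc{J}^\mf{e}$ as extensions of $W_F$, the endoscopic-datum axioms \ref{item:e0}, \ref{item:e1}, the $\tx{Out}$-compatibility, and the quasi-split group $I^\mf{e}=J^\mf{e}$ all transfer between $I\rtimes\Theta$ and $J\rtimes\theta$. Part (3) --- that the two transports are inverse and carry isomorphisms of data to isomorphisms of data --- is then immediate, using also that $\tx{Ad}(\hat I)$-conjugacy projects to $\tx{Ad}(\hat J)$-conjugacy. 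For (4), with $\phi_{I,0}=(\phi_{J,0},\dots,\phi_{J,0})$ the diagonal identifies $\phi_J(W_F)$ with $\phi_I(W_F)$, so $\tilde S_{\phi_I}$ is the diagonal image of $\tilde S_{\phi_J}$ (semisimplicity preserved), and the spectral construction $\hat G^\mf{e}=\tx{Cent}(\tilde s,\hat G)^\circ$, $\mc{G}^\mf{e}=\hat G^\mf{e}\cdot\phi(W_F)$ applied to $(\tilde s_I,\phi_I)$ visibly yields the image under (1) of the datum attached to $(\tilde s_J,\phi_J)$.

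For part (5), since $\Delta_{KS}$ is invariant under $I_{z_I}(F)$-conjugation in its second variable, Lemma \ref{lem:basictwist}(2) lets me replace $\delta_I$ by the standard representative $(\delta_J,1,\dots,1)$ of its twisted class, so that $\delta_J=\delta_0\cdots\delta_{n-1}$ is visibly recovered by multiplication. I would then go through the six factors of \eqref{eq:pure_tf1}. The sign term is exactly Lemma \ref{lem:sgn_ind}: $e([I\rtimes\Theta]_{z_I})=e([J\rtimes\theta]_{z_J})$. A norm $(S,\gamma)$ of $\delta_J\rtimes\theta$ produces a norm of $\delta_I\rtimes\Theta$ with the \emph{same} $\gamma$, under the canonical identification of $S_\theta$ with the $\Theta$-coinvariants of the corresponding torus of $I$ induced by multiplication; moreover the restricted root systems agree and $N\alpha(\delta_I^*)=N\alpha(\delta_J^*)$, the product over a $\Theta$-orbit telescoping to the product over the associated $\theta$-orbit evaluated at $\delta_0^*\cdots\delta_{n-1}^*$. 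Since the endoscopic-side data $\gamma^\mf{e}$, $S^\mf{e}$, $\varphi_{\gamma^\mf{e},\gamma}$ and the transported $a$- and $\chi$-data are literally unchanged, the factors $\Delta_{II}$ and $\Delta_{IV}$ agree; so does $\epsilon_L(V,\psi)$, because $X^*(T_I)^\Theta\cong X^*(T_J)^\theta$ as $\Gamma$-modules (an element of the $\Theta$-fixed space of $X^*(T_J)^n$ is determined by its first coordinate, with the wrap-around imposing $\theta$-invariance) and $T_I^\mf{e}=T_J^\mf{e}$. For $\Delta_I^\tx{new}$ I would use $(T_I,B_I)=(T_J,B_J)^n$ and a diagonal conjugating element, so that the twisted splitting invariant and the element $s_S$ are the transported images of their $J$-counterparts under $H^1(\Gamma,S_{I,\tx{sc}}^\Theta)=H^1(\Gamma,S_{J,\tx{sc}}^\theta)$ and $\pi_0([\hat S_I]_\Theta^\Gamma)=\pi_0([\hat S_J]_\theta^\Gamma)$, whence the Tate--Nakayama pairing is unchanged. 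Finally, $\Delta_{III}^\tx{new}$ is the pairing of $\tx{inv}(\gamma^\mf{z},(z,\delta))$ with $\dot A_0$: since $z_I=(z_J,\dots,z_J)$ and the element $\delta^\mf{z}$ for $I$ multiplies to the one for $J$, the $I$-invariant maps to the $J$-invariant under the identification of the hypercohomology groups of Subsection \ref{sub:coho} induced by multiplication on the tori, while the $L$-embeddings $\xi_S^1,\xi_S^\mf{e}$ used to build $\dot A_0$ are the transported ones, so $\dot A_0$ for $I$ maps to $\dot A_0$ for $J$; functoriality of the pairing \eqref{eq:tnd++} then closes the comparison.

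I expect $\Delta_{III}^\tx{new}$ (and, to a lesser degree, the splitting invariant in $\Delta_I^\tx{new}$) to be the main obstacle: one must unwind the constructions of Subsections \ref{sub:pure_tf2}, \ref{sub:rigid_tf} and \ref{sub:tnd++} and check that $\tx{inv}(\gamma^\mf{z},(z,\delta))$ and $\dot A_0$ are genuinely \emph{identified} --- not merely isomorphic --- under the multiplication and diagonal maps, and that the generalized Tate--Nakayama pairing respects these identifications. All of this is functorial, so it will go through, but it is where the bookkeeping concentrates; the other five factors are routine once the matching of norms and the identification of coinvariant tori are in place.
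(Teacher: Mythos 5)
Your proposal follows the same strategy as the paper's proof: parts (1)--(4) by computing twisted centralizers via the diagonal/product maps (the $\hat J$-shadow of Lemma \ref{lem:basictwist}), and part (5) by a factor-by-factor comparison of \eqref{eq:pure_tf1}, using Lemma \ref{lem:sgn_ind} for the sign, the identification $X^*(T_I)^\Theta=X^*(T_J)^\theta$ for $\epsilon_L$, the telescoping of $N\alpha$ for $\Delta_{II}$ and $\Delta_{IV}$, and quasi-isomorphisms of two-term complexes (projection/diagonal in degree $0$, multiplication/inclusion in degree $1$) together with functoriality of \eqref{eq:tnd++} for $\Delta_I^{\tx{new}}$ and $\Delta_{III}^{\tx{new}}$. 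Your preliminary reduction of $\delta_I$ to the standard representative $(\delta_J,1,\dots,1)$ is a harmless variant the paper does not bother with, and the one small slip is directional --- for functoriality of the pairing the map on the dual side goes $\dot A_{0,J}\mapsto\dot A_{0,I}$, not the other way around --- but that does not affect the argument.
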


\begin{proof}
For the first two points we only need to check parts \ref{item:e0} and \ref{item:e1} of the definition of endoscopic datum in \S\ref{sub:endo}. These verifications are immediate and left to the reader. For the third point, the assignment $(J^\mf{e},\mc{J}^\mf{e},\tilde s_J^\mf{e},\xi_J^\mf{e}) \mapsto (I^\mf{e},\mc{I}^\mf{e},\tilde s^\mf{e}_I,\xi^\mf{e}_I) \mapsto (J^\mf{e},\mc{J}^\mf{e},\tilde s_J^\mf{e},\xi_J^\mf{e})$ is the identity on data, even before taking isomorphism classes. On the other hand, the element $(\check g_0,\dots,\check g_{n-1}) \in \hat I$ with $\check g_i=\check s_{n-1}\dots \check s_{i+1}$ gives an isomorphism between the source and target of the assignment $(I^\mf{e},\mc{I}^\mf{e},\tilde s^\mf{e}_I,\xi^\mf{e}_I) \mapsto (J^\mf{e},\mc{J}^\mf{e},\tilde s_J^\mf{e},\xi_J^\mf{e}) \mapsto (I^\mf{e},\mc{I}^\mf{e},\tilde s^\mf{e}_I,\xi^\mf{e}_I)$. For the fourth point it is enough to start with $(\tilde s_J,\phi_J)$, let $\tilde s_I$ be as in the first point, produce from $(\tilde s_J,\phi_J)$ respectively $(\tilde s_I,\phi_I)$ endoscopic data $(J^\mf{e},\mc{J}^\mf{e},\tilde s_J^\mf{e},\xi_J^\mf{e})$ respectively $(I^\mf{e},\mc{I}^\mf{e},\tilde s^\mf{e}_I,\xi^\mf{e}_I)$ via the spectral construction of \S\ref{sub:endocnst}, and then verify that these two data are related by the construction of the first point. This is immediate and left to the reader.

The remainder of the proof will be concerned with the equality of transfer factors. We consider each individual term in the product \eqref{eq:pure_tf1}
\[ \Delta_{KS} = 
\epsilon_L(V,\psi) (\Delta_I^\tx{new})^{-1}\Delta_{II}(\Delta_{III}^\tx{new})^{-1}\Delta_{IV}. \]
These terms were recalled in \S\ref{sub:pure_tf}, except $\Delta_{III}^\tx{new}$, for which we follow the construction given in \S\ref{sub:rigid_tf}. These terms depend on various auxiliary data recalled in \S\ref{sub:pure_tf} and their comparison requires that we compare this auxiliary data for the group $I$ and the group $J$.

We fix a $\theta$-invariant $F$-pinning $(T_J,B_J,\{X_\alpha\})$ of $J$ and a non-trivial character $\psi : F \to \C^\times$. Taking the product of this pinning gives a $\Theta$-invariant $F$-pinning of $I$ and all $\Theta$-invariant pinnings of $I$ arise this way. In this way $\theta$-special Whittaker data for $J$ correspond to $\Theta$-special Whittaker data for $I$. We fix a norm $(S_I,\gamma)$ for $\delta_I \rtimes \Theta$. Here $S_I \subset I_{z_I}$ is a maximal torus defined over $F$, invariant under $\Theta$, and contained in a Borel subgroup $C_I \subset I$ defined over $\bar F$ and invariant under $\Theta$. Moreover $\gamma \in [S_I]_\Theta(F)$ and there exists $g_I \in I$ such that $g_I^{-1}(\delta_I \rtimes \Theta) g_I = \delta_I^* \rtimes \Theta$ with $\delta_I^* \in S_I$ whose image in $[S_I]_\Theta$ is $\gamma$. It is immediate that $S_I=S_J^n$ and $C_I=C_J^n$ for a $\theta$-invariant Borel pair $S_J \subset C_J \subset J$, with $S_J$ defined over $F$. Moreover, the product map $S_I \to S_J$ induces an isomorphism $[S_I]_\Theta \to [S_J]_\theta$. If we write $g_I=(g_0,\dots,g_{n-1})$ then $\delta_I^*=(g_0^{-1}\delta_0g_1,\dots,g_{n-2}^{-1}\delta_{n-2}g_{n-1},g_{n-1}^{-1}\delta_{n-1}\theta(g_0))$ and its image in $S_J$ is given by $\delta_J^*=g_0^{-1}\delta_0\dots \delta_{n-1}\theta(g_0)$. Therefore $(S_J,\gamma)$ is a norm for $\delta_J\rtimes\theta$.

The set of $\Theta$-orbits in $R(S_I,I)$ is in natural bijection with the set of $\theta$-orbits in $R(S_J,J)$. In this way $R_\tx{res}(S_I,I)=R_\tx{res}(S_J,J)$. We fix $a$-data and $\chi$-data for this set.

We can now compare the individual terms of $\Delta_{KS}$ for $I$ and $J$. The term $\epsilon_L(V,\psi)$ for $I$ is the root number of the virtual Galois representation $X^*(T_I)^\Theta_\C - X^*(T^\mf{e})_\C$. But $X^*(T_I)^\Theta=X^*(T_J)^\theta$ so this equals the term $\epsilon_L(V,\psi)$ for $J$. 

The term $\Delta_{II}$ is a fraction. The denominator for $I$ equals the denominator for $J$ by virtue of the identification $I^\mf{e}=J^\mf{e}$. The numerator for $I$ is a product over the $\Gamma$-orbits in $R_\tx{res}(S_I,I)$ and the factor corresponding to $\alpha_\tx{res}$ involves the quantity $N_\Theta\alpha_I(\delta_I^*)$, where $\alpha_I \in R(S_I,I)$ represents $\alpha_\tx{res}$. Now $R(S_I,I)=R(S_J,J) \cup \dots \cup R(S_J,J)$ and if $\alpha_J \in R(S_J,J)$ represents $\alpha_\tx{res}$ then $N_\Theta\alpha_I(\delta_I^*)=N_\theta\alpha_J(\delta_J^*)$. Therefore the numerators of $\Delta_{II}$ for $I$ matches the numerator of $\Delta_{II}$ for $J$. 

The term $\Delta_{IV}$ is discussed in the same way as the term $\Delta_{II}$.

The term $\Delta_I^\tx{new}$ for $J$ is defined as the Tate-Nakayama pairing applied to an element $t_J \in H^1(\Gamma,S^{\theta,\circ})$ with an element $\check s_{J,\theta} \in \pi_0([\hat S]_\theta^\Gamma)$. The diagonal inclusions $J \to I$ and $S_J \to S_I$ become isomorphisms $J^{\theta,\circ} \to I^{\Theta,\circ}$ and $S_J^{\theta,\circ} \to S_I^{\Theta,\circ}$. Tracing through the construction we see that under the isomorphism $H^1(\Gamma,S_J^{\theta,\circ}) \to H^1(\Gamma,S_I^{\Theta,\circ})$ the elements $t_J$ and $t_I$ are identified. 

The element $\check s_{I,\theta} \in \pi_0([\hat S_I]^\Gamma_\Theta)$ is obtained by recognizing that $\check s_I$ lies in the image of a certain embedding $\hat S_I \to \hat I$, so that it can be transported to $\hat S_I$ under that embedding and then mapped to $[\hat S_I]_\Theta$. Dual to the isomorphism $S_J^{\theta,\circ} \to S_I^{\Theta,\circ}$ is the isomorphism $[\hat S_I]_\Theta \to [\hat S_J]_\theta$ induced by the product map $\hat S_I=\hat S_J \times \dots \times \hat S_J \to \hat S_J$. Since the image of $\check s_I \in \hat S_I$ under the product map produces the element $\check s_J \in \hat S_J$, we see that the term $\Delta_I^\tx{new}$ for $I$ equals the term $\Delta_I^\tx{new}$ for $J$.

We come to the term $\Delta_{III}^\tx{new}$. We shall give the proof in the special case of pure inner forms and no $z$-pair. The proof in the general case is the same, but with more cumbersome notation that obscures the main point. This term for $J$ is given by the Tate-Nakayama pairing of the element $\tx{inv}(\gamma,(z_J,\delta_J)) \in H^1(\Gamma,S_J \stackrel{1-\theta}{\lrw} S_J)$ with the element $A_{0,J} \in H^1(W_F,\hat S_J \stackrel{1-\hat\theta}{\lrw} \hat S_J)$.

We consider the two dual commutative diagrams
\[ \xymatrix{
	S_I\ar[r]^{p_0}\ar[d]_{1-\Theta}&S_J\ar[d]^{1-\theta}&&\hat S_I&\hat S_J\ar[l]_{i_0}\\
	S_I\ar[r]_m&S_J&&\hat S_I\ar[u]^{1-\hat\Theta}&\hat S_J\ar[u]_{1-\hat\theta}\ar[l]^\Delta
}
\]
Where $m$ is the multiplication map, $\Delta$ is the diagonal inclusion, $i_0$ is the inclusion into the first coordinate, and $p_0$ is the projection onto the first coordinate. These diagrams can be seen as morphisms of complexes of tori of length 2, the complexes being the vertical arrows and the morphisms being the horizontal arrows. It is immediate to check that these morphisms are quasi-isomorphisms. Therefore they induce isomorphisms $H^1(\Gamma,S_I \stackrel{1-\Theta}{\lrw} S_I) \to H^1(\Gamma,S_J \stackrel{1-\theta}{\lrw} S_J)$ and $H^1(W_F,\hat S_J \stackrel{1-\hat\theta}{\lrw} \hat S_J) \to H^1(W_F,\hat S_I \stackrel{1-\hat\Theta}{\lrw} \hat S_I)$. The element $\tx{inv}(\gamma,(z_I,\delta_I\rtimes\Theta))$ is the pair $((g_I^{-1}z_I(\sigma)\sigma(g_I))^{-1},\delta_I^*) \in Z^1(\Gamma,S_I \stackrel{1-\Theta}{\lrw} S_I)$. We had already noted that the image of $\delta_I^*$ under the multiplication map is $\delta_J^*$. At the same time, the image of $g_I^{-1}z_I(\sigma)\sigma(g_I)$ under $p_0$ is $g_0^{-1}z_J(\sigma)\sigma(g_0)$. Thus the image of $\tx{inv}(\gamma,(z_I,\delta_I\rtimes\Theta))$ under the first of these isomorphisms is indeed $\tx{inv}(\gamma,(z_J,\delta_J\rtimes\Theta))$.

To compare $A_{0,J}$ and $A_{0,I}$ we consider the commutative diagram
\[ \xymatrix{
&^LI&\\
^LJ^\mf{e}\ar[ur]^{\xi^\mf{e}_I}\ar[dr]_{\xi^\mf{e}_J}&&^LJ^1\ar[ul]_\Delta\ar[dl]^{\tx{nat}}\\
&^LJ\ar[uu]_\Delta&\\
^LS^\mf{e}\ar[uu]^{\xi_S^\mf{e}}&&^LS_\theta\ar[ll]^{^L\varphi_{\gamma^\mf{e},\gamma}}\ar[uu]_{\xi_S^1}
}
\]
The element $A_{0,J}$ is the class of $(a_{S_J}^{-1},\check s_J)$, where $a_{S_J} : W_F \to \hat S_J$ is the 1-cocycle measuring the difference between $\xi^\mf{e}_J\circ \xi_S^\mf{e}\circ {^L\varphi_{\gamma^\mf{e},\gamma}}$ and $\tx{nat}\circ\xi_S^1$. The element $A_{0,I}$ is the class of $(a_{S_I}^{-1},\check s_I)$, where $a_{S_I} : W_F \to \hat S_I$ is the 1-cocycle measuring the difference between $\xi^\mf{e}_I\circ \xi_S^\mf{e}\circ {^L\varphi_{\gamma^\mf{e},\gamma}}$ and $\Delta\circ\circ\xi_S^1$. The commutativity of the diagram shows that $a_{S_I}=\Delta(a_{S_J})$. On the other hand $\check s_I=i_0(\check s_J)$ and we conclude that the image of $A_{0,J}$ under the isomorphism $H^1(W_F,\hat S_J \stackrel{1-\hat\theta}{\lrw} \hat S_J) \to H^1(W_F,\hat S_I \stackrel{1-\hat\Theta}{\lrw} \hat S_I)$ equals $A_{0,I}$.
\end{proof}

\begin{cor} \label{cor:indprodd}
Let $\phi_H : L_F \to {^LH}$ and $\tilde s_H \in \tilde S_{\phi_H}$. Write $\tilde s_H = \check s_H \rtimes b$. Fix a section $l : A \lmod B/\<b\> \to B$. For each $d \in A \lmod B/\<b\>$ define $\tilde s_d=\tx{ev}_{l(d)}(\tilde s_H^{n_d})$ and $\phi_{d,0}=\tx{ev}_{l(d)} \circ \phi_{H,0}$. Then $\phi_d : L_F \to {^LG}$ and $\tilde s_d \in \tilde S_{\phi_d}$.

\begin{enumerate}
	\item Let $\mf{e}_H$ and $\mf{e}_d$ be the endoscopic data associated to $(\tilde s_H,\phi_H)$ and $(\tilde s_d,\phi_d)$. There is a natural identification
	\[ \mf{e}_H = \prod_d \mf{e}_d. \]
	\item Fix a $z$-pair $\mf{z}_H$ for $\mf{e}_H$ and express it as $\prod_d \mf{z}_d$. If $\gamma^{\mf{z}_H}$ corresponds to $(\gamma^{\mf{z}_d})_d$ under this identification, and the stable class of $\tilde h' \in [H \rtimes b^{-1}]_{z_H}(F)$ corresponds to the stable class of $(\tilde g'_d)_d$ under the bijection of Corollary \ref{cor:indorb}, then
	\[ \Delta_\tx{KS}(\gamma^{\mf{z}_H},\tilde h') = \prod_d \Delta_\tx{KS}(\gamma^{\mf{z}_d},\tilde g'_d). \]
	\item The transfer of the function $R_{\tilde h}^{-1}f_H \in \mc{C}^\infty_c(\tilde H_{z_H}(F))$ to $\mf{z}_H$ equals the tensor product over $d$ of the transfers of the functions $R_{\tilde g_d}^{-1}(f_{d,0}*\dots*f_{d,n_d-1}) \in \mc{C}^\infty_c(\tilde G_{z_G}(F))$ to $\mf{z}_d$.
\end{enumerate}
\end{cor}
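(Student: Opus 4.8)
The plan is to reduce all three assertions to Lemma \ref{lem:basictwistd} by means of the product decompositions of Lemmas \ref{lem:indprodd0}, \ref{lem:indprodd}, \ref{lem:indprod0}, \ref{lem:indprod} and of Corollary \ref{cor:indorb}, and then to obtain the statement about transfer of functions from the fact that, by Lemma \ref{lem:trans}, a transfer is determined by its stable orbital integrals. Throughout I would work with the double-coset decomposition $B = \bigsqcup_d A\,l(d)\,\langle b\rangle$ and the section $s : A \lmod B \to B$ of \eqref{eq:sec1}.

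First I would dispose of the elementary assertions preceding (1): that $\phi_d = \phi_{d,0}(-)\rtimes(-)$ is a homomorphism $L_F \to {^LG}$ and that $\tilde s_d \in \tilde S_{\phi_d}$. After conjugating $\phi_H$ within $\hat H$ one may assume $\phi_{H,0}(x,al(d)b^i) = a\,\phi_{d,0}(x)$; evaluating the cocycle identity for $\phi_{H,0}$ at the coordinate $l(d)$ then yields that for $\phi_{d,0}$, and the commutation $\tilde s_H \in \tilde S_{\phi_H}$ restricts, via Lemma \ref{lem:indprodd}, to $\tilde s_d \in \tilde S_{\phi_d}$. For (1) I would then invoke Lemma \ref{lem:indprodd}, which exhibits an isomorphism $\hat H \cong \prod_d \hat G^{n_d}$ under which conjugation by $\phi_H(x)$ becomes the diagonal action of $(\phi_d(x))_d$ and conjugation by $\tilde s_H$ becomes, on the $d$-th factor, the $\Theta_d$-twisted conjugation by $\tilde s_d$, where $\Theta_d$ is the cyclic shift composed with $a_d$ in the last coordinate. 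Hence the spectral endoscopic datum attached to $(\tilde s_H,\phi_H)$ decomposes as a product over $d$ of the spectral data obtained on the factors $G^{n_d}\rtimes\Theta_d$; applying Lemma \ref{lem:basictwistd}(1)--(4) to each factor, with $J=G$, $\theta=a_d$, $I=G^{n_d}$, $\Theta=\Theta_d$, $n=n_d$, identifies that $d$-th datum with $\mf{e}_d$, the spectral datum of $(\tilde s_d,\phi_d)$ for $G\rtimes a_d$. Since $\mc{I}^\mf{e}=\mc{J}^\mf{e}$ in that lemma, $G^{\mf{e}_H}=\prod_d G^{\mf{e}_d}$ and a $z$-pair $\mf{z}_H$ for $\mf{e}_H$ is a product $\prod_d\mf{z}_d$; this establishes (1).

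For (2) I would use that each factor in \eqref{eq:pure_tf1} --- the sign $e(\cdot)$ (Fact \ref{fct:sgn_prod}), $\epsilon_L(V,\psi)$, $\Delta_I^\tx{new}$, $\Delta_{II}$, $\Delta_{III}^\tx{new}$, $\Delta_{IV}$ --- is multiplicative for direct products of groups once compatible auxiliary data (a pinning, a Whittaker datum, $a$-data and $\chi$-data) are fixed, and such a compatible choice exists because $R_\tx{res}(S_H,H)=\bigsqcup_d R_\tx{res}(S_{I_d},I_d)$. Writing $\delta'_{d,\bullet}$ for the $d$-th block of the image of $\tilde h'$ under the isomorphism of Lemma \ref{lem:indprod}, this yields $\Delta_{KS}(\gamma^{\mf{z}_H},\tilde h') = \prod_d \Delta_{KS}(\gamma^{\mf{z}_d},\delta'_{d,\bullet}\rtimes\Theta_d)$, and Lemma \ref{lem:basictwistd}(6) rewrites the $d$-th factor as $\Delta_{KS}(\gamma^{\mf{z}_d},\tilde g'_d)$, because $\tilde g'_d=\tx{ev}_{l(d)}(\tilde h'^{n_d})$ is the image of $\delta'_{d,\bullet}$ under the multiplication map up to the distinguished twisting element --- which is exactly the correspondence of Corollary \ref{cor:indorb}(1). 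This proves (2).

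For (3) I would compare stable orbital integrals on $G^{\mf{z}_H}(F)=\prod_d G^{\mf{z}_d}(F)$. On the right-hand side a stable orbital integral of a tensor product factors over $d$, with $d$-th factor $\sum_{\tilde g'_d}\Delta_{KS}(\gamma^{\mf{z}_d},\tilde g'_d)\,O_{\tilde g'_d}\big(R_{\tilde g_d}^{-1}(f_{d,0}*\dots*f_{d,n_d-1})\big)$ by the defining property of transfer (Lemma \ref{lem:trans}); on the left-hand side $SO_{\gamma^{\mf{z}_H}}$ of the transfer of $R_{\tilde h}^{-1}f_H$ is $\sum_{\tilde h'}\Delta_{KS}(\gamma^{\mf{z}_H},\tilde h')\,O_{\tilde h'}(R_{\tilde h}^{-1}f_H)$, where Corollary \ref{cor:indorb}(1) reparameterizes the sum by tuples $(\tilde g'_d)_d$, part (2) expands the transfer factor, and Corollary \ref{cor:indorb}(2) expands the orbital integral as $\prod_d O_{\tilde g'_d}\big(R_{\tilde g_d}^{-1}(f_{d,0}*\dots*f_{d,n_d-1})\big)$. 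The two expressions then agree term by term, whence (3) again by Lemma \ref{lem:trans}. I expect the only genuine obstacle to be organizational: the mathematical content is entirely contained in Lemma \ref{lem:basictwistd}, especially its identity of transfer factors, and the present corollary is a formal assembly of that lemma over the double-coset decomposition of $B$; the care needed is in tracking twisting elements and coset representatives, in checking the compatibility of the chosen auxiliary data across the factors $I_d=G^{n_d}$, and in verifying that the isomorphism $\hat H\cong\prod_d\hat G^{n_d}$ carries the spectral datum of $(\tilde s_H,\phi_H)$ to the product of those of the $(\tilde s_d,\phi_d)$.
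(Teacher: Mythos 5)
Your proposal is correct and follows essentially the same route as the paper: decompose $H$ and $\hat H$ via Lemmas \ref{lem:indprod0} and \ref{lem:indprodd0} (applied to $b^{-1}$) into blocks $G^{n_d}$ and $\hat G^{n_d}$, conjugate $\phi_{H,0}$ to be constant along each cyclic orbit, reduce to Lemma \ref{lem:basictwistd} on each block, and then for parts (2) and (3) combine the multiplicativity of the transfer factor for direct products with the orbital-integral identities of Corollary \ref{cor:indorb}.

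Two small points worth flagging. First, in part (1) the block-wise application of Lemma \ref{lem:basictwistd} does not literally produce $\mf{e}_d$: it produces a datum $\mf{e}_d'$ with distinguished element $\check s_d' \rtimes a_d$, $\check s_d' = \check s_{d,n_d-1}\cdots\check s_{d,0}$, whereas $\tilde s_d = \tx{ev}_{l(d)}(\tilde s_H^{n_d})$ is a different (but conjugate) element. The paper supplies the explicit isomorphism $\mf{e}_d' \to \mf{e}_d$, namely conjugation by $(\check s_{d,n_d-1}\cdots\check s_{d,1})^{-1}$; since Lemma \ref{lem:basictwistd}(3) is a bijection only on isomorphism classes, this conjugation is a genuine (if routine) step that your argument omits. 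Second, the citation of Lemma \ref{lem:basictwistd}(6) in part (2) should be to part (5) of that lemma, which is the transfer-factor identity. Neither issue affects the validity of your overall strategy, which matches the paper's.
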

\begin{proof}
We consider the isomorphism 
\[ H \to \prod_d \prod_{i=0}^{n_d-1} G,\qquad h \mapsto \prod_d \prod_{i=0}^{n_d-1} h(l(d)b^{-i}) \]
of Lemma \ref{lem:indprod0} and the isomorphism
\[ \hat H \to \prod_d \prod_{i=0}^{n_d-1} \hat G,\qquad \check h \mapsto \prod_d \prod_{i=0}^{n_d-1} \check h(l(d)b^{-i}) \]
of Lemma \ref{lem:indprodd0}, both applied to the element $b^{-1}$. They are dual to each other (cf. \eqref{eq:ind_duality}). The first of them translates $b^{-1}$ to $(\Theta_d)_d$ and hence $a_d^{-1}$ to $\theta_d$, while the second translates $b$ to $(\hat\Theta_d)_d$ and $a_d$ to $\hat\theta_d$.

Since the $\hat H$-conjugacy class of $\phi_H$ is $b$-invariant, we may conjugate $(\phi_H,\tilde s_H)$ by $\hat H$ to arrange that the image of $\phi_H^0$ under the second isomorphism is of the form $(\phi_d^0,\dots,\phi_d^0)_d$ for parameters $\phi_d : L_F \to {^LG}$. Let $(\check s_{d,i})$ be the image of $\check s_H$. Then Lemma \ref{lem:basictwistd} gives the identification $\mf{e}_H=\prod_d \mf{e}_d'$, where $\mf{e}_d'$ is the endoscopic datum associated to $(\tilde s_d,\phi_d)$ with $\tilde s_d'=\check s_d' \rtimes a_d$ and 
\[ \check s_d'=\check s_{d,n_d-1}\dots \check s_{d,0}=\check s_H(l(d)b^{1-n_d})\cdot\check s_H(l(d)b^{2-n_d})\dots \check s_H(l(d)). \]
The element $(\check s_{d,n_d-1}\dots \check s_{d,1})^{-1}$ conjugates $\tilde s_d'$ to $\tilde s_d$ and hence provides an isomorphism $\mf{e}_d' \to \mf{e}_d$.

Let $\tilde h' = h' \rtimes b^{-1}$ and let $(g_{d,i})$ be the image of $h'$. It is immediate that $g_{d,0}\dots g_{d,n_d-1}\rtimes a_d^{-1}=\tx{ev}_{l(d)}(\tilde h')=\tilde g_d'$, therefore Lemma \ref{lem:basictwistd} 
implies
\[ \Delta_{KS}(\gamma^\mf{z},\tilde h')=\prod_d \Delta_{KS}(\gamma^{\mf{z}_d},\tilde g'_d).\]

The identification of transfers of functions follows from the equation
\begin{eqnarray*}
SO_{\gamma^{\mf{z}_H}}((R_{\tilde h}^{-1}f_H)^{\mf{e}_H})&=&\sum_{\tilde h'} \Delta_\tx{KS}(\gamma^{\mf{z}_H},\tilde h') O^H_{\tilde h'}(R_{\tilde h}^{-1}f_H)\\
&=&\prod_d\sum_{\tilde g'_d} \Delta_\tx{KS}(\gamma^{\mf{z}_d},\tilde g'_d) O^{G}_{\tilde g'_d}(R_{\tilde g_d}^{-1}(f_{d,0}*\dots*f_{d,n_d-1}))\\
&=&\prod_d SO_{\gamma^{\mf{z}_d}}((R_{\tilde g_d}^{-1}(f_{d,0}*\dots*f_{d,n_d-1}))^{\mf{e}_d}).
\end{eqnarray*}
Here $\tilde h'$ runs over the set of $H_{z_H}(F))$-classes in $[H \rtimes b^{-1}]_{z_H}(F)$. We have used Corollary \ref{cor:indorb} for $b^{-1}$ to identify this set with the set $(\tilde g_d')_d$ of $G_{z_G}(F)^d$-conjugacy classes in $\prod_d [G \rtimes a_d^{-1}]_{z_G}(F)$, and to related the corresponding orbital integrals.
\end{proof}

We continue with $z_G$ and $\phi_G$ whose equivalence classes are $A$-fixed and consider $\pi_G \in \Pi_{\phi_G}$ corresponding to $\rho_G \in \tx{Irr}(\pi_0(S_{\phi_G}^+))$ whose equivalence classes are also $A$-fixed (provided these exist). Recall that $z_G$ and $\phi_G$ determine $B$-fixed equivalence classes of $z_H$ and $\phi_H$, and that $\pi_G$ and $\rho_G$ determine $B$-fixed isomorphism classes of representations $\pi_H$ of $H_{z_H}(F)$ and $\rho_H$ of $\pi_0(\tilde S_{\phi_H}^+)$. Assume given an extension of $\pi_G \boxtimes \rho_G^\vee$ from $G_{z_G}(F) \times \pi_0(S_{\phi_G}^+)$ to $\tilde G_{z_G}(F) \times_A \pi_0(\tilde S_{\phi_G}^+)$. We claim that it determines an extension of $\pi_H \boxtimes \rho_H^\vee$ from $H_{z_H}(F) \times \pi_0(S_{\phi_H}^+)$ to $\tilde H_{z_H}(F) \times_B \pi_0(\tilde S_{\phi_H}^+)$. To see this, fix $g_a \in G$ and $\tilde\pi_G(g_a \rtimes a) : V_{\pi_G} \to V_{\pi_G}$ satisfying Equations \eqref{eq:ind1a} and \eqref{eq:ind1b}, and fix analogously $\check g_a \in \hat G$ and $\tilde\rho_G(\check g_a \rtimes a) : V_{\rho_G} \to V_{\rho_G}$ satisfying Equations \eqref{eq:ind1ad} and \eqref{eq:ind1bd}. We demand that these choices are made in such a way that the restriction to $\tilde G_{z_G}(F) \times_A \pi_0(\tilde S_{\phi_G}^+)$ of the exterior tensor product $\tilde\pi_G \boxtimes \tilde\rho_G^\vee$ is the given extension of $\pi_G \boxtimes \rho_G^\vee$. We fix a section $s : A \lmod B \to B$ and according to Lemma \ref{lem:ind1} we can take $z_H(w,as(c))=a(z_G(w))$ and $\pi_H(h)=\otimes_c \pi_G(h(s(c)))$ acting on $V_{\pi_G}^{\otimes_c}$, and according to Lemma \ref{lem:ind2d} we can take $\phi_{H,0}(x,as(c))=a\phi_{G,0}(x)$ and $\rho_H(\check h)=\otimes_c \rho_G(\check h(s(c)))$. We then define for each $b \in B$ the element $h_b \in H$ and the isomorphism $\tilde\pi_H(h_b \rtimes b)$ as in Lemma \ref{lem:ind2} and the element $\check h_b \in \hat H$ and the isomorphism $\tilde\rho_H(\check h_b \rtimes b)$ as in Lemma \ref{lem:ind2d} and consider $\tilde\pi_H \boxtimes \tilde\rho_H^\vee$ restricted to $\tilde H_{z_H}(F) \times_B \pi_0(\tilde S_{\phi_H}^+)$. According to Lemmas \ref{lem:ind2} and \ref{lem:ind2d} this is a linear representation and extends $\pi_H \boxtimes \rho_H^\vee$. 

\begin{lem} \label{lem:ind4}
The restriction of $\tilde\pi_H \boxtimes \tilde\rho_H^\vee$ to $\tilde H_{z_H}(F) \times_B \pi_0(\tilde S_{\phi_H}^+)$ is independent of the choices of $g_a$, $\tilde\pi_G(g_a \rtimes a)$, $\check g_a$, and $\tilde\rho_G(\check g_a \rtimes a)$.
\end{lem}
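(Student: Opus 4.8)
\textbf{Proof plan for Lemma \ref{lem:ind4}.}

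The strategy is to reduce the independence claim to two already-established independence statements: the one for $\tilde G_{z_G}(F) \times_A \pi_0(\tilde S_{\phi_G}^+)$ baked into the hypothesis (the restriction of $\tilde\pi_G \boxtimes \tilde\rho_G^\vee$ is \emph{the given extension}), and the intrinsic independence statements in Lemma \ref{lem:ex1} (the construction of $\tilde X$ is independent of the auxiliary choices of projective extensions, provided the compatibility $\tilde y = \tilde x\circ\tx{Ad}(a^{-1})$ for $y = ax$ holds). Concretely, I would argue as follows. Suppose $(g_a', \tilde\pi_G(g_a'\rtimes a)')$ and $(\check g_a', \tilde\rho_G(\check g_a'\rtimes a)')$ is another collection of choices subject to the same demand, namely that the restriction of $\tilde\pi_G\boxtimes\tilde\rho_G^{\vee,\prime}$ to $\tilde G_{z_G}(F)\times_A \pi_0(\tilde S_{\phi_G}^+)$ is again the given extension. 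The key point is that the construction of $\tilde\pi_H$ in Lemma \ref{lem:ind2} and of $\tilde\rho_H$ in Lemma \ref{lem:ind2d} factors through the external tensor product $\tilde\pi_G\boxtimes\tilde\rho_G^\vee$ \emph{only through its restriction to the fibered product}. So I would first record that two choices of $(g_a,\tilde\pi_G(g_a\rtimes a),\check g_a,\tilde\rho_G(\check g_a\rtimes a))$ differ, on the level of the element $\tilde\pi_G(g_a\rtimes a)\boxtimes\tilde\rho_G(\check g_a\rtimes a)^\vee$, by a $1$-cochain $c \in C^1(A,\C^\times)$; the hypothesis that both restrict to the same extension of $\pi_G\boxtimes\rho_G^\vee$ forces $c$ to be trivial on the relevant subgroup, i.e. $c$ measures only a scalar ambiguity that has already been pinned down.

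Next I would trace how this scalar ambiguity propagates through the formulas of Lemma \ref{lem:ind2} and Lemma \ref{lem:ind2d}. The element $h_b \in H$ is built as $h_b(as(c)) = a(g_{r(s(c)b)})$, and the isomorphism $\tilde\pi_H(h_b\rtimes b)$ as a tensor product over $c\in A\lmod B$ of the maps $\tilde\pi_G(g_{r(s(c)b)}\rtimes r(s(c)b))$; the same pattern holds on the dual side. Therefore, replacing the $G$-level choices by $c\cdot(\text{old choices})$ multiplies $\tilde\pi_H(h_b\rtimes b)\boxtimes\tilde\rho_H(\check h_b\rtimes b)^\vee$ by a product $\prod_{c\in A\lmod B}$ of the corresponding scalar discrepancies, which, since the $G$-level discrepancy is already trivial on $\tilde G_{z_G}(F)\times_A\pi_0(\tilde S_{\phi_G}^+)$, yields an $H$-level discrepancy that is trivial on $\tilde H_{z_H}(F)\times_B \pi_0(\tilde S_{\phi_H}^+)$. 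This is precisely the content of the corestriction identity in Lemmas \ref{lem:ind2} and \ref{lem:ind2d}: the cocycle discrepancy for $H$ is the corestriction along $s$ of the cocycle discrepancy for $G$, and corestriction carries coboundaries to coboundaries; but here the $G$-discrepancy is not merely a coboundary in $Z^2(A,\C^\times)$ — it is the trivial element, because both $\tilde\pi_G\boxtimes\tilde\rho_G^\vee$ restrict to the \emph{same} representation, not merely isomorphic ones. Hence the $H$-discrepancy is trivial as well, and the restricted representation of $\tilde H_{z_H}(F)\times_B\pi_0(\tilde S_{\phi_H}^+)$ does not change.

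Finally, I would also check that changing the choices $g_a, \check g_a$ of the group elements themselves (not just the vector-space isomorphisms) does not matter. This was already observed in the discussion preceding Lemma \ref{lem:ind1}: any other choice of $g_a$ differs by an element of $G_{z_G}(F)$, and one correspondingly modifies $\tilde\pi_G(g_a\rtimes a)$ by the action of $\pi_G$ of that element; similarly for $\check g_a$. Plugging these into the formulas of Lemma \ref{lem:ind2} and \ref{lem:ind2d} for $h_b$ and $\check h_b$ changes $h_b$ by an element of $H_{z_H}(F)$ and $\tilde\pi_H(h_b\rtimes b)$ correspondingly, so that the resulting representation of $\tilde H_{z_H}(F)\times_B\pi_0(\tilde S_{\phi_H}^+)$ is literally unchanged as a function on that group. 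Assembling these two observations gives the claim.

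\textbf{Main obstacle.} The delicate point is bookkeeping: one must be careful to distinguish the \emph{extension as an abstract isomorphism class} (where corestriction of a coboundary gives a coboundary, Lemmas \ref{lem:ind2}, \ref{lem:ind2d}) from the \emph{extension as a specific representation on a fixed vector space} (where the hypothesis fixes it on the nose). The lemma is asserting the latter, strict, statement, so the argument must verify that every scalar ambiguity that could appear is killed not up to isomorphism but identically — which relies on the fact that the inducing data $\tilde x$ in Lemma \ref{lem:ex1} was itself required to restrict to the given representation, so the relevant $1$-cochain is $1$, not merely a coboundary. Once this is kept straight, the rest is a mechanical substitution into the explicit formulas already displayed in Lemmas \ref{lem:ind2} and \ref{lem:ind2d}.
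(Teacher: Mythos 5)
Your proposal is correct and follows essentially the same two-step strategy as the paper: first vary the intertwiners $\tilde\pi_G(g_a\rtimes a)$, $\tilde\rho_G(\check g_a\rtimes a)$ keeping $g_a,\check g_a$ fixed, and show the opposite scalar ambiguities cancel in $\tilde\pi_H\boxtimes\tilde\rho_H^\vee$ because they already cancel at the $G$-level by hypothesis; then vary $g_a,\check g_a$ by elements of $G_{z_G}(F)$, $S_{\phi_G}^+$ and check via the explicit formulas that the resulting $h_b,\check h_b,\tilde\pi_H,\tilde\rho_H$ are literally unchanged. One small caution on your ``main obstacle'' paragraph: the individual $1$-cochains measuring the change in $\tilde\pi_G$ and in $\tilde\rho_G^\vee$ need not each be trivial—only their product is—so it is the cancellation, not triviality of each cochain, that does the work.
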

\begin{proof}
Keeping $\{g_a\}$ and $\{\check g_a\}$ fixed, for any $a$ the isomorphism $\tilde\pi_G(g_a \rtimes a)$ can only be changed to $z_a\tilde\pi_G(g_a \rtimes a)$ for some $z_a \in \C^\times$. Since the restriction of $\tilde\pi_G \boxtimes \tilde\rho_G$ to $\tilde G_{z_G}(F) \times_A \pi_0(\tilde S_{\phi_G}^+)$ is fixed, this means that $\tilde\rho_G(\check g_a \rtimes a)$ must be changed to $z_a^{-1}\tilde\rho_G(\check g_a \rtimes a)$. Now $\tilde\pi_H(h_b \rtimes b)$ is multiplied by $z_a$ raised to the power of the cardinality of $\{c|a=r(s(c)b)\}$, while $\tilde\rho_H(\check h_b \rtimes b)$ is multiplied by $z_a^{-1}$ raised to the same power, so we see that the restriction of $\tilde\pi_H \boxtimes \rho_H^\vee$ to $\tilde H_{z_H}(F) \times_B \pi_0(\tilde S_{\phi_H}^+)$ remains unchanged.

Replace now $g_a$ by $g'_a = g^0_ag_a$ and $\check g_a$ by $\check g'_a=\check g^0_a\check g_a$ for $g^0_a \in G_{z_G}(F)$ and $\check g^0_a \in S_{\phi_G}^+$. By the previous argument the choices of $\tilde\pi_G(g'_a \rtimes a)$ and $\tilde\rho_G(\check g'_a \rtimes a)$ will not influence the construction. We choose them to be $\pi_G(g^0_a)\circ\tilde\pi_G(g_a \rtimes a)$ and $\rho_G(\check g^0_a) \circ \tilde\rho_G(\check g_a \rtimes a)$ respectively. We claim that $\tilde\pi_H$ and $\tilde\rho_H$ are both unchanged. Indeed, $h_b$ is now replaced by $h'_b$ defined by $h'_b(as(c))=a(g'_{r(s(c)b)})=a(g^0_{r(s(c)b)})\cdot a(g_{r(s(c)b)})$. Define $h^0 \in H$ by $h^0(as(c))=a(g^0_{r(s(c)b)})$. Then $h^0 \in H_{z_H}(F)$ and $h'_b =h^0h_b$. The new choices now stipulate 
\[ \tilde\pi_H(h'_b \rtimes b)_c=\tilde\pi_G(g'_{r(s(c)b)} \rtimes r(s(c)b)) = \pi_G(g^0_{r(s(c)b)}) \circ \tilde\pi_G(g_{r(s(c)b)} \rtimes r(s(c)b)), \] 
while according to the old choices we have 
\[ \tilde\pi_H(h'_b \rtimes b)=\pi_H(h^0)\circ\tilde\pi_H(h_b \rtimes b) \]
and we see that both of these values for $\tilde\pi_H(h'_b \rtimes b)_c$ agree. The argument for $\tilde\rho_H$ is analogous.
\end{proof}

Thus far we have focused on the setting in which the objects $z_H$, $\pi_H$, $\phi_H$, $\rho_H$ have equivalence classes fixed by $B$. In general this will not be the case, but one can reduce to that case by a simple application of Mackey theory, at the expense of introducing rather cumbersome notation. This is what we turn to next.

Let $B' \subset B$ be a subgroup. Fix a section $l : A \lmod B/B' \to B$ of the natural projection. For each $d \in A \lmod B/B'$ let $A_d=l(d)^{-1}Al(d)$, $A'_d=A_d \cap B'$, and write $G^d$ for the group $G$ with action of $A_d$ defined by $(l(d)^{-1}al(d)) \cdot_d g=ag$. Fix a section $s_d : A'_d \lmod B' \to B'$ of the natural projection. Then each element of $B$ has a unique expression of the form $b=al(d)s_d(c'_d)$ for $d \in A \lmod B/B'$ and $c'_d \in A'_d \lmod B'$. This gives a section $A \lmod B \to B$.

\begin{cor} \label{cor:ind6}

\begin{enumerate}
	\item Let $z_H \in Z^1(u \to W,Z(H)^{B'} \to H)$ have a $B'$-fixed class. For each $d \in A \lmod B/B'$ we obtain an element of $Z^1(u \to W,Z(G^d)^{A'_d} \to G^d)$ by $z_{G^d}(w)=z_H(w,l(d))$, and the map $z_H \mapsto (z_{G^d})_d$ is a bijection between $H^1(u \to W,Z(H)^{B'} \to H)^{B'}$ and $\prod_d H^1(u \to W,Z(G^d)^{A'_d} \to G^d)^{A'_d}$.

	\item Let $\pi_H$ be an irreducible representation of $H_{z_H}(F)$ whose class is $B'$-fixed. For each $d \in A \lmod B/B'$ we obtain an irreducible representation $\pi_{G^d}$ of $G^d_{z_{G^d}}(F)$ on $V_{l(d)}$ by $\pi_{G^d}(g)=\pi_H^{l(d)}(g)$. The map $\pi_H \mapsto (\pi_{G^d})_d$ is a bijection between $\tx{Irr}(H_{z_H}(F))^{B'}$ and $\prod_d \tx{Irr}(G^d_{z_{G^d}}(F))^{A'_d}$.

	\item Let $\phi_H(x)=\phi_{H,0}(x) \rtimes x$ be a Langlands parameter for $H$ whose class is $B'$-fixed. For each $d \in A \lmod B/B'$ we obtain a Langlands parameter $\phi_{G^d,0}$ for $G^d$ by $\phi_{G^d,0}(x)=\phi_{H,0}(x,l(d))$. The map $\phi_H \mapsto (\phi_{G^d})_d$ is a bijection between $\Phi(H)^{B'}$ and $\prod_d \Phi(G^d)^{A'_d}$.

	\item Let $\rho_H$ be an irreducible representation of $\pi_0(S_{\phi_H}^+)$. For each $d \in A \lmod B/B'$ we obtain an irreducible representation of $\pi_0(S_{\phi_{G^d}}^+)$ by $\rho_{G^d}(\check g)=\rho_H^{l(d)}(\check g)$. The map $\rho_H \mapsto (\rho_{G^d})_d$ is a bijection between the sets $\tx{Irr}(\pi_0(S_{\phi_H}^+))^{B'}$ and $\prod_d \tx{Irr}(\pi_0(S_{\phi_{G^d}}^+))^{A'_d}$.

	\item The inverse of the above bijections are given by $z_H(w,al(d)s_d(c_d'))=a(z_{G,d}(w))$, $\phi_H(x,al(d)s_d(c_d'))=a\phi_{G^d}(x)$, $\pi_H(h)=\otimes_d \otimes_{c_d'} \pi_{G^d}(h(l(d)s_d(c_d')))$, and $\rho_H(\check h)=\otimes_d \otimes_{c_d'} \rho_{G^d}(\check h(l(d)s_d(c_d')))$.

	\item For each $d \in A \lmod B /B'$ and $a'_d \in A'_d$ choose $g_{a'_d} \in G^d$ and $\tilde\pi_{G^d}(g_{a'_d} \rtimes a'_d) : V_{\pi_{G^d}} \to V_{\pi_{G^d}}$ satisfying Equations \eqref{eq:ind1a} and \eqref{eq:ind1b} for $G^d \rtimes A'_d$. For each $b' \in B'$ define $h_{b'} \in H$ by $h_{b'}(al(d)s_d(c'_d))=ag_{r_d(s_d(c'_d)b')}$ and 
	\[ \tilde\pi_H(h_{b'} \rtimes b') : \otimes_d \otimes_{c'_d} V_{d,c'_d} \to \otimes_d \otimes_{c'_d} V_{d,c'_d} \] by 
	\[\tilde\pi_H(h_{b'} \rtimes b')(\otimes_d \otimes_{c'_d} v_{d,c'_d})=\otimes_d \otimes_{c'_d} \tilde\pi_{G^d}(g_{r_d(s_d(c'_d)b')} \rtimes r_d(s_d(c'_d)b'))(v_{d,c'_d \cdot b'}). \]
	Then these satisfy Equations \eqref{eq:ind1c} and \eqref{eq:ind1d}. Moreover, the 2-cocycle $\beta \in Z^2(B',\C^\times)$ for $\tilde\pi_H$ is the product over $d \in A \lmod B/B'$ of the co-restrictions (computed with respect to the sections $s_d$) of the 2-cocycles $\alpha_d \in Z^2(A'_d,\C^\times)$ for $\tilde\pi_{G^d}$.

	\item For each $d \in A \lmod B /B'$ and $a'_d \in A'_d$ choose $\check g_{a'_d} \in \hat G^d$ and $\tilde\rho_{G^d}(\check g_{a'_d} \rtimes a'_d) : V_{\rho_{G^d}} \to V_{\rho_{G^d}}$ satisfying Equations \eqref{eq:ind1ad} and \eqref{eq:ind1bd} for $\hat G^d \rtimes A'_d$. For each $b' \in B'$ define $\check h_{b'} \in \hat H$ by $\check h_{b'}(al(d)s_d(c'_d))=a\check g_{r_d(s_d(c'_d)b')}$ and 
	\[ \tilde\rho_H(\check h_{b'} \rtimes b') : \otimes_d \otimes_{c'_d} V_{d,c'_d} \to \otimes_d \otimes_{c'_d} V_{d,c'_d} \] by 
	\[\tilde\rho_H(h_{b'} \rtimes b')(\otimes_d \otimes_{c'_d} v_{d,c'_d})=\otimes_d \otimes_{c'_d} \tilde\rho_{G^d}(g_{r_d(s_d(c'_d)b')} \rtimes r_d(s_d(c'_d)b'))(v_{d,c'_d \cdot b'}). \]
	Then these satisfy Equations \eqref{eq:ind1cd} and \eqref{eq:ind1dd}. Moreover, the 2-cocycle $\beta \in Z^2(B',\C^\times)$ for $\tilde\rho_H$ is the product over $d \in A \lmod B/B'$ of the co-restrictions (computed with respect to the sections $s_d$) of the 2-cocycles $\alpha_d \in Z^2(A'_d,\C^\times)$ for $\tilde\rho_{G^d}$.

	\item Let $b_1 \in B$ and set $B_1'=b_1B'b_1^{-1}$. Define $l_1 : A \lmod B/B_1' \to B$ by $l_1(d_1)=l(d)b_1^{-1}$ for $d_1 \in A \lmod B/B_1'$ and $d=d_1b_1 \in A \lmod B/B'$. Define $s_{d_1}: A_{d_1}' \lmod B_1' \to B_1'$ by $s_{d_1}(c_{d_1}')=b_1s_d(c'_d)b_1^{-1}$ for $c_d' \in A_d' \lmod B'$ and $c_{d_1}'=b_1c_d'b_1^{-1} \in A_{d_1}'\lmod B_1'$.

	If $(z_H,\pi_H)$ corresponds to $(z_{G^d},\pi_{G^d})_d$ based on the choices of $l$ and $(s_d)$, then $b(z_H,\pi_H)$ corresponds to $(z_{G^{d_1}},\pi_{G^{d_1}})$ via the choices of $l_1$ and $(s_{d_1})$, where $z_{G^{d_1}}(w)=z_{G^d}(w)$ and $\pi_{G^{d_1}}=\pi_{G^d}$.

\end{enumerate}

\end{cor}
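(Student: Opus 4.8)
The plan is to prove Corollary \ref{cor:ind6} as a routine, if lengthy, application of Mackey theory combined with the two-step induction picture already set up in Lemmas \ref{lem:ind1}, \ref{lem:ind2}, \ref{lem:ind1d}, \ref{lem:ind2d}, together with their product analogues in Lemmas \ref{lem:indprod0}, \ref{lem:indprodd0}. The conceptual point is that for a subgroup $B' \subset B$, the double coset decomposition $A \backslash B / B'$ slices the induction $\tx{Ind}_A^B$ into pieces indexed by $d \in A \backslash B/B'$, each of which is an induction $\tx{Ind}_{A_d'}^{B'}$ of the twisted group $G^d$ (the group $G$ with the action of $A$ conjugated by $l(d)$, restricted to $A_d' = A_d \cap B'$). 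Concretely, the key structural identity underlying every point is the decomposition of $H = \tx{Ind}_A^B G$ as a group: evaluation at the representatives $\{l(d) s_d(c_d')\}$ of $A \backslash B$ (grouped by $d$) gives an isomorphism $H \cong \prod_d \tx{Ind}_{A_d'}^{B'} G^d$ compatible with the $B'$-action, analogous to Lemma \ref{lem:indprod0}, and dually on the $\hat H$ side as in Lemma \ref{lem:indprodd0}. Once this bookkeeping isomorphism is in place, each numbered assertion is obtained by feeding it into the corresponding one-step result.

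Here is the order I would carry this out. First, establish the group-theoretic decomposition $H \cong \prod_d \tx{Ind}_{A_d'}^{B'} G^d$ and its rational-structure compatibility with $z_H \mapsto (z_{G^d})_d$; point (1) is then immediate from applying Lemma \ref{lem:ind1}(1) to each factor $\tx{Ind}_{A_d'}^{B'} G^d$. Points (2), (3), (4) follow the same pattern: a $B'$-fixed object on $H$ (representation, parameter, dual representation) decomposes as a tuple of $A_d'$-fixed objects on $G^d$ via evaluation at $l(d)$, and the bijection claims reduce factor-by-factor to Lemma \ref{lem:ind1}(2), Lemma \ref{lem:ind1d}(1)--(2). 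Point (5) records the inverse maps, which is just the explicit reconstruction formula $z_H(w, al(d)s_d(c_d')) = a z_{G^d}(w)$ etc., and amounts to checking these are genuine cocycles/parameters/representations and that they invert points (1)--(4); this is the same verification as in Lemma \ref{lem:ind1}(3) performed in each $d$-block. Points (6) and (7) are the cohomological refinements: the projective extension data $\tilde\pi_H(h_{b'} \rtimes b')$ (resp.\ $\tilde\rho_H$) is assembled blockwise, so by Lemma \ref{lem:ind2} (resp.\ Lemma \ref{lem:ind2d}) applied to each $G^d$ the resulting $2$-cocycle $\beta \in Z^2(B',\C^\times)$ is the product over $d$ of the corestrictions (with respect to $s_d$) of the $2$-cocycles $\alpha_d \in Z^2(A_d',\C^\times)$. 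Finally, point (8) is a change-of-representatives compatibility: conjugating everything by $b_1 \in B$ replaces $B'$ by $B_1' = b_1 B' b_1^{-1}$ and, with the stated translated choices $l_1, s_{d_1}$, carries the $d$-block to the $d_1 = d b_1$-block; this is a direct computation tracking how $\tx{ev}_{l(d)}$ and $\tx{ev}_{l_1(d_1)} = \tx{ev}_{l(d) b_1^{-1}}$ differ.

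I expect the main obstacle to be purely organizational rather than mathematical: keeping the indexing consistent. One must fix, once and for all, the identification of $A \backslash B$ with $\bigsqcup_d \{l(d) s_d(c_d') : c_d' \in A_d' \backslash B'\}$, and then verify that the map $r_d : B' \to A_d'$ defined by $b' = r_d(b') s_d(\cdot)$ (the $G^d$-analogue of the map $r$ preceding Lemma \ref{lem:ind1}) interacts correctly with the global map $r : B \to A$. The cocycle identities \eqref{eq:ind1a}--\eqref{eq:ind1d} and their dual counterparts \eqref{eq:ind1ad}--\eqref{eq:ind1dd} must be seen to factor through the block decomposition; the only subtlety is that the $B'$-action permutes the cosets $c_d'$ within a fixed block $d$ (it does not mix blocks, since $d \in A \backslash B/B'$ is $B'$-stable on the right), so each block is genuinely an $\tx{Ind}_{A_d'}^{B'}$ situation and the one-step lemmas apply verbatim. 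I would therefore keep the proof short, stating that (1)--(5) follow by applying Lemmas \ref{lem:ind1} and \ref{lem:ind1d} blockwise via the decomposition $H \cong \prod_d \tx{Ind}_{A_d'}^{B'} G^d$, that (6)--(7) follow similarly from Lemmas \ref{lem:ind2} and \ref{lem:ind2d} together with the fact that corestriction along $A \backslash B$ factors as the product over $d$ of corestrictions along $A_d' \backslash B'$ composed with the block inclusion, and that (8) is a direct unwinding of the definitions of $l_1$ and $s_{d_1}$. All the genuinely new content was already proved in the one-step lemmas, and I would say so explicitly to justify not repeating the computations.
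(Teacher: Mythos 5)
Your proposal is correct and takes essentially the same route as the paper: the paper's proof opens with exactly the Mackey isomorphism $\tx{Res}^B_{B'}H \cong \prod_{d \in A \lmod B/B'} \tx{Ind}_{A'_d}^{B'}\tx{Res}_{A'_d}^{A_d}G^d$, identifies $\pi_H^{l(d)s_d(c'_d)}=\pi_{H_d}^{s_d(c'_d)}$ block-by-block, and then concludes points (1)--(5) from Lemma \ref{lem:ind1}/\ref{lem:ind1d} and (6)--(7) from Lemma \ref{lem:ind2}/\ref{lem:ind2d} applied factor-wise, just as you propose. One minor presentational point: the paper is careful to phrase the decomposition as a $B'$-equivariant isomorphism of $\tx{Res}^B_{B'}H$ (rather than of $H$ with its full $B$-action), which is what makes the block decomposition respect the $B'$-invariance hypotheses; you note the $B'$-compatibility informally, and you also outline point (8) (which the paper's proof leaves implicit), so nothing is missing.
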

\begin{proof}
We have the Mackey isomorphism
\[ \tx{Res}^B_{B'}H \to \prod_{d \in A \lmod B /B'} \tx{Ind}_{A'_d}^{B'} \tx{Res}_{A'_d}^{A_d}G^d. \]
It sends $h \in H$ to the collection $(h_d)_d$ given by $h_d(b')=h(l(d)b')$. Write $H_d=\tx{Ind}_{A'_d}^{B'} \tx{Res}_{A'_d}^{A_d}G^d$, so that $\tx{Res}^B_{B'}H=\prod H_d$.

The element $z_H$ is mapped to the collection $(z_{H_d})_d$, where $z_{H_d} \in Z^1(u \to W,Z(H_d)^{B'} \to H_d)$. The class of each $z_{H_d}$ is $B'$-invariant, as one checks by sending \eqref{eq:ind1c} through the Mackey isomorphism. In turn, $z_{H_d}$ corresponds by Lemma \ref{lem:ind1} to $z_{G,d} \in Z^1(u \to W,Z(G^d)^{A'_d} \to G^d)$. Explicitly, we have $z_H(w,al(d)s_d(c_d'))=a(z_{H_d}(w,s_d(c_d')))=a(z_{G^d}(w))$.

According to the product $H=\prod_d H_d$, the representation $\pi_H$ is given by $\otimes\pi_{H_d}$, where $\pi_{H_d}$ is a representation of $H_{d,z_{H_d}}(F)$ on a vector space $V_d$. Thus $\pi_H$ acts on $\otimes_d V_{d}$ as $\pi_H(h)=\otimes_d \pi_{H_d}(h_d)$. Therefore each $\pi_{H_d}$ acts on $\otimes_{c_d'} V_{d,c_d'}$ as $\pi_{H_d}(h_d)=\otimes_{c_d'}\pi_{H_d}^{c_d'}(h_d(c_d'))$. Thus we have $\pi_H^{l(d)s_d(c_d')}=\pi_{H_d}^{s_d(c_d')}$ acting on $V_{d,c_d'}$. The class of each $\pi_{H_d}$ is $B'$-invariant, so $\pi_{H_d}$ corresponds to the representation $\pi_{G^d}$ of $G^d$ on the vector space $V_{d,1}$ given by $\pi_{G^d}(g)=\pi_{H_d}^1(g)=\pi_H^{l(d)}(g)$.

The statements concerning $z_H$ and $\pi_H$ now follow immediately from Lemma \ref{lem:ind1} by taking products over $d$. The statement about $\tilde\pi_H$ follows from Lemma \ref{lem:ind2}. The argument for the dual side is analogous, using Lemmas \ref{lem:ind1d} and \ref{lem:ind2d} instead.
\end{proof}

We are almost ready to prove the main result, Proposition \ref{pro:ind}. The final bit that is required is the following elementary discussion of signs.

\begin{lem} \label{lem:ind_eps}
	Let $f : A \lmod B \to \{\pm1\}$ be a function. Define the subset $B^f \subset G$ and the function $\epsilon_f : B^f \to \{\pm1\}$  by 
	\begin{eqnarray*}
		B^f&=&\{b \in B|\forall b' \in B:f(b'b)=f(b')\}\\
		\epsilon(b)&=&\prod_{d \in A\lmod B/\<b\>} f(d)^{n_d-1},
	\end{eqnarray*}
	where for $d \in A \lmod B$ the number $n_d$ is the size of the orbit through $d$ for the action of $\<b\>$ on $A \lmod B$ by multiplication on the right. Then $B^f$ is a subgroup of $B$ and $\epsilon$ is a group homomorphism.
\end{lem}
\begin{proof}
	The action of $B$ on $A \lmod B$ by right multiplication induces an action of $B$ on all functions $A \lmod B \to \{\pm1\}$. The subset $B^f$ is the stabilizer of $f$ for this action, and is thus a subgroup.

	Let $b \in B^f$. The value $\epsilon(b)$ is a product over the $\<b\>$-orbits in $A \lmod B$.Write $(A \lmod B) = (A \lmod B)^+ \cup (A \lmod B)^-$ according to the value of $f$ being $+1$ or $-1$. The action of $B^f$ preserves this decomposition. A $\<b\>$-orbit lying in $(A \lmod B)^+$ contributes a factor of $+1$ to the value of $\epsilon(b)$. A $\<b\>$-orbit lying in $(A \lmod B)^-$ contributes a factor of $(-1)^{n_d-1}$ to the value of $\epsilon(b)$. But this factor is exactly the signature of the permutation that $b$ induces on this orbit. We conclude that $\epsilon(b)=\tx{sgn}(b|(A \lmod B)^-)$.
\end{proof}

\begin{pro} \label{pro:ind}
Assume that Conjectures \ref{cnj:func} and \ref{cnj:coset} hold for $G \rtimes A$. Then they also hold for $H \rtimes B$.
\end{pro}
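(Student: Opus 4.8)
The plan is to verify Conjecture \ref{cnj:coset} for $H \rtimes B$ by bootstrapping from its validity for $G \rtimes A$, using the inductive structure systematically set up in Lemmas \ref{lem:ind1} through \ref{lem:ind4} and Corollaries \ref{cor:indorb}, \ref{cor:indprodd}, \ref{cor:ind6}. The overall strategy mirrors the reductions already carried out: reduce to the case where all relevant equivalence classes are $B$-fixed, then reduce that to a product over a single $\langle b\rangle$-orbit, and finally invoke the cyclic case where the required canonical extension $(\pi\boxtimes\rho^\vee)^\tx{can}$ is produced by hand.

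First I would fix a tempered parameter $\phi_H : L_F \to {^LH}$ together with $\pi_H \in \Pi_{\phi_H}(H_{z_H})$ and its partner $\rho_H \in \tx{Irr}(\pi_0(S_{\phi_H}^+))$, and apply Corollary \ref{cor:ind6} with $B' = B_{\pi_H}^{[z_H]} = B_{\rho_H}^{[\phi_H]}$ (these coincide by Conjecture \ref{cnj:func}) to break the data into a family $(z_{G^d}, \pi_{G^d}, \phi_{G^d}, \rho_{G^d})_{d \in A \backslash B / B'}$, each of whose components has $A_d'$-fixed class. This replaces the single problem for $H \rtimes B$ with a product of problems, and Corollary \ref{cor:ind6}(6)--(7) shows the relevant $2$-cocycles $\alpha_{\pi_H}, \alpha_{\rho_H} \in H^2(B',\C^\times)$ are products of corestrictions of the $\alpha_{\pi_{G^d}}, \alpha_{\rho_{G^d}}$; since corestriction is functorial in $H^2$, the equality $\alpha_{\pi_{G^d}} = \alpha_{\rho_{G^d}}$ (which holds by Conjecture \ref{cnj:coset} for $G^d \rtimes A_d'$, each $G^d$ being $G$ with a relabelled action) gives $\alpha_{\pi_H} = \alpha_{\rho_H}$, so the extension $(\pi_H \boxtimes \rho_H^\vee)^\tx{can}$ to $\tilde H_{z_H}(F)_{\pi_H} \times_{B'} \pi_0(\tilde S_{\phi_H, \rho_H}^{+,[z_H]})$ exists. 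The discussion preceding Lemma \ref{lem:ind4}, together with Lemma \ref{lem:ind4} itself, shows this extension is canonical (independent of the auxiliary choices of $g_a, \tilde\pi_{G^d}, \check g_a, \tilde\rho_{G^d}$), and Remark \ref{rem:canext}'s compatibility with conjugation by $B$ follows from Corollary \ref{cor:ind6}(8).

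Next I would establish the character identity. Given $b \in B$ and $\tilde f_H \in \mc{C}^\infty_c([H \rtimes b]_{z_H}(F))$, choose a section $l : A \backslash B / \langle b \rangle \to B$ and decompose $\tilde f_H$ — via the isomorphism of Lemma \ref{lem:indprod} — as a tensor product of convolutions of functions on $G_{z_G}(F)$, writing $\tilde f_H = R_{\tilde h}^{-1} f_H$ as in the set-up before Corollary \ref{cor:indorb}. Corollary \ref{cor:indprodd}(1)--(3) then factorizes both the endoscopic datum $\mf{e}_H = \prod_d \mf{e}_d$, the transfer factor $\Delta_{KS}$, and the transfer $f_H^{\mf{z}_H} = \bigotimes_d f_d^{\mf{z}_d}$; meanwhile Corollary \ref{cor:indorb}(3) and Lemma \ref{lem:basictwist}(5) factorize the spectral side $\tx{tr}\,(\pi_H \boxtimes \rho_H^\vee)^\tx{can}(\tilde f_H, \tilde s_H^{-1}) = \prod_d \tx{tr}\,(\pi_{G^d} \boxtimes \rho_{G^d}^\vee)^\tx{can}(\cdots)$ — this last factorization is exactly what the tensor-product structure of $(\pi_H \boxtimes \rho_H^\vee)^\tx{can}$ from the construction before Lemma \ref{lem:ind4} delivers, combined with Lemma \ref{lem:basictwistd}(6) on the transfer-factor side. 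Since Conjecture \ref{cnj:coset} for each $G^d \rtimes A_d'$ asserts $S\Theta_{\phi_{G^d}^{\mf{z}_d}}(f_d^{\mf{z}_d}) = \sum \tx{tr}\,(\pi_{G^d} \boxtimes \rho_{G^d}^\vee)^\tx{can}$, multiplying these identities over $d$ and using that $S\Theta_{\phi_H^{\mf{z}_H}}(f_H^{\mf{z}_H}) = \prod_d S\Theta_{\phi_{G^d}^{\mf{z}_d}}(f_d^{\mf{z}_d})$ (product of stable characters along a product of groups, applied to a pure tensor) closes the argument.

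The main obstacle I expect is bookkeeping rather than conceptual: matching up, coherently and simultaneously, the three parallel chains of isomorphisms — the geometric one on $H_{z_H}(F)$ (Lemmas \ref{lem:indprod0}, \ref{lem:indprod}), the dual one on $\hat H$ (Lemmas \ref{lem:indprodd0}, \ref{lem:indprodd}), and the Mackey decomposition of representations (Corollary \ref{cor:ind6}) — so that the sum over strongly regular twisted classes on the geometric side lines up index-for-index with the sum over $\Pi_{\phi_H}(H_{z_H})$ on the spectral side after one substitutes the canonical extension. In particular one must check that when $b \notin B_{\pi_H}$ the induced-representation character vanishes at $\tilde f_H \otimes \delta_{\tilde s_H^{-1}}$ and the restriction to $b \in B_{\pi_H}^{[z_H]}$ is harmless, exactly as in the proof of Proposition \ref{pro:slice}; and that passing from $B'$-fixed data back to general data via induction from $\tilde H_{z_H}(F)_{\pi_H}$ to $\tilde H_{z_H}(F)$ preserves all identities — this is the content of Lemma \ref{lem:ex1}, which applies verbatim. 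Once the indexing is pinned down, every individual equality has already been isolated as a cited lemma, so no new harmonic analysis is needed.
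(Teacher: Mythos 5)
Your overall strategy is the paper's: the same two-step structure (construct the canonical extension via the Mackey decomposition of Corollary~\ref{cor:ind6} over $B'$-double cosets, then verify the character identity via the $\langle b\rangle$-decomposition of Lemmas~\ref{lem:indprod}/\ref{lem:indprodd}), invoking the same supporting lemmas. But there is a gap in the logical ordering. Your very first step reads ``apply Corollary~\ref{cor:ind6} with $B' = B_{\pi_H}^{[z_H]} = B_{\rho_H}^{[\phi_H]}$ (these coincide by Conjecture~\ref{cnj:func})'' --- yet Conjecture~\ref{cnj:func} is only \emph{assumed} for $G \rtimes A$, and its validity for $H \rtimes B$ is part of what the Proposition asserts. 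The coincidence of the two stabilizers for $H$ is a \emph{consequence} of Conjecture~\ref{cnj:func} for $H \rtimes B$, which you have not yet established. The paper dispatches this first: apply Corollary~\ref{cor:ind6} with an arbitrary $B'$, use compatibility of the LLC with products to see each $(\phi_{G^d},\rho_{G^d}) \leftrightarrow (z_{G^d},\pi_{G^d})$, and then invoke the $B$-equivariance statement of Corollary~\ref{cor:ind6}(8) (applied notably to $B'=\{1\}$) to obtain precisely Conjecture~\ref{cnj:func} for $H \rtimes B$. Only then is the equality of stabilizers available to you.

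There is also a conflation between the two different decompositions. In the character-identity paragraph you correctly introduce the section $l : A \backslash B / \langle b \rangle \to B$, so the index $d$ there runs over $A \backslash B / \langle b\rangle$, the decomposition of Lemmas~\ref{lem:indprod}/\ref{lem:indprodd}. But you then cite ``Conjecture~\ref{cnj:coset} for each $G^d \rtimes A_d'$'', where $A_d' = A_d \cap B'$ belongs to the \emph{other} decomposition (Corollary~\ref{cor:ind6} over $A\backslash B/B'$), a different index set. The correct thing to invoke in the character-identity step is the endoscopic character identity for $G \rtimes A$ itself, at the element $a_d = l(d)b^{n_d}l(d)^{-1} \in A$ extracted via Lemma~\ref{lem:basictwistd}, as the paper does. (Relatedly, you cite ``Lemma~\ref{lem:basictwistd}(6)''; that lemma has only five items, and the transfer-factor statement you want is item~(5).) Once Conjecture~\ref{cnj:func} for $H$ is in place and the two decompositions are kept apart, the rest of your outline matches the paper.
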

\begin{proof}
Let $\phi_H : L_F \to {^LH}$ be a tempered Langlands parameter, $z_H \in Z^1(u \to W,Z(H)^B \to B)$ and $\rho_H \in \tx{Irr}(S_{\phi_H}^+,[z_H])$. We are assuming the validity of the refined local Langlands correspondence, so there is a corresponding $\pi_H \in \Pi_{\phi_H}(H_{z_H})$. 

Consider any subgroup $B' \subset B$ fixing the equivalence classes of $z_H$, $\pi_H$, $\phi_H$, $\rho_H$. Choose a sections $l : A \lmod B/B' \to B$,  as well as a section $s_d : A'_d \lmod B' \to B'$ for each $d \in A \lmod B/B'$, as in the discussion before the statement of Corollary \ref{cor:ind6}. That Corollary provides collections $(z_{G^d})$,$(\pi_{G^d})$,$(\phi_{G^d})$,$(\rho_{G^d})$ indexed by $d \in A \lmod B/B'$, where $\phi_{G^d} \in \Phi(G^d)$, $\rho_{G^d} \in \tx{Irr}(\pi_0(S_{\phi_{G^d}}^+))$, $z_{G^d} \in Z^1(u \to W,Z(G^d)^{A'_d} \to G^d)$, and $\pi_{G^d}\in\tx{Irr}(G^d_{z_{G^d}}(F))$. Since the refined local Langlands correspondence is compatible with products of reductive groups we see that for each $d$, $(\phi_{G^d},\rho_{G^d})$ corresponds to $(z_{G^d},\pi_{G^d})$. The part of Corollary \ref{cor:ind6} that describes the compatibility of forming these collections with the action of $B$, applied to the case $B'=\{1\}$, shows that for any $b_1 \in B$ the pair $b_1(\phi_H,\rho_H)$ corresponds to the pair $b_1(z_H,\pi_H)$. That is, Conjecture \ref{cnj:func} holds for $H \rtimes B$. This uses the assumed validity of Conjecture \ref{cnj:func} for $G \rtimes A$, because the formulas describing this action are in terms of the retractions $r : B \to A$ and the action of $A$ on $G$.

In particular we see $B^{[\phi_H]}_{\rho_H}=B^{[z_H]}_{\pi_H}$. Take $B'$ to be this group and apply the above discussion to obtain the collections $(z_{G^d})$,$(\pi_{G^d})$,$(\phi_{G^d})$,$(\rho_{G^d})$. Let $\tilde\pi_{G_d}\boxtimes\tilde\rho_{G^d}^\vee$ be the extension of $\pi_{G^d}\boxtimes\rho_{G^d}^\vee$ to $\tilde G^d_{z_{G^d}}(F)_{\pi_{G^d}} \times_{A'_d} \pi_0(\tilde S_{\phi_{G^d},\rho_{G^d}}^{+,[z_{G^d}]})$ that Conjecture \ref{cnj:coset} for $G^d \rtimes A'_d$ provides. Taking the tensor product over $d$ of the extensions provided by the construction prior to Lemma \ref{lem:ind4} gives an extension $\tilde\pi_H \boxtimes\tilde\rho_H^\vee$ of $\pi_H \boxtimes\rho_H^\vee$ to $\tilde H_{z_H}(F)_{\pi_H} \times_{B'} \pi_0(\tilde S_{\phi_H,\rho_H}^{+,[z_H]})$. Well-definedness of this extension and bijectivity of these constructions follows from Lemma \ref{lem:ind4} and Corollary \ref{cor:ind6}.

This extension is however \emph{not} the correct one, as we will see soon when discussing the character identity. It needs to be twisted as follows. Consider the function 
\[ f : B \to \{\pm1\}, \qquad f(b) :=  e(G_{z_H(-,b)}).\]
This function is left-invariant under $A$ and right invariant under $B^{[z_H]}$. Therefore Lemma \ref{lem:ind_eps} provides a sign character 
\[ \epsilon_{z_H} : B^{[z_H]}  \to \{\pm 1\}. \]
One checks that the following identity holds
\begin{equation} \label{eq:ind_eps}
	e(H_{z_H}) = \epsilon_{z_H}(b) \cdot \prod_d e(G^d_{z_{G^d}}).
\end{equation}

We inflate this character to $[H \rtimes B]_{z_H}(F)$ and define
\begin{equation} \label{eq:ind_hcan}
(\pi_H \boxtimes \rho_H^\vee)^\tx{can} := (\tilde\pi_H \boxtimes \tilde\rho_H^\vee) \otimes \epsilon_{z_H}.
\end{equation}

We now come to the character identity. Thus we fix $\check h_b \rtimes b \in \tilde S_{\phi_H}$, $h_b \rtimes b^{-1} \in \tilde H_{z_H}(F)$, a function $f_H \in \mc{C}^\infty_c(H_{z_H}(F))$, and $\check t \in S_{\phi_H}$, and consider
\begin{equation} \label{eq:charidind1}
e(H_{z_H})\sum_{\substack{\pi_H \in \Pi_{\phi_H}\\ \pi_H \circ b \cong \pi_H}} \tx{tr}((\pi_H \boxtimes\rho_H^\vee)^\tx{can})(R_{h_b \rtimes b^{-1}}^{-1}f_H \times (\check t\check h_b \rtimes b)).	
\end{equation}

To relate the above to the character identity for $G$ we explicate the construction we just employed, with $B'=\<b\>$. We apply Lemmas \ref{lem:indprod} and \ref{lem:indprodd} to represent $\pi_H$ as $\boxtimes_d \pi_d^{\boxtimes n_d}$ and $\rho_H$ as $\boxtimes_d \rho_d^{\boxtimes n_d}$ where, for each $d \in A\lmod B/\<b\>$,  $\pi_d$ is a representations of $G_{z_G}(F)$ invariant under $g_d \rtimes a_d^{-1}=\tilde g_d=\tx{ev}_{l(d)}((h_b \rtimes b^{-1})^{n_d})$ and $\rho_d$ is a representation of $S_{\phi_G}^+$ invariant under $\check g_d \rtimes a_d=\tx{ev}_{l(d)}((\check h_b \rtimes b)^{n_d})$. We fix isomorphisms $\tilde\pi_d : \pi_d \circ \tx{Ad}(\tilde g_d)^{-1} \to \pi_d$ and $\tilde\rho_d : \rho_d \circ \tx{Ad}(\check g_d \rtimes a_d)^{-1} \to \rho_d$, making the choices so that $\tilde\pi_d \boxtimes\tilde\rho_d^\vee$ is the canonical extension of $\pi_d \boxtimes \rho_d^\vee$. This means that the elements of $Z^2(A,\C^\times)$ coming from $\tilde\pi_d$ and from $\tilde\rho_d$ are equal. We obtain a the projective representation $\boxtimes_d (\tilde\pi_d \boxtimes \tilde\rho_d^\vee)^{\boxtimes n_d}$ of $\prod_d \prod_i [G \rtimes \<a_d\>]_{z_G} \times_{\<a_d\>} [\hat G \rtimes \<a_d\>b]_{\phi_G}$, which actually is a linear representation due to Lemmas \ref{lem:ind2} and \ref{lem:ind2d}. Under Lemmas \ref{lem:indprod} and \ref{lem:indprodd} this representation becomes identified with the canonical extension $\tilde\pi_H \boxtimes \tilde\rho_H^\vee$ of $\pi_H \boxtimes \rho_H^\vee$ to $[H \rtimes \<b\>]_{z_H}(F) \times_{\<b\>} [\hat H \rtimes \<b\>]_{\phi_H}$. 

We write $f_H=\otimes_d \otimes_{i=0}^{n_d-1} f_{d,i}$ and $\check t = \prod_d \prod_i \check s_{d,i}$ and then Lemma \ref{lem:basictwist} implies that $\tx{tr}(\tilde\pi_H \boxtimes\tilde\rho_H^\vee)(R_{h_b \rtimes b^{-1}}^{-1}f_H \times (\check t\check h_b \rtimes b))$ equals
\[ \prod_d \tx{tr}(\tilde\pi_d \boxtimes\tilde\rho_d^\vee)(R_{g_d \rtimes a_d^{-1}}^{-1}f_{d,0}*\dots*f_{d,n_d-1},\check s_{d,0}\dots \check s_{d,n_d-1}\check g_d \rtimes a_d). \]
The set $\{\pi_H \in \Pi_{\phi_H}|\pi_H \circ b \cong \pi_H\}$ is translated to the set $\{\otimes_d \pi_d \in \otimes_d \Pi_{\phi_d}|\pi_d\circ a_d \cong \pi_d\}$. Therefore \eqref{eq:charidind1} becomes the product of $e(H_{z_H}) \cdot \epsilon_{z_H}(b)$ with 
\[ \prod_d \sum_{\substack{\pi_d \in \Pi_{\phi_d}\\\pi_d\circ a_d \cong \pi_d}} \tx{tr}(\tilde\pi_d \boxtimes\tilde\rho_d^\vee)(R_{g_d \rtimes a_d}^{-1}f_{d,0}*\dots*f_{d,n_d-1},\check s_{d,0}\dots \check s_{d,n_d-1}\check g_d \rtimes a_d).\]
The parameter $\phi_d$ and the element $\check s_{d,0}\dots \check s_{d,n_d-1}\check g_d \rtimes a_d \in \hat G \rtimes A$ lead to an endoscopic datum $\mf{e}_d$ and parameter $\phi_{\mf{e}_d}$. The character identities for $\tilde G$ imply that the above equals the product of $e(H_{z_H}) \cdot \epsilon_{z_H}(b)$ with 
\[ \prod_d e(G^d_{z_{G^d}}) S\Theta_{\phi_{\mf{e}_d}}(f^{\mf{e}_d}_\tx{KS}), \]
where $f^{\mf{e}_d}_{KS}$ is the transfer of $R_{g_d \rtimes a_d}^{-1}f_{d,0}*\dots*f_{d,n_d-1}$ with respect to $\Delta_{KS}$.
By Corollary \ref{cor:indprodd} the endoscopic datum for $H$ and $\check t \check h_b \rtimes b$ and $\phi_H$ is $\prod_d \mf{e}_d$, and the function $\otimes f^{\mf{e}_d}_{KS}$ has $KS$-matching orbital integrals with $f_H$. Noting \eqref{eq:ind_eps}, the proof is complete.

\end{proof}

\section{The case of tori} \label{sec:tori}
In this section we are going to sketch the proof of Conjecture \ref{cnj:llc_rigid} in the case where the reductive group $G$ is a torus. We will write $T$ instead of $G$ to emphasize this. Note that, while a torus $T$ is tautologically quasi-split, an inner form of $T \rtimes A$ need not be quasi-split. This is the main source of complications we will have to deal with.

\subsection{Initial considerations} \label{sub:init}

Let $\phi : W_F \to \hat T$ and let $[\phi]$ denote both the equivalence class of $\phi$ and the corresponding character $T(F) \rw \C^\times$. Let $Z \subset T$ be finite and defined over $F$, and $z \in Z^1(u \to W,Z \to T)$. Then
\[ \tilde T_z(F) = (T(\bar F) \rtimes A)^{\tilde{\bar z}(\Gamma)} = \{ \tilde\delta \in T(\bar F) \rtimes A| \tx{Ad}(\tilde{\bar z}(\sigma))\tilde\delta=\tilde\delta\ \forall \sigma \in \Gamma\}. \]
The group $\tilde T_z(F)$ is an extension
\[ 1 \rw T(F) \rw \tilde T_z(F) \rw A^{[z]} \rw 1. \]
The set $\tilde \Pi_{\phi,z}$ consists of those irreducible admissible representations of $\tilde T_z(F)$ whose restriction to $T(F)$ contains the character $[\phi]$. All these representations are finite-dimensional. They can be described as follows. We have the pull-back and push-out diagram
\[ \xymatrix{
1\ar[r]&T(F)\ar[r]\ar@{=}[d]&(T(\bar F) \rtimes A)^{\tilde{\bar z}(\Gamma)}\ar[r]&A^{[z]}\ar[r]&1\\
1\ar[r]&T(F)\ar[r]\ar[d]^{[\phi]}&(T(\bar F) \rtimes A^{[\phi]})^{\tilde{\bar z}(\Gamma)}\ar[r]\ar[u]\ar[d]&A^{[z],[\phi]}\ar[r]\ar@{=}[d]\ar@{^(->}[u]&1\\
1\ar[r]&\C^\times\ar[r]&\mc{E}^z_{[\phi]}\ar[r]&A^{[z],[\phi]}\ar[r]&1
} \]
The bottom extension is central. If we let $\tx{Irr}(\mc{E}^z_{[\phi]},\tx{id})$ denote the set of irreducible representations of $\mc{E}^z_{[\phi]}$ whose central character restricts to the identity on $\C^\times$, then inflating an element of $\tx{Irr}(\mc{E}^z_{[\phi]},\tx{id})$ to $(T(\bar F) \rtimes A^{[\phi]})^{\tilde{\bar z}(\Gamma)}$ and then inducing it to $(T(\bar F) \rtimes A)^{\tilde{\bar z}(\Gamma)}$ provides a canonical bijection
\[ \tx{Irr}(\mc{E}^z_{[\phi]},\tx{id}) \rw \tilde \Pi_{\phi,z}. \]

Dually, we have $\tilde S_\phi^{[z]}=(\hat T \rtimes A^{[z]})^{\phi(W_F)}$. Its preimage $\tilde S_\phi^{+,[z]}$ in $\hat{\bar T} \rtimes A$ fits in the following push-out diagram
\[ \xymatrix{
1\ar[r]&\pi_0([\hat{\bar T}]^+)\ar[r]\ar[d]^{[z]}&\pi_0(\tilde S_\phi^{+,[z]})\ar[r]\ar[d]&A^{[z],[\phi]}\ar[r]\ar@{=}[d]&1\\
1\ar[r]&\C^\times\ar[r]&\mc{E}^\phi_{[z]}\ar[r]&A^{[z],[\phi]}\ar[r]&1
} \]
and again there is a canonical bijection $\tx{Irr}(\mc{E}^\phi_{[z]},\tx{id}) \rw \tx{Irr}(\tilde S_{\phi,[z]},{[z]})$ given simply by inflation.

While the extensions $\mc{E}^\phi_{[z]}$ and $\mc{E}^z_{[\phi]}$ are constructed from essentially the same data, their constructions are in some sense dual to each other. In Subsection \ref{sub:eiso} we will construct a natural isomorphism of extensions $\mc{E}^z_{[\phi]} \cong \mc{E}^\phi_{[z]}$. The resulting bijections
\begin{equation} \label{eq:cnj1tori} \tx{Irr}(\tilde S_{\phi,[z]},{[z]}) \rw \tx{Irr}(\mc{E}^\phi_{[z]},\tx{id}) \cong \tx{Irr}(\mc{E}^z_{[\phi]},\tx{id}) \rw \tilde \Pi_{\phi,z} \end{equation}
will imply Conjecture \ref{cnj:llc_pure_is}. We will then go on to verify the identity claimed in Conjecture \ref{cnj:llc_pure_ci}.

\subsection{The isomorphism $\mc{E}^z_{[\phi]} \cong \mc{E}^\phi_{[z]}$} \label{sub:eiso}

We will first realize the extensions $\mc{E}^z_{[\phi]}$ and $\mc{E}^\phi_{[z]}$ explicitly as twisted products of $\C^\times$ with $A^{[\phi],[z]}$.

For each element $a \in A^{[\phi],[z]}$ we choose $t_a \in T(\bar F)$ and $s_a \in \hat T$ such that
\[ (z,t_a) \in Z^1_a(u \to W,Z \to T \rrw T) \qquad\tx{and}\qquad (\phi,s_a\rtimes a) \in Z^1_a(W_F,\hat T \rrw \hat T). \]
More explicitly, if we write $\phi(w)=\phi_0(w)\rtimes w$, then the above can be reformulated as
\[ z(w)^{-1} \cdot a(z(w)) = t_a^{-1}\cdot\sigma_w(t_a)\qquad\tx{and}\qquad \phi_0(w)^{-1} \cdot a(\phi_0(w))=s_a^{-1} \cdot \sigma_w(s_a), \]
where $\sigma_w \in \Gamma$ is the image of $w \in W$ in the first case, and of $w \in W_F$ in the second.
The choices of $t_\bullet$ and $s_\bullet$ give us sections $a \mapsto t_a \rtimes a$ and $a \mapsto s_a \rtimes a$ of the two extensions

\[ 1 \rw T(F) \rw (T(\bar F) \rtimes A^{[\phi]})^z \rw A^{[\phi],[z]} \rw 1 \]
and
\[ 1 \rw \hat T^\Gamma \rw (\hat T \rtimes A^{[z]})^\phi \rw A^{[\phi],[z]} \rw 1. \]
Choose a lift $\dot s_a \in \hat{\bar T}$ for each $s_a$. Then $a \mapsto \dot s_a \rtimes a$ is a section of the extension
\[ 1 \rw \pi_0([\hat{\bar T}]^+) \rw \tilde S_\phi^{+,[z]} \rw A^{[\phi],[z]} \rw 1. \]
The 2-cocycles corresponding to the sections $a \mapsto t_a \rtimes a$ and $a \mapsto \dot s_a \rtimes a$ are
\[ \alpha(a,b) = t_a \cdot {^at_b} \cdot t_{ab}^{-1}\qquad\tx{and}\qquad \beta(a,b) = \dot s_a \cdot {^a\dot s_b} \cdot \dot s_{ab}^{-1}. \]
Let $\bar\alpha = [\phi] \circ\alpha$ and $\bar\beta = [z] \circ\beta$. Then we have
\[ \mc{E}^z_{[\phi]} = \C^\times \boxtimes_{\bar\alpha} A^{[\phi],[z]}\qquad\tx{and}\qquad\mc{E}^\phi_{[z]} = \C^\times \boxtimes_{\bar\beta} A^{[\phi],[z]}. \]
By construction, for each $a \in A^{[\phi],[z]}$, we have $(z_0^{-1},t_{a^{-1}})\in Z^1(u \to W,Z \to T \stackrel{1-a^{-1}}{\lrw} T)$ and $(\phi_0^{-1},\dot s_a) \in Z^1(W_F,Z \to \hat T \stackrel{1-a}{\llw} \hat T)$. We put
\[ h(a) := \bar\alpha(a^{-1},a)\cdot \left\< (z^{-1},t_{a^{-1}}),(\phi_0^{-1},\dot s_a)\right\>, \]
where the pairing $\<-,-\>$ is \eqref{eq:tnd++}.

\begin{pro} \label{pro:isoh}
The map $x\boxtimes a \mapsto xh(a) \boxtimes a$ is an isomorphism $\mc{E}^z_{[\phi]} \rw \mc{E}^\phi_{[z]}$. It is independent of the choices of $t_a$ and $\dot s_a$.
\end{pro}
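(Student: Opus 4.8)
The statement has two parts: that $x \boxtimes a \mapsto x h(a) \boxtimes a$ is a homomorphism $\mc{E}^z_{[\phi]} \to \mc{E}^\phi_{[z]}$, and that it is independent of the choices of $t_a$ and $\dot s_a$. Since both sides are twisted products of $\C^\times$ with $A^{[\phi],[z]}$ via the respective 2-cocycles $\bar\alpha$ and $\bar\beta$, the map above is automatically bijective and is a homomorphism of extensions precisely when it intertwines the cocycles, i.e. when
\[ \bar\beta(a,b) = h(a) \cdot h(b) \cdot h(ab)^{-1} \cdot \bar\alpha(a,b) \]
for all $a,b \in A^{[\phi],[z]}$. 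So the plan is: first reduce both claims to cocycle identities for the function $h$ defined via the Tate--Nakayama-type pairing \eqref{eq:tnd++}; then verify those identities using the functoriality and compatibility properties of the pairing established in Section~\ref{sub:tnd++} (Fact~\ref{fct:tn++d1}, Lemma~\ref{lem:tn++d3}, Corollary~\ref{cor:tn++d4}) together with the bilinearity of the pairing in the two complexes.

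For the independence claim I would argue first, as it is the easier of the two. Replacing $t_a$ by $t_a' = t_a' \cdot t_a$ with $t_a' \in T(F)$ (the ambiguity is exactly by $T(F) = Z^1(\Gamma,T)^{\tx{trivial}}$-coboundaries lying in the kernel) changes the class $(z^{-1},t_{a^{-1}})$ inside $H^1(u \to W, Z \to T \overset{1-a^{-1}}{\lrw} T)$ by the image of $t_{a^{-1}}'$ under $H^0(\Gamma,T) \to H^1(u \to W, Z\to T \overset{1-a^{-1}}{\lrw}T)$, which by part (1) of Corollary~\ref{cor:tn++d4} pairs with $(\phi_0^{-1},\dot s_a)$ via the Langlands pairing against the image of $(\phi_0^{-1},\dot s_a)$ in $H^1(W_F,\hat T)$, namely $\phi_0^{-1}$ as a character of $T(F)$; meanwhile $\bar\alpha(a^{-1},a)$ changes by the corresponding value of $[\phi]$, and the two changes cancel. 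The same argument with roles reversed handles the ambiguity in $\dot s_a$, using part (2) of Corollary~\ref{cor:tn++d4} and the compatibility with the pairing of \cite[Corollary 5.4]{KalRI}; the ambiguity in the lift $\dot s_a$ over $s_a$ (by $\hat Z$) is absorbed since the pairing factors through $H^1_\tx{cts}(W_F,\hat Z \to \hat{\bar T} \from \hat U)_\tx{red}$ and the reduction by $[\hat{\bar T}]^{+,\circ}$. This reduces independence to a finite check that the correction terms introduced into $\bar\alpha(a^{-1},a)$ and into the pairing exactly cancel.

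For the homomorphism property the heart of the matter is to compute $h(a)h(b)h(ab)^{-1}$ and match it with $\bar\beta(a,b)\bar\alpha(a,b)^{-1}$. I would expand each $h$ using its definition, collect the contribution of the $\bar\alpha(a^{-1},a)$-prefactors into a cochain expression, and collect the pairing contributions into a single pairing of a "product" cocycle. The key input is that the assignments $a \mapsto (z^{-1},t_{a^{-1}})$ and $a \mapsto (\phi_0^{-1},\dot s_a)$ are \emph{crossed homomorphisms} (1-cocycles for the $A^{[\phi],[z]}$-action) up to the explicit coboundaries $\alpha$ and $\beta$; feeding these into the bi-additive, $A$-equivariant pairing \eqref{eq:tnd++} converts the "failure to be a homomorphism on the left" ($\alpha$) paired against a fixed element into a value of $[\phi]$, and dually the "failure on the right" ($\beta$) into a value of $[z]$. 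Precisely, the mixed terms $\<\alpha(\cdot,\cdot), \dot s_\bullet\>$ and $\<t_\bullet, \beta(\cdot,\cdot)\>$ should reduce, via Corollary~\ref{cor:tn++d4}, to $[\phi]\circ\alpha = \bar\alpha$ and $[z]\circ\beta = \bar\beta$ respectively, while the remaining "pure" pairing terms cancel telescopically because the pairing annihilates coboundaries on both sides.

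\textbf{Main obstacle.} The genuine difficulty is bookkeeping: the pairing \eqref{eq:tnd++} is defined on hypercohomology of two-term complexes, and the elements $(z^{-1},t_{a^{-1}})$, $(\phi_0^{-1},\dot s_a)$ depend on $a$ through both coordinates in a way that does not respect the $A$-action on the nose but only up to the coboundaries $\alpha,\beta$. Tracking how these coboundary discrepancies propagate through the pairing --- and in particular confirming that the two "directions" of discrepancy land respectively in the $[\phi]$-direction (via part (1) of Corollary~\ref{cor:tn++d4}) and the $[z]$-direction (via part (2)) with the correct signs, given the sign reversals noted in the proofs of Lemma~\ref{lem:tn++d3} and Corollary~\ref{cor:tn++d4} --- is where all the care is needed. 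I expect the clean way to organize this is to first prove an abstract lemma: if $\mc{A} \to \mc{B}$ and $\mc{B}^\vee \to \mc{A}^\vee$ are two extensions arising as pushouts along characters of the ends of a perfect dual pair of complexes, and one chooses compatible set-theoretic sections, then the pairing of the resulting discrepancy cocycles yields an explicit isomorphism of the pushouts; the present proposition is then the instance of that lemma for $(\mc{A},\mc{B}) = (T \overset{1-a^{-1}}{\lrw} T,\ \ast)$ paired with its dual, summed appropriately over $a$. Isolating and proving that abstract lemma, rather than manipulating the specific cocycles directly, is the step I would prioritize.
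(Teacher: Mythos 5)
Your plan is essentially the paper's proof: reduce the homomorphism property to the cocycle identity $h(a)h(b)h(ab)^{-1}=\bar\alpha(a,b)\bar\beta(a,b)^{-1}$, then attack it by feeding the coboundary discrepancies $\alpha,\beta$ through the Tate--Nakayama-type pairing and invoking Corollary~\ref{cor:tn++d4} to turn the mixed terms into $\bar\alpha$ and $\bar\beta$; the independence argument is also the same in substance. The paper does not extract your proposed abstract lemma but instead carries out the chain-level calculation directly, and that calculation rests on two concrete normalizations your sketch does not yet anticipate: first, one chooses chain representatives $(\bar\lambda_a,\mu_a)$ for the classes $(z^{-1},t_a)$ with the \emph{same} degree-zero component $\bar\lambda$ for all $a$ (possible because all $\bar\lambda_a$ have the same image in $X_*(\bar T)^N/IX_*(T)$, namely the class of $z^{-1}$), which makes the first slot of the pairing $a$-independent and collapses the "pure" terms; second, when expanding $h(a)h(b)h(ab)^{-1}$ one must replace $(\phi_0^{-1},\dot s_a)$ by the cohomologous representative $({^b\phi_0^{-1}},\dot s_a{^a\dot s_b}\dot s_b^{-1})$ so that the three pairings combine into a single pairing of $(z^{-1},\dot s_a{^a\dot s_b}\dot s_{ab}^{-1})$ against $(\phi_0^{-1},t_{b^{-1}}{^{b^{-1}}t_{a^{-1}}}t_{(ab)^{-1}}^{-1})$ for the zero-differential complexes, at which point Corollary~\ref{cor:tn++d4} applies. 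Your "telescoping cancellation" intuition is right but only becomes literal after these choices; without them the dependence on $a$ through the complex $T\stackrel{1-a^{-1}}{\lrw}T$ makes the telescoping opaque. On independence, you should also note explicitly that the cancellation between the change in $\bar\alpha(a^{-1},a)$ and the change in the pairing requires $[\phi]\circ a^{-1}=[\phi]$ and $[z]\circ a=[z]$, which holds precisely because $a\in A^{[\phi],[z]}$; the paper's direct verification uses this silently. Finally, the closing step in the paper — checking that the four leftover $\bar\alpha$-terms $\bar\alpha(a^{-1},a)\bar\alpha(b^{-1},b)\bar\alpha((ab)^{-1},ab)^{-1}\bar\alpha(b^{-1},a^{-1})^{-1}$ combine to $\bar\alpha(a,b)$ — is a small but not automatic exercise with the 2-cocycle condition; your sketch should not dismiss it as obvious.
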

\begin{proof}
It is obvious that the map is bijective, but we need to show that it is a homomorphism. This amounts to the equation
\begin{equation} \label{eq:h} h(a)h(b)h(ab)^{-1} = \bar\alpha(a,b)\bar\beta(a,b)^{-1}. \end{equation}
We choose for each $a \in A$ an element $(\bar\lambda_a,\mu_a) \in Z_0(W_{K/F},X_*(\bar T) \stackrel{1-a}{\lrw}X_*(T))_0$ whose image in $H^1(u \to W,Z \to T \stackrel{1-a}{\lrw} T)$ under the isomorphism \eqref{eq:arithiso} equals $(z^{-1},t_a)$. Here $K/F$ is a suitably large Galois extension. Note that all $\bar\lambda_a \in Z_0(W_{K/F},X_*(\bar T))_0=X_*(\bar T)^{N_{K/F}}$ have the same image in $X_*(\bar T)^N/IX_*(T)$, since their images under the isomorphism $X_*(\bar T)^N/IX_*(T) \to H^1(u \to W,Z \to T)$ all equal $z^{-1}$. Thus we may choose a single $\bar\lambda \in Z_0(W_{K/F},X_*(T))_0$ and arrange, by modifying $(\bar\lambda_a,\mu_a)$ by a coboundary, that $\bar\lambda_a=\bar\lambda$ for all $a$. Then the pairing $\<(z^{-1},t_{a^{-1}}),(\phi_0^{-1},\dot s_a)\>$ is equal to
\[ \<\bar\lambda,\dot s_a\> \cdot \prod_{w \in W_{K/F}} \<\mu_{a^{-1}}(w),\phi_0(w)\>, \]
according to the definition of \eqref{eq:tnd++} as the composition of \eqref{eq:elempair} and \eqref{eq:arithiso}. Here the angle brackets denote the canonical pairing $X_*(T) \otimes \hat T \to \C^\times$ and its analog for $\bar T$.

With this, we can now compute $h(a)h(b)h(ab)^{-1}$. For $h(b)$ and $h(ab)$ we simply plug in this formula. For $h(a)$, we shall replace $(\phi_0^{-1},\dot s_a)$ by the element $({^b\phi}_0^{-1},\dot s_a \cdot {^a \dot s_b}\cdot \dot s_b^{-1})$, which is easily seen to be cohomologous using the fact that  $(\phi_0^{-1},\dot s_b) \in Z^1(W_F,\hat Z \to \hat T \stackrel{1-b}{\llw} \hat T)$. All together we obtain
\begin{eqnarray*}
h(a)h(b)h(ab)^{-1}&=&\bar\alpha(a^{-1},a)\bar\alpha(b^{-1},b)\bar\alpha((ab)^{-1},ab)^{-1}\cdot\\
&&\<\bar\lambda,\dot s_a\cdot {^a\dot s_b} \cdot \dot s_{ab}^{-1}\> \cdot \prod_w \< {^{b^{-1}}}\mu_{a^{-1}}+\mu_{b^{-1}}-\mu_{(ab)^{-1}},\phi_0(w)\>.	
\end{eqnarray*}

Using 
\[ (\phi_0^{-1},\dot s_a) \in Z^1(W_F,\hat Z \to \hat T \stackrel{1-a}{\llw} \hat U),\quad  (\bar\lambda,\mu_a) \in Z_0(W_{K/F},X_*(\bar T) \stackrel{1-a^{-1}}{\lrw} X_*(T)) \] 
one checks that 
\[ \dot s_a\cdot {^a \dot s_b} \cdot \dot s_{ab}^{-1} \in [\hat {\bar T}]^+,\quad {^{b^{-1}}}\mu_{a^{-1}}+\mu_{b^{-1}}-\mu_{(ab)^{-1}} \in Z_1(W_{K/F},X_*(T)). \] 
The functoriality of the maps $\dot\psi$ and $\phi$ that make up the isomorphism \eqref{eq:arithiso} implies that we have
\begin{eqnarray*}
h(a)h(b)h(ab)^{-1}&=&\bar\alpha(a^{-1},a)\bar\alpha(b^{-1},b)\bar\alpha((ab)^{-1},ab)^{-1}\cdot\\
&&\<(z^{-1},\dot s_a\cdot {^a\dot s_b} \cdot \dot s_{ab}^{-1}),(\phi_0^{-1},t_{b^{-1}} \cdot {^{b^{-1}}t_{a^{-1}}}\cdot t_{(ab)^{-1}}^{-1})\>.
\end{eqnarray*}
where the pairing is now between $H^1(u \to W,Z \to T \stackrel{0}{\lrw} T)$ and $H^1(W_F,\hat Z \to \hat T \stackrel{0}{\llw} \hat T)$. Using Corollary \ref{cor:tn++d4} we see
\[
h(a)h(b)h(ab)^{-1}=\bar\alpha(a^{-1},a)\bar\alpha(b^{-1},b)\bar\alpha((ab)^{-1},ab)^{-1}\bar\alpha(b^{-1},a^{-1})^{-1}\cdot\bar\beta(a,b)^{-1}.
\]
Finally, an elementary computation using the fact that $\bar\alpha$ is a cocycle shows that all terms involving $\bar\alpha$ combine to $\bar\alpha(a,b)$.

It remains to show that the isomorphism $\mc{E}^z_{[\phi]} \rw \mc{E}^\phi_{[z]}$ we have just constructed is independent of the choices involved in its construction, that is of the choices of elements $t_a \in T(\bar F)$ and $\dot s_a \in \hat{\bar T}$.
For this we need to check that if we replace $t_a$ by $x_at_a$ with $x_a \in T(F)$, then $h(a)$ is replaced by $\<[\phi],x_a\>h(a)$, and if we replace $\dot s_a$ by $y_a\dot s_a$ with $y_a \in [\hat{\bar T}]^+$, then $h(a)$ is replaced by $\<[z],y_a\>^{-1}h(a)$. Both of these verifications are immediate.
\end{proof}

\subsection{Remarks and generalizations}

Before we continue with the proof of Conjecture \ref{cnj:llc_rigid} for tori, we would like to point out a beautiful symmetry between $\tilde T_z(F)$ and $\tilde S_\phi^{[z]}$ that may have become covered under the debris of generality. To see it more clearly, let us consider the special case where the Langlands parameter $\phi$ extends to the Galois group (thus it corresponds to a character of $T(F)$ whose composition with the norm map $N_{K/F} : T(K) \to T(F)$ is trivial for some finite extension $K/F$) and the inner form of $T \rtimes A$ we are considering is pure. As above we shall write $\phi : \Gamma \to \hat T \rtimes \Gamma$ for the Langlands parameter, and $\phi_0 : \Gamma \to \hat T$ for the corresponding cocycle, so that $\phi(\sigma) = \phi_0(\sigma) \rtimes \sigma$. We shall use the analogous notation $z : \Gamma \to T \rtimes \Gamma$ and $z_0 : \Gamma \to T$ for the pure inner form, slightly deviating from the notation of the rest of the paper, where we used $z$ and $\tilde z$ instead. We are writing $T$ for $T(\bar F)$, in the same way we are writing $\hat T$ for $\hat T(\C)$.

Now $\tilde T_z(F)=(T \rtimes A)^{z(\Gamma)}$ and $S_\phi=(\hat T \rtimes A)^{\phi(\Gamma)}$. These fit into the extensions
\[ 1 \to T^{z(\Gamma)} \to (T \rtimes A)^{z(\Gamma)} \to A^{[z]} \to 1 \]
and
\[ 1 \to \hat T^{\phi(\Gamma)} \to (T \rtimes A)^{\phi(\Gamma)} \to A^{[\phi]} \to 1.\]
We have written $T^{z(\Gamma)}$ for $T^\Gamma=T(F)$ and $\hat T^{\phi(\Gamma)}=\hat T^\Gamma$ to emphasize the symmetry. Now $[\phi]$ is a character of $T^{z(\Gamma)}$ and $[z]$ is a character of $T^{\phi(\Gamma)}$. We pull back the above extensions along the inclusions of $A^{[z],[\phi]}$ into $A^{[z]}$ and $A^{[\phi]}$ and obtain the push-out diagrams
\[ \xymatrix{
1\ar[r]&T^{z(\Gamma)}\ar[r]\ar[d]^{[\phi]}&(T \rtimes A)^{z(\Gamma),[\phi]}\ar[r]&A^{[z],[\phi]}\ar[r]&1\\
&\C^\times
}
\]
and
\[ \xymatrix{
1\ar[r]&\hat T^{\phi(\Gamma)}\ar[r]\ar[d]^{[z]}&(\hat T \rtimes A)^{\phi(\Gamma),[z]}\ar[r]&A^{[z],[\phi]}\ar[r]&1\\
&\C^\times
}
\]
Which lead to the extensions $\mc{E}^z_{[\phi]}$ and $\mc{E}^\phi_{[z]}$ of $A^{[z],[\phi]}$ by $\C^\times$. The symmetry of the situation now makes it rather natural to expect that these two extensions are closely related.

Moving towards the opposite end on the spectrum of clarity, we are now going to formulate a situation a bit more general then the one considered in Subsection \ref{sub:eiso}. We will not need this generalization in the present paper, but will need it in a forthcoming paper in a rather different set-up.

Let $T$ be an algebraic torus $T$ defined over $F$, and $A$ a finite group acting on $T$ by $F$-automorphisms. Let $Z \subset T$ be a finite subgroup defined over $F$ and fixed pointwise by $A$. Let $\phi : W_F \to \hat T \rtimes W_F$ and $z \in Z^1(u \to W,Z \to T)$. Write $\hat{\bar T}$ for the complex dual group of $\bar T=T/Z$.

Instead of considering the split extensions $T \rtimes A$ and $\hat{\bar T} \rtimes A$, we now assume given extensions $1 \to T \to \tilde T \to A \to 1$ and $1 \to \hat{\bar T} \to \mc{\bar T} \to A \to 1$ that may or may not be split. Dividing out by $\hat Z$ we obtain an extension $1 \to \hat T \to \mc{T} \to A \to 1$. We emphasize that no relation is assumed between $\tilde T$ and $\mc{T}$. We assume that after taking $F$-points the sequence $1 \to T(F) \to \tilde T(F) \to A^\Gamma \to 1$ is still exact, and after taking $\Gamma$-invariants the sequence $1 \to \hat T^\Gamma \to \mc{T}^\Gamma \to A^\Gamma \to 1$ remains exact. Let $[\mc{\bar T}]^+$ be the preimage of $\mc{T}^\Gamma$ in $\mc{\bar T}$.

Let $1 \to \C^\times \to \mc{E}_{[\phi]}^0 \to A^{[\phi]} \to 1$ be the push-out of $1 \to T(F) \to \tilde T(F)^{[\phi]} \to A^{[\phi]} \to 1$ along $[\phi] : T(F) \to \C^\times$. Let $1 \to \C^\times \to \mc{E}_{[z]}^0 \to A^{[z]} \to 1$ be the push-out of $1 \to [\hat{\bar  T}]^+ \to [\mc{\bar T}]^{+,[z]} \to A^{[z]} \to 1$ along $[z] : [\hat{\bar  T}]^+ \to \C^\times$.

We now consider the inner form $\tilde T_z$. We have $\tilde T_z(F)=\{\tilde t \in \tilde T(\bar F)|\forall \sigma \in \Gamma:\tilde t= \tx{Ad}(\bar z(\sigma))\sigma(\tilde t)\}$, where again $\bar z \in Z^1(\Gamma,\bar T)$ is the image of $z$, and we are using that the conjugation action of $T$ on $\tilde T$ factors through $\bar T$ because $Z$ is pointwise fixed by $A$. The assumption that $\tilde T(F) \to A^\Gamma$ is surjective implies that $\tilde T_z(F) \to A^{[z]}$ is surjective, where again $A^{[z]}$ is the stabilizer in $A$ of the class $[z] \in H^1(u \to W,Z \to T)$. Thus we have the extension
\[ 1 \to T(F) \to \tilde T_z(F) \to A^{[z]} \to 1. \]
We pull back along the inclusion $A^{[z],[\phi]} \to A^{[z]}$ and push out along $[\phi] : T(F) \to \C^\times$ to obtain an extension $1 \to \C^\times \to \mc{E}_{[\phi]}^z \to A^{[z],[\phi]} \to 1$.

Dually we consider the centralizer $S_\phi = \mc{T}^{\phi(W_F)}$ of $\phi$ in $\mc{T}$. Again the assumption that $\mc{T}^\Gamma \to A^\Gamma$ is surjective implies that $S_\phi \to A^{[\phi]}$ is surjective. Let $S_\phi^+$ be the preimage of $S_\phi$ in $\mc{\bar T}$, so that we have the extension
\[ 1 \to [\hat{\bar T}]^+ \to S_\phi^+ \to A^{[\phi]} \to 1. \]
We pull back along the inclusion $A^{[z],[\phi]} \to A^{[\phi]}$ and push out along $[z] : [\hat{\bar T}]^+ \to \C^\times$ to obtain an extension $1 \to \C^\times \to \mc{E}_{[z]}^\phi \to A^{[z],[\phi]} \to 1$.

\begin{pro} Let $\mc{E}_{[\phi]}^{0,[z]}$ and $\mc{E}_{[z]}^{0,[\phi]}$ be the pull-backs of $\mc{E}_{[\phi]}^0$ and $\mc{E}_{[z]}^0$ along the inclusions of $A^{[z],[\phi]}$ into $A^{[\phi]}$ and $A^{[z]}$. An isomorphism of extensions $\zeta : \mc{E}_{[\phi]}^{0,[z]} \to \mc{E}_{[z]}^{0,[\phi]}$ determines an isomorphism of extensions $\xi : \mc{E}_{[\phi]}^z \to \mc{E}_{[z]}^\phi$. If $\zeta$ is multiplied by a character $A^{[z],[\phi]} \to \C^\times$, then $\xi$ is multiplied by the same character.
\end{pro}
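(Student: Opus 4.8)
The plan is to isolate, inside each of the four extensions, a ``split part'' carrying the (possibly non-split) extensions $\tilde T,\mc{\bar T}$ and a ``twist part'' carrying $z$ and $\phi$, and then to observe that the split parts are matched by $\zeta$ while the twist parts are matched \emph{canonically} by Proposition \ref{pro:isoh}.

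First I would present everything by explicit $2$-cocycles on $A:=A^{[z],[\phi]}$. Using the hypotheses that $\tilde T(F)\to A^\Gamma$ and $\mc{T}^\Gamma\to A^\Gamma$ are surjective, fix lifts $\tilde t_a\in\tilde T(F)$ and $\hat s^0_a\in[\mc{\bar T}]^+$ of each $a\in A$. The cocycles $\alpha_0(a,b)=\tilde t_a\tilde t_b\tilde t_{ab}^{-1}\in T(F)$ and $\beta_0(a,b)=\hat s^0_a\,{}^a\hat s^0_b\,(\hat s^0_{ab})^{-1}\in[\hat{\bar T}]^+$ give cocycles $\bar\alpha_0=[\phi]\circ\alpha_0$ and $\bar\beta_0=[z]\circ\beta_0$ representing $\mc{E}_{[\phi]}^{0,[z]}$ and $\mc{E}_{[z]}^{0,[\phi]}$, and $\zeta$ is the isomorphism $x\boxtimes a\mapsto x g(a)\boxtimes a$ for a function $g:A\to\C^\times$ with $\delta g=\bar\alpha_0\bar\beta_0^{-1}$. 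Next, exactly as in \S\ref{sub:eiso}, choose $t_a\in T(\bar F)$ with $(z,t_a)\in Z^1_a(u\to W,Z\to T\rrw T)$ and $\dot s_a\in\hat{\bar T}$ with $(\phi,\dot s_a\rtimes a)$ a cocycle. A direct verification, using the conventions of \S\ref{sub:inner} (that $\bar T$ acts trivially on $T$ by conjugation, that the $A$- and $\Gamma$-actions commute, and that $Z$ is pointwise $A$-fixed), shows that $\tilde\delta_a:=t_a\tilde t_a$ lies in $\tilde T_z(F)$ and $\hat s_a:=\dot s_a\hat s^0_a$ lies in $S_\phi^+$, that the cocycle of $\mc{E}_{[\phi]}^z$ attached to the section $a\mapsto\tilde\delta_a$ is $\bar\alpha=\tx{twist}_\phi\cdot\bar\alpha_0$ and the cocycle of $\mc{E}_{[z]}^\phi$ attached to $a\mapsto\hat s_a$ is $\bar\beta=\tx{twist}_z\cdot\bar\beta_0$, where $\tx{twist}_\phi(a,b)=[\phi](t_a\,a(t_b)\,t_{ab}^{-1})$ and $\tx{twist}_z(a,b)=[z](\dot s_a\,{}^a\dot s_b\,\dot s_{ab}^{-1})$ are \emph{precisely} the cocycles $\bar\alpha,\bar\beta$ of \S\ref{sub:eiso} built from the \emph{split} extensions $T\rtimes A$ and $\hat{\bar T}\rtimes A$ attached to the same $T,A,Z,z,\phi$.

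Now I would apply Proposition \ref{pro:isoh} to those split extensions: it furnishes a function $h_0:A\to\C^\times$, canonical (independent of $t_a,\dot s_a$), with $\delta h_0=\tx{twist}_\phi\cdot\tx{twist}_z^{-1}$. Setting $h:=g\cdot h_0$ one gets $\delta h=(\bar\alpha_0\bar\beta_0^{-1})(\tx{twist}_\phi\tx{twist}_z^{-1})=\bar\alpha\bar\beta^{-1}$, so $\xi:x\boxtimes a\mapsto x h(a)\boxtimes a$ is an isomorphism $\mc{E}_{[\phi]}^z\to\mc{E}_{[z]}^\phi$ restricting to the identity on $\C^\times$. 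Replacing $\zeta$ by $\zeta\cdot\chi$ replaces $g$ by $g\cdot\chi$ and leaves $h_0$ unchanged, hence replaces $h$ by $h\cdot\chi$ and $\xi$ by $\xi\cdot\chi$, which is the last assertion.

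It remains to see that $\xi$ depends only on $\zeta$. Independence from $t_a$ and $\dot s_a$ is already part of Proposition \ref{pro:isoh}. For $\tilde t_a$ and $\hat s^0_a$ one uses that $[\phi]$ is $A^{[\phi]}$-invariant as a character of $T(F)$ (cohomologous parameters define the same character, and $H^1(W_F,\hat T)\cong\tx{Hom}(T(F),\C^\times)$ is functorial in $T$, hence $A$-equivariant) and likewise $[z]$ is $A^{[z]}$-invariant on $[\hat{\bar T}]^+$: then replacing $\tilde t_a$ by $r_a\tilde t_a$ with $r_a\in T(F)$ multiplies both $\bar\alpha_0$ and $\bar\alpha$ by the same coboundary $\delta([\phi]\circ r_\bullet)$ and leaves $\tx{twist}_\phi$ untouched, so requiring $\zeta$ to remain the given abstract isomorphism forces $g$ to change by $[\phi]\circ r_\bullet$, whence $h=g h_0$ changes compatibly and the abstract map $\xi$ is unchanged; the argument for $\hat s^0_a$ is dual. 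The only real labour is the identity $\bar\alpha=\tx{twist}_\phi\cdot\bar\alpha_0$ (and its dual) in the possibly non-split $\tilde T$ and $\mc{\bar T}$; once that is in place, the genuinely delicate point — the canonicity of $h_0$, i.e.\ that the $z$-twist and the $\phi$-twist match functorially via the generalized Tate--Nakayama duality of \S\ref{sub:tnd++} — is supplied word for word by Proposition \ref{pro:isoh}.
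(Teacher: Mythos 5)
Your proof is correct and follows essentially the same route as the paper: the key step is the same cocycle factorization (your $\bar\alpha = \tx{twist}_\phi\cdot\bar\alpha_0$ is the paper's $\alpha' = \alpha\cdot\alpha_0$, and your $g$ is the paper's $\zeta_0$), combined with an appeal to Proposition~\ref{pro:isoh} for the ``twist'' part. The only stylistic difference is in the independence-of-choices step: you track directly how $\bar\alpha_0$, $g$, and $h$ each transform when $\tilde t_a$ is replaced by $r_a\tilde t_a$, whereas the paper uses the slicker device of simultaneously replacing $t_a$ by $x_a^{-1}t_a$ (licensed by the already-established independence in Proposition~\ref{pro:isoh}) so that $t_a\theta_a$, and hence $\bar\alpha\bar\alpha_0$ and the identification of $\mc{E}_{[\phi]}^z$, are literally unchanged; both versions rest on the same input ($A$-invariance of $[\phi]$ and $[z]$) and yield the same conclusion.
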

\begin{proof}
To lighten notation, we replace $A$ by its subgroup $A^{[z],[\phi]}$. For each $a \in A$ choose lifts $\theta_a \in \tilde T(F)$ and $\tau_a \in [\mc{\bar T}]^+$, as well as elements $t_a \in T(\bar F)$ such that $t_a\theta_a \in \tilde T_z(F)$ and $\dot s_a \in \hat{\bar T}$ such that $\dot s_a\tau_a \in S_\phi^+$.

The section $a \mapsto t_a\theta_a$ realizes the extension $\mc{E}_{[\phi]}^z$ as the twisted product $\C^\times \boxtimes_{\bar\alpha'}A$, where $\alpha' \in Z^2(A,T(F))$ is defined as $\alpha'(a,b)=t_a\theta_at_b\theta_b(t_{ab}\theta_{ab})^{-1}$ and $\bar\alpha' = [\phi]\circ\alpha' \in Z^2(A,\C^\times)$. The section $a \mapsto \dot s_a\tau_a$ realizes the extension $\mc{E}_{[z]}^\phi$ as the twisted product $\C^\times\boxtimes_{\bar\beta'}A$, where $\beta' \in Z^2(A,[\hat{\bar T}]^+)$ is defined as $\beta'(a,b)=\dot s_a\tau_a \dot s_b\tau_b (\dot s_{ab}\tau_{ab})^{-1}$ and $\bar\beta'=[z]\circ\beta' \in Z^2(A,\C^\times)$.

We have $\alpha'(a,b)=\alpha(a,b)\cdot \alpha_0(a,b)$ with $\alpha(a,b)=t_a \cdot {^at_b} \cdot t_{ab}^{-1}$ and $\alpha_0(a,b)=\theta_a\theta_b\theta_{ab}^{-1}$. The element $\alpha_0 \in Z^2(A,T(F))$ is the 2-cocycle corresponding to the section $a \mapsto \theta_a$, which then identifies $\mc{E}_{[\phi]}^{0,[z]}$ with $\C^\times\boxtimes_{\bar\alpha_0}A$. Analogously we have $\beta'(a,b)=\beta(a,b)\beta_0(a,b)$ with $\beta(a,b)=\dot s_a {^a\dot s_b} \dot s_{ab}^{-1}$ and $\beta_0(a,b)=\tau_a\tau_b\tau_{ab}^{-1}$. The element $\beta_0 \in Z^2(A,\hat{\bar T})$ is the 2-cocycle corresponding to the section $a \mapsto \tau_a$, which then identifies $\mc{E}_{[z]}^{0,[\phi]}$ with $\C^\times \boxtimes_{\bar\beta_0}A$.

Let $\zeta : \mc{E}_{[\phi]}^{0,[z]} \to \mc{E}_{[z]}^{0,[\phi]}$ be an isomorphism of extensions. The composition $\C^\times \boxtimes_{\bar\alpha_0} A \to \mc{E}_{[\phi]}^{0,[z]} \to \mc{E}_{[z]}^{0,[\phi]} \to \C^\times\boxtimes_{\bar\beta_0}A$ is given by $x \boxtimes a \mapsto x\zeta_0(a)\boxtimes a$, where $\zeta_0 : A \to \C^\times$ is defined as $\zeta_0(a)=\zeta(\bar\theta_a)\bar\tau_a^{-1}$ and satisfies $\zeta_0(a)\zeta_0(b)\zeta_0(ab)^{-1}=\bar\alpha_0(a,b)^{-1}\bar\beta_0(a,b)$. Here $\bar\theta_a \in \mc{E}_{[\phi]}^{0,[z]}$ and $\bar\tau_a \in \mc{E}_{[z]}^{0,[\phi]}$ are the images of $\theta_a$ and $\tau_a$ respectively.

Let $h : A \to \C^\times$ be defined as in Subsection \ref{sub:eiso}. We claim that
\[ \C^\times\boxtimes_{\bar\alpha\bar\alpha_0} A \to \C^\times\boxtimes_{\bar\beta\bar\beta_0}A,\qquad x\boxtimes a \mapsto h(a)\zeta_0(a) \boxtimes a \]
is an isomorphism of extensions and the composition
\[ \xi : \mc{E}_{[\phi]}^z \to \C^\times\boxtimes_{\bar\alpha\bar\alpha_0} A \to \C^\times\boxtimes_{\bar\beta\bar\beta_0}A \to \mc{E}_{[z]}^\phi \]
depends only on $\zeta$, and not on the choices of $\theta_a$, $\tau_a$, $t_a$, or $\dot s_a$.

The first part of the claim is equivalent to $h(a)h(b)h(ab)^{-1}=\bar\alpha(a,b)^{-1}\bar\beta(a,b)$, which was the content of the proof of Proposition \ref{pro:isoh}. This proof remains valid verbatim in the current situation. For the second claim, the independence of the choices of $t_a$ and $\dot s_a$ was already addressed in the proof of Proposition \ref{pro:isoh}. Now say we replace $\theta_a$ by $x_a\theta_a$ and $\tau_a$ by $\dot y_a\tau_a$, for $x_a \in T(F)$ and $\dot y_a \in [\hat{\bar T}]^+$. Since we already have independence of $t_a$ and $\dot s_a$, we are free to replace $t_a$ by $x_a^{-1}t_a$ and $\dot s_a$ by $\dot y_a^{-1}\dot s_a$. This has the effect of keeping $\bar\alpha\bar\alpha_0$ and $\bar\beta\bar\beta_0$, as well as the first and third arrows in the last displayed sequence, unchanged. At the same time, $h(a)$ is replaced by $h(a)\<[\phi],x_a^{-1}\>\<[z],\dot y_a\>$, while $\zeta_0(a)$ is replaced by $\zeta_0(a)\<[\phi],x_a\>\<[z],\dot y_a^{-1}\>$, so the middle arrow is unchanged as well.

Finally, if $\zeta$ is replaced by $\delta\zeta$, then $\zeta_0$ is replaced by $(\delta\zeta)_0$ specified by $(\delta\zeta)_0(a)=(\delta\zeta)(\bar\theta_0)\bar\tau_a^{-1}=\delta(a)\zeta(\bar\theta_0)\bar\tau_a^{-1}=\delta(a)\zeta_0(a)$. If follows that the isomorphism $\C^\times\boxtimes_{\bar\alpha\bar\alpha_0} A \to \C^\times\boxtimes_{\bar\beta\bar\beta_0}A$ is multiplied by $\delta$, and the same is then true for $\xi$.
\end{proof}

\subsection{Computing the right-hand side of \eqref{eq:charid}}
In this section we will compute the virtual character
\[ \Theta^{\tilde s}_\phi := \sum_\rho \tx{tr}\rho(\tilde s) \cdot \Theta_{\pi_\rho} \]
where $\rho$ runs over the set $\tx{Irr}(\pi_0(\tilde S_\phi^{[z]}),{[z]})$. We recall from Subsections \ref{sub:init} and \ref{sub:eiso} that we have the following diagram
\[ \xymatrix{
1\ar[r]&T(F)\ar[r]\ar@{=}[d]&(T(\bar F) \rtimes A)^{\tilde{\bar z}(\Gamma)}\ar[r]&A^{[z]}\ar[r]&1\\
1\ar[r]&T(F)\ar[r]\ar[d]^{[\phi]}&(T(\bar F) \rtimes A^{[\phi]})^{\tilde{\bar z}(\Gamma)}\ar[r]\ar[u]\ar[d]^F&A^{[z],[\phi]}\ar[r]\ar@{=}[d]\ar@{^(->}[u]&1\\
1\ar[r]&\C^\times\ar[r]\ar@{=}[d]&\mc{E}^z_{[\phi]}\ar[r]\ar[d]^H&A^{[z],[\phi]}\ar[r]\ar@{=}[d]&1 \\
1\ar[r]&\C^\times\ar[r]&\mc{E}^\phi_{[z]}\ar[r]&A^{[z],[\phi]}\ar[r]&1\\
1\ar[r]&\pi_0([\hat {\bar T}]^+)\ar[r]\ar[u]^{[z]}&\pi_0(\tilde S_\phi^+)\ar[r]\ar[u]^G&A^{[z],[\phi]}\ar[r]\ar@{=}[u]&1\\
} \]
We can be more explicit about the maps $F$, $G$, and $H$. Recall from Subsection \ref{sub:eiso} that we have fixed elements $t_a \in T(\bar F)$ and $\dot s_a \in \hat{\bar T}$ such that $a \mapsto t_a \rtimes a$ and $a \mapsto \dot s_a \rtimes a$ are sections of the top and bottom extensions.
The corresponding 2-cocycles are
\[ \alpha(a,b) = t_a \cdot {^at_b} \cdot t_{ab}^{-1}\qquad\tx{and}\qquad \beta(a,b) = \dot s_a \cdot {^a\dot s_b} \cdot \dot s_{ab}^{-1}. \]
and setting $\bar\alpha = [\phi] \circ\alpha$ and $\bar\beta = [z] \circ\beta$ allows us to make the identifications
\[ \mc{E}^z_{[\phi]} = \C^\times \boxtimes_{\bar\alpha} A^{[\phi],[z]}\qquad\tx{and}\qquad\mc{E}^\phi_{[z]} = \C^\times \boxtimes_{\bar\beta} A^{[\phi],[z]}. \]
The maps $F$, $G$, and $H$ are now explicitly given by
\[ F(t\cdot t_a \rtimes a) = [\phi](t) \boxtimes a,\qquad G(\dot s \cdot \dot s_b \rtimes b) = [z](s)\boxtimes b,\qquad H(x \boxtimes a) = xh(a) \boxtimes a. \]
We now fix $t \cdot t_a \rtimes a \in \tilde T_z(F)$ and $\dot s \cdot \dot s_b \rtimes b \in \tilde S_\phi^{[z]}$ and set out to compute
\[ \Theta_\phi^{s\cdot s_b \rtimes b}(t\cdot t_a \rtimes a). \]
Since this is a virtual character of $(T(\bar F) \rtimes A)^z$ which is induced from a virtual character of $(T(\bar F) \rtimes A^{[\phi]})^z$, there is no loss of generality if we assume $t \cdot t_a \rtimes a \in (T(\bar F) \rtimes A^{[\phi]})^z$, which simply means $a \in A^{[\phi],[z]}$. Then, by construction, we have
\begin{eqnarray}
\label{eq:os1} O_\phi^{s\cdot s_b \rtimes b}(t\cdot t_a \rtimes a)&=&\!\!\!\!\!\!\!\!\!\!\!\!\!\!\sum_{\tau \in \tx{Irr}(\C^\times\boxtimes_{\bar\beta}A^{[\phi],[z]},\tx{id})}\!\!\!\!\!\!\!\!\!\!\!\!\!\!\chi_\tau(G(\dot s\cdot \dot s_b\rtimes b))\\
&\cdot&|A^{[\phi],[z]}|^{-1}\!\!\!\!\!\!\!\!\!\sum_{\substack{c \in A^{[z]}\\ cac^{-1}\in A^{[\phi],[z]}}}\!\!\!\!\!\!\!\!\!\chi_\tau(HF((t_c \rtimes c)(t\cdot t_a \rtimes a)(t_c \rtimes c)^{-1})) \nonumber
\end{eqnarray}
where $\chi_\tau$ denotes the character of the finite dimensional representation $\tau$ and the second line is the Frobenius formula for the character of the representation on $\tilde T_z(F)$ induced from $\tau\circ HF$.

We compute
\[ (t_c \rtimes c)(t\cdot t_a \rtimes a)(t_c \rtimes c)^{-1} = {^ct}\cdot \zeta(c,a)\cdot t_{cac^{-1}} \rtimes cac^{-1}, \]
where
\[ \zeta(c,a) := {^ct_a}t_c{^{(cac)^{-1}}t_c^{-1}}t_{cac^{-1}}^{-1} \in T(F). \]
With this we have
\[ \Theta_\phi^{\dot s\cdot \dot s_b \rtimes b}(t\cdot t_a \rtimes a) = [z](\dot s)\sum_c[\phi]({^ct}\zeta(c,a)) h(cac^{-1}) |A^{[\phi],[z]}|^{-1}\sum_\tau \chi_{\bar\tau}(b)\chi_{\bar\tau}(cac^{-1}). \]
We now apply Lemma \ref{lem:orth} to the sum over $\tau$ and conclude that if $cac^{-1}$ is not conjugate to $b^{-1}$ then the corresponding summand is zero. It is more convenient to apply this information not to the expression we just obtained, but to the original expression we started with, namely \eqref{eq:os1}. This allows us rewrite that expression as
\begin{eqnarray*}
&&|A^{[\phi],[z]}|^{-1}||Z_{A^{[\phi],[z]}}(b^{-1})|^{-1}\sum_\tau\chi_\tau(G(\dot s\cdot \dot s_b\rtimes b))
\\
&&\sum_{\substack{y \in A^{[\phi],[z]}\\ c \in A^{[z]}\\ cac^{-1} = yb^{-1}y^{-1} }}\chi_\tau(HF((t_c \rtimes c)(t\cdot t_a \rtimes a)(t_c \rtimes c)^{-1})) \nonumber
\end{eqnarray*}
and making the substitution $c \mapsto yc$ this equals
\begin{eqnarray*}
&&|A^{[\phi],[z]}|^{-1}||Z_{A^{[\phi],[z]}}(b^{-1})|^{-1}\sum_\tau\chi_\tau(G(\dot s\cdot \dot s_b\rtimes b))
\\
&&\sum_{\substack{y \in A^{[\phi],[z]}\\ c \in A^{[z]}\\ cac^{-1} = b^{-1} }}\chi_\tau(HF((t_{yc} \rtimes yc)(t\cdot t_a \rtimes a)(t_{yc} \rtimes yc)^{-1})) \nonumber
\end{eqnarray*}
Since the images of $t_c \rtimes c \in \tilde T_z(F)$ and $t_{yc} \rtimes yc \in \tilde T_z(F)$ in the quotient
\[ (T(\bar F) \rtimes A^{[\phi]})^z \lmod (T(\bar F) \rtimes A)^z \cong A^{[\phi],[z]}\lmod A^{[z]} \]
are equal, the character of $\tau\circ HF$ will remain unchanged if we replace $yc$ by $y$. Doing this leads to the expression
\begin{eqnarray*}
\Theta_\phi^{\dot s\cdot \dot s_b \rtimes b}(t\cdot t_a \rtimes a)&=&
|Z_{A^{[\phi],[z]}}(b^{-1})|^{-1}\sum_\tau\chi_\tau(G(\dot s\cdot \dot s_b\rtimes b))
\\
&&\sum_{\substack{c \in A^{[z]}\\ cac^{-1} = b^{-1} }}\chi_\tau(HF((t_c \rtimes c)(t\cdot t_a \rtimes a)(t_c \rtimes c)^{-1})).
\end{eqnarray*}
The same analysis as for \eqref{eq:os1} now leads to
\[ \resizebox{0.98\textwidth}{!}{$\Theta_\phi^{\dot s\cdot \dot s_b \rtimes b}(t\cdot t_a \rtimes a) = [z](\dot s)\sum_c[\phi]({^ct}\zeta(c,a)) h(cac^{-1}) |Z_{A^{[\phi],[z]}}(b^{-1})|^{-1}\sum_\tau \chi_{\bar\tau}(b)\chi_{\bar\tau}(cac^{-1})$}. \]
Since now $cac^{-1}=b^{-1}$ we can apply Lemma \ref{lem:orth} and, recalling the definition of $h(b)$ from Subsection \ref{sub:eiso}, we obtain
\begin{eqnarray*}
\Theta_\phi^{\dot s\cdot \dot s_b \rtimes b}(t\cdot t_a \rtimes a)&=&[z](\dot s)h(b^{-1})\bar\beta(b,b^{-1})\sum_c[\phi]({^ct}\zeta(c,a))\\
&=&[z](\dot s)h(b)^{-1}\bar\alpha(b,b^{-1})\sum_c[\phi]({^ct}\zeta(c,a))\\
&=&[z](\dot s)\left\< (z^{-1},t_{b^{-1}}),(\phi^{-1},\dot s_b)\right\>^{-1}\sum_c[\phi]({^ct}\zeta(c,a))
\end{eqnarray*}

\subsection{Computing the left-hand side of \eqref{eq:charid}}

We will now compute the lift to $\tilde T_z(F)$ of the virtual character $S\Theta_{\phi^\mf{e}}$. Recall that we have fixed an element $\dot{\tilde s} = \dot s \cdot \dot s_b \rtimes b \in \tilde S_\phi^{[z]}$ and $\mf{e}$ is the endoscopic triple for the twisted group $(G,b^{-1})$ corresponding to $\tilde s$ and $\phi$ and augmented by a choice of an $L$-embedding $\xi^\mf{e} : {^LG^\mf{e}} \rw {^LG}$ whose image contains the image of $\phi$. In our special case of $G=T$, we have $G^\mf{e}=T_{b^{-1}}$ and we can choose $\xi^\mf{e} : [\hat T]^{b,\circ} \rtimes W_F \rw \hat T \rtimes W_F$ to be given by $(t,w) \mapsto t \cdot \phi(w)$. With this choice, $\phi^\mf{e}(w)=1 \rtimes w$ and hence $S\Theta_{\phi^\mf{e}}$ is the trivial character of $T_{b^{-1}}(F)$. On the other hand, the function $f^\mf{\dot e}$ from Lemma \ref{lem:trans} is given by
\begin{eqnarray*}
f^\mf{\dot e}(\gamma^\mf{e})&=&\sum_{\tilde\delta \in \tilde T_z(F)/\tilde T_z(F)-conj} \Delta(\gamma^\mf{e},\tilde\delta) \int_{\tilde x \in \tilde T_z(F)/\tilde T_z(F)_{\tilde\delta}} f(\tilde x\tilde\delta \tilde x^{-1})d\tilde x\\
&=&\sum_{\tilde\delta \in \tilde T_z(F)/T_z(F)-conj} \Delta(\gamma^\mf{e},\tilde\delta) \int_{x \in T_z(F)/ T_z(F)_{\tilde\delta}} f(\tilde x\tilde \delta \tilde x^{-1})d\tilde x,\\
\end{eqnarray*}
where $\Delta$ is the transfer factor determined by $\mf{\dot e}$ and the fixed $L$-embedding (there is no Whittaker datum since we are dealing with tori). Thus the lift of $S\Theta_{\phi^\mf{e}}$ evaluated at $f$ is equal to
\begin{eqnarray*}
&&\int_{\gamma \in T_{b^{-1}}(F)} f^\mf{\dot e}(\gamma)d\gamma\\
&=&\int_\gamma \sum_{\tilde\delta \in \tilde T_z(F)/T_z(F)-conj} \Delta(\gamma,\tilde\delta) \int_{\tilde x \in T_z(F)/ T_z(F)_\delta} f(\tilde x\tilde \delta \tilde x^{-1})d\tilde x\\
&=&\int_\gamma \sum_{\tilde \delta \in \tilde T_z(F)/T_z(F)-conj} \sum_{c \in \tilde T_z(F)/T_z(F)} \Delta_{KS}(\gamma,c\tilde \delta c^{-1}) \int_{\tilde x \in T_z(F)/ T_z(F)_{\tilde \delta}} f(\tilde x\tilde \delta \tilde x^{-1})d\tilde x\\
\end{eqnarray*}
Recall that $\Delta_{KS}$ is supported in the variable $\tilde \delta$ on the coset $(G \rtimes b^{-1})_z(F)$. We obtain
\begin{eqnarray*}
&=&\int_\gamma \sum_{a \in A^{[z]}}\sum_{\tilde \delta \in [T \rtimes a]_z(F)/T_z(F)-conj} \sum_{\substack{c \in \tilde T_z(F)/T_z(F)\\ cac^{-1}=b^{-1}}} \Delta_{KS}(\gamma,c\tilde \delta c^{-1}) \int_{\tilde x \in T_z(F)/ T_z(F)_{\tilde \delta}} f(\tilde x\tilde \delta \tilde x^{-1})d\tilde x\\
\end{eqnarray*}
We interchange the integral over $\gamma$ with the sums over $a$ and $c$. Moreover, as $\gamma$ runs over $T_{b^{-1}}(F)$, $c^{-1}\gamma c$ runs over $T_a(F)$. We make the substitution $\gamma \mapsto c^{-1}\gamma c$ and arrive at
\begin{eqnarray*}
&=&\sum_{a \in A^{[z]}}\int_{\gamma \in T_a(F)} \sum_{\tilde \delta \in [T \rtimes a]_z(F)/T_z(F)-conj} \sum_{\substack{c \in \tilde T_z(F)/T_z(F)\\ cac^{-1}=b^{-1}}} \Delta_{KS}(c\gamma c^{-1},c\tilde \delta c^{-1}) \int_{\tilde x \in T_z(F)/ T_z(F)_{\tilde \delta}} f(\tilde x\tilde \delta \tilde x^{-1})d\tilde x\\
\end{eqnarray*}
Now $\Delta_{KS}(\gamma,\tilde \delta)$, in our special case of tori, is non-zero if and only if $\tilde \delta = \delta \rtimes b^{-1}$ and the image of $\delta$ in $T_{b}(F)$ equals $\gamma$. Thus the function
\[ \Phi(\tilde \delta)=\sum_{\substack{c \in \tilde T_z(F)/T_z(F)\\ cac^{-1}=b^{-1}}} \Delta_{KS}(c\gamma c^{-1},c\tilde \delta c^{-1}) \]
depends only on $\tilde \delta$, as $\gamma$ can be recovered from $\tilde \delta$. We arrive at the formula
\[
\sum_{a \in A^{[z]}}\int_{\gamma \in T_a(F)} \sum_{\substack{\tilde \delta \in [T \rtimes a]_z(F)/T_z(F)-conj\\ \delta \mapsto \gamma}} \Phi(\tilde \delta) \int_{\tilde x \in T_z(F)/ T_z(F)_\delta} f(\tilde x\delta \tilde x^{-1})d\tilde x
\]
Since $\Phi(\tilde \delta)$ is conjugation-invariant under $T_z(F)$, we obtain
\[
\sum_{a \in A^{[z]}}\int_{\gamma \in T_a(F)} \sum_{\substack{\tilde \delta \in [T \rtimes a]_z(F)/T_z(F)-conj\\ \delta \mapsto \gamma}} \int_{\tilde x \in T_z(F)/ T_z(F)_\delta} \Phi(\tilde x\tilde \delta \tilde x^{-1})  f(\tilde x\tilde \delta \tilde x^{-1})d\tilde x
\]
A simple integration formula now shows that this is equal to
\[ \int_{\tilde \delta \in \tilde T_z(F)} \Phi(\tilde \delta)f(\tilde \delta). \]
We conclude that the lift of $S\Theta_{\phi^\mf{e}}$ to $\tilde T_z(F)$ is represented by the function $\Phi$. We have
\begin{eqnarray*}
\Phi(t \cdot t_a \rtimes a)&=&\sum_{\substack{c \in \tilde T_z(F)/T_z(F)\\ cac^{-1}=b^{-1}}} \Delta_{KS}(c\gamma c^{-1},c(t \cdot t_a \rtimes a) c^{-1})\\
&=&\sum_{\substack{c \in \tilde T_z(F)/T_z(F)\\ cac^{-1}=b^{-1}}} \Delta_{KS}(c\gamma c^{-1},(t_c \rtimes c)(t \cdot t_a \rtimes a) (t_c \rtimes c)^{-1})\\
&=&\sum_{\substack{c \in \tilde T_z(F)/T_z(F)\\ cac^{-1}=b^{-1}}} \Delta_{KS}(c\gamma c^{-1},{^ct}\zeta(c,a)t_{cac^{-1}} \rtimes cac^{-1})\\
&=&\sum_{\substack{c \in A^{[z]} \\ cac^{-1}=b^{-1}}} \<(z^{-1},{^ct}\zeta(c,a)t_{b^{-1}}),(\phi_0^{-1},\dot s\dot s_b)\>^{-1}\\
&=&\<(z^{-1},t_{b^{-1}}),(\phi_0^{-1},\dot s\dot s_b)\>^{-1}\sum_{\substack{c \in A^{[z]} \\ cac^{-1}=b^{-1}}} [\phi]({^ct}\zeta(c,a)).\\
\end{eqnarray*}
The final expression is equal to the formula for $\Theta_\phi^{\dot s\cdot \dot s_b \rtimes b}(t\cdot t_a \rtimes a)$ obtained in the previous section. The proof of Conjecture \ref{cnj:llc_pure_ci} in the case of tori is now complete.

\begin{appendices}

\addtocontents{toc}{
\setlength{\cftbeforesecskip}{\cftbeforesubsecskip}
\setlength{\cftsecindent}{\cftsubsecindent}
\protect\renewcommand{\cftsecfont}{\cftsubsecfont}
\protect\renewcommand{\protect\cftsecdotsep}{\cftsubsecdotsep}
}

\section{Functoriality of the local correspondence for connected groups} \label{app:func}

Let $\phi : L_F \to {^LG}$ be a tempered Langlands parameter. As in \cite[\S5.4]{KalRI} and \cite[\S4.1]{KalRIBG} we expect to have a compound $L$-packet $\Pi_\phi$ and a commutative diagram
\[ \xymatrix{
	\Pi_\phi\ar[r]\ar[d]&\tx{Irr}(S_\phi^+)\ar[d]\\
	H^1(u \to W,Z(G) \to G)\ar[r]&\pi_0(Z(\hat{\bar G})^+)^*
}
\]
Recall here that $\Pi_\phi$ is a subset of the set $\Pi_\tx{temp}$ of tempered representations of rigid inner twists, that consists of tuples $(G_z,\xi,z,\pi)$, where $\xi : G \to G_z$ is an inner twist, $z \in Z^1(u \to W,Z(G) \to G)$ is such that $\xi^{-1}\sigma(\xi) = \tx{Ad}(\bar z(\sigma))$, where $\bar z \in Z^1(\Gamma,G_\tx{ad})$ is the image of $z$ under the natural projection $G \to G_\tx{ad}$, and $\pi$ is an irreducible tempered representation of $G_z(F)$.

The group $A$ acts on $Z^1(u \to W,Z(G) \to G)$ by $a(z)(w) = a(z(w))$. Given rigid inner twists $(\xi_i,z_i) : G \to G_i$  for $i=1,2$ and $a \in A$ such that $z_2=a(z_1)$ one checks that the isomorphism $b := \xi_2 \circ a \circ \xi_1^{-1} : G_1 \to G_2$ is defined over $F$. More generally, if $a(z_1)$ and $z_2$ are cohomologous and one chooses $h \in G$ with $z(w)=h^{-1}a(z(w))\sigma_w(h)$, then $b := \xi_2 \circ \tx{Ad}(h) \circ a \circ \xi^{-1}$ is defined over $F$. A different choice of $h$ will change $b$ only by an inner automorphism coming from $G_1(F)$.

Seen from a slightly different perspective, this can be formulated as an action of $A$ on the category of rigid inner twists of $G$, namely $a(\xi,z) = (\xi\circ a^{-1},a(z))$. This action can be upgraded to an action of $A$ on the set $\Pi_\tx{temp}$ by $a(G_z,\xi,z,\pi)=(G_z,\xi\circ a^{-1},a(z),\pi)$.

Consider now the dual side. Given a tempered Langlands parameter $\phi : L_F \to {^LG}$ and $\rho \in \tx{Irr}(S_\phi^+)$ we obtain $a\phi := a\circ \phi : L_F \to {^LG}$ and $a\rho := \rho \circ a^{-1} \in \tx{Irr}(S_{a\phi}^+)$. Thus $A$ acts on the space of refined Langlands parameters.

It is reasonable to expect that the above commutative diagram is natural with respect to this action. More precisely:

\begin{cnj} \label{cnj:func}
If $\dot \pi \in \Pi_\tx{temp}$ corresponds to $(\phi,\rho)$, then $a\dot\pi$ corresponds to $(a\phi,a\rho)$. Formulated equivalently, if $(G_1,\xi_1,z_1,\pi_1)$ and $(G_2,\xi_2,z_2,\pi_2)$ correspond to $(\phi,\rho)$ and $(a\phi,a\rho)$ respectively, then the isomorphism $b : G_1 \to G_2$ constructed above identifies $\pi_1$ with $\pi_2$.	
\end{cnj}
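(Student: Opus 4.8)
The plan is to deduce Conjecture \ref{cnj:func} from the expected internal characterization of the refined local Langlands correspondence for connected groups, rather than to prove it unconditionally, which is impossible as long as the correspondence itself is only conjectural. Recall that the correspondence is pinned down by two properties: the normalization of the base point of each $L$-packet by a fixed Whittaker datum $\mf{w}$, and the endoscopic character identities with respect to the normalized transfer factors $\Delta[\mf{\dot e},\mf{z},\mf{w},(\xi,z)]$, the latter determining the correspondence once one has enough refined endoscopic data to separate the members of a packet. So the strategy is: transport the given correspondence $\dot\pi \leftrightarrow (\phi,\rho)$ by $a$ to a candidate correspondence $a\dot\pi \leftrightarrow (a\phi,a\rho)$, and check that this candidate again satisfies these two characterizing properties; uniqueness then forces it to agree with the actual correspondence, which is precisely the assertion.

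For the first property one observes that $a$ preserves a fixed $F$-pinning, hence preserves the associated Whittaker datum (this is where it is convenient that the pinning has been chosen $A$-invariant), so the base-point normalization is carried to itself. For the second property one must check that $a$ is compatible with all ingredients of the transfer factor. Concretely, $a$ sends a refined endoscopic datum $\mf{\dot e}=(G^\mf{e},\mc{G}^\mf{e},\dot s^\mf{e},\xi^\mf{e})$ and $z$-pair $\mf{z}$ to a refined endoscopic datum $a(\mf{\dot e})$ and $z$-pair $a(\mf{z})$, with $\dot s^\mf{e}$ replaced by $a(\dot s^\mf{e})$ and $\xi^\mf{e}$ by $a\circ\xi^\mf{e}$, and one verifies the equivariance
\[ \Delta[a(\mf{\dot e}),a(\mf{z}),\mf{w},(\xi\circ a^{-1},a(z))](\gamma^\mf{z},\tilde\delta) = \Delta[\mf{\dot e},\mf{z},\mf{w},(\xi,z)](\gamma^\mf{z},\tilde\delta). \]
This is a term-by-term check on the factorization $\Delta_{KS}=e(\cdot)\epsilon_L(\cdot)\Delta_I^{-1}\Delta_{II}(\Delta_{III}^\tx{new})^{-1}\Delta_{IV}$ of Subsection \ref{sub:pure_tf} of exactly the kind carried out in the proof of Lemma \ref{lem:basictwistd}: $a$ identifies the virtual $\Gamma$-module $V$ for $\mf{e}$ with that for $a(\mf{e})$, carries the twisted splitting invariant $t\in H^1(\Gamma,S_\tx{sc}^b)$ and the element $s_S\in\pi_0([\hat S]^\Gamma_b)$ to the corresponding objects for $a(\mf{e})$, and, being an algebraic automorphism defined over $F$, commutes with the cohomology pairings of \cite{KS99,KalRI} used to define $\Delta_{III}^\tx{new}$ and with the Kottwitz sign of Subsection \ref{sub:sgn_twisted} (compare Fact \ref{fct:sgn_kott}). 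Granting this equivariance, if $\Theta_\phi^{\dot s}(f)=S\Theta_{\phi^\mf{z}}(f^\mf{z})$ holds for $(\phi,\rho)$ and $\mf{\dot e}$, then applying $a$ and renaming gives the analogous identity for $(a\phi,a\rho)$ and $a(\mf{\dot e})$; since every refined endoscopic datum for $a\phi$ arises as some $a(\mf{\dot e})$, the candidate correspondence satisfies all the character identities, hence equals the true one.

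In the case where $G$ is a torus this argument becomes unconditional: the correspondence is Langlands' construction via local class field theory, which is functorial in the torus, so equivariance under $F$-automorphisms is automatic, and this is implicitly what underlies Section \ref{sec:tori}. The main obstacle in general is exactly what keeps this a conjecture: one is assuming both the existence of the refined correspondence and its rigidity under the endoscopic character identities, the latter being open in general. A secondary but genuine technical point is to verify that the entire package of auxiliary choices entering the normalized transfer factor — Whittaker datum, $a$- and $\chi$-data, the $L$-embeddings $\xi_S$, $\xi_S^\mf{e}$, the lift $\dot s^\mf{e}\in\hat{\bar G}\rtimes A$ — can be selected $a$-equivariantly, so that the displayed equality of transfer factors holds on the nose rather than merely up to a nonzero scalar; this is routine, but must be carried out with the same care as in Subsections \ref{sub:pure_tf1}, \ref{sub:pure_tf2} and \ref{sub:rigid_tf}.
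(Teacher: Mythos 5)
The paper provides no proof: Conjecture~\ref{cnj:func} is posed precisely as a conjecture, and the introduction remarks only that it is ``undoubtedly well-known to experts'' without a citable reference. So there is no argument in the paper to compare yours against, and you correctly acknowledge that an unconditional proof is out of reach as long as the refined correspondence itself is conjectural.

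What your sketch actually does is reduce Conjecture~\ref{cnj:func} to a rigidity statement: that the bijection $\Pi_\phi \to \tx{Irr}(S_\phi^+)$ is uniquely pinned down by the Whittaker-generic normalization together with the endoscopic character identities. That rigidity is itself open; it is essentially the linear-independence-of-characters phenomenon that Arthur has to work hard to establish even for quasi-split classical groups, and it is of comparable depth to the conjecture you are trying to deduce. So the sketch is a plausible reduction, not a proof, and does not discharge the conjectural content. Two further points deserve flagging even inside this conditional framework. First, the conjecture quantifies over all rigid inner twists, including those whose class is not $a$-fixed, so before invoking uniqueness you must observe that conjugation by $a$ permutes the entire compound diagram over $H^1(u\to W,Z(G)\to G)$ rather than acting on a single $\Pi_\phi(G_z)$; your uniqueness argument then applies to the compound packet, not pointwise. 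Second, the statement implicitly depends on a Whittaker datum: if $\mf{w}$ is not $a$-invariant the expected equivariance is twisted by the character $(\mf{w},a\mf{w})$ of \S\ref{sec:change_whit}, so taking $\mf{w}$ $A$-special (as you do) is not an innocent convenience but a genuine hypothesis that should be stated. Granting rigidity and that choice of $\mf{w}$, the term-by-term equivariance of $\Delta_{KS}$ under $a$ is precisely the kind of verification the paper carries out in Lemma~\ref{lem:basictwistd}, and your outline of it is sound, as is your remark that for tori the whole thing is unconditional by functoriality of the Langlands correspondence for tori.
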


In the special case of a rigid inner twist $(G_z,\xi,z)$ for which the cohomology class of $z$ is fixed by $a$, in particular in the case $z=1$ where $G_z=G$, this amounts to a compatibility with automorphisms of the refined local Langlands correspondence for the group $G_z$. However, the above statement applies even to inner forms of $G$ which do not admit $a$ as an automorphism defined over $F$.

\section{Automorphisms of Weil-restricted groups} \label{app:weil}

Let $E/F$ be a finite extension, $\Delta=\tx{Gal}(\bar F/E) \subset \Gamma = \tx{Gal}(\bar F/F)$. Let $G$ be an absolutely simple connected reductive $E$-group. Let $a$ be an automorphism of $H=\tx{Res}_{E/F} G$. Recall the natural identification $H(\bar F)=\tx{Ind}_\Delta^\Gamma G(\bar F)$. For every $\sigma \in \Gamma$ let $^\sigma E$ be the subfield $\sigma(E)$ of $\bar F$ and let $G^\sigma$ be the $^\sigma E$-group obtained by twisting the rational structure, i.e. $G^\sigma = G \times_{\tx{Spec}(E)} \tx{Spec}(^\sigma E)$, where we have used the map $\sigma : E \to {^\sigma E}$. 

\begin{lem} \label{lem:weilauto}
There exists $\sigma_0 \in N_\Gamma(\Delta)$ and an isomorphism $a' : G \to G^{\sigma_0}$ such that 
\[ a(f)(\sigma) = a'(f(\sigma_0^{-1} \sigma)),\qquad \forall f \in H(\bar F)=\tx{Ind}_\Delta^\Gamma G(\bar F),\ \ \forall \sigma \in \Gamma.\]
The $\Delta$-coset of $\sigma_0$ is unique and $a'$ is uniquely determined by the choice of $\sigma_0$ within its $\Delta$-coset. If $\sigma_0$ is replaced by $\tau\sigma_0$ with $\tau \in \Delta$ then $a'$ is replaced by $\tau\circ a'$.
\end{lem}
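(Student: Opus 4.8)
The statement describes automorphisms of $H=\tx{Res}_{E/F}G$ in terms of an ``$\sigma_0$-semilinear'' datum on $G$, so the natural approach is to work entirely with the explicit description $H(\bar F)=\tx{Ind}_\Delta^\Gamma G(\bar F)$, where $f:\Gamma\to G(\bar F)$ ranges over functions satisfying $f(\tau\sigma)={^\tau}f(\sigma)$ for $\tau\in\Delta$, with $\Gamma$ acting by $({^\rho}f)(\sigma)=\rho(f(\rho^{-1}\sigma))$. First I would reduce to understanding automorphisms of $H_{\bar F}$, i.e.\ $\bar F$-automorphisms of $G_{\bar F}^{\times[\Gamma:\Delta]}$, since $G$ being absolutely simple makes $G_{\bar F}$ simple and hence $\tx{Aut}(G_{\bar F}^{\times n})$ is the wreath product $\tx{Aut}(G_{\bar F})\wr S_n$: any automorphism permutes the simple factors and acts on each by an automorphism of $G_{\bar F}$. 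In the language of induced modules, a factor corresponds to a $\Delta$-coset in $\Gamma$, and the permutation of factors must come from a single group element: I would check that an $F$-rational automorphism $a$ of $H$ must respect the $\Gamma$-action, which forces the permutation of the $[\Gamma:\Delta]$ cosets $\Delta\backslash\Gamma$ to be given by left translation by a fixed element, and that element must normalize $\Delta$ (so that left and right coset structures are compatible); call a representative $\sigma_0\in N_\Gamma(\Delta)$.

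Having pinned down $\sigma_0$, the second step is to extract $a'$. Evaluating $a$ on functions supported on a single coset and using $\Gamma$-equivariance of $a$, one sees that $a$ is completely determined by its effect on, say, the coset $\Delta$, which is an isomorphism of $\bar F$-varieties $G(\bar F)\to G(\bar F)$; rationality of $a$ over $F$ together with the semilinear constraints $f(\tau\sigma)={^\tau}f(\sigma)$ translate into exactly the statement that this map descends to an isomorphism of algebraic groups $a':G\to G^{\sigma_0}$ over the appropriate fields (here one uses the identification $G\times_{\tx{Spec}E}\tx{Spec}({^{\sigma_0}E})=G^{\sigma_0}$ via $\sigma_0:E\to{^{\sigma_0}E}$, and the fact that $\sigma_0\in N_\Gamma(\Delta)$ means ${^{\sigma_0}E}$ and $E$ are both fixed fields of $\Delta$ inside $\bar F$, hence equal, so $G^{\sigma_0}$ really is an $E$-form of $G$). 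I would then verify directly that the formula $a(f)(\sigma)=a'(f(\sigma_0^{-1}\sigma))$ does define an $F$-rational automorphism of $H$ for any such pair $(\sigma_0,a')$, giving existence, and that every $F$-automorphism arises this way, giving surjectivity of the correspondence.

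The third step is uniqueness and the dependence statement. If $(\sigma_0,a')$ and $(\sigma_0',a'')$ both represent $a$, then comparing the two formulas on functions supported near the identity coset shows $\sigma_0^{-1}\sigma_0'$ must stabilize every $\Delta$-coset, i.e.\ $\sigma_0'\in\Delta\sigma_0$; writing $\sigma_0'=\tau\sigma_0$ with $\tau\in\Delta$ and substituting into the defining formula, using $f(\tau^{-1}\sigma)={^{\tau^{-1}}}(f(\sigma))$ only when $\sigma\in\Delta$ — more carefully, tracking the semilinear identity — yields $a''=\tau\circ a'$ (where $\tau$ acts on $G^{\sigma_0}$, identified with $G^{\sigma_0'}$ via $\tau$). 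This is a short but fiddly computation. I expect the \emph{main obstacle} to be purely bookkeeping: keeping straight the three layers of twisting — the $\Delta$-semilinearity built into $\tx{Ind}_\Delta^\Gamma$, the outer translation by $\sigma_0$, and the fact that $a'$ lands not in $\tx{Aut}(G)$ but in $\tx{Isom}(G,G^{\sigma_0})$ — and confirming at each step that ``defined over $F$'' (resp.\ ``over $E$'') really does correspond to the stated equivariance identity rather than a twisted variant of it. There is no deep input needed beyond the structure of $\tx{Aut}$ of a product of simple groups and elementary Galois descent; the content is getting the identifications exactly right.
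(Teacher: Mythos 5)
Your proposal is correct and follows essentially the same route as the paper: reduce to $\tx{Aut}$ of a product of copies of the simple group $G_{\bar F}$ (so the automorphism permutes factors), use $F$-rationality to force the permutation of $\Delta\lmod\Gamma$ to be $\Gamma$-equivariant and hence left translation by some $\sigma_0\in N_\Gamma(\Delta)$, extract $a'$ from the coordinate at the identity coset and check semilinearity gives an $E$-isomorphism $G\to G^{\sigma_0}$, and verify the formula on coset-supported functions. The only difference is cosmetic — you spell out the wreath-product structure, the identity $^{\sigma_0}E=E$, and the uniqueness computation a bit more explicitly than the paper does — but the underlying argument is the same.
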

\begin{proof}
Choose a set of representatives $\sigma_1,\dots,\sigma_n$ for $\Delta \lmod \Gamma$ and arrange $\sigma_1=1$. Then $f \mapsto (f(\sigma_1),\dots,f(\sigma_n))$ is an isomorphism $H(\bar F) \to \prod_{i=1}^n G(\bar F)$ of algebraic groups. It translates the automorphism $a$ to an automorphism of $\prod_{i=1}^n G(\bar F)$. Such an automorphism must map each factor in the product to another factor. In this way we obtain a permutation $p$ of the set $\Delta \lmod \Gamma$ which has the property that if $f \in H(\bar F)$ is a function supported on the coset $\Delta \sigma$, then $a(f)$ is a function supported on the coset $\Delta p(\sigma)$. Since $a$ is an $F$-automorphism, the permutation $p$ is $\Gamma$-equivariant, i.e. $p(\sigma\gamma)=p(\sigma)\gamma$. It follows that there exists $\sigma_0 \in N_\Gamma(\Delta)$ such that $p(\gamma)=\sigma_0 \gamma$, and the $\Delta$-coset of $\sigma_0$ is unique.

Given $g \in G(\bar F)$ and $\sigma \in \Gamma$ let $g^{\delta_\sigma} \in H(\bar F)$ be the unique function supported on $\Delta\sigma$ and with value $g$ at $\sigma$. Define the $\bar F$-automorphism $a'$ of $G$ by $a'(g)=a(g^{\delta_1})(\sigma_0)$. One checks immediately that $a'(\tau g)=\sigma_0 \tau \sigma_0^{-1} a'(g)$, so that $a'$ is in fact an isomorphism of $E$-groups $G \to G^{\sigma_0}$. The equality $a(f)(\sigma)=a'(f(\sigma_0^{-1}\sigma))$ can be checked on functions $f$ of the form $g^{\delta_\gamma}$ for arbitrary $g \in G(\bar F)$ and $\gamma \in \Gamma$. We compute that $a(g^{\delta_\gamma})(\sigma)$ equals
\[ \resizebox{0.98\textwidth}{!}{$ a(\gamma^{-1}(g^{\delta_1}))(\sigma)=a(g^{\delta_1})(\sigma\gamma^{-1})=\sigma\gamma^{-1}\sigma_0^{-1}a'(g)=a'(\sigma_0^{-1}\gamma^{-1}\sigma g)=a'(g^{\delta_\gamma}(\sigma_0^{-1}\sigma))$}, \]
provided $\sigma_0^{-1}\gamma^{-1}\sigma \in \Delta$, and that $a(g^{\delta_\gamma})=1=a'(g^{\delta_\gamma}(\sigma_0^{-1}\sigma))$ otherwise.
\end{proof}

\section{Orthogonality relations for projective characters} \label{app:projchar}

We have now constructed the bijection \eqref{eq:cnj1tori}. Our next goal is to show that with this bijection the character identities \eqref{eq:charid} hold. In this section we will prove a lemma that will be needed for the evaluation of the right hand side of \eqref{eq:charid}. It is a refinement of the orthogonality relation
\[ \sum_{\tau\in \tx{Irr}(A)} \overline{\chi_\tau(a)}\chi_\tau(b) = \begin{cases} |Z_A(a)|,& b \in C_A(a)\\ 0,&\tx{else} \end{cases} \]
for the characters of the irreducible representations of a finite group $A$. Here $Z_A(a)$ and $C_A(a)$ are the centralizer and the conjugacy class of $a$ in $A$.

The refinement we need is the following. Consider a central extension
\[ 1 \rw Z \rw E \rw A \rw 1. \]
We assume that $A$ is finite. For $e \in E$ we will write $\bar e$ for its image in $A$. Let $\psi : Z \rw \C^\times$ be a character, and let $\tx{Irr}(E,\psi)$ be the set of all irreducible representations of $E$ whose central character restricted to $Z$ equals $\psi$. This is a finite set and each element of it is finite dimensional. For each $\tau \in \tx{Irr}(E,\psi)$ its character $\chi_\tau$ is a class function on $E$ that satisfies $\chi_\tau(ze)=\psi(z)\chi_\tau(e)$. This implies that if the images of $e,e' \in E$ in $A$ commute, so that $ee'e^{-1}e'^{-1} \in Z$, but $\psi(ee'e^{-1}e'^{-1}) \neq 1$, then $\chi_\tau(e')=0$. We will say that $e' \in E$ is $\psi$-centralizing, if for all $e \in E$ such that $ee'e^{-1}e'^{-1} \in Z$ we have $\psi(ee'e^{-1}e'^{-1})=1$. Note that the notion if being $\psi$-centralizing is invariant under conjugation as well as under translation by $Z$. The refinement of the orthogonality relations we need is the following.
\begin{lem} \label{lem:orth} Assume that $e \in E$ is $\psi$-centralizing. Then
\[ \sum_{\tau\in \tx{Irr}(E,\psi)} \chi_\tau(e)\chi_\tau(e') = \begin{cases} |Z_A(e)|\psi(ee'),& ee' \in Z\\ 0,&\bar e^{-1} \notin C_A(\bar e') \end{cases} \]
\end{lem}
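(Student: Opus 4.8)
The statement is a ``$\psi$-projective'' refinement of the second orthogonality relation for the group $A$, and I would obtain it by reducing to the twisted group algebra. First I would pass to a section $s : A \to E$ of the central extension, producing a 2-cocycle $\alpha \in Z^2(A,Z)$ and an identification $\psi_*\alpha = \bar\alpha \in Z^2(A,\C^\times)$, so that $\tx{Irr}(E,\psi)$ is in bijection with the irreducible $\bar\alpha$-projective representations of $A$. Under this bijection $\chi_\tau(s(a))$ becomes the $\bar\alpha$-twisted character of the corresponding projective representation, and the hypothesis that $e$ is $\psi$-centralizing translates into the statement that $\bar e$ is an \emph{$\bar\alpha$-regular} element of $A$, i.e. $\bar\alpha(\bar e, x) = \bar\alpha(x, \bar e)$ for every $x$ commuting with $\bar e$ (more precisely that the obstruction character of the centralizer is trivial at $\bar e$). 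It is exactly for $\bar\alpha$-regular classes that the twisted characters behave like ordinary characters, and I would invoke (or reprove in a line or two) the standard fact that the number of irreducible $\bar\alpha$-projective representations equals the number of $\bar\alpha$-regular conjugacy classes, and that the twisted class functions supported on $\bar\alpha$-regular classes have the twisted irreducible characters as an orthonormal basis.

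Next I would write down the column-orthogonality consequence of this. Fixing $\bar\alpha$-regular $\bar e$, the vector $(\chi_\tau(s(\bar e)))_\tau$ and, for $\bar\alpha$-regular $\bar e'$, the vector $(\chi_\tau(s(\bar e')))_\tau$ satisfy
\[ \sum_{\tau} \overline{\chi_\tau(s(\bar e))}\,\chi_\tau(s(\bar e')) = \begin{cases} |Z_A(\bar e)|, & \bar e' \in C_A(\bar e),\\ 0, & \text{else},\end{cases} \]
by the usual argument: the matrix with rows indexed by $\tau$ and columns indexed by $\bar\alpha$-regular classes is, up to the scaling $\sqrt{|C_A(\bar e)|/|A|}$, unitary, because it is square (equal numbers of rows and columns) and its rows are orthonormal for the twisted inner product. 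I would then convert this into the form in the lemma. The left-hand side of the lemma is $\sum_\tau \chi_\tau(e)\chi_\tau(e')$ with no complex conjugation; using $\chi_\tau(e) = \chi_{\tau^\vee}(e^{-1})$ and the fact that $\tau \mapsto \tau^\vee$ permutes $\tx{Irr}(E,\psi^{-1})$, together with $\chi_{\tau}(e)\chi_{\tau}(e') = \overline{\chi_{\tau}(e^{-1})}\,\chi_\tau(e')$ when $\psi$ is unitary — or, more robustly, by directly noting that $\sum_\tau \chi_\tau(e)\chi_\tau(e')$ vanishes unless $\bar e\,\bar e'$ is conjugate to the identity, i.e. $\bar e^{-1} \in C_A(\bar e')$ — I would reduce to the case $ee' \in Z$. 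In that case $e' \in Z e^{-1}$, so $\chi_\tau(e') = \chi_\tau(z e^{-1}) = \psi(z)\chi_\tau(e^{-1})$ where $z = ee'$, and $\sum_\tau \chi_\tau(e)\chi_\tau(e') = \psi(ee') \sum_\tau \chi_\tau(e)\chi_\tau(e^{-1}) = \psi(ee') \sum_\tau |\chi_\tau(e)|^2 \cdot (\text{sign bookkeeping})$; the inner sum is $|Z_A(\bar e)|$ by column orthogonality applied to $\bar e' = \bar e$ (which is indeed in $C_A(\bar e)$), giving the claimed value $|Z_A(\bar e)|\psi(ee')$. The case distinction in the lemma only lists $ee' \in Z$ and $\bar e^{-1}\notin C_A(\bar e')$, so the intermediate case $\bar e^{-1} \in C_A(\bar e')$ but $ee'\notin Z$ is left unstated (and one checks the sum can be nonzero there but is not needed); I would simply not claim anything about it.

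The one genuine subtlety — and the step I expect to be the main obstacle — is the translation between ``$e$ is $\psi$-centralizing'' and ``$\bar e$ is $\bar\alpha$-regular,'' and the accompanying fact that $\#\{\bar\alpha\text{-regular classes}\} = \#\tx{Irr}(E,\psi)$. The definition of $\psi$-centralizing in the paper is phrased elementwise in $E$ (for all $e$ with $[e,e']\in Z$ one has $\psi([e,e'])=1$); I would check that this is precisely the condition that the function $x \mapsto \bar\alpha(\bar e,x)\bar\alpha(x,\bar e)^{-1}$ is trivial on $Z_A(\bar e)$, which is the standard definition of $\bar\alpha$-regularity, using that $[s(a),s(b)] = \psi(\alpha(a,b)\alpha(b,a)^{-1})$ inside $E$ modulo $Z$ — really, computing $s(\bar e)s(x)s(\bar e)^{-1}s(x)^{-1}$ in terms of $\alpha$ when $\bar e, x$ commute in $A$. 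Once this identification is in place, everything else is the classical projective-character theory (Schur, or e.g. the reference the paper already points to for projective characters), so I would cite it rather than reprove it. I would also record at the start that the entire statement is invariant under translating $e$ or $e'$ by $Z$ and under conjugation, so there is no loss in working with section representatives $s(\bar e), s(\bar e')$, which is already observed in the text preceding the lemma.
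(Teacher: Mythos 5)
Your proposal follows essentially the same route as the paper's proof: push out along $\psi$ to reduce to $Z=\C^\times$, pass to a section $s:A\to E$ with cocycle $\bar\alpha$, identify $\psi$-centralizing with $\bar\alpha$-regularity, invoke the standard orthogonality/completeness theory of $\bar\alpha$-projective characters (you via the square-matrix/unitarity argument, the paper via the Fourier expansion of the delta $\alpha$-class function from \cite{Che13}), and then translate back using unitarity of $\tau$ and $\chi_\tau(ze^{-1})=\psi(z)\chi_\tau(e^{-1})$. The "main obstacle" you flag -- the equivalence of $\psi$-centralizing with $\bar\alpha$-regular -- is precisely the point the paper asserts without elaboration ("because $\bar e^{-1}$ is an $\alpha$-element"), and your sketch of why $[s(\bar e),s(x)]=\alpha(\bar e,x)\alpha(x,\bar e)^{-1}$ for $x\in Z_A(\bar e)$ correctly fills it.
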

\begin{proof}
We will make use of the character theory of projective representations of finite groups, for an exposition of which we refer the reader to \cite{Che13}. We first form the push-out
\[ \xymatrix{
1\ar[r]&Z\ar[r]\ar[d]^{\psi}&E\ar[r]\ar[d]&A\ar[r]\ar@{=}[d]&1\\
1\ar[r]&\C^\times\ar[r]&E_\psi\ar[r]&A\ar[r]&1
} \]
Inflation provides a bijection $\tx{Irr}(E_\psi,\tx{id}) \cong \tx{Irr}(E,\psi)$ that preserves characters. Moreover, $e' \in E$ is $\psi$-centralizing if and only if its image in $E_\psi$ is $\tx{id}$\-centralizing. This reduces the problem to $Z=\C^\times$ and $\psi=\tx{id}$. Next we fix a set-theoretic splitting $s : A \rw E$ such that all values of the corresponding $2$-cocycle $\alpha(a,b)=s(a)s(b)s(ab)^{-1}$ are complex roots of unity, see \cite[Lemma 3.1]{Che13}. For each $\tau \in \tx{Irr}(E_\psi,\tx{id})$ set $\bar\tau = \tau \circ s$. Then $\bar\tau$ is a projective representation of $A$ with cocycle $\alpha$ and the map $\tau \mapsto \tau\circ s$ is a bijection between $\tx{Irr}(E_\psi,\tx{id})$ and the isomorphism classes of projective representations of $A$ with cocycle $\alpha$. Let $f$ be the $\alpha$-class function \cite[Definition 3.13]{Che13} on $A$ supported on the $A$-conjugacy class of $\bar e^{-1}$ and having the property with $f(\bar e^{-1})=1$. This class function exists because $\bar e^{-1}$ is an $\alpha$-element. According to \cite[Theorem 3.15]{Che13}, we have
\[ f = \sum_{\bar\tau} \<f,\chi_{\bar\tau}\>\chi_{\bar\tau}. \]
Since both $f$ and $\chi_{\bar\tau}$ are $\alpha$-class functions and $\alpha$ is unitary, the product $f \cdot \ol{\chi_{\bar\tau}}$ is a 1-class function (i.e. an honest class function) and one sees
\[ \<f,\chi_{\bar\tau}\>=|A|^{-1}\sum_{a \in A}f(a)\ol{\chi_{\bar\tau}(a)} = |Z_A(\bar e^{-1})|^{-1}\ol{\chi_{\bar\tau}(\bar e^{-1})}. \]
We thus obtain
\[ |Z_A(\bar e)|f(\bar e')=\sum_\tau \ol{\chi_{\bar\tau}(\bar e^{-1})} \chi_{\bar\tau}(\bar e') \]
and then further
\begin{eqnarray*}
\sum_\tau \chi_\tau(e) \chi_\tau(e')&=&\sum_\tau \tx{tr}(\tau(e^{-1})^{-1}) \tx{tr}(\tau(e'))\\
&=&\sum_\tau \tx{tr}((z_{e^{-1}}\bar\tau(\bar e^{-1}))^{-1}) \tx{tr}(\tau(e'))\\
&=&z_{e^{-1}}^{-1}z_{e'}\sum_\tau \ol{\chi_{\bar\tau}(\bar e^{-1})}\chi_{\bar\tau}(\bar e')\\
&=&ee'\sum_\tau\ol{\chi_{\bar\tau}(\bar e^{-1})}\chi_{\bar\tau}(\bar e')\\
&=&ee'|Z_A(\bar e)|f(\bar e')
\end{eqnarray*}
We have used in this computation that the projective representation $\bar\tau$ is unitarizable, which is a consequence of our choice of $s$. The lemma follows.
\end{proof}

\section{Folding of root systems} \label{app:fold}

\begin{lem} \label{lem:rootsupp}
	Let $\Phi$ be a simply laced irreducible root system with base $\Delta$. For each positive root $\alpha$, let $f_\alpha : \Delta \to \Z_{\geq 0}$ be determined by $\alpha=\sum_{\gamma \in \Delta} f_\alpha(\gamma) \cdot \gamma$. 
	\begin{enumerate}
		\item The map 
		\[ \Phi^+ \to \{f : \Delta \to \Z_{\geq 0}\},\qquad \alpha \mapsto f_\alpha, \] 
		is injective and equivariant for all automorphisms of the pair $(\Phi,\Delta)$.
		\item For every $f : \Delta \to \Z_{\geq 0}$ define $\tx{supp}(f)=\{\alpha \in \Delta|f(\alpha) \neq 0\}$. Then $\tx{supp}(f_\alpha)$ is a connected subdiagram of the Dynkin diagram of $(\Phi,\Delta)$.
		\item Every $f : \Delta \to \{0,1\}$ with connected support is equal to $f_\alpha$ for some $\alpha \in \Phi^+$.
	\end{enumerate}
\end{lem}
\begin{proof}
	(1) is clear and (2),(3) are \cite[Lemma 2.6.14]{BTBOOK}.
\end{proof}

\begin{rem} \label{rem:rootsupp}
	In order to fully describe the image of the map $\alpha \mapsto f_\alpha$, it remains to describe those $f : \Delta \to \Z_{\geq 0}$ in the image that take at least one value greater than $1$. For the root systems of type $E$ these functions are tabulated in the Plates of \cite[Chap. VI]{BourLie4-6}. From the formulas for positive roots in terms of simple roots in those plates we also infer that there are no such maps in type $A_n$, and in type $D_n$ they are given by
	\[ \begin{dynkinDiagram}[affine mark=*,edge length=.7cm]{D}{*.**.***}
		\node[below=0cm] at (root 1) {\tiny $1$}; 
		\node[below=0cm] at (root 2) {\tiny $1$}; 
		\node[below=0cm] at (root 3) {\tiny $2$}; 
		\node[below=0cm] at (root 4) {\tiny $2$}; 
		\node[below=0cm] at (root 5) {\tiny $1$}; 		
		\node[below=0cm] at (root 6) {\tiny $1$}; 		
		\end{dynkinDiagram}\\
	\]
	where $|\{\gamma:f(\gamma)=1\}|>2$ and $|\{\gamma:f(\gamma)=2\}|>0$.
\end{rem}

The following elementary lemma is well-known. Various versions have been proved in \cite[\S1]{Ste68end} and \cite[\S3]{Haines18}. We record here a statement and proof that is convenient for our purposes.

\begin{pro} \label{pro:fold1}
	Let $V$ be a finite-dimensional real vector space, $\Phi \subset V$ a root system generating $V$, $\Phi^\vee \subset V^*$ the dual root system, $\Delta \subset \Phi$ a base, $\Delta^\vee \subset \Phi^\vee$ its dual, $\Theta$ a finite group acting on $V$ by automorphisms that preserve $\Delta$, hence also $\Phi$.
	\begin{enumerate}
		\item For $\alpha \in \Phi$ let $\alpha_\Theta$ denote the image of $\alpha$ in the space $V_\Theta$ of $\Theta$-coinvariants. Then $\Phi_\Theta=\{\alpha_\Theta|\alpha \in \Phi\}$ is a (possibly non-reduced) root system in $V_\Theta$, and $\Delta_\Theta=\{\alpha_\Theta|\alpha \in \Delta\}$ is a base.
		\item For $\alpha^\vee \in \Phi^\vee$ let $(\alpha^\vee)^\Theta$ denote the sum of the elements of the $\Theta$-orbit of $\alpha^\vee$. Then $(\Phi^\vee)^\Theta=\{(\alpha^\vee)^\Theta| \alpha^\vee \in \Theta\}$ is a reduced root system in $(V^*)^\Theta$ and $(\Delta^\vee)^\Theta=\{(\alpha^\vee)^\Theta|\alpha^\vee \in \Delta^\vee\}$ is a base.
		\item Under the duality between $V_\Theta$ and $(V^*)^\Theta$, the root system $(\Phi^\vee)^\Theta$ is dual to the subsystem of non-multipliable roots in $\Phi_\Theta$. 
  		\item More precisely, let $\alpha \in \Phi$ with image $\alpha_\Theta \in \Phi_\Theta$, and let $(\alpha_\Theta)^\vee \in (V^*)^\Theta$ be the unique element of Axiom $\tx{RS}_\tx{II}$ in \cite[Chap. VI, \S1, no. 1, Definition 1]{BourLie4-6}. Then one of the following mutually exclusive cases holds.
		  \begin{enumerate}
			  \item No two roots in the $\Theta$-orbit of $\alpha$ sum up to a root, $\alpha_\Theta$ is non-multipliable, and $(\alpha_\Theta)^\vee=(\alpha^\vee)^\Theta$.
     			\item The $\Theta$-orbit of $\alpha$ contains a root $\alpha' \neq \alpha$ such that $\alpha'+\alpha \in \Phi$, $\alpha_\Theta$ is multipliable, $2\alpha_\Theta=(\alpha+\alpha')_\Theta$, and $(\alpha_\Theta)^\vee=2(\alpha^\vee)^\Theta$. The root $\alpha'$ is unique with these properties.
		  \end{enumerate} 
  		\item The set of irreducible components of $\Phi_\Theta$ is in bijection with the set of orbits for the action of $\Theta$ on the set of irreducible components of $\Phi$. 
		\item A component of $\Phi_\Theta$ is non-reduced if and only if the corresponding $\Theta$-orbit of components of $\Phi$ consists of components of type $A_{2n}$, and the stabilizer of one (hence any) such component acts non-trivially on it.
  		\item The fibers of the projection $\Phi \to \Phi_\Theta$ are the orbits of $\Theta$.
        \item The action of $W^\Theta$ on $V_\Theta$ and $(V^*)^\Theta$ is faithful and identifies $W^\Theta$ with the Weyl group of the root systems $\Phi_\Theta$ and $(\Phi^\vee)^\Theta$.
        \item Let $\alpha \in \Phi$ with image $\alpha_\Theta \in \Phi_\Theta$ and let $s_{\alpha_\Theta} \in W^\Theta$ be the reflection corresponding to $\alpha_\Theta$. When $\alpha_\Theta$ is not multipliable, the $s_{\alpha_\Theta}$ is equal to the product of the pairwise commuting reflections $s_\beta$, where $\beta$ runs over the $\Theta$-orbit of $\alpha$. When $\alpha_\Theta$ is multipliable, $s_{\alpha_\Theta}$ is equal to $s_\beta$, where $\beta=\alpha+\alpha'$ with $\alpha'$ as in (4b).
        \item The Dynkin diagram of $\Phi_\Theta$ with respect to $\Delta_\Theta$ is given by the following ``folding'' procedure. Two elements $\alpha_\Theta,\beta_\Theta \in \Delta_\Theta$ are connected by a bond if and only if there exist lifts $\alpha,\beta \in \Delta$ that are connected by a bond. That bond is multiple in the following two situations:
        \begin{enumerate}
			\item Up to switching $\alpha_\Theta,\beta_\Theta$, there exists $\alpha \mapsto \alpha_\Theta$ that is connected to more than one $\beta \mapsto \beta_\Theta$ in the Dynkin diagram of $\Phi$. The strength of the bond is the number of $\beta \mapsto \beta_\Theta$ connected to $\alpha$, and an arrow is placed pointing towards $\beta_\Theta$.
   			\item Up to switching $\alpha_\Theta,\beta_\Theta$, there exist $\alpha_1,\alpha_2 \mapsto \alpha_\Theta$ that are connected to each other in the Dynkin diagram of $\Phi$. Then $\alpha_\Theta$ is multipliable, and a double bond with no arrow is placed between $\alpha_\Theta,\beta_\Theta$ to indicate (by convention) a component of type $BC_n$.
		\end{enumerate}
		\item Every irreducible root system arises as $\Phi_\Theta$ from some irreducible $\Phi$.
	\end{enumerate}
\end{pro}
\begin{proof}
	Decompose $\Phi$ into irreducible components. These are permuted by the action of $\Theta$. All statements are compatible with products, so we may assume that the action of $\Theta$ on the set of irreducible components of $\Phi$ is transitive. Then all components are of the same Dynkin type. Let $\Phi'$ be one such component, $\Delta'=\Phi' \cap \Delta$,  and $\Theta'$ its stabilizer in $\Theta$. Let $V'$ be the subspace of $V$ generated by $\Theta'$. The inclusion $V' \to V$ induces an isomorphism $(V')_{\Theta'} \to V_\Theta$, which identifies $(\Delta')_{\Theta'}$ with $\Delta_\Theta$, $(\Phi')_{\Theta'}$ with $\Phi_\Theta$, and $(W')^{\Theta'}$ with $W^\Theta$. Since $V'$ is a direct factor of $V$ we have the inclusion $(V')^* \to V^*$, which identifies $(V^*)^\Theta$ with $((V')^*)^{\Theta'}$, $(\Phi^\vee)^\Theta$ with $((\Phi')^\vee)^{\Theta'}$, and $(\Delta^\vee)^\Theta$ with $((\Delta')^\vee)^{\Theta'}$. This reduces the proof to the case that $\Phi$ is irreducible.

	If $\Phi$ is not simply laced, then $\Theta$ acts trivially on $\Phi$ and all statements are trivial. From now on we assume that $\Phi$ is simply laced and irreducible. We are also free to replace $\Theta$ by the quotient through which it acts effectively on $V$. Then $\Theta$ is cyclic, except when $\Phi$ is of type $D_4$, when $\Theta$ could by the symmetric group on $3$ letters. In the latter case, the unique subgroup of $\Theta$ of index $2$ has the same set of orbits in $\Phi$ and $\Phi^\vee$ as $\Theta$, so we may replace $\Theta$ by $\Theta'$ without changing the statements.

	This reduces the proof to the following core case: $\Phi$ is irreducible and simply laced, in particular reduced, and $\Theta$ is cyclic. Let $\theta \in \Theta$ be a generator.

	Since $\Delta$ forms a basis of the vector space $V$ that is stable under $\Theta$, the image $\Delta_\Theta$ of $\Delta$ in $V_\Theta$ is a basis for $V_\Theta$ and the projection map $\Delta \to \Delta_\Theta$ has fibers given by the orbits of $\Theta$ in $\Delta$. In particular, no element of $\Delta$ maps to zero in $V_\Theta$. By considering positive and negative roots in $\Phi$, we conclude that no element of $\Phi$ maps to zero in $V_\Theta$. Thus $\Phi_\Theta$ does not contain zero. It generates $V_\Theta$, because $\Phi$ generates $V$. This proves Axiom $\tx{RS}_{\tx{I}}$ of \cite[Chap. VI, \S1, no. 1, Definition 1]{BourLie4-6}. 

	Consider $\alpha \in \Phi$ and assume that its $\Theta$-orbit consists of pairwise orthogonal roots. We compute 
	\[ \<\alpha_\Theta,(\alpha^\vee)^\Theta\> = \sum_i \<\alpha,\alpha_i^\vee\> = \<\alpha,\alpha^\vee\> = 2, \]
	where the sum runs over the $\Theta$-orbit $\{\alpha_1,\dots,\alpha_n\}$ of $\alpha$. Write $s$ for the reflection corresponding to the pair $(\alpha_\Theta,(\alpha^\vee)^\Theta)$, i.e. the map
	\[ V_\Theta \to V_\Theta, v_\Theta \mapsto v_\Theta - \<v_\Theta,(\alpha^\vee)^\Theta\>\alpha_\Theta. \] 
	For any $\beta \in \Phi$ we have 
	\begin{eqnarray*}
	s(\beta_\Theta)&=&\Big[\beta-\sum_{i=1}^n \<\beta,\alpha_i^\vee\>\alpha_i\Big]_\Theta\\
	&=&\Big[ \beta-\sum_{i=1}^n \<s_{\alpha_1}\circ \dots \circ s_{\alpha_{i-1}}(\beta),\alpha_i^\vee\>\alpha_i \Big]_\Theta\\
	&=&[s_{\alpha_1}\circ\dots\circ s_{\alpha_n}(\beta)]_\Theta,
	\end{eqnarray*}
	where we have denoted by $[-]_\Theta$ the projection $V \to V_\Theta$ and have used $\<s_{\alpha_1}\circ \dots \circ s_{\alpha_{i-1}}(\beta),\alpha_i^\vee\>=\<\beta,\alpha_i^\vee\>$ due to the pairwise orthogonality of $\{\alpha_1,\dots,\alpha_n\}$. This shows that $s(\beta_\Theta) \in \Phi_\Theta$, which is Axiom $\tx{RS}_{\tx{II}}$, and identifies $(\alpha_\Theta)^\vee=(\alpha^\vee)^\Theta$. Moreover
	\[ \<\beta_\Theta,(\alpha^\vee)^\Theta\> = \sum_i \<\beta,\alpha_i^\vee\> \in \Z, \]
	hence $\tx{RS}_{\tx{III}}$.
	
	Consider now $\alpha \in \Phi$ whose $\Theta$-orbit has $\alpha' \neq \alpha$ so that $\alpha+\alpha' \in \Phi$. We claim that $\Phi$ is of type $A_{2n}$ and $\{\alpha,\alpha'\}$ is the $\theta$-orbit of $\alpha$. For this we use Lemma \ref{lem:rootsupp} and Remark \ref{rem:rootsupp}. Since $f_{\alpha+\alpha'}=f_{\alpha}+\theta(f_{\alpha})$, we see by inspecting the $\theta$-stable maps that the supports of $f_{\alpha}$ and $\theta(f_\alpha)$ must be disjoint, yet their union must be connected, and this is only possible when $\Phi$ is of type $A_{2n}$ and $\theta$ acts non-trivially. We now compute
	\[ \<\alpha_\Theta,(\alpha^\vee)^\Theta\> = \<\alpha,\alpha^\vee\>+\<\alpha,\alpha'^\vee\>=2-1=1. \]
	Let $s$ be the reflection corresponding to the pair $(\alpha_\Theta,2(\alpha^\vee)^\Theta)$. For any $\beta \in \Phi$ we have 
	\[ s(\beta_\Theta)=[\beta-2\<\beta,\alpha^\vee+\alpha'^\vee\>\alpha]_\Theta = [\beta-\<\beta,\alpha^\vee+\alpha'^\vee\>(\alpha+\alpha')]_\Theta = [s_{\alpha+\alpha'}(\beta)]_\Theta.\]
	Moreover, 
	\[ \<\beta_\Theta,(\alpha^\vee)^\Theta\> = 2\<\beta,\alpha^\vee\>+2\<\beta,\alpha'^\vee\> \in \Z, \]
	hence $\tx{RS}_\tx{III}$.

	We have thus proved the axioms of a root system for $\Phi_\Theta$. We have also argued that $\Delta_\Theta$ is a basis of $V_\Theta$. Since every element of $\Phi_\Theta$ is an integer combination of elements of $\Delta_\Theta$ with the same sign, (1) has been proved.

	Next, consider (7). Let $\alpha_1,\alpha_2 \in \Phi$ map to the same element $\bar\alpha \in \Phi_\Theta$. Then they are either both positive or both negative, assume wlog positive. We already know that the fibers of $\Delta \to \Delta_\Theta$ are precisely the $\Theta$-orbits. In particular, $\alpha_1,\alpha_2$ have the same height, which also equals the height of $\bar\alpha$. Therefore, if $\bar\alpha \in \Delta_\Theta$, then $\alpha_1,\alpha_2 \in \Delta$  hence lie in the same $\theta$-orbit. In our computations above we showed that all root reflections in $W(\Phi_\Theta)$ lie in $W^\Theta$, hence $W(\Phi_\Theta) \subset W^\Theta$. Let $w \in W(\Phi_\Theta)$ be such that $w\bar\alpha \in \Delta_\Theta \cup 2\Delta_\Theta$. If $w\bar\alpha \in \Delta_\Theta$, by the above argument $w\alpha_1,w\alpha_2$ lie in the same $\theta$-orbit, but then so do $\alpha_1,\alpha_2$ since $w \in W^\Theta$. If $w\bar\alpha \in 2\Delta_\Theta$, then $w\alpha_1$ (and also $w\alpha_2$) is of height $2$, more precisely the sum of two elements of $\Delta$ which map to the same element of $\Delta_\Theta$. A visual inspection of the Dynkin diagrams shows that this only happens in type $A_{2n}$ with $\theta$ acting non-trivially, in which case these two elements of $\Delta$ are unique (up to transposition). We conclude that $w\alpha_1$ is $\theta$-fixed and equals $w\alpha_2$. 
	
	This completes the proof of (7), but (6), as well as the part of (4) that identifies which members of $\Phi_\Theta$ are multipliable. The remainder of (4) was completed in the course of proving (1), hence (4) is proved, which in turn implies (2) and (3).

	In the course of proving (1) we also proved (9) and thus know that the Weyl group of $\Phi_\Theta$ is a subgroup of $W^\Theta$. On the other hand, the action of $W^\Theta$ on $V$ preserves induces an action on $V_\Theta$ which is faithful (because the action on $V^\Theta$ is faithful and the projection $V^\Theta \to V_\Theta$ is an isomorphism equivariant for this action). Since $W^\Theta$ preserves $\Phi$, it also preserves $\Phi_\Theta$. Thus $W^\Theta$ contains the Weyl group of $\Phi_\Theta$ and is contained in the automorphism group of $\Phi_\Theta$. Since the preimage of $\Delta_\Theta$ under $\Phi \to \Phi_\Theta$ is $\Delta$, any member of $W^\Theta$ that stabilizes $\Delta_\Theta$ also stabilizes $\Delta$ and is hence trivial. This proves (8).

	Consider (10). Let $\alpha_\Theta,\beta_\Theta \in \Delta_\Theta$ be distinct. According to \cite[Chap. VI, \S1, no. 3]{BourLie4-6}, the two numbers $\<\alpha_\Theta,(\beta_\Theta)^\vee\>$ and $\<\beta_\Theta,(\alpha_\Theta)^\vee\>$ are non-positive and, after possibly switching $\alpha,\beta$ so that $|\<\alpha_\Theta,(\beta_\Theta)^\vee\>| \geq |\<\beta_\Theta,(\alpha_\Theta)^\vee\>|$, the strength of the bond between $\alpha_\Theta$ and $\beta_\Theta$ equals $|\<\alpha_\Theta,(\beta_\Theta)^\vee\>|$, which is one of $0,1,2,3$, with $0$ indicating no bond. Using (4) we see
	\[ \<\alpha_\Theta,(\beta_\Theta)^\vee\> =n_\beta\sum_\beta \<\alpha,\beta^\vee\> \]
	for some $n_\beta \in \{1,2\}$, where the sum runs over all $\beta \mapsto \beta_\Theta$ while $\alpha \mapsto \alpha_\Theta$ is arbitrary. This number is non-zero precisely when there exist $\alpha,\beta$ with $\<\alpha,\beta^\vee\> \neq 0$, i.e. whose nodes in the Dynkin diagram of $\Phi$ are linked.

	Next we consider the strength of the bond. Recall that $\Phi$ is simply laced, because $\theta$ acts non-trivially. Therefore each $\<\alpha,\beta^\vee\>$  is either $0$ or $-1$, the latter precisely when $\alpha,\beta$ are linked in the Dynkin diagram of $\Phi$. Since $|\<\alpha_\Theta,(\beta_\Theta)^\vee\>|$ is constrained to be one of $0,1,2,3$, it is bigger then $1$ in the following two mutually exclusive cases:
	\begin{enumerate}
		\item $n_\beta=2$ and there is a unique $\beta \mapsto \beta_\Theta$ linked to any given $\alpha \mapsto \alpha_\Theta$. This is the case if and only if $\beta_\Theta$ is multipliable, according to (4b), in which case $\Phi$ is of type $A_{2n}$ by (6) and the strength of the bond is thus $2$.
  		\item $n_\beta=1$ and there are more than one $\beta \mapsto \beta_\Theta$ linked to any given $\alpha \mapsto \alpha_\Theta$. The strength of the bond is then the number of such $\beta$, and $\beta_\Theta$ is short, while $\alpha_\Theta$ is long.
	\end{enumerate}
	This proves (10). Examining the simply laced Dynkin diagrams we see that in each case, the folded diagram is connected, hence $\Phi_\Theta$ is irreducible. This proves (5), and (11) is obtained by inspection.
\end{proof}

\begin{rem}
	\begin{enumerate}
		\item The above Lemma can be applied with the roles of $\Phi$ and $\Phi^\vee$ switched, so we can consider the root system $\Phi_\Theta \subset V_\Theta$ and $\Phi^\Theta \subset V^\Theta$. We emphasize that $\Phi_\Theta$ may be non-reduced, while $\Phi^\Theta$ is always reduced. 
  		\item We further emphasize that $\Phi^\Theta$ is \emph{not} the root system obtained as the image of $\Phi$ under the trace map $\sum_{\theta \in \Theta}\theta : V \to V^\Theta$. Indeed, this trace map factors through the quotient $V_\Theta$ and produces an isomorphism $V_\Theta \to V^\Theta$, so the image of $\Phi$ under the trace map is a root system isomorphic to $\Phi_\Theta$, and hence generally non-reduced. Even when $\Phi_\Theta$ is reduced, its image under the trace map is in general distinct from $\Phi^\Theta$, and not even isomorphic to it. For example, when $\Phi$ is of type $A_{2n+1}$ and $\Theta$ acts faithfully and non-trivially, then $\Phi_\Theta$ is of type $C_n$, while $\Phi^\Theta$ is of type $B_n$, in accordance with the duality statement in the above lemma.
    	\item We can consider the subsystems $(\Phi_\Theta)^\tx{nd}$ and $(\Phi_\Theta)^\tx{nm}$ of non-divisible and non-multipliable elements in $\Phi_\Theta$. Then $(\Phi_\Theta)^\tx{nd}$ is dual to $((\Phi_\Theta)^\vee)^\tx{nm}$ and $(\Phi_\Theta)^\tx{nm}$ is dual to $((\Phi_\Theta)^\vee)^\tx{nd}$. The resulting two pairs of dual root systems are those considered in \cite[Definition 3.3]{Haines18}. More precisely, using the notation of that reference we have
     	\begin{eqnarray*}
		 (\Phi_\Theta)^\tx{nd}&=&\tx{res}_\Theta(\Phi),\\ (\Phi_\Theta)^\tx{nm}&=&\tx{res}'_\Theta(\Phi),\\ 
		 ((\Phi_\Theta)^\vee)^\tx{nd}&=&N_\Theta(\Phi),\\ 
		 ((\Phi_\Theta)^\vee)^\tx{nm}&=&N_\Theta'(\Phi).
	 	\end{eqnarray*}
		  Moreover $((\Phi_\Theta)^\vee)^\tx{nd}=\Phi^\Theta$.
    	\item Assume that $\Phi$ is irreducible and $\Theta$ acts faithfully. Let $\Phi^\tx{tr}$ be the image of $\Phi$ under the trace map $\sum_{\theta \in \Theta}\theta : V \to V^\Theta$. Then $\Phi^\Theta$ is the subsystem of non-divisible roots in $\Phi^\tx{tr}$. More precisely, $\Phi^\Theta=\Phi^\tx{tr}$ unless $\Phi$ is of type $A_{2n}$ and $\Theta$ is non-trivial, in which case $\Phi^\tx{tr}$ is of type $BC_n$ and $\Phi^\Theta$ is of type $C_n$.
	\end{enumerate}

\end{rem}

\section{The relationship between $G$ and $G^A$} \label{app:g-ga}

We continue with a connected reductive group $G$ defined and quasi-split over $F$, and $A$ a finite group of automorphisms that leaves invariant an $F$-pinning of $G$.

Let $G^1=G^{A,\circ}$ be the identity component of the $A$-fixed points in $G$. We collect here some results about the relationship between $G$ and $G^1$. They are well-known, and we are collecting them here for the convenience of the reader.

\begin{lem} \label{lem:linesigns}
	Let $(T,B,\{X_\alpha\})$ be an $A$-stable pinning of $G$. Let $\Phi \subset X^*(T)$ be the absolute root system of $G$ relative to $T$. If $a \in A$ fixes $\alpha \in \Phi$, then it acts on the root line $\tx{Lie}(G)_\alpha$ by the scalar $+1$ unless the component $\Phi'$ of $\Phi$ that contains $\alpha$ is of type $A_{2n}$ and $a$ acts non-trivially on it, in which case it acts by the scalar $-1$ on $\tx{Lie}(G)_\alpha$.
\end{lem}
\begin{proof}
	The question is insensitive to extending the ground field, so we may assume it is algebraically closed. We may also replace $G$ by its adjoint group. Then $G$ breaks up into a product of simple groups, permuted by $A$, according to the decomposition of $\Phi$ into irreducible factors.

	We may assume that $\Phi$ is irreducible by replacing $\Phi$ by $\Phi'$ and $A$ by the stabilizer of $\Phi'$ in $A$. We may also assume $\Phi$ is simply laced, otherwise $A$ acts trivially and the statement is trivial. We now use induction on the height of the root $\alpha$ and Lemma \ref{lem:rootsupp}.

	The case of height $1$ is that of a simple root, and follows by assumption on the existence of an $A$-stable pinning. Consider now a root of height $h>1$. If $\Phi$ is not of type $A_{2n}$, a visual inspection of the Dynkin diagram shows that $\alpha=\alpha_{-1}+\alpha_0+\alpha_1$, where $\alpha_{-1},\alpha_1$ are simple and switched by $a$, and $\alpha_0$ is of height $h-2$ and fixed by $a$. If $X_{-1},X_0,X_1$ are non-zero vectors belonging to the corresponding root lines, then $X=[X_{-1},[X_0,X_1]]$ is a non-zero root vector lying in the $\alpha$-root line. By induction $a(X_0)=X_0$, and the Jacobi identity implies $a(X)=X$.

	If $\Phi$ is of type $A_{2n}$, then a visual inspection of the Dynkin diagram shows that $\alpha=\alpha_{-1}+\alpha_1$ with $\alpha_{-1},\alpha_1$ of equal height $h/2$ and switched by $a$. If $X_{-1},X_1$ are non-zero root vectors in the corresponding root lines, then $X=[X_{-1},X_1]$ is a non-zero root vector lying in the $\alpha$-root line with $a(X)=-X$.
\end{proof}

The following proposition collects basic results of Steinberg from \cite{Ste68end} in a form similar to \cite[Theorem 1.1.A]{KS99}.

\begin{pro} \label{pro:stein}
In this proposition $F$ is an arbitrary field of characteristic zero.
	\begin{enumerate}
		\item The connected algebraic group $G^1=G^{A,\circ}$ is reductive and quasi-split.
		\item If $G$ is simply connected, then $G^A$ is connected, hence equal to $G^1$, but may not be simply connected.
		\item If $G$ is adjoint, then $G^A$ is connected, hence equal to $G^1$, and adjoint.
    	\item An $A$-stable Borel $F$-subgroup $B$ of $G$ contains an $A$-stable maximal $F$-torus $T$. 
    	\item If $(T,B)$ is an $A$-stable $F$-Borel pair, then $(T^1,B^1)$ is an $F$-Borel pair of $G^1$, where $T^1:=T \cap G^1$ and $B^1=B \cap G^1$. We have $T=\tx{Cent}(T^1,G)$ and $\Omega(T^1,G^1)=\Omega(T,G)^A$.
		\item The map $B \mapsto B^1=B \cap G^1$ is a bijection between the set of $A$-stable Borel $F$-subgroups of $G$ and the set of Borel $F$-subgroups of $G^1$.
     	\item The map $(T,B) \mapsto (T^1,B^1)$ is a bijection between the set of $A$-stable Borel $F$-pairs of $G$ and the set of Borel $F$-pairs of $G^1$. 
      	\item Assume $G$ is split. Let $(T,B)$ be a Borel $F$-pair of $G$ and $(T^1,B^1)$ the corresponding pair for $G^1$. Let $\Delta \subset \Phi$ and $\Delta^1 \subset \Phi^1$ be the associated root systems with sets of simple roots. The image of $\Phi$ under the restriction map $X^*(T) \to X^*(T^1)$ is a (possibly non-reduced) root system $\Phi_\tx{res}$, whose set of simple roots equals $\Delta^1$, and whose subsystem of indivisible roots equals $\Phi^1$. The fibers of the surjections $\Phi \to \Phi_\tx{res}$ and $\Delta \to \Delta^1$ induced by the restriction map are precisely the $A$-orbits.
  		\item If $(T,B,\{X_\alpha\})$ is an $A$-stable $F$-pinning of $G$, then $(T^1,B^1,\{X_{\alpha^1}\})$ is an $F$-pinning of $G^1$, where $X_{\alpha^1}=\sum_{\alpha \mapsto \alpha^1} X_\alpha$. This establishes a bijection between the set of $A$-stable $F$-pinnings of $G$ and the set of $F$-pinnings of $G^1$.
    	\item Any $A$-stable $F$-Borel pair can be extended to an $A$-stable $F$-pinning.
	\end{enumerate}
\end{pro}
\begin{proof}
	Steinberg's results in \cite{Ste68end} are formulated for a single automorphism $a$ of $G$ and over an algebraically closed field $F$. For the purposes of this proposition, we can reduce to this case as follows. We are free to replace $G$ by $G_\tx{sc}$ for all statements except (2), and by $G_\tx{ad}$ for all statements except (3), so we assume without loss of generality that $G$ is either simply connected or adjoint. 
	
	Let us first assume that $F$ is algebraically closed. Then $G$ decomposes into a product of simple factors permuted by $A$. All the claims reduce, by taking products, first to the case that the action of $A$ on the set of factors is transitive, and then to the case of a single factor. We may thus assume that $G$ is simple. We may also replace $A$ by the quotient through which it acts effectively. Then $A$ is cyclic, except possibly when $G$ is of type $D_4$, when $A$ could be the symmetric group on $3$ letters. 
	
	Consider first the case that $A$ is cyclic, generated by $a \in A$. We can now apply Steinberg's results to the automorphism $a$ of $G$. This automorphism is quasi-semisimple in the sense of \cite[\S9]{Ste68end}. 
	
	(2) When $G$ is simply connected, the group $G^a$ is connected by \cite[Corollary 9.7]{Ste68end}.
	
	(3) When $G$ is adjoint, the connectedness of $G$ is proved in \cite[Lemma 3.1]{Ree10}. Even though this reference assumes the field to be $\C$, this particular argument does not require this assumption. Indeed, the argument uses \cite[\S9.2,\S9.5]{Ste68end}, the first of which shows $\pi_0(G^a)=((1-a)G_\tx{sc} \cap Z(G_\tx{sc}))/(1-a)Z(G_\tx{sc})$, and the second gives an expression for $(1-a)G_\tx{sc} \cap Z(G_\tx{sc}))$. That expression involves an element $t' \in T_\tx{sc}$, where $T_\tx{sc}$ is a maximal torus of $G_\tx{sc}$ that is stable under $a$ and contained in a Borel subgroup also stable under $a$. We can choose this pair to be part of an $a$-stable pinning of $G$. The element $t'$ is described in part \cite[\S8.2(5)]{Ste68end} as having the property $\alpha(t')=c_{a(\alpha)}$ for every root $\alpha$, where $c_\alpha$ is defined in the beginning of \cite[\S8.2]{Ste68end}. Since $a$ preserves a pinning, all elements $c_\alpha$ are equal to $1$ and we can take $t'=1$. Then \cite[\S9.5]{Ste68end} shows $(1-a)G_\tx{sc} \cap Z(G_\tx{sc})=(1-a)T_\tx{sc} \cap Z(G_\tx{sc})$. Reeder then argues that the inclusion $Z(G_\tx{sc}) \to T_\tx{sc}$ induces an injective map $Z(G_\tx{sc})_a \to (T_\tx{sc})_a$, because the map $X^*(T_\tx{sc})^a \to X^*(Z(G_\tx{sc}))^a$ is surjective. Therefore, $(1-a)T_\tx{sc} \cap Z(G_\tx{sc})=(1-a)Z(G_\tx{sc})$, concluding the proof of connectedness of $G^a$ when $G$ is adjoint.

	We still need to prove that $G^a$ is adjoint. For this we will need to know (5) and (8), which do not rely on that claim. Assuming them, fix an $a$-stable Borel pair $(T,B)$. Then $T^a$ is a maximal torus of $G^a$ by (5), hence contains the center of $G^a$, which is the subgroup of $T^a$ killed by all elements of the set of simple roots $\Delta^1$. By (8) that set is the set of restrictions of the simple roots $\Delta$, so we conclude $Z(G^a)=T^a \cap Z(G)=\{1\}$. 

	(4) This is \cite[\S7.6]{Ste68end}.

	(5) We assume from now on that $G$ is simply connected and simple. Let $(T,B)$ be an $a$-stable Borel pair in $G$. Let $\Phi=\Phi(T,G)$ be the root system of $G$ relative to $T$, $\Phi^+=\Phi(T,B)$ the set of $B$-positive roots, and $\Delta \subset \Phi^+$ the set of simple roots. By \cite[\S8.2(3)]{Ste68end} the group $T^a$ is connected, hence a torus in $G^a$. We claim that no element of $\Phi$ restricts trivially to $T^a$, i.e. maps to zero in the module $X^*(T)_a$ of $a$-coinvariants in $X^*(T)$. It is enough to prove this for $\Phi^+$, hence for $\Delta$. Since $\otimes_\Z\Q$ is right-exact and $X^*(T)_a$ is torsion-free, we have $X^*(T)_a \subset (X^*(T) \otimes_\Z\Q)_a$, so it is enough to show that no element of $\Delta$ projects trivially to $(X^*(T) \otimes_\Z\Q)_a$. But $\Delta$ forms an $a$-stable basis of $X^*(T) \otimes_\Z\Q$, and the claim follows.

	The claim implies that $T=\tx{Cent}(T^a,G)$. In particular, $T^a$ is a maximal torus in $G^a$, for otherwise $\tx{Cent}(T^a,G^a)$ would be a Levi subgroup of $G^a$ with a non-trivial root system, hence non-commutative, contradicting its containment in $T$.

	For a moment assume that $(T,B)$ is part of an $a$-stable pinning of $G$. Then the constants $c_\alpha$ defined in the beginning of \cite[\S8.2]{Ste68end} have the property $c_\alpha=1$ when $\alpha$ is simple. Therefore \cite[\S8.2(5)]{Ste68end} shows that every element of $\Omega(T,G)^a$ is represented in $N(T,G)^a$. Now $T=\tx{Cent}(T^a,G)$ implies $\Omega(T^a,G^a)=N(T^a,G^a)/T^a=N(T,G)^a/T^a=\Omega(T^a,G^a)$.

	Returning to a general $a$-stable pair $(T,B)$, let $(T_0,B_0)$ be one that belongs to an $a$-stable pinning. Since both $T^a$ and $T_0^a$ are maximal tori of $G^a$, we may conjugate $(T,B)$ under $G^a$ to assume that $T^a=T_0^a$. Then $B$ and $B_0$ are two $a$-stable Borel subgroups that contain $T$, so $B=wB_0w^{-1}$ for a unique, hence $a$-fixed, element $w \in \Omega(T,G)$. But then $w \in \Omega(T^a,G^a)$, so we can further conjugate $(T,B)$ by $G^a$ to assume $T=T_0$ and $B=B_0$.

	Finally, we show that $B^a$ is a Borel subgroup of $G^a$. It is certainly solvable, and we have $B^a=T^aU^a$, where $U$ is the unipotent radical of $B$, so by \cite[\S8.2(2,3)]{Ste68end} we know $B^a$ is connected. It is therefore contained in a Borel subgroup $B_1'$ of $G^a$, which in turn is contained in an $a$-stable Borel subgroup $B_1$ of $G$ by \cite[7.4 Corollary]{Ste68end}. By \cite[\S7.6]{Ste68end} $B_1$ contains an $a$-stable maximal torus $T_1$. By what was argued so far, $B_1^a$ is a connected solvable subgroup of $G^a$ containing $B_1'$, hence equal to $B_1'$ by maximality of latter. But we have also argued that $(B_1,T_1)$ is conjugate by $G^a$ to $(B,T)$, and we conclude that $B^a$ is a Borel subgroup of $G^a$, as claimed.

	(6,7) From (5) we have a well-defined map $(T,B) \mapsto (T^1,B^1)$. Surjectivity is immediate, because the conjugation action of $G^a$ preserves the set of $a$-stable Borel pairs of $G$ and is transitive on the set of Borel pairs of $G^a$. Injectivity reduces to showing that two $a$-stable Borel subgroups $B_1,B_2$ that contain the same $a$-stable maximal torus $T$ and have $B_1^a=B_2^a$ are equal. But the element $w \in \Omega(T,G)$ conjugating $B_1$ to $B_2$ is then $a$-fixed, hence by (5) an element of $\Omega(T^a,G^a)$, which then preserves the Borel subgroup $B_1^a=B_2^a$ of $G^a$ and must therefore be trivial. This completes the proof of (7), and also the surjectivity of (6). For injectivity, it is enough to show that two $a$-stable Borel subgroups $B_1,B_2$ contain a common $a$-stable maximal torus. For that, apply \cite[\S7.6]{Ste68end} to $B_1 \cap B_2$ to conclude that this solvable group contains an $a$-stable torus $T$ that is maximal in $B_1 \cap B_2$. But it is well known that $B_1 \cap B_2$ contains a maximal torus of $G$, so $T$ must also be maximal in $G$.

	(8) According to (5), $T^a$ is a maximal torus of $G^a$ and $U^a$ is a maximal unipotent subgroup. In \cite[\S8.2(2)]{Ste68end}, Steinberg computes the decomposition of $U^a$ into root subgroups for $T^a$ and shows that the roots are certain elements of the image $\Phi_\tx{res}$ of $\Phi$ under the restriction map $X^*(T) \to X^*(T^1)$, and which these elements are is described in terms of the constants $c_\alpha$. Note that $X^*(T^1)\otimes_\Z\R$ is the space of $A$-coinvariants in $X^*(T)\otimes_\Z\R$, so $\Phi_\tx{res}$ is described in Proposition \ref{pro:fold1}.
	
	Lemma \ref{lem:linesigns} shows that the constants $c_\alpha$ are equal to $+1$ except when the image of $\alpha$ in $\Phi_\tx{res}$ is divisible, in which case these elements are equal to $-1$. This proves that $\Phi^1$ is the set of indivisible elements in $\Phi_\tx{res}$. That $\Delta^1$ is the image of $\Delta$ is part of Proposition \ref{pro:fold1}.

	(9) According to Proposition \ref{pro:fold1}, an element $\alpha^1 \in \Delta$ corresponds to an $A$-orbit in $\Delta$. Moreover, Steinberg describes that $\tx{Lie}(G^A)_{\alpha^1}=(\bigoplus_{\alpha \mapsto \alpha^1} \tx{Lie}(G)_\alpha)^A$. The element $X_{\alpha^1}=\sum_{\alpha \mapsto \alpha^1} X_\alpha$ lies in that line and is non-zero. This provides the desired pinning of $G^A$. Conversely, given a pinning $(T^1,B^1,\{X_{\alpha^1})$ of $G^A$, (7) provides a unique $A$-stable Borel pair $(T,B)$ of $G$ from which $(T^1,B^1)$ arises by intersection with $G^A$. Using the above direct sum decomposition we obtain $X_{\alpha^1}=\sum_{\alpha \mapsto \alpha^1} X_\alpha$ for unique $X_\alpha \in \tx{Lie}(G)_\alpha$ and the set $\{X_\alpha\}$ is $A$-stable.
	
	(10) This follows from the existence of one $A$-stable pinning and the fact that all $A$-stable Borel pairs are conjugate under $G^A$, according to (7).
	\vskip 5pt

	This completes the proof under the assumptions that $F$ is algebraically closed, $G$ is (without loss of generality) simple, and $A$ is cyclic. Keeping $F$ algebraically closed and $G$ simple, the only outstanding case is that $G$ is simple of type $D_4$ and $A$ is the symmetric group on 3 letters. We may also, as before, assume that $G$ is adjoint. Let $B \subset A$ be the unique subgroup of index $2$. Then $G^B$ is a connected adjoint group by (3), of type $G_2$. The $A$-stable pinning of $G$ endows $G^B$ with a pinning by (9), which is clearly $A/B$-stable. But a group of type $G_2$ doesn't have non-trivial pinned automorphisms, so we conclude that $A/B$ acts trivially on $G^B$, hence  $G^A=G^B$, and all results follow for that case.

	We are finally left with removing the assumption that $F$ is algebraically closed. This comes down to Galois equivariance. The bijections in (6), (7), and (9), are Galois-equivariant, hence descent to bijections on $F$-objects. The same reasoning applies to (5). Part (8) is unaffected by changing the base field due to the assumption that $G$ is split. Part (4) is obtained from the bijection (7) by choosing a maximal $F$-torus $T^1$ inside of $B^1$. Part (10) follows from the assumed existence of an $A$-stable $F$-pinning and the conjugacy of all $F$-rational $A$-Borel pairs of $G$ under $G^A(F)$, which follows from (7).
\end{proof}

\begin{cor} \label{cor:regnilp}
A nilpotent element of $\mf{g}^A$ is regular if and only if it is regular as an element of $\mf{g}$.
\end{cor}
\begin{proof}
	We begin by showing that regular nilpotent and $A$-fixed elements of $\mf{g}$ exist. For this, take an $A$-fixed pinning $(T,B,\{X_\alpha\})$ of $G$, which exists by assumption. Then $X=\sum X_\alpha$ is such an element.
	
	Now let $X \in \mf{g}$ be regular nilpotent and $A$-fixed. Then there is a unique, hence $A$-stable, Borel subgroup $B \subset G$ whose Lie algebra contains $X$. By Proposition \ref{pro:stein}(6), $B^1=B \cap G^1$ is a Borel subgroup of $G^1$ whose Lie algebra contains $X$ and is unique with this property, hence $X$ is regular nilpotent as an element of $\mf{g}^A$.
	
	Conversely let $X \in \mf{g}^A$ be regular nilpotent. Since all such elements are conjugate under $G^1$, we may assume that this $X$ comes from an $A$-fixed pinning of $G$ as in the first paragraph of the proof. But then $X$ is regular in $\mf{g}$ as well.
\end{proof}

\begin{cor} \label{cor:pinu}
	The map that associates to an $A$-stable $F$-pinning $(T,B,\{X_\alpha\})$ the element $\sum_\alpha X_{-\alpha} \in \mf{g}^A(F)$, where $X_{-\alpha} \in \mf{g}_\alpha$ is determined by $[X_\alpha,X_{-\alpha}]=H_\alpha$, is a bijection between the set of $G^1(F)$-conjugacy classes of $A$-stable $F$-pinnings of $G$ and the set of $G^1(F)$-conjugacy classes of regular semi-simple elements of $\mf{g}^A(F)$.
\end{cor}
\begin{proof}
	When $A=\{1\}$ this is well-known, see e.g. \cite[Lemma 3.1.1(3)]{DPR} or \cite[Lemma 5.1.A]{LS87}. For general $A$ use Proposition \ref{pro:stein}(9) and Corollary \ref{cor:regnilp}.
\end{proof}

\end{appendices}

\bibliographystyle{amsalpha}
\bibliography{../../TexMain/bibliography.bib}

\end{document}